\providecommand{\leftsquigarrow}{%
	\mathrel{\mathpalette\reflect@squig\relax}%
}
\newcommand{\reflect@squig}[2]{%
	\reflectbox{$\m@th#1\rightsquigarrow$}%
}
\newcommand{\A}{\mathbb{A}}
\newcommand{\G}{\mathbb{G}}
\newcommand{\T}{\mathbb{T}}
\newcommand{\cB}{\mathcal{B}}
\newcommand{\cC}{\mathcal{C}}
\newcommand{\cD}{\mathcal{D}}
\newcommand{\cE}{\mathcal{E}}
\newcommand{\cJ}{\mathcal{J}}
\newcommand{\cL}{\mathcal{L}}
\newcommand{\cM}{\mathcal{M}}
\newcommand{\cO}{\mathcal{O}}
\newcommand{\cR}{\mathcal{R}}
\newcommand{\cS}{\mathcal{S}}
\newcommand{\cU}{\mathcal{U}}
\newcommand{\cZ}{\mathcal{Z}}
\newcommand{\cX}{\mathcal{X}}
\newcommand{\cY}{\mathcal{Y}}
\newcommand{\fa}{\mathfrak{a}}
\newcommand{\fm}{\mathfrak{m}}
\newcommand{\fn}{\mathfrak{n}}
\newcommand{\bP}{\mathbb{P}}
\newcommand{\bC}{\mathbb{C}}
\newcommand{\bR}{\mathbb{R}}
\newcommand{\bA}{\mathbb{A}}
\newcommand{\bQ}{\mathbb{Q}}
\newcommand{\bZ}{\mathbb{Z}}
\newcommand{\bD}{\mathbb{D}}
\newcommand{\bG}{\mathbb{G}}
\newcommand{\bF}{\mathbb{F}}
\newcommand{\bN}{\mathbb{N}}
\newcommand{\bk}{\mathbbm{k}}
\newcommand{\bK}{\mathbb{K}}
\newcommand{\sF}{\mathscr{F}}
\newcommand{\cI}{\mathcal{I}}
\newcommand{\rmA}{\mathrm{A}}
\newcommand{\rmY}{\mathrm{Y}}
\newcommand{\bfb}{\mathbf{b}}
\newcommand{\bfT}{\mathbf{T}}
\newcommand{\bfM}{\mathbf{M}}
\newcommand{\STR}{\overline{\mathrm{ST}}_R}
\DeclareMathOperator{\Aut}{Aut}
\DeclareMathOperator{\codim}{codim}
\DeclareMathOperator{\Div}{Div}
\DeclareMathOperator{\Exc}{Exc}
\DeclareMathOperator{\Hilb}{Hilb}
\DeclareMathOperator{\Spec}{Spec}
\DeclareMathOperator{\Pic}{Pic}
\DeclareMathOperator{\im}{Im}
\DeclareMathOperator{\ord}{ord}
\DeclareMathOperator{\Proj}{Proj}
\DeclareMathOperator{\Supp}{Supp}
\newcommand{\PGL}{\mathrm{PGL}}
\newcommand{\Hom}{\mathrm{Hom}}
\newcommand{\bT}{\mathbb{T}}
\newcommand{\ind}{\mathrm{ind}}
\newcommand{\tX}{\widetilde{X}}
\newcommand{\tZ}{\widetilde{Z}}
\newcommand{\tf}{\tilde{f}}
\newcommand{\edim}{\textrm{edim}}
\newcommand{\tB}{\widetilde{B}}
\newcommand{\K}{\mathrm{K}}
\newcommand{\oS}{\overline{S}}
\newcommand{\oX}{\overline{X}}
\newcommand{\oC}{\overline{C}}
\newcommand{\oD}{\overline{D}}
\newcommand{\oB}{\overline{B}}
\newcommand{\oG}{\overline{G}}
\newcommand{\oR}{\overline{R}}
\newcommand{\ocX}{\overline{\mathcal{X}}}
\newcommand{\ocD}{\overline{\mathcal{D}}}
\newcommand{\ocB}{\overline{\mathcal{B}}}
\newcommand{\ocG}{\overline{\mathcal{G}}}
\newcommand{\ocS}{\overline{\mathcal{S}}}
\newcommand{\ocR}{\overline{\mathcal{R}}}
\newcommand{\tx}{\tilde{x}}
\newcommand{\tw}{\tilde{w}}
\newcommand{\oz}{\bar{z}}
\newcommand{\CY}{\mathrm{CY}}
\newcommand{\KSBA}{\mathrm{KSBA}}
\newcommand{\sn}{\mathrm{sn}}
\newcommand{\la}{\lambda}
\newcommand{\bfa}{\mathbf{a}}
\newcommand{\DP}{\mathrm{DP}}
\newcommand{\cDP}{\mathcal{D}\mathcal{P}}
\newcommand{\bfr}{\mathbf{r}}
\newcommand{\bfc}{\mathbf{c}}
\newcommand{\lcd}{\mathrm{lcd}}
\newcommand{\rmO}{\mathrm{O}}
\newcommand{\Hodge}{\mathrm{Hodge}}
\newcommand{\bmu}{\bm{\mu}}
\newcommand{\ormY}{\overline{\mathrm{Y}}}
\newcommand{\YL}[1]{{\textcolor{blue}{[Yuchen: #1]}}}
\newcommand{\HB}[1]{{\textcolor{red}{[Blum: #1]}}}
\numberwithin{equation}{section}
\newtheorem{prop} {Proposition} [section]
\newtheorem{thm}[prop] {Theorem} 
\newtheorem{lem}[prop] {Lemma}
\newtheorem{prop-def}[prop]{Proposition-Definition}
\newtheorem{theorem}[prop]{Theorem}
\newtheorem{lemma}[prop]{Lemma}
\newtheorem{corollary}[prop]{Corollary}
\newtheorem{proposition}[prop]{Proposition}
\newtheorem{thm-defn}[prop]{Theorem-Definition}
\theoremstyle{definition}
\newtheorem{defn}[prop]{Definition}
\newtheorem{expl}[prop] {Example}
\theoremstyle{remark}
\newtheorem{rem}[prop]{Remark} 
\newtheorem{remark}[prop]{Remark}
\newtheorem{definition}[prop]{Definition}
\title{Good moduli spaces for boundary polarized Calabi--Yau surface pairs}
\author{Harold Blum}
\address{Department of Mathematics\\
	University of Utah\\
	Salt Lake City, UT 84112, USA.}
\email{blum@math.utah.edu}
\author{Yuchen Liu}
\address{Department of Mathematics, Northwestern University, Evanston, IL 60208, USA.}
\email{yuchenl@northwestern.edu}
\date{\today}
\begin{document}
	
	\begin{abstract}
		We construct  projective asymptotically good moduli spaces parametrizing boundary polarized CY surface pairs, which are projective slc Calabi--Yau pairs $(X,D)$ such that $D$ is ample and $X$ has dimension $2$.
		The moduli space provides a wall crossing  between certain  KSBA and K-moduli spaces  and is the ample model of the Hodge line bundle. 
		In the case of K3 surfaces with a non-symplectic automorphism, the moduli space gives a modular interpretation for the Baily--Borel compactification. 
	\end{abstract}
	
	\maketitle
	
	\setcounter{tocdepth}{1}
	
	\tableofcontents
	
	
	\section{Introduction}

	A central problem in algebraic geometry is to construct compact moduli spaces of varieties. 
	From the perspective of the Minimal Model Program, the three main building blocks of varieties are canonically polarized, Calabi--Yau, and Fano varieties.
	In two of these classes, there is a relatively complete moduli theory.
	An approach that began in work of Koll\'ar, Shephard-Barron, and Alexeev  (KSBA) produces projective coarse moduli spaces for canonically polarized varieties with slc singularities \cite{KSB88, Ale94, Kol08,BCHM10,AH11, HX13, KP17, HMX18, Fuj18,Kol23}. 
	An approach using K-stability produces  projective good moduli spaces parametrizing K-semistable Fano varieties \cite{Jia20, CP21, LWX21, BX19, ABHLX20, Xu20, BLX22, XZ20, XZ21, BHLLX21, LXZ22}.

	The remaining case of Calabi--Yau varieties is less well understood.
	While techniques from Hodge Theory, mirror symmetry, log KSBA theory, and log K-stability theory provide various approaches to construct moduli of CY varieties, there is no unifying and complete  solution.  
	Recently, the two authors with K. Ascher, D. Bejleri, K. DeVleming, G. Inchiostro,  and X. Wang introduced a new approach for constructing moduli spaces  of certain CY pairs that, when it exists, interpolates between the KSBA and K-moduli theories  \cite{BABWILD}.
	The goal of this paper is to complete the latter approach in dimension two.

	Following \cite{BABWILD},  we study the moduli of  \emph{boundary polarized CY surface  pairs}, which are projective slc pairs $(X,D)$ such that $\dim X=2$,  $K_X+D\sim_{\bQ}0$, and $D$ is an ample $\bQ$-Cartier $\bQ$-divisor.
	Interesting examples of such pairs arise in the study of del Pezzo surfaces and K-trivial varieties in low dimension.
	
	\begin{enumerate}
		\item If $X$ is a smooth del Pezzo surface and $H \in |-mK_X|$ is a smooth divisor, then $(X, \tfrac{1}{m}H)$ is boundary polarized CY surface pair. 
		
		\item   A general hyperelliptic K3 surface is the double cover of a pair in example (1) when 
		$m=2$, $H\in |-2K_X|$ is a smooth curve, and $X$ is either $\bP^2$, $\bP^1\times \bP^1$ or $\bF_1$.
		
		\item If $E$ is an elliptic curve with an ample line bundle $L$, then $(C_p(E,L),H)$ is a boundary polarized CY pair, where $C_p(E,L)$ is the projectivized cone over $E$ with polarization $L$  and $H\subset C_{p}(E,L)$ is the divisor at infinity.
	\end{enumerate}

	A key feature of boundary polarized CY pairs is that they encode CY geometry, as well as sometimes Fano and even  canonically polarized geometry. Indeed, if $(X,D)$ is a klt boundary polarized CY surface pair, then 
	\begin{itemize}
		\item $(X,(1-\varepsilon)D)$ is a K-stable  log Fano pair,
		\item $(X, (1+\varepsilon) D)$ is a KSBA-stable canonically polarized pair
	\end{itemize}
	for $0< \varepsilon \ll1$. 
	Thus techniques from Fano K-moduli and KSBA-moduli theories can be used to construct compact moduli spaces $M^{\rm K}$ and $M^{\rm KSBA}$ parametrizing boundary polarized CY pairs satisfying an additional stability condition for the perturbed pair.
	Ideally, there would exist a moduli space of CY pairs at $\varepsilon=0$ that interpolates between $M^{\K}$ and $M^{\KSBA}$.

	
	Motivated  in part by this expectation,  \cite{BABWILD} initiated a program to study the moduli theory of boundary polarized CY pairs.
	The first step of this program, which was carried out in \emph{loc. cit.}, is to define a moduli stack parametrizing boundary polarized CY pairs and  study its properties. 
	The next step is to construct some notion of a moduli space for this stack. 
	This step turns out to be quite subtle, since the irreducible components of the moduli stack of boundary polarized CY pairs are often not of finite type.
	Therefore standard techniques from moduli theory do not apply to construct the desired moduli space.

	
	\subsection{Main result}
	The main result of this paper is the construction of  a projective moduli space for boundary polarized CY surface pairs.

	To state the the theorem, let $(\cX, \cD)\to T$ be a family of marked boundary polarized CY  surface pairs over a finite type scheme $T$. Write $\cM^{\CY}$ for the seminormalization of the closure of the image of $T$ in the relevant moduli stack of boundary polarized CY pairs (see Section \ref{ss:moduli-stack} for the definition of the moduli stack). 
	Let  $\cM^{\rm K}$  and $\cM^{\rm KSBA}$ 
	denote the open substacks of $\cM^{\CY}$ parametrizing  pairs $(X,D)$ in $\cM^{\CY}$ such that $(X,(1-\varepsilon)D)$ is K-semistable and $(X,(1+\varepsilon)D)$ is KSBA-stable for $0<\varepsilon\ll 1$, respectively.

	\begin{thm}\label{t:mainintro}
		There exists  an asymptotically good moduli space morphism
		\[
		\cM^{\CY}
		\to 
		M^{\CY}
		\]
		to a projective  scheme satisfying the following. 
		\begin{enumerate}
			\item $M^{\CY}(\bk)$ parametrizes S-equivalence classes of pairs in $\cM^{\CY}(\bk)$.
			\item The open inclusions of moduli stacks
			\[
			\cM^{\K} \hookrightarrow \cM^{\CY} 
			\hookleftarrow \cM^{\KSBA}
			\]
			induce projective morphisms of (asymptotically) good moduli spaces
			\[
			M^{\K} \to M^{\CY} \leftarrow M^{\KSBA}
			.\]
			\item The Hodge line bundle is ample on $M^{\CY}$.
		\end{enumerate}
	\end{thm}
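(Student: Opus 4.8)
The plan is to realize $M^{\CY}$ as the $\Proj$ of the section ring of the Hodge line bundle $\lambda$ on $\cM^{\CY}$, and to deduce the three listed properties from a structural analysis of degenerate boundary polarized CY surface pairs. First I would assemble the input from \cite{BABWILD} and the earlier sections: $\cM^{\CY}$ is exhausted by finite type open substacks, it contains the finite type open substacks $\cM^{\K}$ and $\cM^{\KSBA}$, and their good moduli spaces $M^{\K}$ and $M^{\KSBA}$ are projective --- the former by K-moduli of log Fano pairs applied to $(X,(1-\varepsilon)D)$, the latter by KSBA theory applied to $(X,(1+\varepsilon)D)$. The failure of $\cM^{\CY}$ to be of finite type comes from the non-klt boundary, and here I would invoke the classification of slc boundary polarized CY surface pairs: such a pair is assembled from a ``core'' with bounded invariants together with chains and trees of ruled surfaces, cones over elliptic curves, and similar Hodge-theoretically trivial pieces whose number is unbounded. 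Filtering by the complexity of these pieces writes $\cM^{\CY}=\bigcup_i\cU_i$ as an increasing union of finite type open substacks with good moduli spaces $U_i$ and open immersions $U_i\hookrightarrow U_{i+1}$.

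The heart of the proof is the analysis of $\lambda$. I would first show $\lambda$ is nef on $\cM^{\CY}$: on the klt locus by Fujita--Kawamata semipositivity for the variation of Hodge structure attached to a boundary polarized CY surface pair (equivalently, to a suitable cyclic cover), and on the slc locus by semipositivity in families with slc fibers combined with the gluing description of $\lambda$. The crucial statement is then an asymptotic semiampleness property: for a fixed $N$, the restriction of $\lambda^{N}$ to each $\cU_i$ is basepoint free, the induced morphisms to projective space are compatible as $i$ grows, and the tail strata $\cU_{i+1}\setminus\cU_i$ are contracted, because the period data of a degenerate pair --- hence its image under $\lambda^{N}$ --- depends only on the core. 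It follows that the good moduli spaces $U_i$ stabilize for $i\gg0$ and that the section ring $R=\bigoplus_{m\ge0}H^{0}(\cU_i,\lambda^{mN})$ is finitely generated. I expect this contraction of infinitely many strata to be the main obstacle: it rests on the full classification of the wild part of the boundary together with semipositivity of Hodge bundles in the slc setting, both of which need to be developed beyond what is available in \cite{BABWILD}.

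Granting this, set $M^{\CY}:=\Proj R$; it is projective, the descended Hodge bundle is ample on it, and the morphisms $\cU_i\to U_i$ (with $U_i=M^{\CY}$ for $i\gg0$) assemble into a morphism $\cM^{\CY}\to M^{\CY}$ that one checks is an asymptotically good moduli space morphism in the sense defined above. Property (3) is immediate from the construction. For property (1) one verifies that each fiber of $\cM^{\CY}\to M^{\CY}$ has a unique closed point, represented by the polystable pair obtained by degenerating along all destabilizing one-parameter subgroups; this uses $\Theta$-reductivity and S-completeness of the local quotient presentations of the $\cU_i$, in the asymptotic sense, and yields that two pairs have the same image precisely when they share such a polystable degeneration, i.e.\ are S-equivalent. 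Finally, property (2) is formal: the open immersions $\cM^{\K}\hookrightarrow\cM^{\CY}\hookleftarrow\cM^{\KSBA}$ induce morphisms $M^{\K}\to M^{\CY}\leftarrow M^{\KSBA}$ by functoriality of (asymptotically) good moduli spaces, these are compatible with the Hodge line bundles, and they are morphisms of projective schemes, hence projective.
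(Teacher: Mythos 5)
Your proposal inverts the logical order of the paper and, in doing so, leaves its two hardest steps unproved. You propose to define $M^{\CY}$ as $\Proj$ of the section ring of the Hodge line bundle, which presupposes (i) that each finite type open substack $\cU_i$ in your exhaustion admits a good moduli space, and (ii) that $\lambda^{N}$ is semiample on each $\cU_i$ with compatible, eventually stable images. Neither is established. For (i), the existence criterion of Alper--Halpern-Leistner--Heinloth requires $\cU_i$ to be S-complete and $\Theta$-reductive; the subtle point is that the filling of an $\STR$- or $\Theta_R$-test, which exists in the ambient non-finite-type stack, must be shown to land back in $\cU_i$. In the paper this is exactly where the new input lies: the exhausting substacks are the bounded-Gorenstein-index loci $\cM_m$, and Theorem \ref{thm:index-deform} (proved via index-one covers and the fact that Gorenstein semi-log-terminal surface singularities are hypersurface singularities) shows the Gorenstein index cannot jump only over the closed point of a two-dimensional base, so the filling stays in $\cM_m$. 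Your filtration ``by complexity of the tail pieces'' comes with no analogous openness-of-filling statement, and without one the good moduli spaces $U_i$ do not exist.

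For (ii), your claim that the tail strata are contracted ``because the period data of a degenerate pair depends only on the core'' is the assertion to be proved, not an argument, and you yourself flag it as the main obstacle. The paper never proves finite generation of the section ring directly; instead it first builds $M_m$ as a separated algebraic space, proves it is proper by pushing forward from $M^{\KSBA}$, proves the maps $M_m\to M_{m+1}$ stabilize (this needs the Type III toric-degeneration theorem, Theorem \ref{t:TypeIIItoricdegen}, to show the Type III locus of $M_m$ is a finite set of closed points, plus boundedness of the Type I+II locus), and only then proves ampleness of $L_{\rm Hodge}$ on the already-projective-candidate $M_m$ via Nakai--Moishezon. That last step reduces to showing the source has maximal variation on every positive-dimensional subvariety, which on the Type II locus requires the Seifert-bundle structure theory and the field-of-definition result (Theorem \ref{t:TypeIIfieldofdef}) of Section \ref{s:TypeII}. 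None of this content appears in your outline, and it cannot be replaced by a generic appeal to semipositivity of Hodge bundles: semipositivity gives nefness, but the strict positivity needed for ampleness (equivalently, for your ``contraction of exactly the right strata'') is precisely the maximal-variation statement. Your treatment of properties (1) and (2) is essentially correct and formal once the space exists, but the construction itself is incomplete.
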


	In the special case when the general fibers of $\cX\to T$ are isomorphic to $\bP^2$, Theorem \ref{t:mainintro} was previously proven in \cite{BABWILD}, which  confirmed a conjecture in \cite{ADL19}.
	The latter proof relied at various key steps on properties of slc degenerations of $\bP^2$ that do not hold for more general slc Fano surfaces.

	The notion of an \emph{asymptotically good moduli space morphism}, which appears in the theorem, was defined in \cite{BABWILD} to
	allow for a non-quasicompact stack to admit a finite type  moduli space. 
	The definition states that there exists an infinite chain of open substacks 
	\[
	\cU_1\subset \cU_2 \subset \cU_3 \subset \cdots \subset \cM^{\CY}
	\]
	such that $\cM^{\CY} = \cup_{i \geq 1} \cU_i$ and   the composition  $\cU_i\hookrightarrow \cM^{\CY}\to  M^{\rm CY}$ is  a good moduli space morphism for each $i$.
	As a consequence of properties of good moduli space morphisms, $\cM^{\CY} \to M^{\CY}$ is universal among maps from $\cM^{\CY}$ to algebraic spaces and identifies two boundary polarized CY pairs  if and only if they are S-equivalent, i.e. they admit isotrivial degenerations via test configurations to a common boundary polarized CY pair.

	
	The CY moduli space may be viewed as an analogue of the Baily--Borel compactification of the moduli space of K3 surface or abelian varieties in our setting. 
	Indeed, the Hodge line bundle is ample on our moduli space and the boundary parametrizes lower dimensional CY pairs. 
	As we will see in Theorem \ref{t:nonsymplecticautintro}, in special cases the moduli space even agrees with a Baily--Borel compactification up to normalization.

	There are a number of interesting related conjectures in the literature by Shokurov, Odaka \cite{Oda22}, and Laza \cite{Laz24} predicting that there should exist a compactification of the moduli space of  polarized CY varieties on which the Hodge line bundle is ample and the boundary parametrizes lower dimensional CY varieties.
	(These conjectures are also linked to the b-semiampleness conjecture of Prokhorov and Shokurov \cite{PS09} in birational geometry.) 
	Theorem \ref{t:mainintro}  confirms these expectations in the case of boundary polarized CY surface pairs.
	A novel feature of the theorem is that its proof primarily uses birational geometry
	and good moduli space theory, rather than Hodge Theory, to construct the moduli space.

	In the body of the paper, we work in the more general setup of boundary polarized CY surface pairs of the form  $(X,B+D)$, which are projective slc pairs such that $\dim X=2$,  $K_{X}+B+D\sim_{\bQ} 0$ and $D$ is ample. 
	Theorem \ref{t:mainintro} extends to families of such pairs when the marked coefficients of $B$ are in the standard set $\{\tfrac{1}{2} ,\tfrac{2}{3}, \tfrac{3}{4}, \ldots , 1\}$ and $D$ has arbitrary rational coefficients. See Theorem \ref{t:mainCY}.

	
	\subsection{Special cases}
	
	We list two important special cases of the above theorem.
	See Section \ref{s:examples} for a more detailed discussion on explicit examples.
	
	
	\subsubsection{Del Pezzo surfaces with pluri-anti-canonical divisors}
	For each integers $1\leq d \leq 9$ and  $r\geq 1$, let $\mathcal{DP}_{d,r}$ denote the stack parametrizing pairs $(X,\tfrac{1}{r} C)$, where $X$ is a smooth del Pezzo surface of degree $d$ and $C\in |-rK_X|$ is a smooth curve.
	The stack $\mathcal{DP}_{d,r}$ is a Deligne--Mumford stack and admits a quasi-projective coarse moduli space  $\DP_{d,r}$.
	K-moduli and KSBA-moduli theory produces projective compactifications $\DP_{d,r}^{\K}$ and $\DP_{d,r}^{\KSBA}$  of $\DP_{d,r}$.
	These compactifications have been studied in various explicit examples in \cite{Hac04, GHK15, Laz16, GMGS18, ADL19, ADL20, HKY20, DH21,  Gol21, AET23, AEH21, AE22,PSW23, Zha22, MGPZ24}. 
	
	To construct the CY moduli space, 
	let $\mathcal{DP}_{d,r}^{\rm CY}$ denote the seminormalization of the closure of $\cDP_{d,r}$ in the the relevant moduli stack of boundary polarized CY pairs. 
	Theorem \ref{t:mainintro} applied to this setup   produces the following result.

	\begin{thm}\label{t:delpezzointro}
		There exists a projective seminormal scheme $\DP_{d,r}^{\CY}$ whose closed points parametrize S-equivalence classes of pairs in $\mathcal{DP}_{d,r}^{\CY}(\bk)$.
		Furthermore,  the Hodge line bundle is ample on $\DP_{d,r}^{\CY}$ and there are wall crossing morphisms
		\[
		\DP^{\K}_{d,r} \rightarrow  \DP^{\CY}_{d,r}\leftarrow \DP^{\KSBA}_{d,r}.
		\]
	\end{thm}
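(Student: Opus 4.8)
The plan is to deduce Theorem~\ref{t:delpezzointro} from Theorem~\ref{t:mainintro} by realizing $\cDP_{d,r}^{\CY}$ as an instance of the stack $\cM^{\CY}$ considered there. Fix $1\le d\le 9$ and $r\ge 1$, choose a finite type scheme $T$ with a smooth surjection $T\to\cDP_{d,r}$ (for instance an atlas of the Deligne--Mumford stack $\cDP_{d,r}$), and pull back the universal del Pezzo surface and its boundary to obtain a family $(\cX,\cD)\to T$ of marked boundary polarized CY surface pairs, the marking recording that the part $B$ is empty and that $\cD$ has coefficient $\tfrac1r$. Since $T\to\cDP_{d,r}$ is surjective, the image of $T$ in the relevant moduli stack of boundary polarized CY surface pairs coincides with that of $\cDP_{d,r}$, so the seminormalization of the closure of this image is $\cDP_{d,r}^{\CY}$ by definition; that is, $\cM^{\CY}=\cDP_{d,r}^{\CY}$. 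Applying Theorem~\ref{t:mainintro} to this family produces an asymptotically good moduli space morphism $\cDP_{d,r}^{\CY}\to\DP_{d,r}^{\CY}$ to a projective scheme whose closed points parametrize S-equivalence classes of objects of $\cDP_{d,r}^{\CY}(\bk)$, and on which the Hodge line bundle is ample; moreover $\DP_{d,r}^{\CY}$ is seminormal, since the asymptotically good moduli space of a seminormal stack is seminormal. This gives the first two assertions of the theorem.

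It remains to produce the wall crossing morphisms. By Theorem~\ref{t:mainintro}(2) the open substacks $\cM^{\K},\cM^{\KSBA}\subset\cM^{\CY}$ have (asymptotically) good moduli spaces $M^{\K},M^{\KSBA}$ sitting in morphisms $M^{\K}\to M^{\CY}\leftarrow M^{\KSBA}$, so it is enough to identify $M^{\K}$ with $\DP_{d,r}^{\K}$ and $M^{\KSBA}$ with $\DP_{d,r}^{\KSBA}$, equivalently to identify $\cM^{\K}$ and $\cM^{\KSBA}$ with the K-moduli and KSBA-moduli compactifications of $\cDP_{d,r}$. If $X$ is a smooth del Pezzo surface of degree $d$ and $C\in|-rK_X|$ is smooth, then $(X,\tfrac1r C)$ is a klt boundary polarized CY surface pair, so by the properties recalled in the introduction $(X,\tfrac{1-\varepsilon}{r}C)$ is a K-stable log Fano pair and $(X,\tfrac{1+\varepsilon}{r}C)$ is KSBA-stable for $0<\varepsilon\ll 1$; hence $\cDP_{d,r}$ is contained in both $\cM^{\K}$ and $\cM^{\KSBA}$. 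On the other hand, every object $(X,D)$ of $\cM^{\K}$ has $(X,(1-\varepsilon)D)$ a K-semistable log Fano pair with the numerical invariants of $(X,\tfrac{1-\varepsilon}{r}C)$, so $\cM^{\K}$ is an open substack of the relevant K-moduli stack of log Fano pairs, and conversely each member of the K-moduli compactification of $\cDP_{d,r}$ is the small perturbation of a boundary polarized CY pair lying in $\cM^{\CY}$, hence lies in $\cM^{\K}$. Therefore $\cM^{\K}$ coincides, up to seminormalization, with the K-moduli compactification of $\cDP_{d,r}$, so $M^{\K}=\DP_{d,r}^{\K}$; the same argument with $\KSBA$ in place of $\K$ gives $M^{\KSBA}=\DP_{d,r}^{\KSBA}$. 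Feeding these identifications into Theorem~\ref{t:mainintro}(2) yields the wall crossing morphisms $\DP_{d,r}^{\K}\to\DP_{d,r}^{\CY}\leftarrow\DP_{d,r}^{\KSBA}$.

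The main obstacle is this last identification: one has to match the numerical invariants used to define $\DP_{d,r}^{\K}$ and $\DP_{d,r}^{\KSBA}$ in the literature with those of the perturbed pairs $(X,\tfrac{1\mp\varepsilon}{r}C)$, confirm that those compactifications contain $\cDP_{d,r}$ as a dense open substack --- equivalently, that a general K-semistable (resp.\ KSBA-stable) member is a smooth del Pezzo of degree $d$ with smooth pluri-anticanonical boundary --- and keep careful track of the seminormalization throughout. Everything else is a formal consequence of Theorem~\ref{t:mainintro}.
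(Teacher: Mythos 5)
Your proposal is correct and follows essentially the same route as the paper: Theorem \ref{t:delpezzointro} is deduced by specializing Theorem \ref{t:mainintro} (via Theorem \ref{t:mainCY}) to the stack $\cDP_{d,r}^{\CY}$, defined as the seminormalization of the closure of $\cDP_{d,r}$ in $\cM(\chi,r,1,\tfrac1r)$. The identification you flag as the ``main obstacle'' is a non-issue in the paper's formulation, since $\cDP_{d,r}^{\K}$ and $\cDP_{d,r}^{\KSBA}$ are \emph{defined} intrinsically as the K-semistable and KSBA-stable open substacks of $\cDP_{d,r}^{\CY}$ (Definition \ref{def:dP-pairs}, following Section \ref{ss:KSBA+K}), so the wall-crossing morphisms are exactly those furnished by Theorem \ref{t:mainintro}(2); what remains is your observation that $(X,\tfrac{1\mp\varepsilon}{r}C)$ is K-stable, resp.\ KSBA-stable, for smooth members, which guarantees these substacks contain $\cDP_{d,r}$.
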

	
	As an  application of Theorem \ref{t:delpezzointro}, in Theorem \ref{thm:dP-lines} we show that there exists a log Calabi--Yau wall crossing diagram that connects K-moduli and KSBA-moduli compactifications of smooth del Pezzo surfaces with sum of lines through an asymptotically good moduli space of boundary polarized CY pairs. 
	Combining with results on the wall crossing of K-moduli and KSBA moduli spaces in \cite{ GKS21, Sch23, Zha23}, we obtain a complete wall crossing framework interpolating between K-moduli spaces of del Pezzo surfaces \cite{MM93, OSS16} and the KSBA moduli spaces of del Pezzo surfaces with the sum of lines as the reduced boundary \cite{HKT09}. 
	See Section \ref{sec:dP} for further discussion.
	
	\subsubsection{K3 surfaces with a non-symplectic automorphism}\label{ss:intrononsymplectic}
	Following \cite{AEH21}, let $S$ be a smooth K3 surface with a  non-symplectic automorphism $\sigma$ of finite order $n>1$ whose fixed locus contains a smooth curve $R$ of genus $\geq2$,
	and write $\rho $ for the induced isometry of the K3 lattice $L_{\rm K3}:= H^2(S,\bZ)$. 
	The ample model of $R$ is a K3 surface $\oS$ with ADE singularities. Then  $\sigma$ descends to a non-symplectic automorphism $\overline{\sigma}$ on $\oS$ of order $n$. 
	In this setting, the quotient $\oS/\langle \overline{\sigma} \rangle$ is a boundary polarized CY pair of the form $(X, \tfrac{n-1}{n}C)$. 
	A well known special case of this construction is that a general degree 2 K3 surface admits an anti-symplectic involution and its quotient is the pair $(\bP^2, \tfrac{1}{2} C)$, where $C\subset \bP^2$ is a sextic curve. 
	
	Let  $F_{\rho}^{\rm ade}$ 
	denote the  coarse moduli space of  K3 surfaces $\oS$ with at worse ADE singularities and  a non-symplectic automorphism $\overline{\sigma}$ of type $\rho$. Then the period map gives an open immersion $F_{\rho}^{\rm ade}\hookrightarrow \bD_{\rho}/\Gamma_{\rho}$ to an arithmetic quotient of a  Hermitian symmetric domain which admits a Baily--Borel compactification $(\bD_{\rho}/\Gamma_{\rho})^{\rm BB}$. Let $\sF_{\rho}$ denote the moduli stack of boundary polarized CY pairs $(X, \tfrac{n-1}{n}D)$ that are quotients of K3 surfaces $(\oS, \overline{\sigma})$. Write $\sF_{\rho}^{\CY}$ for the seminormalization of the closure of $\sF_{\rho}$ in the relevant moduli stack of boundary polarized CY pairs. By construction,  the coarse moduli space $F_{\rho}$ of $\sF_{\rho}$ is isomorphic to $F_{\rho}^{\rm ade}$. As a consequence of Theorem \ref{t:mainintro}, there exist compactifications of ${F}_{\rho}$ that fit into a diagram
	\[
	F_{\rho}^{\rm K} \to F_{\rho}^{\rm CY} \leftarrow F_{\rho}^{\rm KSBA}
	,\]
	where $F_{\rho}^{\CY}$ is an asymptotically good moduli space of $\sF_{\rho}^{\CY}$.
	
	Following the significant work \cite{AE23}, in \cite{AEH21} it was shown that the normalization of $F_{\rho}^{\rm KSBA}$ is a semi-toroidal compactification of $\bD_{\rho}/\Gamma_{\rho}$. 
	In the case of degree 2 K3 surfaces and, more generally, when the automorphism is an involution, the specific semi-toroidal compactification was explicitly described in \cite{AET23} and \cite{AE22}.
	As a consequence of the ampleness of the Hodge line bundle in Theorem \ref{t:mainintro}.4, we give the following description of $F_{\rho}^{\rm CY}$, which provides a modular meaning in the sense of asymptotically good moduli spaces for the Baily--Borel compactification.

	\begin{thm}\label{t:nonsymplecticautintro}
		The normalization of $F_{\rho}^{\CY}$ is isomorphic to the Baily--Borel compactification $(\bD_{\rho}/\Gamma_{\rho})^{\rm BB}$.
	\end{thm}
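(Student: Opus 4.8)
The plan is to identify both $(\bD_\rho/\Gamma_\rho)^{\rm BB}$ and the normalization of $F_\rho^{\CY}$ with the \emph{ample model} of the Hodge line bundle on the common semi-toroidal compactification supplied by the KSBA side, and then invoke uniqueness of the ample model. Here, for a line bundle $L$ on a projective variety $Y$ with $\bigoplus_{m\ge 0}H^0(Y,L^{\otimes m})$ finitely generated, the ample model means $\Proj\bigoplus_{m\ge 0}H^0(Y,L^{\otimes m})$ together with its canonical morphism from $Y$. Note that both $(\bD_\rho/\Gamma_\rho)^{\rm BB}$ and the normalization of $F_\rho^{\CY}$ are normal projective varieties sharing the dense open locus $F_\rho\cong F_\rho^{\rm ade}$, so it is enough to realize both as $\Proj$ of the same graded ring.

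First I would treat the KSBA side. Writing $\overline{F}{}_\rho^{\KSBA}$ for the normalization of $F_\rho^{\KSBA}$, by \cite{AE23, AEH21} this is a semi-toroidal compactification of $\bD_\rho/\Gamma_\rho$; by the general theory of toroidal and semi-toroidal compactifications (reducing, if necessary, to a dominating toroidal compactification), the period map extends to a morphism $p\colon\overline{F}{}_\rho^{\KSBA}\to(\bD_\rho/\Gamma_\rho)^{\rm BB}$ with connected fibres which pulls back the natural ample line bundle on the Baily--Borel compactification --- whose section ring is the ring of automorphic forms --- to a fixed positive power of the Hodge line bundle $\lambda$ on $\overline{F}{}_\rho^{\KSBA}$. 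Since $(\bD_\rho/\Gamma_\rho)^{\rm BB}$ is normal and $p$ has connected fibres, $p_\ast\cO=\cO$, so the projection formula shows that, up to a Veronese rescaling of weights, the section ring of $\lambda$ on $\overline{F}{}_\rho^{\KSBA}$ is the ring of automorphic forms; hence $p$ exhibits $(\bD_\rho/\Gamma_\rho)^{\rm BB}$ as the ample model of $\lambda$ on $\overline{F}{}_\rho^{\KSBA}$.

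Next I would transport this across the wall crossing. The wall-crossing morphism $F_\rho^{\KSBA}\to F_\rho^{\CY}$ of Theorem~\ref{t:mainintro} is a morphism of projective schemes whose image contains the dense open $F_\rho$ (over which it restricts to an isomorphism, since there all the relevant stability conditions coincide), hence it is surjective, and by its construction it pulls back the Hodge line bundle on $F_\rho^{\CY}$ to the Hodge line bundle on $F_\rho^{\KSBA}$. Passing to normalizations and taking the Stein factorization $\overline{F}{}_\rho^{\KSBA}\to Z\to\overline{F}{}_\rho^{\CY}$, the finite morphism $Z\to\overline{F}{}_\rho^{\CY}$ is birational (an isomorphism over $F_\rho$) onto a normal variety, hence an isomorphism; thus the induced $\pi\colon\overline{F}{}_\rho^{\KSBA}\to\overline{F}{}_\rho^{\CY}$ satisfies $\pi_\ast\cO=\cO$ and $\pi^\ast\lambda=\lambda$. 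By the ampleness of the Hodge line bundle in Theorem~\ref{t:mainintro}, hence on the normalization $\overline{F}{}_\rho^{\CY}$, we get $\overline{F}{}_\rho^{\CY}=\Proj\bigoplus_{m\ge 0}H^0(\overline{F}{}_\rho^{\CY},\lambda^{\otimes m})=\Proj\bigoplus_{m\ge 0}H^0(\overline{F}{}_\rho^{\KSBA},\lambda^{\otimes m})$; that is, $\overline{F}{}_\rho^{\CY}$ is also the ample model of $\lambda$ on $\overline{F}{}_\rho^{\KSBA}$. Uniqueness of the ample model then yields $\overline{F}{}_\rho^{\CY}\cong(\bD_\rho/\Gamma_\rho)^{\rm BB}$.

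The hard part will be the first step: identifying, on the semi-toroidal compactification $\overline{F}{}_\rho^{\KSBA}$, the extension of the Hodge line bundle of the family of CY pairs with a power of the automorphic line bundle pulled back from the Baily--Borel compactification. This is delicate in the genuinely semi-toroidal (rather than toroidal) setting, and one must also reconcile the normalization of the Hodge line bundle used here with the standard weight of automorphic forms --- a discrepancy that is harmless for the $\Proj$ construction but must be tracked. A secondary point to verify carefully is the compatibility of the Hodge line bundles under the wall-crossing morphism, which should already be part of its construction in the proof of Theorem~\ref{t:mainintro}.
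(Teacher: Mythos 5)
Your overall strategy --- realize both $(\bD_\rho/\Gamma_\rho)^{\rm BB}$ and the normalization of $F_\rho^{\CY}$ as the ample model of the Hodge line bundle pulled back to a common space dominating both, and invoke uniqueness of the ample model --- is exactly the strategy of the paper, and your second step (transporting the statement across the wall-crossing morphism using the ampleness of the Hodge line bundle on $F_\rho^{\CY}$) is sound. The only cosmetic difference there is that the paper works on a smooth projective, generically finite cover $T$ of the stack $\sF_{\rho}^{\rm slc}$ rather than on the coarse space $\overline{F}{}_\rho^{\KSBA}$ itself, which lets it avoid the connected-fibres/Stein-factorization discussion: it simply observes that two ample $\bQ$-line bundles on birational normal projective varieties which become proportional after pullback along generically finite morphisms from $T$ force the two varieties to be isomorphic.

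The genuine gap is precisely the step you flag as ``the hard part'' and then do not carry out: the identification, over the boundary of the semi-toroidal compactification, of the extension of the geometric Hodge line bundle of the universal family of CY pairs with a positive power of the automorphic line bundle pulled back from $(\bD_\rho/\Gamma_\rho)^{\rm BB}$. This is not a consequence of ``the general theory of semi-toroidal compactifications'': the automorphic bundle lives on the locally symmetric side, while the Hodge line bundle is defined from the family as $f_*\omega_{X/T}^{[N]}(N(B+D))$, and matching the two across the boundary is the actual content of the theorem. The paper's resolution (proof of Theorem \ref{thm:K3-CY=BB}) is substantive: it passes to the fiberwise $\bmu_n$-cover $\ocS\to\cX$ of the universal family over a cover $T$ of $\sF_{\rho}^{\rm slc}$, uses that this cover is crepant to identify the moduli b-divisor of the lc-trivial fibration $(\cX,\tfrac{n-1}{n}\cC)\to T$ with that of the K3 fibration $\ocS\to T$ (so that $\cO_T(nM_T)\cong\lambda_{\Hodge,g,n}$), and then performs a nontrivial birational modification --- proving $\ocS$ is klt, extracting the generic minimal-resolution divisors via \cite[Corollary 1.4.3]{BCHM10}, and replacing $T$ by a log resolution --- to obtain a family of smooth $\rho$-markable K3 surfaces over the complement of an snc divisor. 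Only then do Borel's extension theorem and Fujino's theorem \cite[Theorems 1.2 and 2.10]{Fuj03} apply to yield $\cO_T(19kM_T)\cong\wp^*\lambda_{\rho,{\rm BB}}$ for the extended period map $\wp\colon T\to(\bD_\rho/\Gamma_\rho)^{\rm BB}$; an auxiliary finiteness statement (Lemma \ref{lem:BB-finite}) comparing $(\bD_\rho/\Gamma_\rho)^{\rm BB}$ with the Baily--Borel compactification for degree-$2d$ quasi-polarized K3 surfaces is also required. Without this chain of arguments your first step remains an assertion rather than a proof.
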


	\subsection{Sketch of proof}
	We now explain the proof of of Theorem \ref{t:mainintro}.
	Recall that $\cM^{\CY}$ denotes the seminormalization of the closure of the image of a finite type scheme $T$ in the moduli stack of boundary polarized CY pairs. 
	While $\cM^{\CY}$ is a locally finite type algebraic stack, its irreducible components can fail to be finite type. 
	Thus we cannot directly construct a moduli space for $\cM^{\CY}$.
	To add some finiteness, we consider the chain of  substacks
	\[
	\cM_1 \subset \cM_2 \subset \cM_3 \subset \cdots \subset \cM^{\CY}
	,\]
	where $\cM_m\subset \cM^{\CY}$ is the open substack parametrizing boundary polarized CY pairs $(X,D)$ with  Gorenstein index of $X$ is $\leq m$ at each point. 
	As a consequence of \cite{Kol85,Fuj17},   $\cM_m$ is finite type.

	To construct a good moduli space for $\cM_m$, we seek to apply \cite{AHLH23}, which states that $\cM_m$  admits a separated good moduli space 
	if and only if $\cM_m$ is S-complete and $\Theta$-reductive. 
	The latter notions involve extending a morphism $S\setminus 0  \to \cM_m$ from a certain  punctured  stacky surface to  a morphism $S \to \cM_m$ (see Definition \ref{d:scomplete+Thetareductive}).
	By \cite{BABWILD}, such a morphism will extend uniquely to a morphism $S\to \cM^{\CY}$. 
	To show that $S\to \cM^{\CY}$ factors through $\cM_m\hookrightarrow \cM^{\CY}$, 
	we prove a key new result on deformations of the Gorenstein index of boundary polarized CY surface pairs (Theorem \ref{thm:index-deform}). 
	The result relies on taking an index one cover  and then using that Gorenstein semi-log-terminal slc surface singularities are hypersurface singularities by \cite{KSB88}.
	In \cite{BABWILD}, this result was proven when the general fiber of $\cX\to T$ is $\bP^2$ by reducing to the case when $D$ is reduced and then using  the deformation theory of certain stacky curves. For arbitrary boundary polarized CY surface pairs, the reduction  step is not possible and so we need to analyze deformations of the surface.

	By the above argument, we deduce that $\cM_m$ is S-complete and $\Theta$-reductive for $m\gg0$ and hence verify the existence of a morphism  $\cM_m \to M_m$ to a separated good moduli space (Theorem \ref{t:existsGMindex}). 
	The inclusions of the substacks induce morphisms
	\[
	M_{m}\to M_{m+1} \to M_{m+2} \to \cdots.
	\]
	Using the properness of $M^{\rm KSBA}$ \cite{KX20}, we argue  
	that $M_m$ is proper for $m\gg0$ (Proposition \ref{p:properness}).
	The remaining steps are  to verify that  $M_m \to M_{m+1}$ is an isomorphism and the Hodge line bundle is ample on $M_m$ when $m\gg0$.
	Once this is complete, the main theorem will hold with $M^{\rm CY}:= M_m$ for $m\gg0$. 
	
	To proceed, we consider the source of a boundary polarized CY pair, which is the minimal lc center on a dlt modification (Definition \ref{d:source}).
	We say a pair is Type I, II, or III if the source is a surface pair, a curve pair, or a point, respectively. 
	This corresponds to the terminology used for degenerations of K3 surfaces.
	If the map $\cM_{m} \to M_m$  identifies two pairs, then their types and sources are preserved and so the  source and type of a point in $M_m$ is well defined.

	To show the stabilization of the moduli spaces, we use that the Type I and II loci of $\cM^{\CY}$ is bounded by \cite{BABWILD}.
	Thus the Type I and II loci of $M_m$  stabilizes.
	Next, we show that the Type III locus is discrete for $m\gg0$. 
	This relies on a delicate construction showing that any Type III pair admits an isotrivial degeneration via a test configuration 
	to a Type III pair whose normalization is toric.
	Using this result, we deduce that $M_m \to M_{m+1}$ is a bijection on closed points for $m\gg0$ (Theorem \ref{t:stabilization}).
	Since  each $M_m$ is seminormal, $M_m \to M_{m+1}$ is  an isomorphism for $m\gg0$.

	The last remaining step is to verify the ampleness of  the Hodge line bundle.
	As a consequence of  results of \cite{Amb05,Kaw97,Kol07} that rely on the theory of variations of Hodge structures, 
	the Hodge line bundle will be ample on $M_m$ if and only if the sources of the points on the moduli space have maximal variation.
	The variation of the source is maximal on the Type I locus tautologically and on the Type III locus as it is discrete.
	To verify the maximal variation of the source on the Type II locus,  we give a detailed description  of the geometry  of polystable Type II pairs in Section \ref{s:TypeII}.
	Once this is complete, we deduce that the Hodge line bundle is ample on $M^{\CY}$.
	The previous maximal variation results strengthen an analysis done in the $\bP^2$ case in \cite{BABWILD}.
	

	\subsection{Higher dimensions}
	In dimension at least three, there are examples where the set of S-equivalence classes of boundary polarized CY pairs do not admit an algebraic structure and so Theorem \ref{t:mainintro} will not extend verbatim to higher dimensions. These examples were mentioned in \cite{BABWILD} and will be computed in detail in a later paper. 
	
	Nevertheless, we still expect that there is projective scheme $M^{\CY}$ that parametrizes certain equivalence classes (but not S-equivalence classes) of boundary polarized CY pairs in higher dimensions. 
	In particular, we expect that there are  wall crossing morphisms 
	$
	M^{\K} \to M^{\CY} \leftarrow M^{\KSBA}
	$,
	which realize $M^{\CY}$ as the common ample model of the Hodge line bundle on the moduli spaces $M^{\K}$ and  $M^{\KSBA}$. 
	In addition, the boundary of $M^{\CY}$ should parametrize lower dimensional CY pairs that are sources of non-klt boundary polarized CY pairs. 
	This expectation is in line with conjectures of \cite{PS09,Laz24,Oda22} in the setting of boundary polarized CY pairs. 

	\subsection{Structure of the paper}
	The paper is structured as follows. 
	In Section \ref{s:prelim}, we discuss preliminaries on boundary polarized CY pairs including the notions of test configurations, sources, families, and moduli stacks of such pairs. 
	In Section \ref{s:Goreinsteinindex}, we construct good moduli spaces for boundary polarized CY surface pairs of bounded Gorenstein index. 
	In Section \ref{s:TypeII} and \ref{s:TypeIII}, we prove results on the S-equivalence classes of Type II and Type III boundary polarized CY surface pairs. 
	In Section \ref{s:asgm}, we use results from the previous sections to prove Theorem \ref{t:mainintro}.
	Finally, in Section \ref{s:examples}, we discuss explicit examples of the  main theorem and prove Theorems \ref{t:delpezzointro} and \ref{t:nonsymplecticautintro}.

	\subsection*{Acknowledgements} We thank Kenneth Ascher, Dori Bejleri, Kristin DeVleming, Giovanni Inchiostro, and Xiaowei Wang for their collaboration on  \cite{BABWILD}, which this paper builds on. 
	Additionally, we thank Sebastian Casalaina-Martin, Philip Engel, Changho Han, Yunfeng Jiang, Radu Laza, and Ananth Shankar for many useful conversations.  HB is partially supported by NSF Grant DMS-2200690. YL is partially supported by NSF CAREER Grant DMS-2237139
	and the Alfred P. Sloan Foundation.

	
	\section{Preliminaries}\label{s:prelim}
	Throughout, all schemes are defined over an algebraically closed field $\bk$  of characteristic $0$, which is our ground field. 
	A scheme $X$ is demi-normal if it is $S_2$ and its codimension 1 points are regular or nodes \cite[Definition 5.1]{Kol13}.
	
	\subsection{Pairs}
	A pair $(X,B)$ consists of a  demi-normal scheme $X$ that is essentially of finite type over a field $\bK\supset \bk$ and an effective $\bQ$-divisor $B$ on $X$ such that $\Supp(B)$ does not contain codimension 1 singular points of $X$ and $K_X+B$ is $\bQ$-Cartier. 
	Unless stated otherwise, we assume that $X$ is geometrically connected.
	For notions of singularities of pairs that assume $X$ is normal, including \emph{lc}, \emph{klt}, \emph{plt}, and \emph{dlt}, see \cite[Definition 2.8]{Kol13}. 
	A pair $(X,B)$ is called a surface (resp., curve) pair if $\dim X=2$ (resp., $\dim X=1$).

	The normalization of a pair $(X,B)$ is the possibly disconnected pair $(\oX, \oG+ \oB)$, where $\pi:\oX\to X$ is the normalization of $X$, $\oG \subset \oX$ is the conductor divisor on $\oX$, and $\oB$ is the divisorial part of $\pi^{-1}(B)$. 
	These satisfy 
	\[
	K_{\oX}+\oG+\oB= \pi^*(K_X+B)
	\]
	\cite[5.7]{Kol13}. 
	In this setting, there is a natural generically fixed point free involution $\tau:\oG^n \to \oG^n$ that fixes ${\rm Diff}_{\oG^n}(\oB)$
	and  the triple $(\oX,\oG+\oB,\tau)$ uniquely determines $(X,B)$ \cite[Section 5.1]{Kol13}.
	
	A pair $(X,B)$ is \emph{slc} if its normalization $(\oX,\oG+\oB)$ is lc. 
	A pair $(X,B)$ with $X$ projective is called \emph{log Fano} (resp., \emph{CY}, \emph{KSBA-stable}) if $(X,B)$ is slc and $-K_X-B$ is ample (resp., $K_{X}+B\sim_{\bQ}0$, $K_X+B$ is ample).
	
	\subsection{Families of pairs}
	
	To define a family of pairs, we need the following notion of a family of divisors \cite[Definition 4.68]{Kol23}.
	
	\begin{defn}
		Let $f:X\to T$ be a  flat finite type morphism with  fibers of pure dimension $n$. A subscheme $B\subset X$ is a \emph{relative Mumford divisor} if there is an open set $U\subset X$ such that 
		\begin{enumerate}
			\item $\codim_{X_t}(X_t \setminus U_t) \geq 2$ for every $t\in T$
			\item $D\vert_U$ is a relative Cartier divisor (i.e. $D\vert_U$ is a Cartier divisor on $U$ and does not contain irreducible components of fibers),  
			\item $D$ is the closure of $D\vert_U$, and 
			\item $X_t$ is smooth at the generic points of $D_t$ for every $t\in T$.
		\end{enumerate}
		When $T=\Spec(\bk)$, we simply call $D$ a \emph{Mumford divisor}.
	\end{defn}

	If $B\subset X$ is a relative Mumford divisor for $f:X\to T$ and $T'\to T$ is a morphism, then its \emph{divisorial pullback} $D_{T'}$ on  $X_{T'}:= X\times_T T'$ is the closure of the pullback of $D\vert_{U}$ to $U':= U\times_T T'$.
	Note that $D_t$ in (4) denotes the divisorial pullback  and does not always agree with the scheme theoretic fiber.

	\begin{defn}
		A \emph{family of slc pairs} $(X,B)\to T$ over a reduced Noetherian scheme $T$ is 
		\begin{enumerate}
			\item a flat finite type morphism $f:X\to T$ with fibers of pure dimension $n$  and 
			\item $B= \sum_{i}^l b_i B_i$, where each $B_i \subset X$ is a relative Mumford divisor and $b_i\in \bQ_{\geq0}$
		\end{enumerate}
		satisfying the following conditions:
		\begin{enumerate}
			\item[(3)] $K_{X/T}+B$ is $\bQ$-Cartier and
			\item[(4)] $(X_t, B_t)$ is an slc pair for every $t\in T$.
		\end{enumerate}
	\end{defn}
	
	The above notion agrees with a \emph{locally stable family} in \cite[Definition--Theorem 4.7.1]{Kol23}. 
	We will frequently use the following characterization of a family of slc pairs over a smooth curve.
	
	\begin{prop}[{\cite[Theorem 2.3]{Kol23}}]\label{p:familyslcpairsovercurve}
		If $X\to C$ is a flat proper morphism to a smooth curve and $B:= \sum b_i B_i$, where  each $B_i$ is a Mumford divisor on $X$ and $b_i \in \bQ_{\geq0}$, then the following are equivalent:
		\begin{enumerate}
			\item $(X,B)\to C$ is a family of slc pairs.
			\item $(X,B+X_c)$ is an slc pair for all closed points $c\in C$.
		\end{enumerate}
	\end{prop}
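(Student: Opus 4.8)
I would first reduce everything to adjunction along a single fiber. Both statements are local on $X$ over $C$, so fix a closed point $c\in C$ and a uniformizer $t\in\mathcal{O}_{C,c}$; since $X\to C$ is flat, $f^*t$ is a non-zero-divisor, so $X_c=\operatorname{div}(f^*t)$ is an effective Cartier divisor on $X$ supported on the set-theoretic fiber over $c$. Consequently $K_X+B+X_c$ is $\bQ$-Cartier if and only if $K_X+B$ is, equivalently $K_{X/C}+B=K_X+B-f^*K_C$ is (as $C$ is a smooth curve, $K_C$ is Cartier); thus the $\bQ$-Cartier hypothesis built into ``family of slc pairs'' matches the one implicit in ``$(X,B+X_c)$ is an slc pair.'' The remaining, essential, point is the equivalence of the singularity conditions, which I would deduce from adjunction and its inversion.

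For $(1)\Rightarrow(2)$: if $(X,B)\to C$ is a family of slc pairs then $X$ is demi-normal, and taking normalizations gives a family of lc pairs $\nu\colon(\oX,\oG+\oB)\to C$. Over the smooth curve $C$ the fiber $\oX_c$ is reduced and Cartier on $\oX$, the divisorial restriction $(\oG+\oB)_c$ equals the different $\mathrm{Diff}_{\oX_c}(\oG+\oB)$, and the fiber pair is lc; inversion of adjunction for lc pairs along the Cartier divisor $\oX_c$ then yields that $(\oX,\oG+\oB+\oX_c)$ is lc near $\oX_c$. Since the gluing involution $\tau$ is defined over $C$, it is compatible with fibers, so descending back to $X$ shows $(X,B+X_c)$ is slc near $X_c$.

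For $(2)\Rightarrow(1)$: flatness, properness, pure fiber dimension, and the relative Mumford condition on $B$ are part of the setup, and the $\bQ$-Cartier condition was handled above, so only the fiberwise slc condition remains. Since $(X,B+X_c)$ is slc, the sub-pair $(X,X_c)$ is slc near the generic points of $X_c$, forcing $X_c$ to be reduced there; being an $S_1$ Cartier divisor in the $S_2$ scheme $X$, $X_c$ is reduced. Adjunction for slc pairs then gives that $(X_c,\mathrm{Diff}_{X_c}(B))$ is slc, and as each $B_i$ is a relative Mumford divisor, $X$ is smooth at the generic points of $B_c$, so $\mathrm{Diff}_{X_c}(B)=B|_{X_c}=B_c$; hence $(X_c,B_c)$ is slc for each closed $c$, the generic fiber following by generization.

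The step I expect to be the main obstacle is supplying the adjunction and inversion-of-adjunction inputs in the demi-normal setting. For normal $X$ the equivalence ``$(X,B+X_c)$ is lc near $X_c$ $\iff$ $K_X+B$ is $\bQ$-Cartier, $X_c$ is reduced, and $(X_c,B_c)$ is lc'' combines Kawakita's inversion of adjunction for lc pairs along a Cartier divisor with Koll\'ar's identification of the different on $X_c$ with $B_c$; transporting this across the normalization and compatibly with the gluing datum $\tau$ is exactly what the slc machinery of \cite{Kol13} and the locally stable reduction theory of \cite{Kol23} provide, and I would invoke it rather than reprove it.
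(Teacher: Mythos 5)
The paper gives no proof of this proposition: it is quoted directly as \cite[Theorem 2.3]{Kol23}, so there is no internal argument to compare against. Your sketch is exactly the standard argument underlying that citation --- pass to the normalization, apply adjunction and inversion of adjunction for lc pairs along the Cartier fiber $\oX_c$, and descend through the gluing involution --- and it is essentially sound, including the localization that reduces both conditions to neighborhoods of individual fibers. The one imprecision worth flagging is in $(2)\Rightarrow(1)$: the hypothesis only supplies Mumford divisors $B_i$ on the total space, not \emph{relative} Mumford divisors, so the requirement that $X_c$ be smooth at the generic points of each $B_{i,c}$ is not ``part of the setup'' but must itself be extracted from the slc condition on $(X,B+X_c)$ at the codimension-one points of $X_c$ (where $X_c$ appears with coefficient one, forcing $X_c$ to be smooth or nodal there and excluding components of $B_i$ with positive coefficient from the nodal locus); once that short local check is supplied, the argument is complete.
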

	
	\subsection{Marked pairs}
	A marking of a pair $(X,B)$ is a way of writing $B= \sum_{i=1}^r b_i B_i$, where the $B_i$ are Mumford divisors and  $b_i \in \bQ_{\geq 0}$. Note that 
	\[
	(\bA^1, \tfrac{1}{2} \{ x^2=0\}) \quad \text{ and } \quad (\bA^1, 1 \{x=0\})
	\]
	agree as pairs, but differ as marked pairs. 
	
	For the purpose of defining a moduli functor, it is important to keep track of a marking, e.g. the marking of the above pair determines the allowable deformations. For many arguments in this paper, the distinction between a pair and marked pair will not be important, 
	since, for a family of slc pairs $(X,B) \to T$ over a finite type normal scheme $T$, a new marking of the generic fiber of $B$ induces a new marking for $B$ \cite[8.5.1]{Kol23}. Thus the distinction will often be ignored when it is not important.

	\subsection{Boundary polarized CY pairs}
	
	\begin{defn}
		A \emph{boundary polarized CY} pair $(X,B+D)$ is an slc pair such that 
		\begin{enumerate}
			\item $B$ and $D$ are effective $\bQ$-divisors,
			\item $K_X+B+D\sim_{\bQ}0$, and 
			\item $D$ is an  ample $\bQ$-Cartier divisor.
		\end{enumerate}
	\end{defn}
	Note that  (3) is  equivalent to the condition that $(X,B)$ is log Fano.
	Hence we refer to $B$ as the \emph{log Fano boundary} and $D$ as the \emph{polarizing boundary} of $(X,B+D)$ and always write them in this order to avoid confusion.

	\begin{rem}
		Given a boundary polarized CY pair $(X,B+D)$, we can construct additional boundary polarized CY pairs as follows.
		\begin{enumerate}
			\item If $X$ is non-normal, then the normalization 
			$
			(\oX,\oG+\oB+\oD)
			$
			is a possibly disconnected boundary polarized CY pair, where $\oG+\oB$ is the log Fano boundary.
			This follows from the fact that $\pi:\oX\to X$ is finite,  $\pi^*(K_X+B) = K_{\oX}+\oG+\oB$, and $\pi^*(K_X+B+D) = K_{\oX}+\oG+\oB+\oD$. 
			
			\item Let $E \subset \lfloor \oG+ \oB \rfloor$ be a prime divisor
			and set 
			\[
			B_{E^n}={\rm Diff}_{E^n}(B-E)  \quad \text{ and }\quad D_{E^n}:=D\vert_{E^n}
			.
			\]
			Then $(E^n, B_{E^n} + D_{E^n})$ is boundary polarized CY pair, since $(K_{X}+B)\vert_{E^n} \sim_{\bQ} K_{E^n}+B_{E^n}$ and  $(K_{X}+B+D)\vert_{E^n} \sim_{\bQ} K_{E^n}+ B_{E^n}+D_{E^n}$ by \cite[(4.2.9)]{Kol13}.
			Note that $D$ is $\bQ$-Cartier and hence the definition for $D_{E^n}$ as the pullback of $D$ to $E^n$ is well defined.
		\end{enumerate}
	\end{rem}

	\begin{defn}\label{d:familybpcyreduced}
		A \emph{family of boundary polarized CY pairs} $(X,B+D)\to T$ over a reduced Noetherian scheme $T$ is a projective family of  slc pairs such that 
		\begin{enumerate}
			\item $D$ is $\bQ$-Cartier and relatively ample over $T$ and 
			\item $K_{X/T}+B+D \sim_{\bQ,T}0$.
		\end{enumerate}
		Additionally, we say that the \emph{CY index} of the family divides an integer $N$ if $N(K_{X/T} +B+D)\sim_T 0$.
	\end{defn}
	
	In order define a moduli stack of boundary polarized CY pairs, we will  define a family of boundary polarized CY pairs over arbitrary bases in Definition \ref{d:familyofbpcys}.
	While the later definition will be more complicated, the two definitions agree for families over  reduced Noetherian schemes by Proposition \ref{p:familybpcyequivalentdef}.
	
	\subsection{Source}\label{ss:source}
	We recall the definition of the source of an slc CY pair, which was first defined in \cite{Kol13} in a slightly less general setting.

	\subsubsection{Dlt modification} 
	If $(X,B)$ is an lc pair, then there exists a 
	\emph{dlt modifcation } 
	\[
	(Y,B_Y)\to (X,B)
	,\]
	which is the data of a pair $(Y,B_Y)$ and a proper birational morphism $f:Y\to X$ such that $K_Y+B_Y = f^*(K_X+B)$ and the pair $(Y,B_Y)$ is dlt \cite[Theorem 1.34]{Kol13}.
	If $S\subset Y$ is an lc center of $(Y,B_Y)$, then  $(S,B_S)$ is a dlt pair, where $B_S: = {\rm Diff}_S^*(B_Y)$ is defined via adjunction and  satisfies $K_S+ B_S \sim_{\bQ} (K_Y+B_Y)\vert_{S}$. 
	See \cite[Section 4.2]{Kol13} for details.
	
	\subsubsection{Source and regularity}
	
	\begin{defn}[Source]\label{d:source}
		Let $(X,B)$ be a  CY pair.
		\begin{enumerate}
			\item If $(X,B)$ is klt, then its source is ${\rm Src}(X,B):= (X,B)$.
			\item If $(X,B)$ is not klt, then let
			$(\oX,\oB):= \sqcup_{i=1}^r (\oX_i,\oB_i) $ denote its normalization and its decomposition into irreducible lc pairs. 
			For an arbitrary $1\leq i \leq r$, let 
			\[(Y,B_Y) \to (\oX_i,\oB_i)
			\] be a dlt modification and $S\subset Y$ be a minimal lc center. We set ${\rm Src}(X,B)$ equal to the crepant birational equivalence class of $(S,B_S)$. 
		\end{enumerate}
	\end{defn}
	
	Note that the source of a non-klt CY pair is the crepant birational equivalence class of a klt CY pair of lower dimension. As a consequence of \cite{Kol16}, the source of a  CY pair is independent of choice of dlt modification and minimal lc center; see  \cite[Lemma 8.2]{BABWILD} for details.
	
	\begin{defn}
		The \emph{coregularity} and \emph{regularity} of a CY pair $(X,B)$ are
		\[
		{\rm coreg}(X,B) = \dim {\rm Src}(X,B) \quad \text{ and } \quad {\rm reg}(X,B) :=   \dim X- \dim {\rm Src}(X,B)-1
		.\]
	\end{defn}
	
	Since we will usually work with surface pairs, we use the following terminology.

	\begin{defn}
		A CY surface pair $(X,B)$ is called 
		\begin{enumerate}
			\item Type I if $(X,B)$ is klt,
			\item Type II if ${\rm Src}(X,B)$ is  a curve pair, and
			\item Type III if ${\rm Src}(X,B)$ is a point. 
		\end{enumerate}
	\end{defn} 
	
	Note that Type I, II, and III correspond to coregularity 2, 1, and 0, respectively. 
	Additionally, when $(X,B)$ is a CY surface pair, the source is a single pair, since in Types II and III there is a single pair in the crepant birational equivalence class.

	\subsection{Moduli spaces}
	
	\subsubsection{Good moduli spaces}
	
	We recall the definition of a good moduli space in \cite{Alp13}.
	
	\begin{defn}
		A a \emph{good moduli space} $\phi:\cM \to M$ is a quasi-compact morphism from an algebraic stack to an algebraic space such that
		\begin{enumerate}
			\item $\phi_*$ is exact on quasi-coherent sheaves and 
			\item the natural morphism $\cO_M \to \phi_* \cO_{\cM}$ is an isomorphism.
		\end{enumerate}
	\end{defn}
	
	We will frequently use the following properties of good moduli spaces proven in \cite{Alp13}.
	
	\begin{prop}\label{p:propertiesofGMs}
		If $\phi:\cM\to M$ is a good moduli space, then the following hold:
		\begin{enumerate}
			\item If $\cM$ is locally Noetherian, then $\phi$ is universal among maps from $\cM$ to algebraic spaces. 
			\item For an algebraically closed field $\bK \supset \bk$ and $x,x'\in \cM(\bK)$, $\phi(x)=\phi(x')$ if and only if $\overline{\{x\}} \cap \overline{\{x'\}}\neq \emptyset$.
		\end{enumerate}
	\end{prop}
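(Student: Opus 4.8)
The plan is to deduce both statements from Alper's foundational results on good moduli spaces \cite{Alp13}; I would organize the argument around three structural inputs that I need up front, all ultimately flowing from the definition together with the exactness of $\phi_*$ and the isomorphism $\cO_M\cong\phi_*\cO_{\cM}$: (a) $\phi$ is surjective and universally closed; (b) for a closed substack $\cZ\subseteq\cM$ with scheme-theoretic image $Z\subseteq M$, the induced morphism $\cZ\to Z$ is again a good moduli space, so in particular $(\phi|_{\cZ})_*\cO_{\cZ}=\cO_Z$; and (c) good moduli space morphisms are stable under arbitrary base change on $M$. I expect \emph{these} to be the real obstacle: they rest on Noetherian approximation and the structure theory of cohomologically affine stacks carried out in \cite{Alp13}. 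Once they are available, both parts of the proposition follow almost formally.

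For part (1), I would first treat the affine case $Z=\Spec A$: taking global sections of $\cO_M\cong\phi_*\cO_{\cM}$ gives $\Gamma(M,\cO_M)\cong\Gamma(\cM,\cO_{\cM})$, whence
\[
\Hom(\cM,\Spec A)=\Hom_{\mathrm{ring}}(A,\Gamma(\cM,\cO_{\cM}))=\Hom_{\mathrm{ring}}(A,\Gamma(M,\cO_M))=\Hom(M,\Spec A),
\]
the bijection being composition with $\phi$. For a general algebraic space $Z$ and $f\colon\cM\to Z$, I would next factor $f$ topologically through $\phi$: by (c) each fibre $\cM_m$ is cohomologically affine and geometrically connected with good moduli space $\Spec\kappa(m)$, so working \'etale-locally on $Z$ and invoking the affine case for $\cM_m$ shows $f|_{\cM_m}$ factors through $\Spec\kappa(m)$, which produces a continuous map $|M|\to|Z|$. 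Finally I would upgrade this to a morphism of algebraic spaces by \'etale descent, pulling back an affine \'etale atlas of $Z$ and gluing the morphisms supplied by the affine case over an \'etale cover of $M$; uniqueness is immediate since $\phi$ is an epimorphism. This gluing step is the fiddliest point, and I would lean on \cite[Theorem 6.6]{Alp13} for it.

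For part (2) the crux is a claim I would prove first: \emph{if $\cN$ is a Noetherian algebraic stack whose good moduli space is $\Spec k$, then any two nonempty closed substacks $\cZ_1,\cZ_2\subseteq\cN$ intersect.} Indeed, were $\cZ_1\cap\cZ_2=\emptyset$, the map $\cO_{\cN}\to\cO_{\cZ_1}\oplus\cO_{\cZ_2}$ would be surjective; applying the exact functor $\phi_*$ and using (b) (so that $\phi_*\cO_{\cZ_i}=k$ for each $i$) would force a surjection $k\twoheadrightarrow k\oplus k$, which is absurd. A formal consequence is that $\cN$ has a unique closed point, to which every point of $\cN$ specializes — this is the conceptual content of part (2).

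Given the claim, the forward direction goes as follows: using (c) I base change along $\Spec\overline{\kappa(\phi(x))}\to M$, reducing to a stack $\cN$ with good moduli space a point through which both $x$ and $x'$ factor; applying the claim to the reduced closures of $x$ and $x'$ in $\cN$ gives $\overline{\{x\}}\cap\overline{\{x'\}}\neq\emptyset$ there, hence in $\cM$. For the converse, if $y\in\overline{\{x\}}\cap\overline{\{x'\}}$ then continuity of $\phi$ makes $\phi(y)$ a common specialization of $\phi(x)$ and $\phi(x')$ in $M$; since (after the harmless base change to $\bK$) $\phi(x)$ and $\phi(x')$ are $\bK$-rational, hence closed, points, these specializations are equalities, so $\phi(x)=\phi(y)=\phi(x')$. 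In sum, the only step requiring genuine work is importing (a)–(c) from \cite{Alp13}; everything else is a short argument built on exactness of $\phi_*$ and $\cO_M\cong\phi_*\cO_{\cM}$.
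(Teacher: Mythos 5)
Your proposal is correct, but it is worth noting that the paper offers no proof of this proposition at all: it is quoted verbatim from Alper's foundational paper \cite{Alp13} (part (1) is his Theorem 6.6, part (2) is his Theorem 4.16(iii)), and what you have written is essentially a faithful reconstruction of Alper's own arguments rather than a new route. The two load-bearing steps you identify are exactly the right ones: the surjection $\bK\twoheadrightarrow \bK\oplus \bK$ contradiction (obtained from exactness of $\phi_*$ applied to two disjoint nonempty closed substacks of a fiber) is precisely how Alper proves that distinct closed points of a cohomologically affine fiber cannot be separated, and the affine-case-plus-\'etale-gluing argument is how Theorem 6.6 is proved. One small point deserves care in the converse direction of (2): your assertion that a $\bK$-rational point of $M_{\bK}$ is closed is automatic only when $M$ is locally of finite type over $\bk$ (which holds in this paper, since $\cM$ is locally of finite type by Theorem 2.17, but is not part of the hypotheses of the proposition as stated); in full generality one should instead use the universal closedness of $\phi$, which identifies $\phi(\overline{\{x\}})$ with $\overline{\{\phi(x)\}}$ and lets the specialization argument go through. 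With that caveat, the proposal is complete and consistent with the intended (cited) proof.
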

	
	\begin{defn}
		An open substack $\cU\subset \cM$ is saturated with respect to a good moduli space morphism $\phi:\cM\to M$ if $\phi^{-1}(\phi(\cU))=\cU$.  
	\end{defn} 
	
	\begin{prop}[{\cite[Remark 6.2]{Alp13}}]\label{p:saturatedopen}
		If $\phi:\cM\to M$ is a good moduli space morphism and $\cU\subset \cM$ is an open substack saturated with respect to $\phi$, then $U:=\phi(\cU)$ is open in $M$ and $\phi:\cU\to U $ is a good moduli space. 
	\end{prop}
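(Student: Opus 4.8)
The plan is to reduce the statement to three standard facts about a good moduli space morphism $\phi\colon\cM\to M$ established in \cite{Alp13}: it is surjective, it is universally closed, and its formation commutes with flat base change on the target (in particular, with restriction to an open subspace of $M$).

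First I would rewrite the saturation hypothesis. Put $U:=\phi(\cU)$. Since formation of preimages commutes with complements, $\phi^{-1}(M\setminus U)=\cM\setminus\phi^{-1}(\phi(\cU))$, and the hypothesis $\phi^{-1}(\phi(\cU))=\cU$ rewrites this as $\cM\setminus\cU=\phi^{-1}(M\setminus U)$; in particular $\cU=\phi^{-1}(U)$, so $\cU$ is exactly the preimage of the subset $U\subseteq M$. To see that $U$ is open, consider the closed substack $\cZ:=\cM\setminus\cU$. By universal closedness of $\phi$, the image $\phi(\cZ)$ is closed in $M$, and I claim it equals $M\setminus U$: the inclusion $\phi(\cZ)\subseteq M\setminus U$ is immediate from $\cZ=\phi^{-1}(M\setminus U)$, while if $y\in M\setminus U$, then surjectivity of $\phi$ yields $x\in\cM$ with $\phi(x)=y$, and $x\in\cZ$ since otherwise $y=\phi(x)\in\phi(\cU)=U$. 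Hence $M\setminus U=\phi(\cZ)$ is closed, so $U$ is open.

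It remains to check that $\phi\colon\cU\to U$ is a good moduli space morphism. By the previous step $\cU=\phi^{-1}(U)=\cM\times_M U$, and $U\hookrightarrow M$ is an open immersion, hence flat; since good moduli space morphisms are stable under flat base change on the target, the restriction $\phi|_\cU\colon\cU\to U$ is again a good moduli space morphism, quasi-compactness being inherited from $\phi$. (Concretely, the two defining conditions — exactness of $\phi_*$ on quasi-coherent sheaves and the isomorphism $\cO_M\xrightarrow{\ \sim\ }\phi_*\cO_\cM$ — are compatible with restriction to the open $U$, using that quasi-coherent sheaves and their short exact sequences on $\cU$ extend to $\cM$ in the locally Noetherian setting.) The argument involves no genuine obstacle; the only non-formal ingredients are the universal closedness and surjectivity of good moduli space morphisms, which are part of the basic theory of \cite{Alp13}.
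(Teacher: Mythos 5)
Your proof is correct and is essentially the argument behind the cited result: the paper gives no proof of its own, simply invoking \cite[Remark 6.2]{Alp13}, and your reduction to surjectivity, (universal) closedness, and stability of good moduli spaces under flat base change is exactly how that remark is justified in Alper's paper. No gaps.
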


	\subsubsection{Existence criterion}
	
	In order to construct good moduli spaces in this paper, we will use the following criterion.
	
	\begin{thm}[{\cite[Theorem A]{AHLH23}}]\label{t:AHLH}
		Let $\cM$ be a finite type algebraic stack with affine diagonal. Then the following are equivalent:
		\begin{enumerate}
			\item There exists a good moduli space morphism $\cM\to M$ to a separated algebraic space. 
			\item $\cM$ is S-complete and $\Theta$-reductive with respect to DVRs essentially of finite type over $\bk$.
		\end{enumerate}
	\end{thm}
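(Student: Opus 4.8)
The plan is to handle the two implications separately: $(1)\Rightarrow(2)$ is comparatively soft, while $(2)\Rightarrow(1)$ — the actual construction of the good moduli space — carries essentially all of the weight. Throughout I use the two standard ``test stacks'': $\Theta_R := [\A^1_R/\G_m]$, with its distinguished closed point $0$ (the image of the origin of $\A^1_R$ in the fiber over the closed point of $\Spec R$), and $\STR := [\Spec\big(R[s,t]/(st-\varpi)\big)/\G_m]$, with $\G_m$ acting with weights $1$ and $-1$ on $s$ and $t$ and $\varpi$ a uniformizer of $R$, again with its distinguished closed point $0$. By definition $\cM$ is $\Theta$-reductive (resp.\ S-complete) if for every DVR $R$ essentially of finite type over $\bk$ each morphism $\Theta_R\setminus 0\to\cM$ (resp.\ $\STR\setminus 0\to\cM$) extends to $\Theta_R\to\cM$ (resp.\ $\STR\to\cM$); such an extension is automatically unique since $\cM$ has affine, hence separated, diagonal.

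For $(1)\Rightarrow(2)$: assume $\phi\colon\cM\to M$ is a good moduli space with $M$ separated. Closed points of $\cM$ then have linearly reductive stabilizers, so the Luna-type local structure theorem furnishes, around any closed point, an affine \'etale chart $[\Spec A/G]\to\cM$ with $G$ linearly reductive and good moduli space $\Spec A^{G}$ an \'etale neighborhood in $M$. Given a test morphism $\Theta_R\setminus 0\to\cM$ or $\STR\setminus 0\to\cM$, one produces the required extension by a standard diagram chase combining: the good moduli space of $\Theta_R$ (resp.\ $\STR$) being $\Spec R$; the fact that $\phi$ is initial among maps to algebraic spaces; separatedness of $M$; and the Luna-chart local structure of $\phi$ (one lifts back along a chart using algebraic Hartogs on its regular total space). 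This is the ``only if'' half of \cite{AHLH23} and is comparatively soft.

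For $(2)\Rightarrow(1)$ I would proceed in four steps. \emph{Step 1 (localize).} Since good moduli spaces are \'etale-local on the target, it suffices to construct one for a saturated open neighborhood of each closed point $x_0\in|\cM|$; thus one may assume $\cM$ has a unique closed point $x_0$, with stabilizer $G_{x_0}$. \emph{Step 2 (reductivity).} S-completeness enters decisively here: extending a one-parameter (Rees-type) degeneration of $x_0$ to itself across $\STR$ forces $G_{x_0}$ to be linearly reductive — in characteristic $0$, equivalently reductive — while $\Theta$-reductivity guarantees that the closed point of each fiber is well behaved. \emph{Step 3 (local chart).} With $G_{x_0}$ linearly reductive, the local structure theorem of Alper--Hall--Rydh provides an affine \'etale morphism $[\Spec A/G_{x_0}]\to\cM$ through $x_0$, and $[\Spec A/G_{x_0}]$ has the good moduli space $\Spec A^{G_{x_0}}$. \emph{Step 4 (glue).} It remains to check that finitely many such charts glue: that the fiber product over $\cM$ of two charts again admits a good moduli space and that the resulting good moduli spaces form an \'etale equivalence relation whose quotient $M$ is a separated algebraic space. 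This is precisely where $\Theta$-reductivity and S-completeness re-enter globally — they force the charts to be mutually saturated and show that the relation ``the closures of $x$ and $x'$ intersect'' is \'etale-local on the would-be space, which is what lets good moduli spaces, being \'etale-local on the target, descend.

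The hard part will be Step 4: translating the valuative hypotheses into the mutual compatibility of the Luna charts and the descent of their good moduli spaces to a global separated algebraic space — this is the bulk of \cite{AHLH23}. Step 2 is also delicate in general, although in characteristic $0$ — the setting of the present paper — the coincidence of reductivity with linear reductivity removes one layer of difficulty. Finally, once $M$ has been constructed, its separatedness is recovered from the valuative criterion applied to $\cM\to M$ together with the $\Theta$-reductivity and S-completeness of $\cM$, which closes the equivalence.
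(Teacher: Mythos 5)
This statement is quoted verbatim from \cite[Theorem A]{AHLH23}; the paper supplies no proof of it and simply imports it, so there is no in-paper argument to compare your proposal against. Judged on its own terms, your outline does track the shape of the Alper--Halpern-Leistner--Heinloth--Rydh proof at the level of section headings: the implication $(1)\Rightarrow(2)$ via Luna-type \'etale charts, the fact that S-completeness forces closed points to have geometrically (hence, in characteristic $0$, linearly) reductive stabilizers, the use of the Alper--Hall--Rydh local structure theorem to produce charts $[\Spec A/G]\to\cM$, and a final gluing step. So the roadmap is not wrong.

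But as a proof it has a genuine gap: everything that makes Theorem A a theorem is concentrated in your Steps 1 and 4, and both are asserted rather than argued. Step 1 is circular as stated --- ``saturated open neighborhood'' only makes sense relative to a good moduli space morphism, which is exactly what is being constructed, and one cannot reduce to the case of a unique closed point; the actual argument never makes such a reduction. Step 4 is where the real content lives: one must refine the local quotient presentations so that they are \emph{stabilizer-preserving} and \emph{send closed points to closed points}, which requires the unpunctured inertia theorem (\cite[Section 5]{AHLH23}) together with a delicate use of $\Theta$-reductivity and S-completeness, and then invoke a gluing criterion in the style of Alper--Fedorchuk--Smyth. Saying that the valuative hypotheses ``force the charts to be mutually saturated'' names the desired conclusion, not a mechanism. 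If the intent is to cite \cite{AHLH23} (as the paper does), this is fine; if the intent is to reprove it, the unpunctured inertia input and the chart-refinement argument would have to be supplied.
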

	
	\begin{rem}
		We cannot apply this theorem directly to the moduli stack of boundary polarized CY pairs, since the stack  (and even its irreducible components) can fail to be finite type by Remark \ref{r:notfinitetype}.
	\end{rem}
	
	We now define  terminology in Theorem \ref{t:AHLH}.2. Let $R$ be a DVR over $\bk$ with uniformizer $\pi$. Let 
	\[
	\STR := [\Spec R[s,t]/(st-\pi)] \quad \text{and } \quad \Theta_R := [\Spec R[t]/\bG_m]
	,\]
	where $\bG_m$ acts on the above affine schemes with weights $1$ and $-1$ on $s$ and $t$, respectively, and trivially on $R$. 
	We a bit abusively write $0 \in \STR$ and $0 \in \Theta_R$ for the unique closed points.
	
	\begin{defn}\label{d:scomplete+Thetareductive}
		Let $\cM$ be an algebraic stack. 
		\begin{enumerate}
			\item The stack $\cM$ is \emph{S-complete} if any DVR $R$ and morphism  ${\STR}\setminus 0 \to \cM$ extends uniquely to a morphism $\STR \to \cM$.
			\item The stack $\cM$ is \emph{$\Theta$-reductive} if any DVR $R$ and morphism $\Theta_R\setminus 0\to \cM$ extends uniquely to a morphism $\Theta_R \to \cM$. 
		\end{enumerate}
		We say $\cM$ is \emph{S-complete} or \emph{$\Theta$-reductive} with respect to essentially of finite type DVRs if the respective statement in (1) or (2) holds for DVRs essentially of finite type over $\bk$.
	\end{defn}
	
	\subsection{Test configurations}
	
	\begin{defn}
		A \emph{test configuration} of a boundary polarized CY pair is the data of 
		\begin{enumerate}
			\item a family of boundary polarized CY pairs $(\cX,\cB+\cD)\to \bA^1$
			\item a $\bG_m$-action on $(\cX,\cB+\cD)$ extending the standard action on $\bA^1$, and
			\item an isomorphism $(\cX_1,\cB_1+\cD_1)\simeq (X,B+D)$. 
		\end{enumerate}
	\end{defn}
	By (2) and (3), there exists  a canonical $\bG_m$-equivariant isomorphism
	\[
	\cX\setminus \cX_0 \simeq X\times (\bA^1 \setminus \{0\})
	,
	\]
	where $\bG_m$ acts on the right as the product of the trivial action on $X$ and the standard action on $\bA^1\setminus \{0 \}$.
	By (2), the $\bG_m$-action fixes $\cX_0$ and so induces a $\bG_m$-action on $(\cX_0,\cB_0+\cD_0)$.

	\begin{rem}
		In the K-stability literature one often considers test configurations where the special fiber may have arbitrarily bad singularities. Since we are requiring that $(\cX_0,\cB_0+\cD_0)$ is a boundary polarized CY pair and, in particular, that the special fiber has slc singularities, these were referred to as \emph{weakly special test configurations} in \cite{BABWILD}.
		Since we only encounter test configurations of the above form in this paper, we  leave out the words weakly special.
	\end{rem}
	
	\begin{defn}
		We say that there exists a \emph{weakly special degeneration}
		\[
		(X,B+D)\rightsquigarrow (X_0,B_0+D_0)
		\]
		if there exists a test configuration $(\cX,\cB+\cD)$ of $(X,B+D)$ such that $(\cX_0,\cB_0+\cD_0)\simeq (X_0,B_0+D_0)$.
	\end{defn}

	We now define various properties of test configurations. 
	\begin{defn}
		Let $(\cX,\cB+\cD)$ be a test configuration of a boundary polarized CY pair $(X,B+D)$. 
		\begin{enumerate}
			\item The test configuration is a \emph{product} if there exists a $\bG_m$-equivariant isomorphism over $\bA^1$
			\[
			(\cX,\cB+\cD) \simeq (X\times \bA^1, B\times \bA^1+D\times \bA^1),
			\] 
			where  $\bG_m$ acts on $X\times \bA^1$ as the product of a $\bG_m$-action on $X$ and the standard $\bG_m$-action on $\bA^1$.
			
			\item The test configuration is \emph{trivial} if the equivariant isomorphism in (1) can be chosen so $\bG_m$ acts on $X\times \bA^1$ as the product of the trivial action on $X$ and the standard action on $\bA^1$.
			
			\item If an algebraic group $G$ acts on $(X,B+D)$, then the test configuration is \emph{$G$-equivariant} if the induced fiberwise $G$-action on $X\times(\bA^1\setminus 0)$ extends to a $G$-action on $\cX$. 
		\end{enumerate}
	\end{defn}
	
	\begin{remark}
		We mention a couple properties of equivariant test configurations.
		\begin{enumerate}
			\item  If $G$ is a connected algebraic group (e.g. $G=\bG_m$)  acting on $(X,B+D)$, then every test configuration of the pair is $G$-equivariant by \cite[Proposition 4.14 and Theorem 6.5]{BABWILD}.
			\item For a $G$-equivariant test configuration $(\cX,\cB+\cD)$ of $(X,B+D)$, the $G$ and $\bG_m$-action on $\cX$ commute, since they commute over $\cX\setminus \cX_0$. 
			Thus there is an induced $G\times \bG_m$-action on $(\cX_0,\cB_0+\cD_0)$.
		\end{enumerate}
	\end{remark}

	\begin{remark}
		If $(\cX,\cB+\cD)$ is a test configuration of $(X,B+D)$, then we can construct new test configurations in the following ways. 
		\begin{enumerate}
			\item The normalization $(\ocX,\ocG+\ocB+\ocD)$ of the pair $(\cX,\cB+\cD)$ is naturally a test configuration of $(\oX,\oG+\oB+\oD)$.  (The fact that it is a family of slc pairs follows from applying Proposition \ref{p:familyslcpairsovercurve} on $\cX$ and on $\ocX$.)
			
			\item If $E\subset \lfloor \oG+\oB\rfloor $ is a prime divisor, then $(\cE^n, \cB_{\cE^n}+\cD_{\cE^n})$ is a test configuration of $(E^n, B_{E^n}+D_{E^n})$, where 
			$\cE$ is the closure of $E\times (\bA^1\setminus 0)$ in $\ocX$, 
			\[
			\cB_{\cE^n} := {\rm Diff}_{\cE^n}(\ocG+\ocB-\cE)\quad \text{and } \quad \cD_{\cE^n} := \ocD\vert_{\cE^n}
			\]
		\end{enumerate}
	\end{remark}

	\subsubsection{Valuations}
	Let $(X,B)$ be a normal pair. A 
	\emph{divisorial valuation} on $X$ is the data of a valuation $v: K(X)^\times \to \bR$ such that $v= b\ord_E$, where $E\subset Y$ is prime divisor on a normal variety $Y$ with a proper birational morphism $\mu:Y \to X$ and $b \in \bR_{\geq 0}$.
	The \emph{log discrepancy} of $v= b \ord_E$ is 
	\[
	A_{X,B}(v) : = b (1 + {\rm coeff}_E(K_{Y}- \mu^*(K_X+B) ).
	\]

	Given a test configuration $(\cX,\cB+\cD)$ of an lc boundary polarized CY pair $(X,B+D)$, we can associate a set of divisorial valuations as follows.
	Since $\cX\setminus \cX_0$ is normal and $(\cX,\cB+\cD)\to \bA^1$ is a family of slc pairs, $\cX$ is normal using Proposition \ref{p:familyslcpairsovercurve}.
	Thus each irreducible 
	component $E\subset \cX_0$ induces a valuation 
	$\ord_E: K(\cX)^\times \to \bZ$
	defined by the order of vanishing along $E$. We write 
	\[
	v_E := r(\ord_E):K(X)^\times \to \bZ
	\]
	for its restriction to $K(X)^\times$ under the embedding $K(X)\subset K(X)(t)\simeq K(\cX)$.
	By \cite[Lemma 4.1]{BHJ17}, each $v_E$ is a $\bZ$-valued divisorial valuation on $X$. 
	
	\subsubsection{Filtrations}	
	Let $(X,B+D)$ be a boundary polarized CY pair and fix a positive integer $r$ such that $L:=-r(K_X+B)$ is a Cartier divisor. We set 
	\[
	R(X,L):= R: = \bigoplus_{m \in \bN} R_m:=\bigoplus_{m \in \bN} H^0(X,\cO_X(mL))
	.\]
	A \emph{filtration} $F^\bullet$ of $R$ is the data of vector subspaces $F^\la R_m\subset R_m$ for each $m \in \bN$ and $\la \in \bZ$ satisfying
	\begin{enumerate}
		\item $F^\la R_m \subset F^{\la'}R_m$ for $\la'\geq \la$;
		\item $F^\la R_m \cdot F^{\la'}R_{m'}\subset F^{\la+\la'}R_{m+m'}$;
		\item $F^{-\la} R_m=R_m$ and $F^\la R_m =0$ for $\la\gg0$.
	\end{enumerate}
	A filtration $F^\bullet$ of $R$ is \emph{finitely generated} if the graded $\bk[t]$-algebra
	${\rm Rees}(F) := \bigoplus_{m \in \bN} \bigoplus_{\la \in \bZ} F^\la R_m
	$ is finitely generated.
	Note that 
	\begin{equation}\label{e:Rees}
		{\rm Rees}(F) \otimes_{\bk[t]} \bk[t^{\pm1}] \simeq R[t^{\pm1}]
		\quad \text{ and } \quad  {\rm Rees}(F) / t {\rm Rees}(F) = \bigoplus_{m \in \bN} \bigoplus_{\la \in \bZ} {\rm gr}_F^{\la}R_m
		,
	\end{equation}
	where ${\rm gr}_F^\la R_m:= F^\la R_m / F^{\la+1}R_m$.
	
	If $(\cX, \cB+\cD)$ is a test configuration of $(X,B+D)$  and $\cL:=-r(K_{\cX/\bA^1}+\cB)$  is Cartier, then there is an induced  filtration $F^\bullet $ of $R$ defined by 
	\[
	F^\la R_m := \{ s\in R_m \, \vert\, t^{-\la} \overline{s}\in H^0(\cX,\cO_{\cX}(m\cL) )\}
	,\]
	where $\overline{s}$ is the unique $\bG_m$-invariant section of 
	$ H^0(\cX\setminus \cX_0,\cO_{\cX}(\cL)) $ such that $\overline{s}\vert_{\cX_1}= s$  and $t$ is the parameter for $\bA^1$. By the discussion in \cite{BHJ17}, the natural map 
	of graded $\bk[t]$-modules
	\begin{equation}\label{e:weightdecomponmcL}
		{\rm Rees}(F)
		\to
		H^0(\cX,\cO_{\cX}(m\cL))=
		\bigoplus_{\la \in \bZ} H^0(\cX,\cO_{\cX}(m\cL))_\la,
	\end{equation}
	where the subscript on the right denotes the weight $\la$-eigenspace,
	is an isomorphism
	and induces
	a $\bG_m$-equivariant  isomorphism
	$\cX \simeq \Proj( {\rm Rees}(F))$  over $\bA^1$.
	
	
	\begin{lemma}\cite[Lemma 4.7]{BABWILD}\label{l:tcfiltformula}
		If $(\cX,\cB+\cD)$ is a test configuration of an lc boundary polarized CY pair $(X,B+D)$ such that  $\cX$ is normal and $-r(K_{\cX/\bA^1}+\cD)$ is Cartier, then 
		\begin{enumerate}
			\item $
			F^\la R_m :=\bigcap_{E\subset \cX_0} \{s \in H^0(X, mL)\, \vert\, v_E(s) \geq \la + mr A_{X,B}(v_E)\}
			$ and 
			\item  $A_{X,B+D}(v_E)=0$ for each irreducible component $E\subset \cX_0$.
		\end{enumerate}
	\end{lemma}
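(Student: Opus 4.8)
The plan is to deduce both statements from a single \emph{restriction formula} for log discrepancies along the structure map $\pi\colon\cX\to\bA^1$. Since $\pi$ underlies a family of slc pairs, the fibre $\cX_0=\pi^*(0)$ is reduced, so $\ord_E(t)=1$ for every irreducible component $E\subset\cX_0$ (here $t$ is the coordinate on $\bA^1$) and $K_{\cX/\bA^1}=K_\cX$. The restriction formula states: for any $\bQ$-divisor $\Theta$ on $\cX$ with $K_\cX+\Theta\sim_{\bQ,\bA^1}0$ and any $E\subset\cX_0$,
\[
A_{(X,\,\Theta\vert_{X})}\bigl(r(\ord_E)\bigr)\;=\;A_{(\cX,\,\Theta+\cX_0)}(\ord_E)\;=\;-\coeff_E(\Theta),
\]
where, as usual, the left-hand side is $0$ when $r(\ord_E)$ is the trivial valuation (i.e.\ $E$ dominates $X$); the last equality is immediate because $E$ is a prime divisor on the normal scheme $\cX$ and $\coeff_E(\cX_0)=1$. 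This is the pair version of the Rees--valuation dictionary of \cite[\S4]{BHJ17}, and I would prove it by passing to a common log resolution of $(\cX,\Theta+\cX_0)$, using that $t$ cuts out $\cX_0$ with multiplicity one together with $K_{\cX/\bA^1}=K_\cX$.

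Granting the restriction formula, part (2) is immediate: apply it with $\Theta:=\cB+\cD$. By Proposition~\ref{p:familyslcpairsovercurve} the pair $(\cX,\cB+\cD+\cX_0)$ is slc, hence lc as $\cX$ is normal, and $K_\cX+\cB+\cD+\cX_0\sim_{\bQ,\bA^1}0$, so $K_\cX+\Theta\sim_{\bQ,\bA^1}0$. Since $\cB$ and $\cD$ are relative Mumford divisors they contain no component of $\cX_0$, whence $\coeff_E(\Theta)=0$ and $A_{X,B+D}(v_E)=-\coeff_E(\cB+\cD)=0$.

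For part (1) I would rewrite the defining membership condition of $F^\la R_m$ valuatively. Because $\cX$ is normal and $m\cL$ is Cartier, a rational section of $m\cL$ that is regular off $\cX_0$ is globally regular if and only if it has nonnegative order along each component of $\cX_0$; applying this to $t^{-\la}\overline{s}$ and using $\ord_E(t)=1$ gives
\[
s\in F^\la R_m \iff \coeff_E\bigl(\mathrm{div}_{m\cL}(\overline{s})\bigr)\ge\la \quad\text{for every }E\subset\cX_0.
\]
So the task is to compute $\coeff_E(\mathrm{div}_{m\cL}(\overline{s}))$, and for this I would apply the restriction formula to $\Theta:=\cB+\tfrac1{mr}\mathrm{div}_{m\cL}(\overline{s})$: one has $K_\cX+\Theta\sim_\bQ0$ since $\mathrm{div}_{m\cL}(\overline{s})\sim m\cL=-mr(K_\cX+\cB)$, and $\Theta\vert_X=B+\Gamma_s$ with $\Gamma_s:=\tfrac1{mr}\mathrm{div}_{mL}(s)$, because the horizontal part of $\mathrm{div}_{m\cL}(\overline{s})$ is the closure of $\mathrm{div}_{mL}(s)\times(\bA^1\setminus0)$. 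Since $\cB$ and that horizontal part contribute nothing to $\coeff_E$, the formula gives $\tfrac1{mr}\coeff_E(\mathrm{div}_{m\cL}(\overline{s}))=-A_{(X,B+\Gamma_s)}(v_E)$; combining with $A_{(X,B+\Gamma_s)}(v_E)=A_{X,B}(v_E)-v_E(\Gamma_s)$ and $v_E(\Gamma_s)=\tfrac1{mr}v_E(s)$ yields $\coeff_E(\mathrm{div}_{m\cL}(\overline{s}))=v_E(s)-mr\,A_{X,B}(v_E)$. Substituting into the displayed equivalence gives exactly (1).

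The main obstacle is establishing the restriction formula with the precise constant---in particular realizing that one must put the boundary $\Theta+\cX_0$ (rather than $\Theta$) on $\cX$, and using reducedness of $\cX_0$; taking $\Theta$ instead of $\Theta+\cX_0$ would throw the constant off by $mr$. Everything else is bookkeeping: the adjunction identification $\cL\vert_{\cX_1}\cong L$ underlying the definition of $\overline{s}$, the separation of $\mathrm{div}_{m\cL}(\overline{s})$ into horizontal and vertical parts, and the harmless degenerate case where $v_E$ is trivial (where (1) reads $0\ge\la$ and (2) reads $0=0$).
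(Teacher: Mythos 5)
Your argument is correct, and it is essentially the proof of the cited result: the paper itself does not reprove this lemma but refers to \cite[Lemma 4.7]{BABWILD}, whose proof runs along exactly the lines you describe (compare the crepant pullbacks of $(\cX,\cB+\cD+\cX_0)$ and $(X_{\bA^1},B_{\bA^1}+D_{\bA^1}+X\times\{0\})$, then read off coefficients along components of $\cX_0$). Your packaging of both parts as two applications of a single ``restriction formula'' --- once with $\Theta=\cB+\cD$ and once with $\Theta=\cB+\tfrac1{mr}\mathrm{div}_{m\cL}(\overline{s})$ --- is clean, and the bookkeeping (reducedness of $\cX_0$, horizontality of $\cB$, the identification of the horizontal part of $\mathrm{div}_{m\cL}(\overline{s})$, the degenerate case $r(\ord_E)$ trivial) is all handled correctly. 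The one place where the work actually lives is the equality $A_{(X,\Theta\vert_X)}(r(\ord_E))=A_{(\cX,\Theta+\cX_0)}(\ord_E)$: a bare common-log-resolution comparison only shows the two crepant pullbacks differ by a rational multiple of the full central fibre $\mathrm{div}_{\cY}(t)$, and pinning that multiple to zero requires the Gauss-extension description of $\ord_E$ as a $\bG_m$-invariant valuation on $K(X)(t)$, i.e.\ \cite[Lemma 4.5 and Proposition 4.11]{BHJ17}, which give $A_{X_{\bA^1}}(\ord_E)=A_X(v_E)+\ord_E(t)$ independently of $\cX$. Since you explicitly invoke \cite[\S4]{BHJ17} for this, there is no gap, but that input is not optional.
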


	\subsubsection{Restriction}\label{sss:tcrestriction}
	
	Let $(X,B+D)$ be an  boundary polarized CY  pair and $E \subset \lfloor B \rfloor $ a prime divisor that is normal.
	Fix an integer $r>0$ such that $L:= -r(K_X+B)$ is Cartier. Write 
	\[
	R
	:=
	\bigoplus_{m \in \bN} R_m 
	:= 
	\bigoplus_{m \in \bN} H^0(X,mL)
	\quad \text{ and } \quad 
	R_E
	:=
	\bigoplus_{m \in \bN} R_{E,m} 
	:= 
	\bigoplus_{m \in \bN} H^0(E,mL\vert_E)
	.\]
	Adjunction induces  a canonical isomorphism 
	\[
	\omega_{X}^{[-mr]}(-mrB) \vert_E \simeq \omega_{E}^{[-mr]}(-mrB_E)
	\]
	for each integer $m$ and so  induces a graded $\bk$-algebra homomorphism $\phi:R \to R_{E}$, 
	whose kernel is the homogeneous ideal $I \subset R$ defining $E$. 
	Since $L$ is ample, the  map  $R_m\to R_{E,m}$ is surjective when $m$ is sufficiently  divisible.
	
	The next lemma relates the filtrations of the two test configurations. 
	In the statement, we denote by $F$ and $F_E$ for the filtrations of $R$ and $R_E$ induced by the above test configurations. 
	
	\begin{proposition}\label{p:tcrestriction}
		If $m \in \bN$ is sufficiently divisible and $\la \in \bZ$, then
		\[
		\im (F^\la R_m \to R_{E,m}) = F_E^\la R_{E,m}  
		.\]
	\end{proposition}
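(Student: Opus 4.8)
The plan is to transport both filtrations to the Rees-algebra picture of \eqref{e:weightdecomponmcL} and reduce the asserted equality to the surjectivity, for $m$ sufficiently divisible, of a family of $\bG_m$-equivariant restriction maps of global sections; that surjectivity will then follow from relative Serre vanishing once the relevant line bundle is seen to be relatively ample.

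First I would fix a test configuration $(\cX,\cB+\cD)$ of $(X,B+D)$ inducing $F$ and set $\cL:=-r(K_{\cX/\bA^1}+\cB)$, a Cartier divisor. Since $E$ is normal, $E^n=E$, and the restricted test configuration is $(\cE,\cB_\cE+\cD_\cE)$ of $(E,B_E+D_E)$, where $\cE\subset\cX$ is the closure of $E\times(\bA^1\setminus 0)$; write $\cL_\cE:=-r(K_{\cE/\bA^1}+\cB_\cE)$. By the adjunction recalled in \S\ref{sss:tcrestriction}, applied on the family $\cX\to\bA^1$, there is a canonical identification $\cO_\cX(m\cL)\vert_\cE\cong\cO_\cE(m\cL_\cE)$ compatible with restriction to the fibre over $1\in\bA^1$. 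The key observation is then that, under the isomorphisms of \eqref{e:weightdecomponmcL} for $(\cX,\cL)$ and for $(\cE,\cL_\cE)$, the weight-$\la$ eigenspace of $H^0(\cX,\cO_\cX(m\cL))$ is carried isomorphically onto $F^\la R_m\subseteq R_m$, the weight-$\la$ eigenspace of $H^0(\cE,\cO_\cE(m\cL_\cE))$ is carried isomorphically onto $F_E^\la R_{E,m}\subseteq R_{E,m}$, and the weight-$\la$ part of the $\bG_m$-equivariant restriction map
\[
\rho_m\colon H^0(\cX,\cO_\cX(m\cL))\longrightarrow H^0(\cE,\cO_\cE(m\cL_\cE))
\]
coincides with the restriction of $\phi\colon R\to R_E$ to $F^\la R_m$. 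This already gives $\phi(F^\la R_m)\subseteq F_E^\la R_{E,m}$, i.e.\ the inclusion ``$\subseteq$''; and since $\bG_m$ is linearly reductive, so that formation of weight-$\la$ eigenspaces is exact, the reverse inclusion for a given $m$ is equivalent to the surjectivity of $\rho_m$.

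Next I would check that $\cL$ is relatively ample over $\bA^1$. Indeed $K_\cX+\cB+\cD\sim_{\bQ,\bA^1}0$ forces $\cL=-r(K_{\cX/\bA^1}+\cB)\sim_\bQ r\cD$ over $\bA^1$, and $\cD$ is relatively ample by the definition of a family of boundary polarized CY pairs; equivalently, $\cL_t\cong L$ is ample for $t\neq 0$ while $\cL_0=-r(K_{\cX_0}+\cB_0)$ is ample because $(\cX_0,\cB_0)$ is log Fano, so $\cL$ is ample on every fibre of the projective morphism $f\colon\cX\to\bA^1$. Since $\cE\hookrightarrow\cX$ is a closed immersion (it is the closure of the integral scheme $E\times(\bA^1\setminus 0)\subset\cX\setminus\cX_0$), twisting the ideal-sheaf sequence of $\cE$ by $\cO_\cX(m\cL)$ gives a $\bG_m$-equivariant exact sequence
\[
0\longrightarrow\cI_\cE\otimes\cO_\cX(m\cL)\longrightarrow\cO_\cX(m\cL)\longrightarrow\cO_\cE(m\cL_\cE)\longrightarrow 0 .
\]
Because $f$ is projective and $\cL$ is relatively ample, relative Serre vanishing gives $R^1f_*\bigl(\cI_\cE\otimes\cO_\cX(m\cL)\bigr)=0$ for $m\gg 0$, and since $\bA^1$ is affine this forces $H^1\bigl(\cX,\cI_\cE\otimes\cO_\cX(m\cL)\bigr)=0$; hence $\rho_m$ is surjective for all such $m$. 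Combined with the surjectivity of $R_m\to R_{E,m}$ for $m$ sufficiently divisible (recorded in \S\ref{sss:tcrestriction}), this completes the argument for every $m$ that is both sufficiently divisible and sufficiently large.

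I expect the main work to be the bookkeeping of the second step: pinning down the Rees-algebra/weight-space dictionary of \eqref{e:weightdecomponmcL} simultaneously on $\cX$ and on $\cE$ so that $\rho_m$ restricted to weight $\la$ genuinely equals $\phi\vert_{F^\la R_m}$, and confirming that the adjunction identification $\cO_\cX(m\cL)\vert_\cE\cong\cO_\cE(m\cL_\cE)$ holds over the whole test configuration rather than merely fibrewise. Once those compatibilities are in place, relative ampleness of $\cL$ and Serre vanishing make the surjectivity — and with it the claimed equality of images — essentially formal.
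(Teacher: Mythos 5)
Your proposal is correct and follows essentially the same route as the paper: identify the filtrations with weight eigenspaces of global sections on $\cX$ and $\cE$ via the adjunction isomorphism and \eqref{e:weightdecomponmcL}, then deduce the equality from surjectivity of the equivariant restriction map $H^0(\cX,\cO_\cX(m\cL))\to H^0(\cE,\cO_\cE(m\cL_\cE))$, which holds for $m$ sufficiently divisible because $-r(K_{\cX/\bA^1}+\cB)$ is relatively ample. The only cosmetic difference is that you spell out the surjectivity via the ideal-sheaf sequence and relative Serre vanishing, and finish with exactness of weight eigenspaces rather than the paper's explicit element chase with $\overline{s}\,t^{-\la}$.
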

	
	\begin{proof}
		Fix  $m\in \bN$ sufficiently divisible so that $R_m \to R_{m,E}$ is surjective and $-mr(K_{\cX}+\cB)$ is a Cartier divisor.
		Adjunction induces a   canonical $\bG_m$-equivariant isomorphism 
		\[
		\omega_{\cX/\bA^1}^{[-mr]}(-mr\cB)\otimes \cO_{\cE} \to  \omega_{\cE/\bA^1}(-mr \cB_{\cE})
		\]
		and  a morphism
		\begin{equation}\label{e:cXtocE}
			H^0(\cX, \omega_{\cX/\bA^1}^{[-mr]}(-mr\cB)) \to 
			H^0(\cX, \omega_{\cE/\bA^1}^{[-mr]}(-mr\cB_\cE))
		\end{equation}
		of $\bZ$-graded $\bk[t]$-modules, where the $\bZ$-grading is induced by the weight decomposition on each module.
		Since $-r(K_{\cX/\bA^1}+ \cB)$ is ample over $\bA^1$, the above map is surjective after possibly replacing $m$ with a positive multiple. 
		
		Now fix $s_E \in R_{m,E}$. 
		By definition,  $s_E \in F_E^\la R_{E,m}$ if and only if 
		\[
		\overline{s_E} t^{-\la}\in H^0(\cE, \omega_{\cE/\bA^1} (-mr\cB_\cE))_\la.
		\] 
		By the surjectivity of \eqref{e:cXtocE} and the isomorphism\eqref{e:weightdecomponmcL}, the latter holds if and only if there exists $s\in F^\la R_m$ such that $\overline{s} t^{-\la}\vert_{\cE} = \overline{s_E} t^{-\la}$. 
		Since $(\overline{s} t^{-\la})\vert_{\cE} = \overline{s_E} t^{-\la}$ implies $s\vert_E = s_E$, the previous  condition holds if and only if there exists $s\in F^\la R_m$ such that $\phi(s)=s_E$.
	\end{proof}
	
	\subsubsection{Degeneration via boundary}
	
	The following proposition  constructs  test configurations induced by prime divisors on the boundary polarized CY pair.
	The construction will be used repeatedly in Section \ref{s:TypeIII}.

	\begin{proposition}\label{p:tcdivisorsonX}
		Let $(X,B+D)$ be an lc boundary polarized CY pair and 
		\[
		v_1 := c_1 \ord_{E_1}, ~\ldots,~ v_l:= c_l \ord_{E_l}
		\]
		be valuations such that the  $E_i$'s are distinct prime divisors contained in $\Supp( \lfloor B+D\rfloor )$ 
		and the $c_i$'s are  positive  integers satisfying
		$A_{X,B}(v_1 ) =\cdots = A_{X,B}(v_l)$.
		
		Then the filtration $F^\bullet$ of $R:=R(X,L)$ defined by
		\[
		F^\la R_m :=  \cap_{i=1}^l\{ s\in R_m \, \vert\, v_i(s) \geq \la +mr A_{X,B}(v_i) \}
		\]
		is finitely generated and induces a  test configuration 
		$(\cX,\cB+\cD)$ of $(X,B+D)$ satisfying:
		\begin{enumerate}
			\item The fiber $\cX_0$ has $l$ irreducible components.
			\item The  $\bG_m$-action is non-trivial on each irreducible component of $\cX_0$.
			\item The induced degeneration of $E:= E_1\cup \cdots \cup E_l$ is trivial. 
		\end{enumerate}
		
	\end{proposition}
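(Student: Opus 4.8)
The plan is to realize $\cX$ as $\Proj_{\bk[t]}{\rm Rees}(F)$, so the crux is the finite generation of $F^\bullet$. Write $a:=A_{X,B}(v_i)$ for the common value. Since each $E_i$ is a prime divisor on the normal variety $X$, for $s\in R_m$ one has $v_i(s)=c_i\ord_{E_i}(s)\ge\la+mra$ if and only if $\ord_{E_i}(s)\ge\lceil(\la+mra)/c_i\rceil$, and as the $E_i$ are distinct this gives
\[
F^\la R_m=H^0\!\Big(X,\cO_X\big(mL-\textstyle\sum_{i=1}^{l}\max\{0,\lceil(\la+mra)/c_i\rceil\}\,E_i\big)\Big).
\]
In particular ${\rm Rees}(F)$ is, up to the elementary bookkeeping caused by the ceilings and maxima, a diagonal graded subalgebra of the multigraded ring $\bigoplus_{(m,\mathbf{n})\in\bN^{l+1}}H^0(X,\cO_X(mL-\sum_i n_iE_i))$. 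Note also that each $\ord_{E_i}$ is an lc place of the lc log Calabi--Yau pair $(X,B+D)$, since $E_i\subset\Supp\lfloor B+D\rfloor$ forces $A_{X,B+D}(\ord_{E_i})=0$.

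For finite generation I would pass to a $\bQ$-factorial dlt modification $g:(Y,B_Y+D_Y)\to(X,B+D)$ on which the strict transforms $\widetilde{E}_i$ and the $g$-exceptional divisors form part of a simple normal crossing boundary; along these the pulled back filtration is toroidal, and, running a relative $(K_Y+B_Y)$-MMP over $X$ (equivalently a $D_Y$-MMP, as $D_Y\sim_{\bQ}-(K_Y+B_Y)$) and using that $D$ is ample, one identifies $\Proj{\rm Rees}(F)$ with the resulting relative ample model over $\bA^1$; alternatively one invokes finite generation of filtrations by lc places of log Calabi--Yau pairs from \cite{BABWILD}, the relevant minimal model programs terminating because $(X,B)$ is log Fano. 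I expect this step, together with the verification that $\cX_0$ is slc, to be where the real work lies --- in particular the ceilings and the possible failure of $\bQ$-factoriality of $X$ must be handled with care.

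Granting finite generation, set $\cX:=\Proj_{\bk[t]}{\rm Rees}(F)\to\bA^1$ and let $\cB,\cD$ be the closures in $\cX$ of $B\times(\bA^1\setminus 0)$ and $D\times(\bA^1\setminus 0)$. By \eqref{e:weightdecomponmcL} this carries a $\bG_m$-action extending the standard one on $\bA^1$ with $(\cX_1,\cB_1+\cD_1)\simeq(X,B+D)$, and its associated filtration of $R$ is $F^\bullet$ by construction. Since $\cX\setminus\cX_0\simeq X\times(\bA^1\setminus 0)$ is normal and $A_{X,B+D}(v_E)=0$ for every component $E\subset\cX_0$ by Lemma \ref{l:tcfiltformula}(2), applying Proposition \ref{p:familyslcpairsovercurve} to $\cX$ and to its normalization $\ocX$, as in \cite{BABWILD}, shows that $\cX$ is normal, $-r(K_{\cX/\bA^1}+\cB)\sim\cL$ is Cartier, $K_{\cX/\bA^1}+\cB+\cD\sim_{\bQ}0$, and $(\cX,\cB+\cD)\to\bA^1$ is a family of slc boundary polarized CY pairs. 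Thus $(\cX,\cB+\cD)$ is a test configuration of $(X,B+D)$.

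It remains to verify (1)--(3). For (3), apply Proposition \ref{p:tcrestriction} to each $E_i$: when $\la+mra\le 0$ every defining inequality of $F^\la R_m$ is vacuous, so $F^\la R_m=R_m$ and hence $F_{E_i}^\la R_{E_i,m}=R_{E_i,m}$; when $\la+mra\ge 1$ every $s\in F^\la R_m$ vanishes along $E_i$, so its image in $R_{E_i,m}$ is $0$. Thus each $F_{E_i}^\bullet$ is the trivial filtration (normalized by $a$), so the induced degeneration of each $E_i$, hence of $E=E_1\cup\cdots\cup E_l$, is trivial. For (1), a direct computation of ${\rm gr}_F R={\rm Rees}(F)/t{\rm Rees}(F)$ from the formula above identifies it, up to cone-variable bookkeeping, with the section ring of $\bigcup_i E_i$ in the restriction of $L$; since $E=E_1\cup\cdots\cup E_l$ has $l$ components this ring has exactly $l$ minimal primes, so $\cX_0$ has $l$ components $W_1,\dots,W_l$, with $W_i$ dominating $E_i$ and $v_{W_i}=c_i\ord_{E_i}$ (consistently with Lemma \ref{l:tcfiltformula}(1)). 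For (2), suppose $\bG_m$ acted trivially on a component $W\subset\cX_0$. It must act with nonzero weight on the conormal line of $W$ at the generic point $\eta_W$ (otherwise $\bG_m$ would act trivially near $\eta_W$, hence on $K(\cX)$, contradicting its nontriviality on $\cX\setminus\cX_0\simeq X\times(\bA^1\setminus 0)$); but then every $\bG_m$-invariant rational function --- in particular every $f\in K(X)\subset K(\cX)$ --- satisfies $v_W(f)=0$, contradicting $v_{W_i}=c_i\ord_{E_i}\neq 0$. Hence the $\bG_m$-action is non-trivial on every component of $\cX_0$.
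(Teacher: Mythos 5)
Your overall architecture (finite generation via the lc-place result of \cite{BABWILD}, $\cX=\Proj_{\bk[t]}{\rm Rees}(F)$, slc-ness via Proposition \ref{p:familyslcpairsovercurve} on $\cX$ and its normalization) matches the paper, and your observation that $E_i\subset\Supp\lfloor B+D\rfloor$ forces $A_{X,B+D}(v_i)=0$ is exactly the hypothesis needed for that citation. The genuine gap is in part (1). Your claim that ${\rm gr}_F R$ ``identifies, up to cone-variable bookkeeping, with the section ring of $\bigcup_i E_i$'' is not a proof and is not literally correct: the central fiber is a surface degenerating $X$ (a union of cone-like/Seifert-like models over the $E_i$), not $\Proj$ of the section ring of the curve $E$; and counting minimal primes of ${\rm gr}_F R$ requires showing both that the $l$ candidate primes are distinct and minimal and that there are no others, which is precisely the content to be established. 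The paper's route is different and worth internalizing: by \cite[Lemma 4.2]{BABWILD} the components $\cX_0^1,\dots,\cX_0^k$ satisfy $k\le l$ and (after reordering) $v_i=r(\ord_{\cX_0^i})$ for $i\le k$; by Lemma \ref{l:tcfiltformula} and the bijection between test configurations and finitely generated filtrations \cite[Proposition 2.15]{BHJ17}, the filtration $F_{\cX}^{\la}R_m=\bigcap_{i=1}^{k}\{v_i(s)\ge\la+mrA_{X,B}(v_i)\}$ must equal $F^{\bullet}$; and if $k<l$ one uses ampleness of $L$ to produce $s$ vanishing on $E_1\cup\cdots\cup E_k$ but not on $E_{k+1},\dots,E_l$, so that $s\in F_{\cX}^{-mrc+1}R_m\setminus F^{-mrc+1}R_m$, a contradiction. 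Some argument of this kind (or a completed version of your graded-ring computation) is needed; as written, (1) is asserted rather than proved, and note that (2) as you argue it depends on (1) through the identification $v_{W_i}=c_i\ord_{E_i}\neq 0$.

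Granting (1), your proof of (2) is correct and is a genuinely different, more abstract argument than the paper's: the paper constructs an explicit $\bG_m$-equivariant birational model $\cX_0^i\dashrightarrow E_i\times\bA^1$ via valuation ideals and \cite[Theorem 1.4]{Blu21}, which has the side benefit of being reused in Proposition \ref{p:tcdivisorsonXextra}; your weight argument at the generic point of a component (trivial action on $W$ plus nonzero conormal weight forces $v_W=0$ on invariant functions) is shorter but yields less. For (3), your computation is essentially the paper's, but two small points: Proposition \ref{p:tcrestriction} is stated for a \emph{normal} prime divisor $E\subset\lfloor B\rfloor$, whereas your $E_i$ may lie in $\lfloor D\rfloor$ or be non-normal, so it is cleaner to argue as the paper does directly with the ideal $\cI=\bigoplus_{m,\la}(F^{\la}R_m\cap I_m)$ of the closure $\cE$, using $F^{\la}R_m=R_m$ for $\la\le -mrc$ and $F^{-mrc+1}R_m=I_m$ to get ${\rm Rees}(F)/\cI\simeq\bigoplus_{\la\le -mrc}(R_m/I_m)t^{-\la}$; this also gives triviality of the degeneration of the whole union $E$ at once, rather than component by component.
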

	
	Statement (3) means that if we let $\cE$ denote the closure of $ E\times(\bA^1\setminus 0 )$  in $\cX$, then there is a $\bG_m$-equivariant isomorphism $\cE \simeq E\times \bA^1$, where $\bG_m$ acts on $E\times \bA^1$ as the product of  the trivial action on $E$ and the standard action on $\bA^1$.

	\begin{proof}
		Since $A_{X,B+D}(v_i)=0$ for each $i$, 
		${\rm Rees}(F) $ is a finitely generated $\bk[t]$-algebra by \cite[Proposition 4.9.i]{BABWILD}.
		Consider the $\bG_m$-equivariant morphism 
		\[
		\cX:= \Proj(  {\rm Rees}(F)) \to \bA^1 ,
		\]
		where the $\bG_m$-action on $\cX$ is induced by the $\bZ$-grading on the Rees algebra. 
		By \eqref{e:Rees},  there are $\bG_m$-equivariant morphisms
		\[
		\cX \setminus \cX_0 \simeq X\times(\bA^1\setminus 0) \quad \text{ and } \quad \cX_0 \simeq \Proj \Big( \bigoplus_{m \in \bN} \bigoplus_{\la \in \bZ} {\rm gr}_F^\la R_m \Big)
		.\]
		Write $\cB$ and $\cD$ for the closures of $B\times (\bA^1\setminus 0)$ and $D\times(\bA^1\setminus 0)$ in $\cX$.
		By \cite[Proposition 4.9.ii]{BABWILD} and its proof, $(\cX,\cB+\cD)$ is a test configuration of $(X,B+D)$, and there is a $\bG_m$-equivariant isomorphism $\omega_{\cX/\bA^1}^{[-mr]}(-mrB) \simeq \cO_{\cX}(m)$ for $m>0$ sufficiently divisible.
		
		To verify  (1), write $\cX_0 = \cX_0^1\cup \cdots \cup \cX_0^{k}$ for the decomposition of $\cX_0$ into irreducible components. 
		By \cite[Lemma 4.2]{BABWILD},  $k\leq l$ and, after possibly reordering the valuations,
		$v_i = r (\ord_{\cX_0^i})$ for all  $1\leq i \leq k$.
		We aim to show that $l=k$.
		By Lemma \ref{l:tcfiltformula},
		the filtration $F_{\cX}^\bullet$ of $R$  induced by  $(\cX,\cB+\cD)$
		satisfies
		\[
		F_{\cX}^\la R_m := \cap_{i=1}^{k} \{ s\in R_m \, \vert \, v_i(s) \geq \la + mr A_{X,B}(v_i) 
		\}
		.
		\]
		By the bijection between test configurations of polarized varieties and finitely generated filtrations of the section rings in \cite[Proposition 2.15]{BHJ17}, after possibly replacing $r$ with a multiple, the filtrations $F^\bullet$ and $F_{\cX}^\bullet$ are equal.
		Since $L$ is ample, there exists a positive integer $m$ and $s\in R_m$ such that $s$ vanishes on $E_1 \cup \cdots \cup E_k$ and does not vanish on $E_i$ for  $i>k$.
		Set 
		\[
		c:= A_{X,B}(v_1)=\cdots = A_{X,B}(v_l).
		\]
		If $k< l$, then $s\in F_{\cX}^{-mrc+1} R_m $ and $s\notin  F^{-mrc+1} R_m$, which  contradicts  $F^\bullet   = F_{\cX}^\bullet$. 
		Therefore $k=l$ and so (1) holds and $v_i = r(\ord_{\cX_0^i})$ for each $1\leq i \leq l$.
		
		To verify (2), we will describe the $\bG_m$-action on an equivariant birational model  of $\cX_0^i$.
		Let $w_i:=\ord_{\cX_0^i}$, which we view as a valuation of $K(X)(t)$.
		Since $r(w_i) = v_i$, \cite[Lemma 4.5]{BHJ17} implies that 
		\[
		w_i(f_0 + f_1 t+ \cdots + f_d t^d) = \min\{ v_i(f_j) + j \, \vert\, f_j \neq 0 \}
		,\]
		where each $f_j \in K(X) $ and  not all are zero.
		Let $U \subset X$ be the smooth locus of $X$ and  $\cJ:= \cI_{U \cap E_i} \subset \cO_{U}$.
		With this notation, the $m$-th valuation ideal of $w_i$ on $U\times \bA^1$ is
		\[
		\fa_m :=  \bigoplus_{j \leq  0} \cJ^{\lceil (m+j)/c_i\rceil } t^{-j} \subset  \bigoplus_{j\leq 0}\cO_{U} t^{-j}
		,\]
		where the unusual sign convention on the grading is chosen since $t$ has weight $-1$ with respect to the $\bG_m$-action on $U\times \bA^1$.
		Since the $\cO_{U}[t]$-algebra $\bigoplus_{m \geq 0} \fa_m$ is finitely generated,  we may consider the projective morphism
		\[
		Y := \Proj( \fa_0 \oplus \fa_1 \oplus \cdots ) \to U\times \bA^1
		\]
		By \cite[Theorem 1.4]{Blu21}, $Y$ is normal, the  exceptional locus of $Y \to U\times \bA^1$ is
		\begin{align*}
			Z&= \Proj( \fa_0/\fa_1  \oplus \fa_1/\fa_2 \oplus \cdots ) \\
			& \simeq \Proj (\fa_0/\fa_1 \oplus \fa_{c_i}/\fa_{c_i+1} \oplus \fa_{2c_i}/\fa_{2c_i+1}\oplus \cdots ),
		\end{align*}
		which is an irreducible prime divisor,
		and $\ord_{Z} = w_i$.
		Thus $\cX_0^i$ is the birational transform of $Z$ on $\cX$ and, hence, there is a $\bG_m$-equivariant birational map $Z \dashrightarrow \cX_0^i$. 
		Observe that
		\begin{align*}
			\fa_{mc_i} / \fa_{mc_i+1}  &= \cO_{U}/ \cJ \oplus  \cJ/\cJ^2 \oplus  \cdots \oplus \cJ^{m}/ \cJ^{m+1 } \\
			& \simeq  {\rm Sym}^{m } (\cO_U \oplus  \cJ/\cJ^2 ),
		\end{align*}
		where in the last expression $\cO_U$ and $\cJ/\cJ^2$ have weights $-c_i$ and $0$, respectively.
		Thus $Z$ is equivariantly birational to $E_i \times \bA^1$, where $\bG_m$ acts on $E_i \times \bA^1$ as the product of the $\bG_m$-action on $E_i$ and a non-trivial action on  $\bA^1$.
		Since the $\bG_m$-action on $E_i\times \bA^1$ is non-trivial, so is the $\bG_m$-action on $\cX_0^i$, which proves (2).
		
		It remains to prove (3). 
		Write $I:= \bigoplus_{m \geq 0} I_m \subset R$ for the homogeneous ideal defining $E$. 
		Write $\cE$ for the closure of $E\times (\bA^1\setminus 0)$ in $\cX$. 
		Observe that $\cE$ is defined by the homogeneous  ideal
		\[
		\cI:= \bigoplus_{m \in \bN} \bigoplus_{\la \in \bZ} (F^\la R_m \cap I_m) \subset {\rm Rees}(F)
		,\]
		since $\cI$ is the kernel of the composition
		\[
		{\rm Rees}(F) \to R[t^{\pm 1}] \to (R/I)[t^{\pm1}]
		.\]
		Since $F^\la R_m =R_m$ when $\la\leq -mrc$ and $F^{-mrc+1} R_m =I_m$, we see 
		\[
		F^\la R_m \cap I_m=
		\begin{cases}
			I_m & \text{ if } \la \leq -mrc+1 \\
			F^\la R_m & \text{ if } \la \geq -mrc+1. 
		\end{cases}
		\]
		Thus 
		\[
		{\rm Rees}(F)/ \cI  \simeq  \bigoplus_{\la \leq -mrc} (R_m/I_m) t^{-\la }
		\]
		and so  there is a $\bG_m$-equivariant isomorphism $\cE\simeq E\times \bG_m$.
	\end{proof}

	\begin{prop}\label{p:tcdivisorsonXextra}
		Keep  the notation in the setup and conclusions of Proposition \ref{p:tcdivisorsonX}.
		
		If there is a $\bG_m$-action on $(X,B+D)$ where each  $E_i$ is not in the $\bG_m$-fixed locus of $X$,  then the induced $\bG_m^2$-action on each irreducible component of $\cX_0$ has finite kernel.
	\end{prop}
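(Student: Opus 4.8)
The plan is to identify the kernel $N$ of the $\bG_m^2$-action on an irreducible component $\cX_0^i$ with a closed subgroup of one of the two $\bG_m$-factors, and then use Proposition~\ref{p:tcdivisorsonX}(2) to conclude that it is proper, hence finite. Write $\bG_m^{(1)}$ for the torus of the test configuration $(\cX,\cB+\cD)$ and $\bG_m^{(2)}$ for the given torus acting on $(X,B+D)$. Since $\bG_m^{(2)}$ is connected, the test configuration is $\bG_m^{(2)}$-equivariant by \cite[Prop.~4.14 and Thm.~6.5]{BABWILD}, the two actions commute on $\cX$, and they induce an action of $\bG_m^2:=\bG_m^{(2)}\times\bG_m^{(1)}$ on $(\cX_0,\cB_0+\cD_0)$; being connected, $\bG_m^2$ preserves each component $\cX_0^i$, and this is the action in the statement. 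Likewise $\bG_m^{(2)}$ preserves $\Supp(\lfloor B+D\rfloor)$, hence fixes each of its finitely many irreducible components, so each $E_i$ is $\bG_m^{(2)}$-invariant; the hypothesis that $E_i$ is not contained in the $\bG_m^{(2)}$-fixed locus of $X$ then says exactly that $\bG_m^{(2)}$ acts non-trivially on $E_i$, so that $\ker(\bG_m^{(2)}\curvearrowright E_i)$ is a proper, hence finite, closed subgroup of $\bG_m$.

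The key geometric input is a $\bG_m^2$-equivariant dominant rational map $\rho\colon\cX_0^i\dashrightarrow E_i$ on which $\bG_m^{(1)}$ acts trivially on the target. To produce it, I would revisit the proof of Proposition~\ref{p:tcdivisorsonX}: there $\cX_0^i$ is shown to be $\bG_m^{(1)}$-equivariantly birational to a $\bP^1$-bundle $Z$ over $E_i$ with $\bG_m^{(1)}$ acting trivially on the base $E_i$ and by fiber-scaling (of weight $-c_i$) along the fibers. Every ingredient of that construction --- the valuation $w_i=\ord_{\cX_0^i}$, its valuation ideals $\fa_m$ on $U\times\bA^1$ with $U=X_{\mathrm{sm}}$, the birational model, the exceptional divisor $Z$, and the bundle map $Z\to E_i$ --- is built only from $w_i$, $U$, $E_i$ and the coordinate $t$, all of which are $\bG_m^{(2)}$-invariant ($\cX_0^i$, $U$, $E_i$ are preserved by $\bG_m^{(2)}$, and $\bG_m^{(2)}$ fixes $t$). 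So the construction is $\bG_m^{(2)}$-equivariant, with $\bG_m^{(2)}$ acting on the base $E_i$ of $Z\to E_i$ via its original action on $X$; composing the birational map $\cX_0^i\dashrightarrow Z$ with $Z\to E_i$ gives $\rho$. This compatibility --- in particular that $\bG_m^{(2)}$ acts on the base of the bundle by the given action on $X$, not through some twist absorbed into the fibers --- is the step I expect to require the most care, as it means unwinding the earlier proof rather than citing only its statement.

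Granting $\rho$, let $N:=\ker(\bG_m^2\curvearrowright\cX_0^i)$. Since $\cX_0^i$ and $E_i$ are integral and $\rho$ is dominant and $\bG_m^2$-equivariant, any $g\in N$ acts trivially on a dense open of $E_i$, hence trivially on $E_i$; thus $N\subseteq\ker(\bG_m^2\curvearrowright E_i)$. Because $\bG_m^{(1)}$ acts trivially on $E_i$, this latter group equals $\ker(\bG_m^{(2)}\curvearrowright E_i)\times\bG_m^{(1)}$, whose identity component is $\bG_m^{(1)}$ (the first factor being finite); hence $N^\circ\subseteq\bG_m^{(1)}$. If $N^\circ=\bG_m^{(1)}$, then $\bG_m^{(1)}$ acts trivially on $\cX_0^i$, contradicting Proposition~\ref{p:tcdivisorsonX}(2). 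So $N^\circ$ is a connected proper closed subgroup of $\bG_m^{(1)}\cong\bG_m$, hence trivial, and therefore $N$ is finite, as desired.
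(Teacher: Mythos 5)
Your proof is correct and takes essentially the same route as the paper: both hinge on the $\bG_m^2$-equivariant birational identification of $\cX_0^i$ with $E_i\times\bA^1$ (product-type action, with the test-configuration torus acting trivially on the base $E_i$) established in the proof of Proposition~\ref{p:tcdivisorsonX}, followed by the observation that the kernel contains no non-trivial connected subgroup of $\bG_m^2$. The only cosmetic difference is that you project to $E_i$ and invoke part~(2) of Proposition~\ref{p:tcdivisorsonX} for the second factor, whereas the paper reads both non-trivialities off the product structure on $E_i\times\bA^1$ directly; your explicit check that the construction is equivariant for the second torus is a point the paper leaves implicit.
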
	
	
	\begin{proof}
		By Proposition \ref{p:tcdivisorsonX}, for each $1\leq i \leq l$, there is a $\bG_m^2$-equivariant birational map
		\[
		\cX_0^i \dashrightarrow E_i \times \bA^1
		,\]
		where  $\bG_m^2$ acts on the right as as the product of the $\bG_m$-action on $E_i$ and a non-trivial $\bG_m$-action on $\bA^1$. 
		Since $E_i$ not in the $\bG_m$-fixed locus of $X$, the $\bG_m$-action on $E_i$ is non-trivial. 
		Thus the kernel of the  $\bG_m^2$-action on $\cX_0^i$
		contains no non-trivial 1-parameter subgroups of $\bG_m^2$. 
		Since the connected non-trivial subgroups of $\bG_m^2$ are precisely the non-trivial 1-parameter subgroups, 
		we conclude that the $\bG_m^2$-action on $\cX_0^i$ has finite kernel.
	\end{proof}

	\subsection{Moduli stack}\label{ss:moduli-stack}
	In this section, we define the moduli stack of boundary polarized CY pairs and study its properties. 
	The  main statements appear in \cite{BABWILD} in a slightly less general setting.
	Throughout this section, we fix the data 
	\begin{itemize}
		\item a function $\chi: \bN\to \bZ$, which will be our Hilbert function,
		\item $N \in \bZ_{>0}$, which will be a multiple of the CY index, and
		\item ${\bf  a}= (a_1,\ldots a_l)  \in (\tfrac{1}{N}\bZ_{>0})^l$ and $c \in \tfrac{1}{N}\bZ_{>0}$, which will be our boundary coefficients.
	\end{itemize}
	
	\subsubsection{Moduli stack}
	

	
	\begin{defn}\label{d:familyofbpcys}
		A \emph{family of boundary polarized CY pairs} $(X, B + D)\to T$ over a Noetherian scheme $T$ with coefficients $(\bfa, c)$ and index dividing $N$ consists of the following data:
		\begin{enumerate}
			\item $X\to T$ is a flat projective morphism of schemes;
			\item  $B = \sum_{i=1}^{l} a_i B_i$ and $D = cD_1$, where $B_i$ and $D_1$ are relative K-flat Mumford divisors 
		\end{enumerate}
		satisfying the following conditions:
		\begin{enumerate}
			\setcounter{enumi}{2}
			\item $(X_t, B_t + D_t)$ is a boundary polarized CY pair for every $t\in T$,
			\item $\omega_{X/T}^{[N]}(N(B+D))\cong_{T} \cO_X$, and
			\item $\omega_{X/T}^{[m]}( mB  + qD_1)$ is flat over $T$ and commutes with base change for every $m,q\in \bZ$ with $m\bfa\in \bZ^l$.
		\end{enumerate}
	\end{defn}
	
	\begin{rem}\label{rem:moduli-stack-def}
		We now explain some of the terminology in Definition \ref{d:familyofbpcys}.
		\begin{enumerate} 
			\item[(i)] When $l=1$, the definition recovers \cite[Defintion 3.1]{BABWILD}.
			\item[(ii)] For the definition of a K-flat relative Mumford divisor, see  \cite[Section 7.1]{Kol23}. 
			
			\item[(iii)] We allow the case when ${\bf a} = (1)$ and $B= 1 \cdot \emptyset$, which corresponds to a family of boundary polarized CY pairs with no divisor in the log Fano boundary.
			
			\item[(iv)] To define the sheafs in (4) and (5), let $i:U\hookrightarrow X$ be the largest open subset on which fibers of $U \subset B$ are either smooth or nodal and the $B_i$'s and $D_1$ are relative Cartier divisors. 
			Then
			\[
			\omega_{X/T}^{[m]}(mB+ qD_1) := i_* \omega_{U/T}^{\otimes m}(m B_U + q D_1 )
			.\] 
			With this notation, condition (4) means that $\omega_{X/T}^{[N]}(N(B+D))\cong_{T} \cO_X \otimes L_T$, for some  line bundle $L_T$, which is the pullback of a line bundle from $T$.
			Condition (5) means that for any morphisms of schemes $T'\to T$, the natural map
			\[
			g'^* \omega_{X/B}^{[j]}(mB+qD_1) \to \omega_{X'/B'}^{[m]}(m B+q D_1)
			\]
			is an isomorphism, where $X' = X\times_T T'$ and $g' : X' \to X$ is the first projection. 
			This type of condition is referred as Koll\'ar's condition in the literature; see \cite[Chapter 9]{Kol23}.
			\item[(v)] If $T$ is reduced, then dropping the K-flatness in condition (2) and replacing (5) by the condition  that $D$ is $\bQ$-Cartier gives an equivalent definition. 
			As a consequence, Definition \ref{d:familyofbpcys} agrees with Definition \ref{d:familybpcyreduced} when $T$ is a reduced Noetherian scheme. 
		\end{enumerate}
	\end{rem}

	The next result shows that it is equivalent to use the simpler definition of a family of boundary polarized CY pairs when the base is a reduced Noetherian scheme.
	
	\begin{prop}\label{p:familybpcyequivalentdef}
		If $T$ is a reduced Noetherian scheme and $(X,\sum_{i=1}^l a_i B_i + c D_1) \to T$ is a family of boundary polarized CY pairs  with CY index dividing $N$ as defined in Definition \ref{d:familybpcyreduced}, then the family satisfies Definition \ref{d:familyofbpcys}.
	\end{prop}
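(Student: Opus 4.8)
The plan is to show that a family $(X,B+D)\to T$ over a reduced Noetherian base satisfying Definition \ref{d:familybpcyreduced} automatically satisfies the more refined conditions (2), (4), (5) of Definition \ref{d:familyofbpcys}. Conditions (1) and (3) are literally shared between the two definitions, so the content is purely about the divisorial and reflexive-sheaf bookkeeping. By Remark \ref{rem:moduli-stack-def}(v), over a reduced base we may replace K-flatness in (2) by the bare assumption that the $B_i$ and $D_1$ are relative Mumford divisors and replace (5) by the condition that $D$ is $\bQ$-Cartier; both of these are part of the hypothesis in Definition \ref{d:familybpcyreduced} (a family of slc pairs has relative Mumford divisors by definition, and $D$ being $\bQ$-Cartier and relatively ample is hypothesis (1) there). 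So modulo invoking Remark \ref{rem:moduli-stack-def}(v), the only genuine points left are conditions (4) and (5) in their Kollár-condition form.

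The first step I would carry out is to establish condition (4), i.e.\ that $\omega_{X/T}^{[N]}(N(B+D))\cong_T \cO_X$, meaning it is the pullback of a line bundle from $T$. We know $K_{X/T}+B+D$ is $\bQ$-Cartier (family of slc pairs) and $\sim_{\bQ,T}0$ (hypothesis (2) of Definition \ref{d:familybpcyreduced}), and that $N(K_{X/T}+B+D)\sim_T 0$ by the CY-index-divides-$N$ hypothesis. The relation $\sim_T$ means precisely that $\omega_{X/T}^{[N]}(N(B+D))$ differs from $\cO_X$ by the pullback of a line bundle from $T$; here one has to be slightly careful that $\omega_{X/T}^{[N]}(N(B+D))$ as defined via the open set $U$ in Remark \ref{rem:moduli-stack-def}(iv) agrees with the divisor-theoretic object $\cO_X(N(K_{X/T}+B+D))$, which follows because $X$ is $S_2$ with fibers $S_2$ (demi-normal) and the complement of $U$ has fiberwise codimension $\geq 2$, so reflexive hulls are computed on $U$. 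This is essentially a restatement, so it should be short.

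The main work is condition (5): for every $m,q\in\bZ$ with $m\bfa\in\bZ^l$, the sheaf $\omega_{X/T}^{[m]}(mB+qD_1)$ is flat over $T$ and commutes with base change. The strategy is to reduce the mixed statement to two known cases and combine. Write $mB+qD_1 = mB + qD_1$; since $D=cD_1$ with $c\in\tfrac1N\bZ_{>0}$ we have $D_1 = \tfrac1c D$, and $q D_1$ makes sense as an integral divisor. The key input is \cite[Chapter 9]{Kol23} (Kollár's condition): for a family of slc pairs $(X,\Delta)\to T$ over a reduced base with $\Delta = \sum \delta_j \Delta_j$, the sheaves $\omega_{X/T}^{[j]}(\sum \lfloor j\delta_j\rfloor \Delta_j)$ — or more precisely the divisorial-pullback formulation — are flat and commute with base change after the base is reduced; the relevant statement is that locally stable families over reduced bases automatically satisfy Kollár's condition for the reflexive powers of $\omega(\Delta)$. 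Concretely I would: (a) use that $D$ is relatively ample and $\bQ$-Cartier, so some $\omega_{X/T}^{[?]}$-free multiple $q_0 D_1$ is relatively Cartier (after reindexing), handling the $qD_1$ part by a line bundle twist that trivially commutes with base change; (b) use that $K_{X/T}+B$ is $\bQ$-Cartier with $-(K_{X/T}+B)\sim_{\bQ,T} D$ relatively ample, so $\omega_{X/T}^{[m]}(mB)$ is, up to the ample twist just handled, a reflexive power of $\omega_{X/T}(B)$ along a locally stable family, to which \cite[Thm 4.7.1, Ch. 9]{Kol23} applies over the reduced base $T$; (c) combine by the projection-formula/reflexive-product compatibility, using that on the open locus $U$ everything is a genuine line bundle so the tensor product of flat-and-base-change-compatible sheaves whose reflexive hull is taken over an $S_2$ morphism remains flat and base-change-compatible (this uses that $U_t$ has codimension $\geq 2$ complement in each fiber, so the $S_2$ property lets one push forward from $U$ without losing flatness). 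The hard part, and the step I expect to be the main obstacle, is exactly (c): justifying that the \emph{product} of the $\bQ$-Cartier ample twist $\cO_X(qD_1)$ with the reflexive power $\omega_{X/T}^{[m]}(mB)$ — which is formed by reflexive hull rather than ordinary tensor — still commutes with base change, since reflexive hulls are notoriously badly behaved in families. The resolution is to do everything on $U$, where the divisors are Cartier and the product is the honest tensor product of line bundles (which trivially commutes with base change), and then invoke that $j_*$ for $j:U\hookrightarrow X$ preserves flatness and base-change-compatibility here because the fibers are $S_2$ and $X_t\setminus U_t$ has codimension $\geq 2$ — this is precisely the mechanism behind \cite[Chapter 9]{Kol23} and I would cite it rather than reprove it. Once (4) and (5) are in hand, the proposition follows by assembling the pieces and invoking Remark \ref{rem:moduli-stack-def}(v) for conditions (2) and (5)'s equivalent forms.
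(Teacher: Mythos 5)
The paper's own proof is a single sentence deferring to \cite[Lemma 3.3]{BABWILD}, so you are supplying details the paper omits. Your framing of the reduction is correct: conditions (1) and (3) are shared between the two definitions, condition (2) holds because K-flatness is automatic for relative Mumford divisors over reduced bases (Remark \ref{rem:moduli-stack-def}(v), ultimately \cite[Proposition 7.43]{Kol23}), and condition (4) is essentially a restatement of the hypothesis that the CY index divides $N$, once one identifies the reflexive hull computed on $U$ with the divisorial sheaf of $N(K_{X/T}+B+D)$. All of that is fine, and you correctly isolate condition (5) as the real content.

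The gap is in your step (c). The principle you invoke --- that $j_*$ of a line bundle from an open set with fiberwise codimension-$\geq 2$ complement is automatically flat and commutes with base change because the fibers are $S_2$ --- is false, even over a smooth base. If it were true, Koll\'ar's condition would be vacuous for every locally stable family and the hull/husk and K-flatness machinery of \cite{Kol23} would be unnecessary. A concrete failure: for a Weil divisor $E$ passing through a non-$\bQ$-factorial fiber singularity (e.g.\ a ruling of a cone over an elliptic curve through a non-torsion point), the map $\cO_X(E)\otimes k(t)\to \cO_{X_t}(E_t)$ need not be an isomorphism. What actually makes condition (5) work is that $m(K_{X/T}+B)+qD_1$ is $\bQ$-Cartier on the total space --- since $K_{X/T}+B+D$ and $D$ are both $\bQ$-Cartier by hypothesis, so are $K_{X/T}+B$ and $D_1$ --- and that its divisorial restriction to each fiber is again $\bQ$-Cartier because the fibers are boundary polarized CY pairs; combined with the reducedness of $T$, one then applies base change for divisorial sheaves of $\bQ$-Cartier relative Mumford divisors (via the local cyclic-cover argument, which is how \cite[Lemma 3.3]{BABWILD} proceeds). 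You assemble the right ingredients in (a) and (b) --- the relatively Cartier multiple of $D_1$, the $\bQ$-Cartierness of $K_{X/T}+B$ --- but your final gluing step discards them in favor of a depth/pushforward argument that does not hold in this generality. Relatedly, \cite[Theorem 4.7.1]{Kol23} does not assert that locally stable families over reduced bases satisfy Koll\'ar's condition, so that citation cannot carry the weight you place on it.
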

	
	\begin{proof}
		This follows from the same argument as \cite[Lemma 3.3]{BABWILD}.    
	\end{proof}

	\begin{rem}[Non-noetherian]
		We can extend Definition \ref{d:familyofbpcys} to families over  arbitrary (possibly non-Noetherian) schemes 
		by the bootstrapping from the Noetherian case as in 
		\cite[Definition 3.4]{BABWILD}.
		This is needed to define the moduli stack below. 
	\end{rem}

	Using the previous definition, we now define the moduli stack of boundary polarized CY pairs. 
	When $l=1$, the definition agrees with \cite[Definition 3.4]{BABWILD}.
	
	\begin{defn}\label{def:moduli-stack}
		Let $\cM(\chi, N, \bfa, c)$ denote the category fibered in groupoids over $\mathrm{Sch}_{\bk}$ where:
		\begin{itemize}
			\item The objects are families of boundary polarized CY pairs 
			\[
			(X, B+D) \to T
			\]
			with coefficients $(\bfa, c)$, index dividing $N$, and $\chi(X_t, \omega_{X_t}^{[-m]}( -m B)) = \chi(m)$ for each $t\in T$ and $m\in \bN$ with $m\bfa\in \bZ^l$.
			\item The morphisms $[(X', B'+D')\to T']\to [(X, B+D) \to T]$ consist of morphisms of schemes $X'\to X$ and $T'\to T$ such that $X'\to X\times_T T'$ is an isomorphism and $B_i'$ and $D'_1$ are the divisorial pullbacks of $B_i$ and $D_1$.
		\end{itemize}
	\end{defn}
	
	
	
	\begin{thm}\label{thm:moduli-alg-stack}\label{t:algebraicstack}
		$\cM(\chi, N, \bfa, c)$ is an algebraic stack locally of finite type over $\bk$ with affine diagonal. 
	\end{thm}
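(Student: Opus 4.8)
The plan is to verify the conditions of Artin's criterion (or, more efficiently, to reduce to the known algebraicity results for moduli of stable pairs). First I would observe that the moduli stack $\cM(\chi,N,\bfa,c)$ is built out of the stack $\mathcal{M}^{\mathrm{slc}}$ of families of pairs in the sense of \cite{Kol23}: forgetting the polarizing boundary and the Calabi--Yau condition, a family of boundary polarized CY pairs $(X,B+D)\to T$ is in particular a family of slc pairs $(X, B+D)\to T$ with fixed coefficients $(\bfa,c)$, fixed Hilbert function, and satisfying Koll\'ar's condition. By \cite[Chapter 8]{Kol23} (or \cite{Kol23} Theorem on algebraicity of the moduli of stable pairs, combined with the fact that dropping the ampleness of $K_X+B+D$ and instead imposing the boundedness conditions leaves the stack algebraic), the stack parametrizing such families of slc pairs with Koll\'ar's condition is an algebraic stack locally of finite type over $\bk$ with affine diagonal. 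So the bulk of the work is to show that the further conditions cutting out $\cM(\chi,N,\bfa,c)$ inside this ambient stack define a locally closed (in fact, I expect open) substack, and that doing so preserves affineness of the diagonal.

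The key steps, in order, would be: (1) Set up the ambient algebraic stack $\mathcal{P}$ of families $(X,\sum a_i B_i + cD_1)\to T$ of slc pairs with the prescribed coefficients, Hilbert function $\chi$, and Koll\'ar's condition on all the reflexive sheaves $\omega_{X/T}^{[m]}(mB+qD_1)$ appearing in Definition~\ref{d:familyofbpcys}(5); cite \cite{Kol23} for its algebraicity, local finite type, and affine diagonal. (2) Show that the locus where $D_1$ is relatively ample is open: ampleness is an open condition on the base for a proper flat family together with a $\bQ$-Cartier divisor, using that the relevant power of $\cO_X(D_1)$ is a line bundle and invoking openness of ampleness \cite[Stacks, Tag 0D3A]{Kol23}-type results (or the corresponding EGA statement). (3) Show that the condition $\omega_{X/T}^{[N]}(N(B+D))\cong_T \cO_X$ up to pullback of a line bundle from $T$ is representable: the locus in $\mathcal{P}$ where the reflexive sheaf $\mathcal{L}:=\omega_{X/T}^{[N]}(N(B+D))$ (which is flat and commutes with base change by the Koll\'ar condition imposed in $\mathcal{P}$) is fiberwise trivial is a locally closed substack, and after passing to it $\mathcal{L}$ descends (up to a line bundle from the base) by cohomology-and-base-change plus the seesaw/rigidity argument; being $N$-torsion-trivial in the Picard functor is a closed condition. (4) Note the remaining fiberwise conditions in Definition~\ref{d:familyofbpcys}(3) — that $(X_t,B_t+D_t)$ is slc, with $B_t$ effective and $K_{X_t}+B_t+D_t\sim_{\bQ}0$ — are already encoded once (2) and (3) hold (the $\bQ$-linear triviality of $K_X+B+D$ forces $-K_X-B$ ample, i.e.\ $(X,B)$ log Fano, so slc-ness of $(X,B+D)$ is the slc-ness already built into $\mathcal{P}$), and the constancy of $\chi$ is locally constant hence an open-and-closed condition. (5) Conclude that $\cM(\chi,N,\bfa,c)$ is a locally closed substack of $\mathcal{P}$; since locally closed substacks of algebraic stacks locally of finite type with affine diagonal are again algebraic, locally of finite type, with affine diagonal, the theorem follows. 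One should also check the boundedness built into the definition (via the fixed Hilbert function $\chi$ and index $N$) does not interfere — it only restricts to a union of components and does not affect the local structure.

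The main obstacle I expect is step (3): making precise the descent of the trivializing isomorphism $\omega_{X/T}^{[N]}(N(B+D))\cong_T\cO_X$ in families over arbitrary (possibly non-reduced, even non-Noetherian) bases, so that it is genuinely a condition cut out by a locally closed immersion of stacks rather than merely a condition on geometric points. This is exactly where the careful formulation in Definition~\ref{d:familyofbpcys}, with Koll\'ar's condition (5) imposed on all the relevant reflexive powers, is needed: it guarantees that $\omega_{X/T}^{[N]}(N(B+D))$ is a flat family of sheaves commuting with base change, so that one may apply the theory of the Picard functor / relative Picard stack and express "this reflexive sheaf is trivial up to a pullback from the base'' as a representable condition. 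The secondary subtlety is bootstrapping from the Noetherian case to arbitrary schemes, which is handled exactly as in \cite[Definition 3.4 and Theorem 3.5]{BABWILD} by a standard limit argument; since the $l=1$ case is \cite[Theorem 3.5]{BABWILD}, the general case should follow by the identical proof with only notational changes to accommodate the multiple log Fano boundary components $B_1,\dots,B_l$, and indeed the cleanest write-up is probably to say precisely that.
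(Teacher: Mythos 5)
There is a genuine gap at your step (1), and it is the step that carries all the weight. You propose to start from an ambient algebraic stack $\mathcal{P}$ of families of slc pairs with prescribed coefficients, Hilbert function and Koll\'ar's condition, \emph{without} yet imposing that $D_1$ is relatively ample or that $K_X+B+D\sim_{\bQ}0$, and to cite \cite{Kol23} for its algebraicity, local finite type, and affine diagonal. No such result is available: the algebraicity theorems in \cite{Kol23} are for stable pairs, i.e.\ they presuppose a relatively ample functorial divisor. That ampleness is used twice in an essential way. First, boundedness/local finite type requires embedding the fibers into a fixed projective space, which needs a very ample multiple of a functorial divisor; here that divisor is $-(K_X+B)\sim_{\bQ}D$, and its ampleness is exactly the condition you are postponing to steps (2)--(3). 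Second, and more seriously, the affine diagonal requires that isomorphisms of pairs preserve an ample class; for merely locally stable pairs the automorphism group schemes need not be affine, so the diagonal of your $\mathcal{P}$ would not be affine even if $\mathcal{P}$ were algebraic. So the reduction ``locally closed substack of a known ambient stack'' cannot get off the ground in the order you propose: the ampleness has to come first, not be cut out afterwards.

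The paper's proof (following \cite[Theorem 3.7]{BABWILD}) instead builds the parameter space by hand in the order forced by the above: it first restricts to the open substacks $\cM(\chi,N,\bfa,c,\bfb,k)$ where $-k(K_{X_t}+B_t)$ is very ample Cartier with fixed degrees $\bfb$, uses this to embed the fibers into $\bP^M$ and form an open locus $H_1$ of a Hilbert scheme parametrizing deminormal subschemes, then adds the divisors via a fiber product of Koll\'ar's spaces $\mathrm{KDiv}$ of K-flat relative Mumford divisors, and finally cuts out the moduli conditions (including Koll\'ar's condition (5) and the CY condition (4)) by a $\PGL_{M+1}$-equivariant locally closed partial decomposition $H_5'\hookrightarrow H_2'$, obtaining $\cM(\chi,N,\bfa,c,\bfb,k)\cong[H_5'/\PGL_{M+1}]$. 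Your remaining observations --- that conditions (4) and (5) are representable by locally closed partial decompositions because the relevant reflexive powers are flat and commute with base change, that the fiberwise conditions reduce to the slc-ness already imposed once $D$ is ample and $K+B+D\sim_{\bQ}0$, and that the non-Noetherian case is handled by the bootstrap of \cite[Definition 3.4]{BABWILD} --- are correct and correspond to the construction of $H_5'$ and the \'etale descent step in the paper. But they only become meaningful after the quotient-stack presentation is in place, so the proof as proposed does not go through without replacing step (1) by the Hilbert-scheme-plus-$\mathrm{KDiv}$ construction.
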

	
	\begin{proof}
		The proof is very similar to \cite[Proof of Theorem 3.7]{BABWILD}, so we only provide a sketch. 
		
		By asymptotic Riemann-Roch, we may assume that $\chi(m) = \frac{V}{n!}m^n + O(m^{n-1})$ for some $V\in \bQ_{>0}$ and $n\in \bN$, as otherwise $\cM(\chi, N, \bfa, c)$ is empty.
		Let $\bfb=(b_1, \cdots, b_l)\in \bQ_{\geq 0}^{l}$ 
		and $k\in \bN$ such that $k\bfa\in \bZ^l$. Let $\cM(\chi, N, \bfa, c, \bfb, k)\subset \cM(\chi, N, \bfa, c)$ be the full subcategory consisting of families $[f:(X, B+D) \to T]$ such that $-k(K_{X_t}+B_t)$ is a very ample Cartier divisor and $\deg(B_{i,t}) = b_i$. Here the degree of a Weil divisor $G_t$ on $X_t$ is defined as $\deg(G_t):=((-K_{X_t}-B_t)^{n-1}\cdot G_t)$. Note that $\deg(D_{1,t}) = c^{-1}V$ as $V= (-K_{X_t}-B_t)^n = ((-K_{X_t}-B_t)^{n-1}\cdot cD_{1,t})$. We denote by $b_{l+1}:=c^{-1}V$. We claim that $\cM(\chi, N, \bfa, c, \bfb, k)$ is an algebraic stack of finite type over $\bk$ with affine diagonal.
		
		By \cite[Proposition 3.5]{BABWILD} we know that for any object $[f:(X, B+D) \to T]$ in $\cM(\chi, N, \bfa, c, \bfb, k)$, we have that $\omega_{X/T}^{[-k]}(-kB)$ is a $f$-very ample line bundle and  $f_*\omega_{X/T}^{[-k]}(-kB)$ is a rank $\chi(k)$ vector bundle on $T$. Moreover, we have an induced closed embedding $X\hookrightarrow \bP_T(f_*\omega_{X/T}^{[-k]}(-kB))$. Denote by $M:=\chi(k) - 1$. By \cite[Proof of Proposition 3.9]{BABWILD}, there exists a $\PGL_{M+1}$-invariant open subscheme $H_1\subset \Hilb_{\chi(k\cdot)}(\bP^M)$ that parametrizes closed subschemes $X\hookrightarrow\bP^M$ with Hilbert polynomial $\chi(k\cdot)$ such that $X$ is geometrically connected, reduced, equidimensional, deminormal, $H^i(X, \cO_X(1)) = 0$ for all $i>0$, and that the restriction map $H^0(\bP^M, \cO_{\bP^M}(1))\to H^0(X, \cO_X(1))$ is an isomorphism. Denote by $X_{H_1}\to H_1$ the universal family.
		
		Next, let 
		\[
		H_2' := \bigtimes_{i=1}^{l+1}{}_{H_1}\mathrm{KDiv}_{k^{n-1}b_i}(X_{H_1}/H_1),
		\]
		where we denote by $\mathrm{KDiv}_{k^{n-1}b_i}(X_{H_1}/H_1)$ the parameter space of K-flat relative Mumford divisors of degree $k^{n-1}b_i$. By \cite[Theorem 7.3]{Kol23} we know that $H_2'$ is a separated scheme of finite type over $\bk$. Let $B_{i, H_2'}$ with $1\leq i \leq l$ (resp. $D_{1, H_2'}$) be the universal family of K-flat relative Mumford divisors on $X_{H_2'}= X_{H_1}\times_{H_1} H_2'$ of degree $k^{n-1} b_i$ (resp. $k^{n-1} b_{l+1}$). From the construction, it is clear that $H_2'$ has a natural $\PGL_{M+1}$-action such that $H_2'\to H_1$ is $\PGL_{M+1}$-equivariant. Let $B_{H_2'}:=\sum_{i=1}^l B_{i, H_2'}$ and $D_{H_2'}:=c D_{1, H_2'}$.
		
		Next, by similar arguments to \cite[Proof of Proposition 3.9]{BABWILD}, there exists a $\PGL_{M+1}$-equivariant locally closed partial decomposition $H_5'\hookrightarrow H_2'$ such that the pull-back of the universal family $(X_{H_2'}, B_{H_2'}+ D_{H_2'})$ under $\phi:T\to H_2'$ gives rise to an object in $\cM(\chi, N, \bfa, c, \bfb, k) (T)$ if and only if $\phi$ factors through $H_5'$. Moreover, we have an isomorphism $[H_5'/\PGL_{M+1}]\cong \cM(\chi, N, \bfa, c, \bfb, k)$. Since $H_5'$ is a scheme of finite type over $\bk$, the claim is proved.
		
		Finally, by similar arguments to \cite[Proof of Theorem 3.7]{BABWILD}, we have that $\cM(\chi, N, \bfa, c)$ satisfies \'etale descent and has a covering by open substacks $\cM(\chi, N, \bfa, c, \bfb, k)$ which are algebraic stacks of finite type with affine diagonal. Thus $\cM(\chi, N, \bfa, c)$ is an algebraic stack locally of finite type with affine diagonal. The proof is finished.
	\end{proof}
	
	We now show that certain changes of the coefficient set induce  finite morphisms of  stacks.
	The result will be used to reduce various results to the case when $l=1$.
	
	\begin{prop}\label{prop:marking-change}
		Let $d, l'\in \bZ_{>0}$, $N'\in (dN) \bZ_{>0}$, $\bfa'\in \frac{1}{N'}\bZ_{>0}^{l'}$, and $c':= \frac{c}{d}$ (hence $c'\in \frac{1}{N'}\bZ_{>0}$). Let $\rmA = (\rmA_{i,j})$ be an $l'\times l$-matrix with non-negative integer entries  such that $\bfa = \bfa' \rmA$. 
		Then there is a finite representable morphism of algebraic stacks 
		\[
		\Phi:\cM(\chi, N, \bfa, c)\to \cM(\chi, N', \bfa', c')
		\]that maps $[(X,\sum_{i=1}^{l} a_i B_i + cD_1)\to T]$ to $[(X, \sum_{i=1}^{l'} a_i' B_i' + c' D_1')\to T]$ where $B_i':= \sum_{j=1}^{l} \rmA_{i,j} B_j$ and $D_1':=d D_1$.
	\end{prop}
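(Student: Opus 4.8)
The plan is to construct the morphism $\Phi$ at the level of families and then check that it is finite and representable. Given an object $[(X,\sum_{i=1}^l a_i B_i + cD_1)\to T]$ of $\cM(\chi,N,\bfa,c)$, I would set $B_i' := \sum_{j=1}^l \rmA_{i,j}B_j$ as relative Weil divisors (using that the relative Mumford divisors $B_j$ form an additive monoid under the operation of taking closures of sums, since the $B_j$ restrict to Cartier divisors on the open locus $U$ where fibers are smooth or nodal) and $D_1' := dD_1$. The identity $\bfa = \bfa'\rmA$ gives $\sum_{i=1}^{l'} a_i' B_i' = \sum_{i,j} a_i' \rmA_{i,j} B_j = \sum_{j=1}^l a_j B_j = B$, and likewise $c' D_1' = \tfrac{c}{d}\cdot dD_1 = cD_1 = D$, so the underlying pair $(X,B+D)\to T$ is literally unchanged; only the marking has been coarsened. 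The first real task is therefore to verify that the new data $(X,\sum a_i' B_i' + c'D_1')\to T$ satisfies Definition \ref{d:familyofbpcys} with coefficients $(\bfa',c')$ and index dividing $N'$. Conditions (1), (3) are untouched; condition (4) follows from the old one since $N' \in (dN)\bZ_{>0}$ means $N(K_{X/T}+B+D)\sim_T 0$ implies $N'(K_{X/T}+B+D)\sim_T 0$; condition (2), K-flatness of the $B_i'$ and $D_1'$, I would deduce from K-flatness of the $B_j$ and $D_1$ together with the fact (from \cite[Section 7.1]{Kol23}) that nonnegative integer combinations of K-flat relative Mumford divisors are again K-flat; condition (5), Koll\'ar's condition for the reflexive sheaves $\omega_{X/T}^{[m]}(mB' + qD_1')$, holds because $B' = B$ as divisors and $mB' + qD_1' = mB + qdD_1$, which is of the form $mB + q'D_1$ covered by the old condition (5) (one must note $m\bfa' \in \bZ^{l'}$ is implied by, or can be arranged compatibly with, $m\bfa\in\bZ^l$ via the integrality of $\rmA$). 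One also checks the Hilbert-function normalization is preserved, which is immediate since $\omega_{X_t}^{[-m]}(-mB')$ equals $\omega_{X_t}^{[-m]}(-mB)$. Functoriality in $T$ (compatibility with divisorial pullback) is formal, so this defines a morphism of CFGs $\Phi$.

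Next I would prove $\Phi$ is representable and finite. For representability it suffices to show $\Phi$ is faithful on automorphisms and, more precisely, that for any scheme $S$ mapping to $\cM(\chi,N',\bfa',c')$ the fiber product $\cM(\chi,N,\bfa,c)\times_{\cM(\chi,N',\bfa',c')} S$ is an algebraic space (in fact a scheme) finite over $S$. Concretely, an $S$-point of the fiber product is a family $(X,B+D)\to S$ that is an object of the target together with a lift of its marking to coefficients $\bfa$; since $X$, the total boundary $B$, and $D$ are already fixed by the target object, the only extra data is the decomposition $B = \sum_{j=1}^l a_j B_j$ into relative Mumford divisors $B_j$ such that $\sum_i \rmA_{i,j}$-combinations recover the given $B_i'$, together with $D_1$ with $dD_1 = D_1'$. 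I would argue that, fiber by fiber and then in families via the representability of the space of relative Mumford divisors $\mathrm{KDiv}$ from \cite[Theorem 7.3]{Kol23} (as already used in the proof of Theorem \ref{t:algebraicstack}), the set of such refinements is parametrized by a scheme finite over $S$: the support of each $B_j$ is a union of irreducible components of $\Supp(B')$ cut out by the combinatorics of $\rmA$ and the prescribed multiplicities, and likewise $D_1 = \tfrac1d D_1'$ is uniquely determined as a divisor (the only subtlety being whether $\tfrac1d D_1'$ is itself an integral relative Mumford divisor, which is a closed-and-open condition on $S$). Because there are only finitely many ways to distribute the finitely many prime components of a fiberwise boundary among the $B_j$ with given coefficients, the map is quasi-finite; properness/finiteness follows from the valuative criterion using the properness of $\mathrm{KDiv}$ over the locus of fixed total divisor, or more simply from the fact that the refinement, when it exists, is unique up to the finite ambiguity of relabeling components with equal coefficients.

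The step I expect to be the main obstacle is the finiteness (as opposed to mere quasi-finiteness and representability) of $\Phi$: one must rule out a refinement "escaping to infinity" in a one-parameter degeneration, i.e. show that if a family over a punctured curve $C^\circ$ admits a marking refinement and the total family extends over $C$ as an object of the target, then the refinement extends too. This is where I would invoke that relative Mumford (indeed K-flat) divisors form a \emph{proper} moduli space over the base once their degree is fixed — the individual pieces $B_j$ have degrees determined by $\bfa$, $\rmA$, and $\deg B_i'$, so each $B_j$ lives in a proper $\mathrm{KDiv}$-space, and the limit of the $B_j$'s is forced by the limit of $B'$ to give a valid refinement of the special fiber (here one uses that slc-ness of the limit pair, known from the target object, is preserved and that taking divisorial limits commutes with the combinatorial recombination since $\rmA$ has constant entries). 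Granting this, $\Phi$ is proper and quasi-finite, hence finite, and the isomorphism on the images of pluri-log-canonical embeddings shows it is also representable, completing the proof. I would also remark that the composition of such $\Phi$'s corresponds to multiplying the matrices $\rmA$, which is the functoriality one uses in applications to reduce to $l=1$ by choosing $l'=1$ and $\rmA$ the row vector of appropriate integers.
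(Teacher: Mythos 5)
Your proposal is correct and follows essentially the same route as the paper: define $\Phi$ on families by coarsening the marking, verify conditions (1)--(5) of Definition \ref{d:familyofbpcys} using K-flatness of sums of K-flat divisors and the divisibility relations, obtain representability from injectivity on automorphism groups, quasi-finiteness from the finiteness of marking refinements supported on $\Supp(B')$ with bounded degrees, and properness via the valuative criterion by extending the refined marking over a DVR (the paper does this by taking closures of the generic-fiber divisors $B_{i,K}$ and checking they are relative Mumford divisors via the support containment in $B_R'$, with K-flatness automatic over the reduced base). The only points to tighten are that the degrees of the individual $B_j$ are bounded rather than determined by $\rmA$ and $\deg B_i'$, and that the valuative criterion for properness also requires the (easy but separate) verification that $\Phi$ is of finite type and quasi-separated, which the paper carries out using the cover by the bounded substacks $\cM(\chi,N',\bfa',c',\bfb',k)$.
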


	\begin{proof}
		We first show that $\Phi$ is a morphism of algebraic stacks. We denote by 
		\[
		B := \sum_{i=1}^{l} a_i B_i,\quad D:=cD_1, \quad B':=\sum_{i=1}^{l'} a_i' B_i' , \quad\textrm{and}\quad D' := c'D_1'.
		\]
		It suffices to show that if  $[(X, B+D)\to T]$ is a family of boundary polarized CY pairs, then so is $[(X, B' + D')\to T]$. By Remark \ref{rem:moduli-stack-def}(4), we may assume that $T$ is a Noetherian scheme. Since $\Phi$ does not change the unmarked pairs at all, we know that Definition \ref{d:familyofbpcys}(1)(3) hold. 
		Since the sum of finitely many K-flat divisors is still K-flat by \cite[7.4.5]{Kol23}, Definition \ref{d:familyofbpcys}(2) holds. It is clear that $N'(B'+D')= \frac{N'}{N} (N(B+D))$ as relative Mumford divisors with $\frac{N'}{N}\in \bZ_{>0}$, $m\bfa'\in \bZ^{l'}$ implies $m\bfa = m \bfa' \rmA\in \bZ^l$, and $qD_1' = (qd) D_1$. Thus Definition \ref{d:familyofbpcys}(4)(5) hold. This completes the proof that $\Phi$ is a morphism of algebraic stacks.
		
		It remains to  show that $\Phi$ is a finite representable morphism. Since every $B_i'$ and $D_1'$ are integer combinations of $B_j$'s and $D_1$, we know that $\Phi$ induces an injective group homomorphism $\Aut(X, B+D) \to \Aut(X, B'+D')$ for a point $[(X, B+D)]$ in $\cM(\chi, N, \bfa, c)$. Thus $\Phi$ is representable. Next, we show that $\Phi$ is of finite type. From the proof of Theorem \ref{thm:moduli-alg-stack}, there is an open covering of $\cM(\chi, N', \bfa', c')$ by algebraic stacks of finite type $\cM(\chi, N', \bfa', c', \bfb', k)$ where $\bfb'\in \bQ_{\geq 0}^{l'}$ and $k\in \bN$ such that $k\bfa'\in \bZ^{l'}$.  Suppose $[(X, B+D)]$ is a point in $\Phi^{-1}(\cM(\chi, N', \bfa', c', \bfb', k))$, then $(X, B+D)$ and $(X, B'+D')$ being isomorphic as unmarked pairs implies that $(X, \Supp(B+D))$ belongs to a bounded family. Since $B$ and $D$ have fixed coefficients and $B+D\sim_{\bQ} -K_X$, we know that the degree of each $B_i$ and $D_1$ are bounded from above which implies that $(X,B+D)$ belongs to a bounded family. Thus $\Phi^{-1}(\cM(\chi, N', \bfa', c', \bfb', k))$ is of finite type, and hence $\Phi$ is of finite type.
		
		Next, we show that $\Phi$ is quasi-finite and proper. The quasi-finiteness is clear because $\Supp(B) = \Supp(B')$ and $D_1'= dD_1$, and the degree of $B_i$ is bounded from above by $B+D\sim_{\bQ} -K_X$. We check the valuative criterion for properness.
		Let $\mu_R':\Spec R\to \cM(\chi, N', \bfa', c')$ be a morphism where $R$ is a DVR essentially of finite type over $\bk$ with fraction field $K$, such that there is a lifting morphism $\mu_K:\Spec K \to \cM(\chi, N, \bfa, c)$ of $\mu_K' = \mu_R'|_{\Spec K}$. Thus we have families of boundary polarized CY pairs $[(X_R, B_R'+D_R')\to \Spec R]$ and $[(X_K, B_K+D_K)\to \Spec K]$ by pullbacks of universal families under $\mu_R'$ and $\mu_K$ respectively. Suppose $B_K = \sum_{i=1}^l a_i B_{i,K}$ and $D_K = D_{1,K}$. Let $B_{i, R}$ (resp. $D_{1,R}$) be the divisorial scheme on $X_R$ as the closure of $B_{i, K}$ (resp. $D_{1,K}$). Hence $B_{i,R}$ and $D_{1,R}$ are  flat over $R$.
		Since $\Supp(B_{i,K})\subset \Supp(B_R')$ and $\Supp(D_{1,K})\subset \Supp(D_R')$, we know that $\Supp(B_{i,R})\subset \Supp(B_R')$ and $\Supp(D_{1,R})\subset \Supp(D_R')$.  Since $B_R'$ and $D_R'$ are $\bQ_{>0}$-combinations of relative Mumford divisor on $X_R/R$, we know that $B_{i, R}$ and $D_{1,R}$ do not contain codimension $1$ singular points in each fiber. This together with flatness  implies that $B_{i,R}$ and $D_{1,R}$ are relative Mumford divisors on $X_R/R$. Let $B_R:=\sum_{i=1}^l a_i B_{i, R}$ and $D_R:=cD_{1,R}$. By generic flatness of $B_R'$ and $D_R'$, we know that $(X_R, B_R + D_R) $ and $(X_R, B_R'+D_R')$ are isomorphic as unmarked pairs since they agree over $\Spec K$. By Remark \ref{rem:moduli-stack-def}(3) and \cite[Lemma 2.11]{BABWILD}, the family $[(X_R, B_R + D_R) \to \Spec R]$ gives an extension $\mu_R:\Spec R\to \cM(\chi, N, \bfa, c)$ of $\mu_K$ that lifts $\mu_R'$. This proves the existence part of the valuative criterion for properness. The uniqueness part follows from generic flatness of relative Mumford divisors.
		
		To summarize, we have shown that $\Phi$ is representable, of finite type, quasi-finite, and proper. Thus $\Phi$ is a finite morphism. The proof is finished.
	\end{proof}

	\begin{thm}\label{t:stackisScomp+Thetared}
		$\cM(\chi, N, \bfa, c)$ is S-complete, $\Theta$-reductive, and satisfies the existence part of the valuative criterion for properness, all with respect to DVRs essentially of finite type over $\bk$.
	\end{thm}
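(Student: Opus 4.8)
The plan is to verify each of the three properties --- S-completeness, $\Theta$-reductivity, and the existence part of the valuative criterion for properness --- by the same strategy: reduce to the case $l=1$ using the finite morphism of Proposition \ref{prop:marking-change}, and then invoke the corresponding results from \cite{BABWILD}, which are proved there for families with a single marked divisor in the log Fano boundary. Concretely, for each property one takes the relevant test datum (a morphism $\STR\setminus 0\to\cM(\chi,N,\bfa,c)$, a morphism $\Theta_R\setminus 0\to\cM(\chi,N,\bfa,c)$, or a morphism $\Spec K\to\cM(\chi,N,\bfa,c)$ together with $\Spec R\to(\text{target of }\Phi)$), composes with $\Phi$ to land in a moduli stack with $l'=1$, applies the \cite{BABWILD} result to obtain the desired extension there, and then lifts the extension back up along $\Phi$. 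The lifting step is where the finiteness and representability of $\Phi$ --- and in particular the explicit description of the map on objects in Proposition \ref{prop:marking-change} --- do the work: once the unmarked family and the divisor $D_1'=dD_1$ are pinned down over $\STR$ (resp. $\Theta_R$, resp. $\Spec R$), the individual divisors $B_i$ are recovered as the closures of their restrictions to the punctured base, and one checks these closures remain relative Mumford divisors and assemble into a family of boundary polarized CY pairs, exactly as in the proof of Proposition \ref{prop:marking-change} (valuative criterion part).

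In more detail, for S-completeness I would argue as follows. Given $g:\STR\setminus 0\to\cM(\chi,N,\bfa,c)$, the composite $\Phi\circ g$ extends over $\STR$ by the $l=1$ case (this is \cite[Theorem 3.22 or the analogous statement]{BABWILD}). This produces a family of boundary polarized CY pairs over $\STR$ whose unmarked total space is $\cX\to\STR$ and whose polarizing divisor is some $\cD_1'$. Over $\STR\setminus 0$ we already have the divisors $\cB_1,\ldots,\cB_l$; let $\overline{\cB_i}\subset\cX$ be their closures. Since $\cX$ is demi-normal in each fiber and the $\overline{\cB_i}$ do not meet the codimension-one singular locus (being contained in $\Supp$ of the extended $\cB'_1 = \sum \rmA_{1,j}\cB_j$), and since they are flat over $\STR$ by taking closures, they are relative Mumford divisors; setting $\cB=\sum a_i\overline{\cB_i}$ and $\cD=c\cdot\cD_1'/d$ gives the candidate extension, which agrees with the $\Phi$-image family after forgetting the marking, hence satisfies Definition \ref{d:familyofbpcys} by Remark \ref{rem:moduli-stack-def}(v) and the argument of Proposition \ref{prop:marking-change}. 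Uniqueness follows because any two extensions agree over the dense open $\STR\setminus 0$ and relative Mumford divisors are determined by their generic fibers (closures are unique). The $\Theta$-reductivity argument is identical with $\Theta_R$ in place of $\STR$, and the existence part of the valuative criterion is the content already essentially carried out in the proof of Proposition \ref{prop:marking-change} --- though there one should note it now needs to be applied with the extension existing upstairs, which again comes from the $l=1$ statement in \cite{BABWILD} combined with properness of $\Phi$.

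The main obstacle I anticipate is \emph{not} the reduction to $l=1$ itself but verifying that the $l=1$ statements are genuinely available in the generality we need: \cite{BABWILD} works with $B=a_1B_1$ a single marked divisor, whereas after applying $\Phi$ we land in $\cM(\chi,N',\bfa',c')$ with $l'=1$, so this matches, but one must make sure the coefficient $c'=c/d$ and index $N'$ fall within the hypotheses of the cited results, and that the Hilbert-function bookkeeping ($\chi$ is preserved since $\Phi$ does not change unmarked pairs) is consistent. A secondary subtlety: S-completeness and $\Theta$-reductivity in \cite{BABWILD} may be stated only for the stack, not composed with a finite morphism, so one must confirm that a finite representable morphism of stacks reflects these properties in the sense needed --- i.e. if $\Phi$ is finite and representable and the target is S-complete/$\Theta$-reductive, then extensions of punctured-base maps to the target lift to extensions for the source, which holds precisely because the fiber of $\Phi$ over the extended family is recovered canonically by taking closures of divisors, as above. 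I would state this lifting lemma cleanly first and then apply it three times. Finally, one should double-check the uniqueness clauses, since S-completeness and $\Theta$-reductivity as defined in Definition \ref{d:scomplete+Thetareductive} require \emph{unique} extension; uniqueness downstairs plus uniqueness of divisorial closures gives uniqueness upstairs, but this deserves an explicit sentence.
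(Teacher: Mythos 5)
Your argument is essentially the paper's own \emph{alternative} proof of this theorem: the paper first observes that the cited results of \cite{BABWILD} (Theorems 3.7, 5.4, and 6.3 there, combined with Lemma 2.11) in fact make no assumption that the log Fano boundary is marked by a single divisor, so the statement follows by direct citation with no reduction needed; it then records, as a second route, exactly your reduction to $l=1$ via the finite morphism $\Phi$ of Proposition \ref{prop:marking-change}. Two remarks on your version. First, for the reduction you should take the simplest instance of $\Phi$, namely $d=1$, $N'=N$, $c'=c$, ${\bf a}'=1/\lcd({\bf a})$, sending $(X,\sum a_iB_i+cD_1)$ to $(X,\tfrac{1}{\lcd({\bf a})}B_1'+cD_1)$ with $B_1'=\sum(\lcd({\bf a})a_i)B_i$; this makes your worry about whether $c'=c/d$ and $N'=dN$ fall within the hypotheses of the $l=1$ results moot. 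Second, the ``lifting lemma'' you propose to state and prove by hand (recovering the $B_i$ upstairs as closures of their restrictions to the punctured base, checking they remain relative Mumford divisors, and deducing uniqueness from uniqueness of divisorial closures) is already packaged abstractly: a finite morphism of stacks is automatically S-complete, $\Theta$-reductive, and proper by \cite[Propositions 3.21(1) and 3.44(1)]{AHLH23}, and these properties of morphisms compose with the corresponding properties of the target stack. The paper invokes this directly, which is cleaner; your explicit closure argument is the content of the valuative-criterion step in the proof of Proposition \ref{prop:marking-change} and is correct, but duplicating it here is unnecessary. There is no gap in your argument, only extra work relative to either of the paper's two routes.
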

	
	\begin{proof}
		The theorem follows 
		immediately from combining  \cite[Theorems 3.7, 5.4, and 6.3]{BABWILD} with \cite[Lemma 2.11]{BABWILD}. Note that statements cited  make no assumption that the marking on the log Fano  boundary is by a single Mumford divisor and so they can be applied in our setting. 
		
		Alternatively, the result can be deduced from \cite[Theorem 1.1]{BABWILD}, which only states the result when the marking on the log Fano boundary is by a single divisor. 
		Indeed, let $\lcd(\bfa)\in \bZ_{>0}$ be the least common denominator of $a_1, \cdots, a_l$.
		By \cite[Theorem 1.1]{BABWILD} and Remark \ref{rem:moduli-stack-def}(1), we know that $\cM(\chi, N, \frac{1}{\lcd(\bfa)}, c)$ satisfies the statement. By Proposition \ref{prop:marking-change}, there is a finite morphism $\Phi: \cM(\chi, N, \bfa, c)\to \cM(\chi, N, \frac{1}{\lcd(\bfa)}, c)$ that maps $[(X,\sum_{i=1}^{l} a_i B_i + cD_1)\to T]$ to $[(X, \frac{1}{\lcd(\bfa)}B'_1 + cD_1)\to T]$ where $B'_1:= \sum_{i=1}^{l} (\lcd(\bfa) a_i) B_i$. Thus we have that  $\cM(\chi, N, \bfa, c)$ also satisfies the statement as a finite morphism is always S-complete, $\Theta$-reductive, and proper by \cite[Propositions 3.21(1) and 3.44(1)]{AHLH23}.
	\end{proof}
	
	\subsubsection{Boundedness}
	
	We now prove a  criterion for when  substacks  of the moduli stack of boundary polarized CY pairs  are of finite type.
	
	\begin{prop}\label{p:boundedness}
		A locally closed substack $\cZ \subset  \cM(\chi, N,{\bf a},c)$ is of finite type if and only if there exists a positive integer $r$ such that $r(K_X+B)$ is Cartier for every   $(X,B+D)\in \mathfrak{\cZ}(\bk)$.
	\end{prop}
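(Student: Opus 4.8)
The plan is to reduce both implications to the quasi-compactness of $\cZ$, using the open cover of $\cM(\chi,N,\bfa,c)$ by the finite type substacks $\cM(\chi,N,\bfa,c,\bfb,k)$ constructed in the proof of Theorem~\ref{t:algebraicstack}. Since $\cZ$ is a locally closed substack of a locally finite type algebraic stack with affine diagonal, it is itself locally of finite type and quasi-separated, so $\cZ$ is of finite type if and only if it is quasi-compact. Recall that on each $\cM(\chi,N,\bfa,c,\bfb,k)$ the divisor $-k(K_X+B)$ is very ample, hence Cartier, and $k\bfa\in\bZ^l$.

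For the forward implication, suppose $\cZ$ is of finite type, hence quasi-compact. Then $\cZ$ is covered by finitely many of the open substacks, say $\cM(\chi,N,\bfa,c,\bfb^{(1)},k_1),\ldots,\cM(\chi,N,\bfa,c,\bfb^{(s)},k_s)$, and I would set $r:=\operatorname{lcm}(k_1,\ldots,k_s)$. For any $(X,B+D)\in\cZ(\bk)$ we have $(X,B+D)\in\cM(\chi,N,\bfa,c,\bfb^{(j)},k_j)(\bk)$ for some $j$, so $k_j(K_X+B)$ is Cartier, and therefore so is $r(K_X+B)=\frac{r}{k_j}\,k_j(K_X+B)$.

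For the reverse implication, suppose there is a positive integer $r$ with $r(K_X+B)$ Cartier for every $(X,B+D)\in\cZ(\bk)$; after replacing $r$ by a multiple I may also assume $r\bfa\in\bZ^l$, so that $L:=\omega_X^{[-r]}(-rB)=\cO_X(-r(K_X+B))$ is a line bundle. It is ample because $-(K_X+B)\sim_{\bQ}D$ is ample, and its Hilbert polynomial $m\mapsto\chi(X,mL)=\chi(mr)$ is independent of $(X,B+D)\in\cZ(\bk)$. The key step is then to show that the pairs $(X,B+D)$ with $(X,B+D)\in\cZ(\bk)$ form a bounded family: each $(X,B)$ is an slc log Fano pair whose anticanonical divisor $-(K_X+B)$ has Cartier index dividing $r$ and fixed volume (the leading coefficient of $\chi$), and since $D\sim_{\bQ}-(K_X+B)$ is a fixed multiple $cD_1$ of an effective divisor whose class is controlled by $(X,B)$, $N$ and $c$, boundedness of slc Fano pairs of bounded Cartier index — the same circle of effective vanishing and boundedness results \cite{Kol85,Fuj17} invoked elsewhere for substacks of bounded Gorenstein index — then yields boundedness of the whole pair $(X,B+D)$. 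Granting this, only finitely many values $\bfb^{(1)},\ldots,\bfb^{(s)}$ of $(\deg B_1,\ldots,\deg B_l)$ occur on $\cZ(\bk)$, and there is an integer $k_0$ with $r\mid k_0$ and $k_0\bfa\in\bZ^l$ such that $-k_0(K_X+B)$ is very ample for every $(X,B+D)\in\cZ(\bk)$. Hence every $\bk$-point of $\cZ$ lies in one of the finitely many substacks $\cM(\chi,N,\bfa,c,\bfb^{(j)},k_0)$, so $\cZ=\bigcup_{j=1}^{s}\bigl(\cZ\cap\cM(\chi,N,\bfa,c,\bfb^{(j)},k_0)\bigr)$ is a finite union of locally closed substacks of finite type stacks, hence of finite type.

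The hard part is the boundedness input in the reverse implication: the numerical identity $-K_X\sim_{\bQ}B+D$ alone does not bound the degrees of the individual components $B_i$, so one genuinely must appeal to a boundedness theorem for slc Fano pairs of bounded index, and everything else is formal bookkeeping with the cover. An equivalent and perhaps cleaner route I would also consider is to first observe that the locus $\cM^{(r)}\subseteq\cM(\chi,N,\bfa,c)$ where $r(K_X+B)$ is Cartier is an \emph{open} substack — this uses that $\omega_{X/T}^{[r]}(rB)$ commutes with base change by Definition~\ref{d:familyofbpcys}(5), so the locally free locus is open — and then to prove directly, by the same boundedness input, that $\cM^{(r)}$ is quasi-compact; since $\cZ\subseteq\cM^{(r)}$, this again gives that $\cZ$ is of finite type.
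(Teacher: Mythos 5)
Your overall strategy coincides with the paper's: both directions are reduced to covering $\cZ$ by finitely many of the finite type substacks $\cM(\chi,N,\bfa,c,\bfb,k)$ from the proof of Theorem~\ref{t:algebraicstack}, and your forward implication is exactly the paper's argument.

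The reverse implication, however, has a gap at precisely the point you flag as ``the hard part.'' You appeal to ``boundedness of slc Fano pairs of bounded Cartier index'' to conclude boundedness of the whole pair $(X,B+D)$, but the results you have available ([Fuj17], giving effective very ampleness of $-r(K_X+B)$ after replacing $r$ by a multiple) bound only the polarized schemes $(X,\cO_X(-r(K_X+B)))$; they say nothing about the marked components $B_i$, and bounding $\deg(B_i)=(-K_X-B)^{n-1}\cdot B_i$ is exactly what is needed to land in finitely many of the substacks $\cM(\chi,N,\bfa,c,\bfb,k)$. Your diagnosis that ``the numerical identity $-K_X\sim_{\bQ}B+D$ alone does not bound the degrees'' is also slightly off: the paper bounds the degrees using precisely this identity, via the chain
\[
a_i\deg(B_i)\ \le\ (-K_X-B)^{n-1}\cdot B\ \le\ (-K_X-B)^{n-1}\cdot(-K_X)\ \le\ d,
\]
where the middle inequality uses that $-K_X-B\sim_{\bQ}D$ is effective, and the constant $d$ is obtained not from a boundedness theorem for \emph{pairs} but from boundedness of the underlying polarized schemes alone: since all $(X,\cO_X(-r(K_X+B)))$ occur as fibers of a single flat projective family $(Y,M)\to Z$ over a finite type base (via the Hilbert scheme), and $z\mapsto M_z^{n-1}\cdot(-K_{Y_z})$ is locally constant after a suitable stratification of $Z$, the quantity $(-K_X-B)^{n-1}\cdot(-K_X)$ takes only finitely many values. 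Once $\deg(B_i)$ is bounded, only finitely many degree vectors $\bfb$ occur and your conclusion follows. So the missing idea is this elementary degree estimate (or, if you insist on your route, a precise citation of a boundedness theorem that controls the boundary divisors, not just the underlying varieties); your alternative route via the open substack $\cM^{(r)}$ has the same lacuna, since proving $\cM^{(r)}$ quasi-compact requires the identical degree bound.
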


	The reverse implication is a consequence of an effective very ampleness result for slc log Fano pairs in \cite{Fuj17}.
	
	\begin{proof}
		By the proof of Theorem \ref{thm:moduli-alg-stack}, $\cM(\chi, N,{\bf a},c)$ admits a cover by finite type open substacks $\cM(\chi,N,{\bf a},c,{\bf b},k)\subset \cM(\chi, N,{\bf a},c)$ parametrizing pairs $(X,B+D)$ in $\cM(\chi,N,{\bf a}, c)(\bk)$ 
		such that $-k(K_X+B)$ is a very ample Cartier divisor and $\deg(B_{i}):= (-K_X-B)^{\dim X-1} B_i=b_i$.
		Since $\cZ$ is finite type and, in particular, quasi-compact, $\cZ$ is contained in the union of finitely many of these substacks. 
		Thus there exists a positive integer $r$ such that $r(K_X+B)$ is Cartier for all $(X,B+D)\in\cZ(\bk)$.
		
		Conversely, assume the existence of such an integer $r$.
		After replacing $r$ with a multiple, we may assume that $N$ divides $r$.
		By \cite[Theorem 1.9]{Fuj17}, after replacing $r$ with a further multiple, we may assume that $-r(K_X+B)$ is very ample for all $(X,B+D)\in \cZ(\bk)$.
		
		We claim that there exists an integer $d$ such that  $  (-K_X-B)^{\dim X-1} \cdot (-K_X) \leq d$ for 
		all $(X,B+D) \in \cZ(\bk)$.
		Indeed, for each $(X,B+D)\in \cZ(\bk)$,  $L =-r(K_X+B)$ is a very ample Cartier divisor and has  Hilbert function $\chi(r\, \cdot \, )$.
		Thus, by properties of the Hilbert scheme, there exists a flat proper morphism of finite type schemes $Y\to Z$ and a line bundle $M$ on $Y$ such that  $(X, \cO_X(-r(K_X+B))$ is a fiber of $(Y,M)\to Z$ for all $(X,B+D)\in\cZ(\bk)$.
		By replacing $(Y,M)\to Z$ with a base change by  $Z' \hookrightarrow Z$, where $Z'$ is a disjoint union of finitely many closed subsets of $ Z$,  
		we may assume that $Z$ is smooth, the fibers of $Y\to M$ are demi-normal \cite[Corollary 10.42]{Kol23}, and $K_{Y/Z}\vert_{Y_z} = K_{Y_z}$ for  all $z\in Z$ \cite[Corollary 3.36]{Kol23}. 
		By \cite[Theorem 4.3.5]{Kol23}, the function   $Z\ni z \mapsto M_z^{n-1} \cdot (-K_{Y_z})$ is locally constant.
		Therefore there exists an integer $d$ satisfying the claim.
		
		Now, if $(X,B+D)\in\cZ(\bk)$, then 
		\[
		c_i\deg(B_i) \leq (-K_X-B)^{n-1}\cdot B
		\leq 
		(-K_X-B)^{n-1}\cdot (-K_X)\leq  d
		,\]
		where the second inequality uses that $-K_X-B\sim_{\bQ} D$, which is effective.
		Thus  $\deg(B_i) \in k^{-\dim X+1} \bZ \cap [0,d/c_i] $ and, hence,  can take finitely many possible values. 
		Therefore  $\cZ$ admits a cover by finitely many 
		of the substacks $\cZ\cap \cM(\chi,N,{\bf a},c,{\bf b},k)$, which are open in $\cZ$.
		Since each $\cZ\cap \cM(\chi,N,{\bf a},c,{\bf b},k)$ is of finite type, $\cZ$ is of finite type.
	\end{proof}

	\subsubsection{KSBA  substack}\label{ss:KSBA+K}
	We now discuss a substack defined using KSBA theory for the perturbed pair. 
	See \cite{Kol13} for a reference on KSBA moduli theory.

	\begin{defn}
		Let $\cM \subset \cM(\chi, N,{\bf a}, c)$ be a closed substack. 
		The \emph{KSBA} substack of $\cM$ is the the substack $\cM^{\rm KSBA}\subset \cM$ consisting of families $(X,B+D) \to T$ in $\cM$ such that, for each $t\in T$, $(X_t,B_t+(1+\varepsilon)D_t)$ is slc for $0<\varepsilon \ll1$.\footnote{The condition that  $(X_t,B_t+(1+\varepsilon)D_t)$  is slc is equivalent to the condition that $(X_t,B_t+(1+\varepsilon)D_t)$ is a KSBA-stable pair, since  $K_{X_t}+B_t+ (1+\varepsilon)D_t\sim_{\bQ} \varepsilon D_t$ is ample. } 
	\end{defn}
	
	The  following theorem is a consequence of  general KSBA theory combined with boundedness results in \cite{KX20,Bir22,Bir23} that allow the $\varepsilon$ coefficient.
	
	\begin{thm}\label{t:KSBA}
		\hfill
		\begin{enumerate}
			\item The stack $\cM^{\rm KSBA}$ is a finite type open substack of $\cM$. 
			\item There exists a coarse moduli space morphism $\cM^{\rm KSBA}\to M^{\rm KSBA}$ to a projective shceme.
		\end{enumerate}
	\end{thm}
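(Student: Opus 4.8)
The plan is to obtain (1) by combining an openness statement with the boundedness criterion of Proposition~\ref{p:boundedness}, and then to deduce (2) from the standard construction of KSBA moduli spaces, the key extra ingredient being the $\varepsilon$-allowing boundedness of \cite{KX20,Bir22,Bir23}.

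First I would prove openness. Given a family $(\cX,\cB+\cD)\to T$ in $\cM$ and $\varepsilon\in\bQ_{>0}$, the divisor $K_{\cX/T}+\cB+(1+\varepsilon)\cD\sim_{\bQ,T}\varepsilon\cD$ is $\bQ$-Cartier, so $(\cX,\cB+(1+\varepsilon)\cD)\to T$ is a family of pairs. Since $(X_t,B_t+D_t)$ is slc and log discrepancies are affine-linear in the coefficient of $\cD$, for each $t$ the set of $c\ge 0$ with $(X_t,B_t+(1+c)D_t)$ slc is a closed interval $[0,\tau(t)]$, and the preimage of $\cM^{\rm KSBA}$ in $T$ equals $\{t:\tau(t)>0\}$. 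As $\tau$ is the log canonical threshold of $\cD$ relative to $(\cX,\cB+\cD)\to T$, it is lower semicontinuous \cite{Kol23}, so $\{\tau>0\}$ is open; hence $\cM^{\rm KSBA}$ is an open substack of $\cM$, and therefore a locally closed substack of $\cM(\chi,N,\bfa,c)$. For finiteness of type, by Proposition~\ref{p:boundedness} it suffices to produce $r\in\bZ_{>0}$ with $r(K_X+B)$ Cartier for all $(X,B+D)\in\cM^{\rm KSBA}(\bk)$. For such a pair $(X,B+(1+\varepsilon)D)$ is KSBA-stable when $0<\varepsilon\ll1$, since $K_X+B+(1+\varepsilon)D\sim_{\bQ}\varepsilon D$ is ample; by the boundedness results of \cite{KX20,Bir22,Bir23}, which extend the Hacon--McKernan--Xu boundedness of KSBA-stable pairs \cite{HMX18} to allow one coefficient perturbed by a uniform small $\varepsilon_0\in\bQ_{>0}$, the pairs $(X,B+(1+\varepsilon_0)D)$ — and in particular the varieties $X$ — form a bounded family. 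Boundedness of $X$ bounds the Cartier indices of $K_X$ and of the components $B_i$, hence (their coefficients lying in $\tfrac1N\bZ$) the Cartier index of $K_X+B$, which yields $r$. This proves (1).

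For (2), $\cM^{\rm KSBA}$ is now a finite type algebraic stack with affine diagonal (inherited from $\cM(\chi,N,\bfa,c)$), and every $(X,B+D)\in\cM^{\rm KSBA}(\bk)$ has finite automorphism group, since automorphisms preserve the ample $\bQ$-divisor $K_X+B+(1+\varepsilon_0)D$; so $\cM^{\rm KSBA}$ is a finite type Deligne--Mumford stack and by Keel--Mori admits a coarse moduli space $M^{\rm KSBA}$, a finite type algebraic space. Uniqueness of KSBA-stable limits \cite{HX13,Kol23} makes $\cM^{\rm KSBA}$ separated, and KSBA stable reduction \cite{KX20,Kol23} gives the existence part of the valuative criterion for properness: for a family over a punctured smooth curve, one takes the KSBA-stable limit of $(\cX,\cB+(1+\varepsilon_0)\cD)$ and checks that, restoring the coefficient $c$, the central fiber is again a boundary polarized CY pair in $\cM^{\rm KSBA}$ — using that decreasing the coefficient of $\cD$ preserves slc, that $\varepsilon_0 D=(K+B+(1+\varepsilon_0)D)-(K+B+D)$ stays $\bQ$-Cartier and ample, and that $N(K+B+D)\sim 0$ and the KSBA condition persist in the limit. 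Thus $M^{\rm KSBA}$ is a proper separated algebraic space. Finally, $(X,B+D)\mapsto(X,B+(1+\varepsilon_0)D)$ defines a morphism from $\cM^{\rm KSBA}$ to the moduli stack of KSBA-stable pairs with the induced numerical invariants and coefficients $(\bfa,(1+\varepsilon_0)c)$; this morphism is a monomorphism, proper, and of finite type, hence a closed immersion, and the target has a projective coarse moduli scheme \cite{KP17,Kol23,Fuj18}, so pulling back an ample line bundle along the induced finite morphism of coarse spaces shows $M^{\rm KSBA}$ is a projective scheme. This gives (2).

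The main obstacle is the $\varepsilon$-perturbed boundedness of \cite{KX20,Bir22,Bir23}, in particular the \emph{uniformity} of $\varepsilon_0$ over the entire family: without it $\cM^{\rm KSBA}$ could a priori fail to be quasi-compact and Proposition~\ref{p:boundedness} could not be applied. Granting this, the remaining steps are the standard KSBA package (Keel--Mori, stable reduction and separatedness of KSBA moduli, Kollár-type projectivity), the only delicate bookkeeping being the verification that KSBA-stable reduction, with the coefficient of $\cD$ restored to $c$, produces an object of $\cM^{\rm KSBA}$ rather than merely of $\cM$.
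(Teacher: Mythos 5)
Your proposal is correct and follows essentially the same route as the paper: openness via the equivalence of ``slc for some $\varepsilon$'' with ``slc for all small $\varepsilon$'' together with openness of slc in locally stable families, finite type via the $\varepsilon$-perturbed boundedness of \cite{KX20,Bir22,Bir23} fed into Proposition~\ref{p:boundedness}, and then the standard KSBA package (Keel--Mori, separatedness, stable reduction, and Koll\'ar--Patakfalvi projectivity). The paper simply cites \cite[Corollary 1.8]{Bir22} directly for the very ampleness of $-r(K_X+B)$ and \cite[Lemma 7]{KX20}, \cite[Propositions 2.50 and 8.64]{Kol23} for the properties you re-derive, but the content is the same.
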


	\begin{proof}
		For a pair $(X,B+D)$ in $\cM(\bK)$ for some field extension $\bk\subset \bK$, $(X,B+D)$ is slc and so the following are equivalent:
		\begin{enumerate}
			\item[(i)] $(X,B+(1+\varepsilon)D)$ is slc for all $0< \varepsilon \ll1$
			\item[(ii)]  $(X,B+(1+\varepsilon)D)$ is slc for some $0<\varepsilon <1$. 
		\end{enumerate}
		Thus the openness of slc singularities  in families of pairs \cite[Corollary 4.45]{Kol23} implies that $\cM^{\rm KSBA} \subset \cM$ is an open substack.
		Next, \cite[Corollary 1.8]{Bir22} implies that there exists a positive integer $r$ such that $-r(K_X+B)$ is a very ample Cartier divisor for every $(X,B+D)$ in $\cM^{\rm KSBA}(\bk)$.
		Therefore $\cM^{\rm KSBA}$ is finite type by Proposition \ref{p:boundedness}, which completes (1). 
		
		The  stack $\cM^{\rm KSBA}$ is separated by \cite[Proposition 2.50]{Kol23} and proper by \cite[Lemma 7]{KX20}.
		Furthermore, since the stabilizer groups of $\bk$-points in $\cM^{\rm KSBA}$ are discrete by \cite[Proposition 8.64]{Kol23}, $\cM^{\rm KSBA}$ is a Deligne--Mumford stack. 
		Thus \cite{KM97} implies the existence of a coarse moduli space morphism $\cM^{\rm KSBA}\to M^{\rm KSBA}$  to a proper algebraic space. Finally, $M^{\rm KSBA}$ is a projective scheme scheme as a consequence of \cite{KP17}.
	\end{proof}
	
	\subsubsection{K-moduli substack}
	Next, we discuss a substack defined using K-moduli for log Fano pairs.
	See \cite{Xu24} for a reference on K-moduli theory.
	
	\begin{defn}
		Let $\cM \subset \cM(\chi, N,{\bf a}, c)$ be a closed substack. 
		The \emph{K-moduli} substack of $\cM$ is the  substack $\cM^{\rm K}\subset \cM$  consisting of families $(X,B+D) \to T$ in $\cM$ such that, for each $t\in T$, $(X_t,B_t+(1-\varepsilon)D_t)$ is a K-semistable klt log Fano pair for $0<\varepsilon \ll1$. 
	\end{defn}
	
	The following result is a consequence of results in K-moduli theory, as well as boundedness results in  \cite{ADL19,Zho23} to allow the $\varepsilon$ coefficient.
	
	\begin{thm}\label{t:Kmoduli}
		\hfill
		\begin{enumerate}
			\item The stack $\cM^{\rm K}$ is a finite type open substack of $\cM$. 
			\item There exists a good moduli space morphism $\cM^{\rm K}\to M^{\rm K}$ to a projective scheme.
		\end{enumerate}
	\end{thm}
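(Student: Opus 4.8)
The plan is to mirror the proof of Theorem \ref{t:KSBA}, replacing each input from KSBA theory with its counterpart in K-moduli theory, together with the boundedness results of \cite{ADL19,Zho23} that permit the $\varepsilon$-coefficient.

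\emph{Part (1).} For a pair $(X,B+D)$ in $\cM(\bK)$, the set of $\varepsilon\in(0,1)$ for which $(X,B+(1-\varepsilon)D)$ is a klt log Fano pair is an interval of the form $(0,\varepsilon_1)$ (or is empty): the log Fano condition is automatic since $-K_X-B-(1-\varepsilon)D\sim_{\bQ}\varepsilon D$ is ample, while the identity $A_{X,B+(1-\varepsilon)D}(E)=A_{X,B+D}(E)+\varepsilon\,\ord_E(D)$ shows that $(X,B+(1-\varepsilon)D)$ is klt for every $\varepsilon>0$ exactly when every lc place $E$ of the lc pair $(X,B+D)$ satisfies $\ord_E(D)>0$, a condition independent of $\varepsilon$. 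Within this interval, K-semistability of $(X,B+(1-\varepsilon)D)$ is governed by the finite wall-and-chamber decomposition of \cite{ADL19}, so the requirement "K-semistable klt log Fano for $0<\varepsilon\ll1$" can be tested at a single value of $\varepsilon$. Openness of $\cM^{\rm K}\subset\cM$ then follows by combining openness of the klt condition \cite[Corollary 4.45]{Kol23} with openness of K-semistability in families of log Fano pairs \cite{BLX22,XZ21}, after checking that over any finite type base a common interval $(0,\varepsilon_1)$ works. For finite type: by \cite{ADL19,Zho23} the set of pairs $(X,B+D)$ in $\cM^{\rm K}(\bk)$ is log bounded, hence there is $r\in\bZ_{>0}$ with $r(K_X+B)$ Cartier for all of them, and Proposition \ref{p:boundedness} gives that $\cM^{\rm K}$ is of finite type.

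\emph{Part (2).} With part (1), $\cM^{\rm K}$ is a finite type algebraic stack with affine diagonal, inherited from $\cM(\chi,N,\bfa,c)$, so by Theorem \ref{t:AHLH} it suffices to prove that $\cM^{\rm K}$ is S-complete and $\Theta$-reductive with respect to DVRs essentially of finite type over $\bk$. Both follow from the corresponding statements for moduli of K-semistable log Fano pairs: S-completeness is the stack-theoretic form of the uniqueness of K-polystable degenerations \cite{BX19}, and $\Theta$-reductivity follows from the theory of optimal destabilization and $\Theta$-stratifications \cite{LXZ22} (see also \cite{Xu24}); the extra datum of $D\sim_{\bQ}-(K_X+B)$ and the $\varepsilon$-perturbation are incorporated exactly as in \cite{ADL19}. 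This produces a good moduli space $\cM^{\rm K}\to M^{\rm K}$ with $M^{\rm K}$ a separated algebraic space. Properness of $M^{\rm K}$ follows from the existence part of the valuative criterion for $\cM^{\rm K}$, which is properness of K-moduli \cite{LXZ22} transported to the perturbed setting, using that the limiting pair stays K-semistable and klt after perturbation by the wall-crossing bookkeeping above. Finally, $M^{\rm K}$ is projective because the CM line bundle is ample on good moduli spaces of K-semistable log Fano pairs \cite{XZ20}, again carried over to the $\varepsilon$-perturbed family via \cite{ADL19} (alternatively one can deduce ampleness from the Hodge line bundle analysis used later in the paper).

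\emph{Main obstacle.} The crux is the uniformity in $\varepsilon$: one must show that over a fixed finite type family a single interval $(0,\varepsilon_1)$ of perturbation parameters works simultaneously, i.e. the K-stability walls of \cite{ADL19} do not accumulate at $0$ along the family. This is precisely why boundedness of the relevant log Fano locus (\cite{ADL19,Zho23}) is needed, and it is the step that prevents one from quoting the unperturbed K-moduli package verbatim; the remaining ingredients (S-completeness, $\Theta$-reductivity, properness, CM-ampleness) are formal consequences of the established theory once this uniformity is in place.
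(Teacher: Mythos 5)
Your overall plan — mirror the proof of Theorem \ref{t:KSBA} with K-moduli inputs plus the boundedness of \cite{ADL19,Zho23} — is the right one and matches the paper's strategy, but there are two places where the argument as written has a real hole. The more serious one is in Part (2): the existence theorem from K-moduli theory (S-completeness, $\Theta$-reductivity, and the valuative criterion, via \cite{ABHLX20}, \cite{BHLLX21}, \cite{LXZ22}) produces an extension of the \emph{perturbed} family of klt log Fano pairs $(X,B+(1-\varepsilon)D)\to S$ across the closed point of $\STR$ or $\Theta_R$. But the objects of $\cM^{\rm K}$ are families of boundary polarized CY pairs, so you must still show that the extended central fiber $(X_0,B_0+D_0)$ is itself slc with $K+B+D\sim_{\bQ}0$ and $D$ ample — i.e. that adding back the $\varepsilon D$ does not destroy log canonicity on the new fiber. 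This is not formal and is not "incorporated exactly as in \cite{ADL19}": the paper isolates it as Lemma \ref{l:logFanoext}, whose proof needs the ACC-type input \cite[Theorem 1.1]{HMX} to choose $\varepsilon$ uniformly (depending only on dimension and coefficients) so that $(X,B+D+f^*(H_1+\cdots+H_r))$ remains lc, and then an argument that the trivializing $\bQ$-divisor $P$ supported on $X_0$ is actually $\sim_{\bQ}0$. Without this step the good moduli space you construct is a priori one for a stack of log Fano pairs, not for $\cM^{\rm K}$.

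The second issue is in Part (1): you reduce "K-semistable for $0<\varepsilon\ll1$" to a single test value by invoking the \emph{finite} wall-and-chamber decomposition, but finiteness of walls is itself a consequence of boundedness of the relevant log Fano locus, which is what you are trying to establish — so as stated the argument is circular. The paper avoids this by a pointwise computation: for each fixed pair, $\varepsilon\mapsto\beta_{(X,B+(1-\varepsilon)D)}(E)$ is linear on $[0,1]$ and non-negative at $\varepsilon=0$ because $(X,B+D)$ is an slc CY pair, so $\beta\geq0$ at one positive $\varepsilon$ forces $\beta\geq0$ on the whole interval down to $0$; this gives (i)$\Leftrightarrow$(ii) with no boundedness input, after which openness is just a union over rational $\varepsilon$ of the open K-semistable loci of \cite{BLX22,Xu20}, and boundedness (hence finite type via Proposition \ref{p:boundedness}) comes separately from the uniform very-ampleness of \cite[Lemma 5.3 and Theorem 5.4]{Zho23} (with \cite[Theorem 7.25]{Xu24} replacing \cite{Jia20} when $B\neq0$). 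Your "main obstacle" paragraph correctly senses that uniformity in $\varepsilon$ is the delicate point, but the fix is the linearity of $\beta$, not an appeal to wall finiteness.
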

	
	In the following proof, the results in \cite{ADL19,Zho23} that we cite only imply the desired statements when $B=0$. 
	The same arguments extend to the $B\neq 0$ case with only minor changes. 
	
	\begin{proof}
		First, note that if $(X,B+D)$ is in $\cM(\bK)$ for some field extension $\bK\supset \bk$, then the following are equivalent:
		\begin{enumerate}
			\item[(i)] $(X,B+(1-\varepsilon)D)$ is K-semistable for some $0< \varepsilon < 1$; 
			\item[(ii)] $(X,B+(1-\varepsilon)D)$ is K-semistable for all $0< \varepsilon \ll  1$. 
		\end{enumerate}
		Indeed, since $(X,B+D)$ is an slc CY pair, the argument in \cite[Definition 2.4 and Proposition 2.13.1]{ADL19} shows that for every prime divisor $E$ over $X$, the function $\varepsilon\mapsto\beta_{(X, B+(1-\varepsilon)D)}(E)$ is linear for $\varepsilon\in [0,1]$ and non-negative at $\varepsilon=0$. Thus we have that (i) implies (ii), while the implication that (ii) implies (i) is trivial. 
		Therefore the openness of K-semistability in families of log Fano pairs \cite{BLX22,Xu20} (and also \cite{Zhu21} to remove the condition on the geometric fibers) 
		implies that  $\cM^{\rm K}$ is an open substack of $\cM$. 
		
		Next, \cite[Lemma 5.3 and Theorem 5.4]{Zho23} implies that there exists an integer $r$ such that  $-r(K_X+B)$ is a very ample Cartier divisor for every $(X,B+D)\in\cM(\bk)$ with $B=0$.
		Furthermore, the same argument as in \emph{loc. cit.}, but with \cite{Jia20} replaced by \cite[Theorem 7.25]{Xu24}, implies the case when $B\neq 0$. 
		Thus $\cM^{\rm K}$ is finite type by Proposition \ref{p:boundedness}.
		
		Finally, combining 
		\cite[Theorems 3.3 and 5.2]{ABHLX20}, \cite[Corollary 7.4]{BHLLX21}, and \cite[Theorem 1.2]{LXZ22} 
		with 
		Lemma \ref{l:logFanoext} shows that $\cM^{\rm K}$ is S-complete, $\Theta$-reductive, and satisfies the existence part of the valuative criterion for properness with respect to essentially of finite type DVRs over $\bk$.
		Therefore  \cite[Theorem A]{AHLH23} implies that there exists a good moduli space morphism $\cM^{\rm K} \to M^{\K}$ to a proper algebraic space. 
		The moduli space is projective as a consequence of \cite{XZ20,LXZ22}.  
	\end{proof}

	\begin{lemma}\label{l:logFanoext}
		Let $S$ be a  smooth scheme that is essentially of finite type over $\bk$ and $0\in S$ be a closed point such that $S^\circ := S \setminus \{0 \}$ is dense in $S$.
		Let
		$
		(X^\circ ,B^\circ+D^\circ)\to S^\circ
		$
		be a family of boundary polarized CY pairs.

		There exists $0<\varepsilon<1$ such that if $(X,B+(1-\varepsilon)D) \to S$ is an extension of $(X^\circ, B^\circ + (1-\varepsilon) D^\circ)\to S^\circ$ to a family of klt log Fano pairs, then 
		$(X,B+D)\to S$ is a family of boundary polarized CY pairs. 
	\end{lemma}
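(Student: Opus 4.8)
The plan is to verify that $(X,B+D)\to S$ meets Definition \ref{d:familybpcyreduced}; since $S$ is reduced, Remark \ref{rem:moduli-stack-def}(v) then upgrades this to Definition \ref{d:familyofbpcys} with the same markings and index. Concretely one must show that $K_{X/S}+B+D$ is $\bQ$-Cartier with $K_{X/S}+B+D\sim_{\bQ,S}0$, that $D$ is $\bQ$-Cartier and relatively ample over $S$, and that $(X_t,B_t+D_t)$ is slc for every $t\in S$. All of this is automatic over $S^\circ$ because $(X^\circ,B^\circ+D^\circ)\to S^\circ$ is already a family of boundary polarized CY pairs, so the content lies entirely along the central fibre. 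Observe at the outset that, the extension being a family of klt log Fano pairs, every fibre $X_t$ ($t\in S$) is a normal, geometrically connected — hence irreducible — projective variety.

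The argument hinges on two ingredients. \emph{The choice of $\varepsilon$.} By the ACC for log canonical thresholds \cite{HMX18}, the set of thresholds $\lct(Y,B_Y;D_Y)$, as $Y$ ranges over normal projective varieties of the fixed dimension, $(Y,B_Y)$ over klt pairs with the coefficients of $B_Y$ in $\{a_1,\dots,a_l\}$, and $D_Y$ over effective $\bQ$-Cartier divisors $cD_{Y,1}$ with $D_{Y,1}$ reduced, satisfies the ascending chain condition; hence the supremum of those thresholds lying in $[0,1)$ is some $1-\delta$ with $\delta>0$. Fix $\varepsilon\in(0,\delta)\cap\bQ$ with denominator compatible with $N$. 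Then for any triple $(Y,B_Y,D_Y)$ of this form, if $(Y,B_Y+(1-\varepsilon)D_Y)$ is lc then $\lct(Y,B_Y;D_Y)\ge 1-\varepsilon>1-\delta$, and since $1-\delta$ bounds below all thresholds that are $<1$ this forces $\lct(Y,B_Y;D_Y)\ge 1$, i.e.\ $(Y,B_Y+D_Y)$ is lc. \emph{The auxiliary divisor.} Put $\mathcal{D}:=-\tfrac{1}{\varepsilon}(K_{X/S}+B+(1-\varepsilon)D)$, which is $\bQ$-Cartier and relatively ample over $S$ since the extension is a family of klt log Fano pairs. As $K_{X^\circ/S^\circ}+B^\circ+D^\circ\sim_{\bQ,S^\circ}0$, on $X^\circ$ one has $\mathcal{D}\sim_{\bQ,S^\circ}D$, so for a suitable $\bQ$-divisor $G$ on $S$ the difference $\mathcal{D}-D-f^*G$ is $\bQ$-linearly equivalent to a $\bQ$-divisor supported on $f^{-1}(\{0\})$. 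If $\dim S\ge 2$ that divisor vanishes for dimension reasons; if $\dim S=1$ it is a rational multiple of the irreducible fibre $X_0$, which is Cartier since the fibres are reduced. In either case $D\sim_{\bQ,S}\mathcal{D}-f^*G$, whence $D$ is $\bQ$-Cartier, relatively ample over $S$, and $K_{X/S}+B+D=\varepsilon(D-\mathcal{D})\sim_{\bQ,S}0$; repeating the comparison with the integral relation $N(K_{X^\circ/S^\circ}+B^\circ+D^\circ)\sim_{S^\circ}0$ even gives $N(K_{X/S}+B+D)\sim_S 0$.

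To finish, since $D$ is now $\bQ$-Cartier, the restriction $D_0:=D|_{X_0}$ is $\bQ$-Cartier and $K_{X_0}+B_0+D_0\sim_{\bQ}0$, while $(X_0,B_0+(1-\varepsilon)D_0)$ is klt, hence lc; applying the property of $\varepsilon$ to the triple $(X_0,B_0,D_0)$ shows $(X_0,B_0+D_0)$ is lc, which is slc because $X_0$ is normal. Together with the preceding paragraph this verifies Definition \ref{d:familybpcyreduced} and proves the lemma. I expect the essential difficulty to be exactly the $\bQ$-Cartierness of $D$ near the central fibre: a priori $D$ is merely a relative Mumford divisor, and $\bQ$-Cartierness does not behave well under specialization, so taking closures is not enough — it is the combination of the relatively ample divisor $\mathcal{D}$ with the irreducibility of the klt log Fano special fibre that rigidifies $D$. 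This is also where the hypotheses are genuinely used: for a reducible or non-normal degeneration of the fibres the conclusion can fail, which is why the conclusion is only asserted conditionally on the existence of a klt log Fano extension.
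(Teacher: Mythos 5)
Your proposal is correct, and the part establishing $\bQ$-Cartierness and relative triviality of $K_{X/S}+B+D$ (via the auxiliary ample divisor $\mathcal{D}=-\tfrac{1}{\varepsilon}(K_{X/S}+B+(1-\varepsilon)D)$ and the observation that the discrepancy over $S^\circ$ is supported on $X_0$, hence trivial by irreducibility of the klt fibre) is essentially the paper's argument, just made more explicit. Where you genuinely diverge is in the slc step: you apply the ACC for log canonical thresholds \emph{fibrewise} to the central fibre $(X_0,B_0;D_0)$, concluding that $\lct(X_0,B_0;D_0)\ge 1-\varepsilon$ forces $\lct\ge 1$, and then invoke the fact that over a reduced base ``family of slc pairs'' means exactly fibrewise slc plus $\bQ$-Cartier relative log canonical (Definition \ref{d:familybpcyreduced} together with \cite[Definition--Theorem 4.7.1]{Kol23}). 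The paper instead works on the total space: it chooses a reduced snc divisor $H=H_1+\cdots+H_r$ on $S$ with $H_1\cap\cdots\cap H_r=\{0\}$, uses \cite[Theorem 4.54]{Kol13} to translate local stability of the $(1-\varepsilon)$-family into lc-ness of $(X,B+(1-\varepsilon)D+f^*H)$, applies ACC (\cite[Theorem 1.1]{HMX}) to this total-space pair to raise the coefficient to $1$, and then translates back. The two routes buy essentially the same thing here since the base is reduced; your fibrewise version avoids the inversion-of-adjunction machinery of \cite[Theorem 4.54]{Kol13} at the cost of leaning on the equivalence between locally stable and fibrewise slc over reduced bases, which the paper's definitions already grant you. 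One minor imprecision: the divisorial pullbacks $B_{i,0}$ and $D_{1,0}$ to the central fibre need not be reduced and may share components, so the relevant coefficient sets for the ACC statement are the finite sets of values $\sum_i a_im_i$ and the set $c\,\bZ_{>0}$ rather than $\{a_1,\dots,a_l\}$ and $\{c\}$ with $D_{Y,1}$ reduced; these are still DCC sets, so your $\delta$ exists all the same, but the hypothesis of \cite[Theorem 1.1]{HMX18} should be invoked with these enlarged sets.
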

	
	In the lemma,  the statement that $(X,B+(1-\varepsilon)D)\to S$ is a ``family of  klt log Fano pairs'' means that it is projective family of slc  pairs such that $(X_s,B_s+(1-\varepsilon)D_s)$ is a klt log Fano pair for all $s\in S$.

	\begin{proof}
		After possibly shrinking $S$ in a neighborhood of $0$, there exists a reduced snc divisor $H=H_1 + \cdots + H_r$ on $S$ such that $H_1 \cap \cdots \cap H_r = \{0 \}$. 
		Now, if $(X,B+(1-\varepsilon)D) \to S$ is an extension of the original family to a  family of log Fano pairs, then 
		\[
		(X,B+(1-\varepsilon)D+ f^*(H_1+\cdots + H_r))
		\]
		is lc by \cite[Theorem 4.54]{Kol13}.
		By \cite[Theorem 1.1]{HMX}, there exists $c\in (0,1)$ that only depends on the dimension of $X$ and the coefficients of $B^\circ$ and $D^\circ$ such that if $0<\varepsilon<c$, then 
		\begin{equation}\label{e:XBDH}
			(X,B+D + f^*(H_1+\cdots + H_r))
		\end{equation}
		is lc. 
		Now  assume that $\varepsilon<c$ and so \eqref{e:XBDH} is lc.
		By \cite[Theorem 4.54]{Kol23}, $(X,B+D) \to S$ is a family of slc pairs. 
		Since $(K_{X/S}+B+D )\vert_{S^\circ} \sim_{\bQ,S^\circ}0$,  there exists a $\bQ$-divisor $P$ with support in $X_0$ satisfying
		\[
		K_{X/S}+B+D \sim_{S,\bQ} P
		.\]
		Since $X_0$ is irreducible by the klt assumption, $P\sim_{\bQ,S} 0$.
		Thus $K_{X/S}+B+D \sim_{\bQ,S}0$. 
		By the previous $\bQ$-linear equivalence and that
		$-K_{X/S}- B-(1-\varepsilon)D$  is ample over $S$, we see $D$ is  ample over $S$. Thus $(X,B+D)\to S$ is a family of boundary polarized CY pairs.
	\end{proof}
	
	\subsection{S-equivalence}
	
	The following equivalence relation will be useful for understanding what the points of the moduli space in Theorem \ref{t:mainintro}
	parametrize. 
	
	\begin{defn}[S-equivalence]
		Two boundary polarized CY pairs $(X,B+D)$ and $(X',B'+D')$ are \emph{S-equivalent} if there exist  weakly special degenerations of the pairs 
		\[
		(X,B+D) \rightsquigarrow (X_0,B_0+D_0) \leftsquigarrow (X',B'+D')
		\]
		to a common boundary polarized CY pair $(X_0,B_0+D_0)$.
	\end{defn}
	
	\begin{rem}\label{r:Sequiv}
		We describe a few properties of S-equivalence.
		\begin{enumerate}
			\item S-equivalence is an equivalence relation by \cite[Proposition 6.9]{BABWILD}.
			\item If two pairs $(X,B+D)$ and $(X',B'+D')$ are S-equivalent, then their sources and coregularity are equal \cite[Proposition 8.7]{BABWILD}.
			\item If $x:= [(X,B+D)]$ and $x':= [(X',B'+D')]$ are two $\bk$-points of the stack in Definition \ref{def:moduli-stack}, then $(X,B+D)$ and $(X',B'+D')$ are S-equivalent if and only if  $\overline{\{ x\}} \cap \overline{\{x'\}}\neq \emptyset$ \cite[Lemma 6.10]{BABWILD}.
		\end{enumerate}
	\end{rem}
	
	\subsection{Stability}
	
	We also define a notions of stability, which is  related to S-equivalence. 
	
	\begin{defn}[Stability]
		A boundary polarized CY pair $(X,B+D)$ is called
		\begin{enumerate}
			\item \emph{polystable} if every test configuration $(\cX,\cB+\cD)$ of $(X,B+D)$ 
			satisfies $(X,B+D) \simeq (\cX_0,\cB_0+\cD_0)$ and 
			\item \emph{stable} if every test configuration $(\cX,\cB+\cD)$ of $(X,B+D)$ is trivial. 
		\end{enumerate}
	\end{defn}
	
	\begin{prop}[{\cite[Theorem 6.15]{BABWILD}}]\label{p:stable=klt}
		A pair $(X,B+D)$ is stable if and only if $(X,B+D)$ is klt. 
	\end{prop}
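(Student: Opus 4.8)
The plan is to prove the two implications separately, using the structure of test configurations developed in the preceding subsections. For the easy direction, suppose $(X,B+D)$ is \emph{not} klt. Then by Definition \ref{d:source} there is a prime divisor $E$ over $X$ with $A_{X,B+D}(\ord_E)=0$ and $A_{X,B}(\ord_E)>0$ — indeed, any lc place of $(X,B+D)$ that is not already a component of $\lfloor B+D\rfloor$ works, and if the non-klt locus already contains such a component $E\subset\lfloor B+D\rfloor$ one can use it directly. Apply Proposition \ref{p:tcdivisorsonX} with $l=1$ and $v_1=\ord_E$ (after scaling to make it $\bZ$-valued): one obtains a test configuration $(\cX,\cB+\cD)$ of $(X,B+D)$ whose special fiber $\cX_0$ is irreducible with a \emph{non-trivial} $\bG_m$-action on it by conclusion (2) of that proposition. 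A test configuration is trivial precisely when the $\bG_m$-action on $X\times\bA^1$ can be taken to act trivially on the $X$-factor, which forces the $\bG_m$-action on $\cX_0$ to be trivial; since it is not, this test configuration is non-trivial, so $(X,B+D)$ is not stable. If the only lc places lie over closed points (the Type III coregularity-$0$ situation with $\lfloor B+D\rfloor$ having no divisorial lc place), one instead appeals to Lemma \ref{l:tcfiltformula}(2) together with the filtration description: a dlt modification supplies a divisorial lc place whose induced filtration is finitely generated by \cite[Proposition 4.9]{BABWILD}, giving a non-trivial test configuration by the same reasoning.

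For the converse, suppose $(X,B+D)$ is klt; we must show every test configuration is trivial. Let $(\cX,\cB+\cD)$ be a test configuration, and pass to its normalization $\ocX$, which is a test configuration of $(\oX,\oG+\oB+\oD)$; since $(X,B+D)$ is klt it is in particular normal, so $\ocX=\cX$ and the total space is normal by Proposition \ref{p:familyslcpairsovercurve}. For each irreducible component $F\subset\cX_0$ we have the divisorial valuation $v_F$ on $X$, and by Lemma \ref{l:tcfiltformula}(2), $A_{X,B+D}(v_F)=0$. But $(X,B+D)$ is klt, so $A_{X,B+D}(w)>0$ for every non-trivial divisorial valuation $w$ on $X$; the only way to have $A_{X,B+D}(v_F)=0$ is $v_F$ trivial, i.e. $v_F=\ord_{X_0}$ is (a multiple of) the valuation of the structure-map fiber coordinate, equivalently $v_F|_{K(X)}=0$. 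This means $\cX_0$ has a single component mapping isomorphically (birationally) onto $X$, and the induced filtration $F^\bullet R_m$ from Lemma \ref{l:tcfiltformula}(1) is the trivial filtration $F^\la R_m=R_m$ for $\la\le 0$, $=0$ for $\la>0$. By the bijection between finitely generated filtrations of the section ring and test configurations \cite[Proposition 2.15]{BHJ17} (and the isomorphism $\cX\simeq\Proj(\mathrm{Rees}(F))$ in \eqref{e:weightdecomponmcL}), the trivial filtration corresponds to the trivial test configuration $(X\times\bA^1,B\times\bA^1+D\times\bA^1)$ with trivial $\bG_m$-action on $X$. Hence $(\cX,\cB+\cD)$ is trivial.

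The main obstacle is the non-klt direction in the case where $(X,B+D)$ has coregularity $0$ and no divisorial lc place supported on $\lfloor B+D\rfloor$: one genuinely needs a divisorial lc place extracted from a dlt modification and must know its induced filtration is finitely generated in order to feed it into the Rees-algebra construction. This is exactly what \cite[Proposition 4.9]{BABWILD} provides (it is the finite generation input already used in the proof of Proposition \ref{p:tcdivisorsonX}), so the argument goes through, but it is the one place where the proof is not purely formal and where the hypothesis $A_{X,B+D}(v)=0$ must be combined with a concrete finite-generation statement rather than just the abstract correspondence of \cite{BHJ17}. Everything else — the equivalence "trivial test configuration $\iff$ trivial filtration $\iff$ trivial $\bG_m$-action on $\cX_0$" — is bookkeeping with the definitions in Subsection \ref{ss:source} and the filtration formalism.
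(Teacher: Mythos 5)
First, a point of reference: the paper does not prove this proposition at all — it is imported verbatim from \cite[Theorem~6.15]{BABWILD} — so your argument can only be checked against the tools this paper makes available. Your ``klt $\Rightarrow$ stable'' direction is correct and essentially complete: normality of $\cX$, Lemma~\ref{l:tcfiltformula}(2) forcing $A_{X,B+D}(v_E)=0$ for every component $E\subset\cX_0$, the klt hypothesis forcing each $v_E$ to be the zero valuation (hence $\cX_0$ irreducible), and the resulting triviality of the filtration and therefore, via the Rees-algebra description and \cite[Proposition~2.15]{BHJ17}, of the test configuration together with its $\bG_m$-action. Note that this also correctly rules out non-trivial product test configurations: a non-trivial $\bG_m$-action on a klt pair would produce a \emph{non-zero} $v_{\cX_0}$ with $A_{X,B+D}(v_{\cX_0})=0$, which is exactly what klt forbids. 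Your normal non-klt case is also fine, resting on Proposition~\ref{p:tcdivisorsonX}(2) when $\lfloor B+D\rfloor$ contains a divisor and on the finite-generation input of \cite[Proposition~4.9]{BABWILD} (equivalently \cite[Theorem~4.8]{BABWILD}, which this paper itself invokes in the proof of Proposition~\ref{p:reg0}) for exceptional lc places.

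The genuine gap is the converse when $X$ is \emph{non-normal}. Such a pair is automatically not klt, yet it can have $\lfloor B+D\rfloor=\emptyset$, so there is no divisor on $X$ to feed into Proposition~\ref{p:tcdivisorsonX} (which in any case assumes $X$ normal), and your dlt-modification fallback does not apply either: the lc places are the conductor components $\oG_i$ on the normalization, and $\ord_{\oG_i}$ is not a valuation on the reducible, non-normal $X$, so it defines no filtration of $R(X,L)$. The repair — which is exactly the first case of the paper's proof of Proposition~\ref{p:reg0} — is to apply Proposition~\ref{p:tcdivisorsonX} to the conductor divisors on each component of the normalization, use conclusion~(3) to see that the induced test configuration of the conductor is trivial and hence equivariant for the gluing involution $\tau$, and then descend to a test configuration of $(X,B+D)$ via Lemma~\ref{l:tcgluing}; non-triviality follows from conclusion~(2). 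With that case inserted, your proof is complete.
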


	\begin{prop}[{\cite[Propositions 6.11 and 6.13]{BABWILD}}]\label{p:stable}
		Let   $x:= [(X,B+D)]$ and $x'=[(X',B'+D')]$ be two $\bk$-points of the moduli stack in Definition \ref{def:moduli-stack}.
		\begin{enumerate}
			\item The pair $(X,B+D)$ is polystable if and only if $x$ is a closed point.
			\item The pair $(X,B+D)$ is stable if and only if $x$ is a closed point and $\Aut(x)$ is discrete.
		\end{enumerate}
		Furthermore, if $(X,B+D)$ is stable and $\overline{\{x\}} \cap \overline{\{x'\}}\neq\emptyset$, then $x=x'$. 
	\end{prop}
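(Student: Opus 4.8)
\emph{Strategy.} The plan is to reduce all three assertions to statements about test configurations, using the dictionary between the topology of $|\cM|$, where $\cM := \cM(\chi,N,\bfa,c)$, and weakly special degenerations recorded in Remark \ref{r:Sequiv}. Two preliminary remarks are used repeatedly. First, a test configuration $(\cX,\cB+\cD)$ of $(X,B+D)$ is a $\bG_m$-equivariant family over $\bA^1$ that is constant over $\bA^1\setminus 0$, so the induced morphism $\bA^1\to\cM$ sends $\bA^1\setminus 0$ to $x$ and the origin to $x_0:=[(\cX_0,\cB_0+\cD_0)]$; hence $x_0\in\overline{\{x\}}$. Second, for $\bk$ algebraically closed two $\bk$-points of $\cM$ define the same point of the ($T_0$) space $|\cM|$ precisely when the associated pairs are isomorphic.

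\emph{Part (1).} If $x$ is closed and $(\cX,\cB+\cD)$ is any test configuration of $(X,B+D)$, then $x_0\in\overline{\{x\}}=\{x\}$, so $(\cX_0,\cB_0+\cD_0)\simeq(X,B+D)$; thus $(X,B+D)$ is polystable. Conversely, suppose $(X,B+D)$ is polystable but $x$ is not closed, and pick $x_1\in\overline{\{x\}}$ with $x_1\neq x$. Since $x_1\in\overline{\{x\}}\cap\overline{\{x_1\}}$, Remark \ref{r:Sequiv}(3) shows $(X,B+D)$ is S-equivalent to a pair $(X_1,B_1+D_1)$ representing $x_1$; write the common degeneration as $(X,B+D)\rightsquigarrow(Z,B_Z+D_Z)\leftsquigarrow(X_1,B_1+D_1)$. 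Polystability of $(X,B+D)$ forces $(Z,B_Z+D_Z)\simeq(X,B+D)$, so the second degeneration realizes $(X,B+D)$ as the special fiber of a test configuration of $(X_1,B_1+D_1)$; hence $x\in\overline{\{x_1\}}$. Together with $x_1\in\overline{\{x\}}$ this gives $\overline{\{x\}}=\overline{\{x_1\}}$, so $x=x_1$ since $|\cM|$ is $T_0$, a contradiction.

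\emph{Part (2).} If $(X,B+D)$ is stable, then every test configuration is trivial, so in particular has special fiber $\simeq(X,B+D)$; thus $(X,B+D)$ is polystable and $x$ is closed by Part (1). If $\Aut(x)$ were not discrete it would be a positive-dimensional affine group scheme (the diagonal of $\cM$ is affine); since $x$ is a closed point of the S-complete stack $\cM$ (Theorem \ref{t:stackisScomp+Thetared}) its stabilizer is linearly reductive, and a positive-dimensional linearly reductive group in characteristic $0$ contains a copy of $\bG_m$. A nontrivial cocharacter $\bG_m\to\Aut(X,B+D)$ yields a nontrivial product test configuration, contradicting stability; hence $\Aut(x)$ is discrete. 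Conversely, assume $x$ is closed and $\Aut(x)$ is discrete. By Part (1), $(X,B+D)$ is polystable, so any test configuration $(\cX,\cB+\cD)$ has $(\cX_0,\cB_0+\cD_0)\simeq(X,B+D)$; the induced $\bG_m$-action on $\cX_0$ therefore corresponds to a cocharacter of $\Aut(X,B+D)=\Aut(x)$, which is trivial as $\Aut(x)$ is discrete. A test configuration whose special fiber carries the trivial $\bG_m$-action is trivial, as one sees from the description of test configurations via finitely generated filtrations of the section ring recalled before Lemma \ref{l:tcfiltformula} (the trivial filtration is the only one with trivial associated grading on $\Proj$). Hence every test configuration of $(X,B+D)$ is trivial, i.e.\ $(X,B+D)$ is stable.

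\emph{The ``furthermore'' and the main obstacle.} Suppose $(X,B+D)$ is stable and $\overline{\{x\}}\cap\overline{\{x'\}}\neq\emptyset$. By Remark \ref{r:Sequiv}(3) the pairs are S-equivalent, and by Proposition \ref{p:stable=klt} the pair $(X,B+D)$ is klt, hence of coregularity $2$; so by Remark \ref{r:Sequiv}(2) the pair $(X',B'+D')$ also has coregularity $2$, i.e.\ is klt, hence stable by Proposition \ref{p:stable=klt}. In a common degeneration $(X,B+D)\rightsquigarrow(W,B_W+D_W)\leftsquigarrow(X',B'+D')$, stability of $(X,B+D)$ forces $(W,B_W+D_W)\simeq(X,B+D)$, and then stability of $(X',B'+D')$ forces $(X',B'+D')\simeq(W,B_W+D_W)$; therefore $x=x'$. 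The load-bearing inputs are the two facts invoked in Part (2): that the stabilizer of a closed point of $\cM$ is linearly reductive, and the structure of test configurations with self-isomorphic special fiber. Both belong to the circle of ideas around \cite{AHLH23}; in the present generality, and when the log Fano boundary carries more than one marked divisor, they can alternatively be deduced from the single-divisor case of \cite{BABWILD} through the finite representable morphism of Proposition \ref{prop:marking-change}, which preserves closures of points, the dimension of automorphism groups, and the set of test configurations.
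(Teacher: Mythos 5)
The paper offers no proof of this proposition---it is imported verbatim from \cite{BABWILD}, Propositions 6.11 and 6.13---and your blind reconstruction follows essentially the same route as that source: part (1) via the dictionary between test configurations and specializations in $|\cM|$ combined with the S-equivalence characterization of intersecting closures (Remark \ref{r:Sequiv}.3), part (2) via linear reductivity of stabilizers at closed points of the S-complete stack together with the filtration description of test configurations with connected reduced central fiber, and the final claim via stable $=$ klt (Proposition \ref{p:stable=klt}) and invariance of the source under S-equivalence. The argument is correct; the one slip is that ``coregularity $2$'' should read ``coregularity $\dim X$'' (equivalently, the source is the pair itself), since the proposition concerns the moduli stack in arbitrary dimension---the conclusion that $(X',B'+D')$ is klt, hence stable, is unaffected.
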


	\section{Gorenstein index and moduli spaces}\label{s:Goreinsteinindex}
	
	In this section, we will construct a good moduli space for boundary polarized CY surface pairs with bounded Gorenstein index when the log Fano boundary has standard coefficients.
	The construction relies on Theorem \ref{thm:index-deform}, which is a result on the deformation of the Gorenstein index of slc pairs with standard coefficients.

	Throughout this section, we fix the following data:
	\begin{itemize}
		\item A moduli stack $\cM(\chi,N, {\bf a},c)$   in Definition \ref{def:moduli-stack} such that there exists $v>0$ satisfying $\chi(m  N) = v  m^2 + O(m)$ for $m\geq 0$ and the coefficients  $a_1,\ldots, a_l$ are in  $\mathbf{T}:= \{\tfrac{1}{2}, \tfrac{2}{3}, \tfrac{3}{4},\ldots \}\cup\big\{1\big\}$. 
		\item A finite type locally closed substacks
		$
		\cM^\circ \subset \cM(\chi,N, {\bf a},c)
		$.
	\end{itemize}
	We write $\cM$ for the stack theoretic closure  of $\cM^\circ$ in $\cM(\chi,N, {\bf a},c)$.
	Note that pairs $(X,B+D)$ in $\cM(\bk)$ are boundary polarized CY surface pairs and the marked coefficients of $B$ are  in $\bfT$.
	
	\subsection{Bounded Gorenstein index substack}\label{ss:indexboundsubstack}
	
	We now proceed to a define a finite type substack of $\cM$ by bounding the Gorenstein index of the underlying log Fano pairs.
	
	\begin{defn}
		Let $(X,B)$ be a marked slc pair. The \emph{Gorenstein index} of $K_X+B$ at a point $x\in X$, denoted by $\ind_x(K_X+B)$, is the smallest positive integer $m$ such that there exists an open neighborhood $x\in U\subset X$ satisfying that $mB|_U$ is a marked $\bZ$-divisor and $\omega_X^{[m]}(mB)|_U$ is locally free. 
	\end{defn}

	\begin{defn}
		For each integer $m\geq1$, let $\cM_m \subset \cM$ denote the substack consisting of families $f:(X,B+D)\to T$ in $\cM$ such that $\ind_{x}(K_{X_{t}}+B_{t})\leq m$ for all $x\in X$ and $t=f(x) \in T$.
	\end{defn}

	\begin{proposition}\label{p:cM_mfinitetype}
		For each integer $m\geq 1$,
		$\cM_m$ is an open substack of $\cM$ 
		and is a finite  type algebraic stack with affine diagonal.
	\end{proposition}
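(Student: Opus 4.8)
The plan is to prove openness first, then finite type, then deduce the remaining properties. For openness: we must show that the condition $\ind_x(K_{X_t}+B_t) \leq m$ is an open condition on the base of a family of boundary polarized CY surface pairs. Fix a family $(X,B+D)\to T$ in $\cM$. The sheaf $\omega_{X/T}^{[m]}(mB)$ is, by Definition \ref{d:familyofbpcys}(5), flat over $T$ and commutes with base change whenever $m\bfa \in \bZ^l$ (and we may always enlarge $m$ within its congruence class, or note that for the $a_i$'s in $\bfT$ with $a_i<1$ the relevant integrality holds for $m$ divisible by the denominators; the coefficients equal to $1$ cause no issue). Being locally free is an open condition on a coherent sheaf that is flat over the base and commutes with base change, because the fiberwise rank is upper semicontinuous and locally constant rank plus flatness forces local freeness (\cite[Corollary 3.36]{Kol23}-type reasoning, or directly the local criterion for freeness via Nakayama on fibers). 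Hence $\{x \in X : \omega_{X/T}^{[m]}(mB) \text{ locally free at } x\}$ is open in $X$, and its complement is closed; pushing forward the closed complement to $T$ (using properness of $f$) gives that $\{t \in T : \ind_x(K_{X_t}+B_t)\leq m \text{ for all } x \in X_t\}$ is open in $T$. Since this is checked on an atlas, $\cM_m \subset \cM$ is an open substack.

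For finite type, the key input is Proposition \ref{p:boundedness}: a locally closed substack of $\cM(\chi,N,\bfa,c)$ is of finite type if and only if there is a uniform integer $r$ with $r(K_X+B)$ Cartier for all pairs in the substack. By definition of $\cM_m$, every $(X,B+D)$ in $\cM_m(\bk)$ has $\ind_x(K_X+B)\leq m$ at each point $x$. Taking $r$ to be a common multiple of $N$ and of $\mathrm{lcm}(1,2,\ldots,m)$ (the latter to clear all possible local indices $\leq m$, and also to ensure $r\bfa \in \bZ^l$ given the $a_i \in \bfT$), we get that $\omega_X^{[r]}(rB)$ is locally free everywhere, i.e. $r(K_X+B)$ is Cartier, for every such pair. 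Hence Proposition \ref{p:boundedness} applies and $\cM_m$ is of finite type. Being an open substack of $\cM$, which is a closed substack of the algebraic stack $\cM(\chi,N,\bfa,c)$ that is locally of finite type over $\bk$ with affine diagonal by Theorem \ref{thm:moduli-alg-stack}, $\cM_m$ is an algebraic stack with affine diagonal; combined with finite type over $\bk$ this completes the proof.

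I expect the main obstacle to be making the openness argument fully rigorous, specifically the passage from "$\omega_{X/T}^{[m]}(mB)$ is flat and commutes with base change" to "the locus where it is locally free is open and saturated under $f$." One has to be careful that local freeness of a reflexive sheaf at a point of a fiber is detected correctly, and that the non-locally-free locus, being closed in $X$, has closed image in $T$ only because $f$ is proper — but we also need that its image is exactly the locus of bad index, which requires that base change to a fiber really computes $\omega_{X_t}^{[m]}(mB_t)$, and this is precisely condition (5) of Definition \ref{d:familyofbpcys}. A secondary subtlety is bookkeeping the integrality constraint $m\bfa \in \bZ^l$: the index function $\ind_x(K_X+B)$ only directly controls $\omega_X^{[m]}(mB)$ for such $m$, so for a general $m$ one should interpret $\cM_m$ via the smallest admissible multiple, or simply observe that since the $a_i$ lie in $\bfT$ the set of admissible $m$ is cofinal and the argument goes through verbatim after replacing $m$ by a controlled multiple. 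Neither of these is a genuine difficulty, just a matter of careful phrasing, and the finite-type half is essentially immediate from Proposition \ref{p:boundedness}.
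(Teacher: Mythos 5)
Your finite-type argument and the deduction of the affine diagonal are correct and essentially identical to the paper's: bound the local index by $m$, take $r$ a common multiple of $N$ and $\mathrm{lcm}(1,\dots,m)$ so that $r(K_X+B)$ is Cartier for every pair in $\cM_m(\bk)$, and invoke Proposition \ref{p:boundedness}; the stack-theoretic properties are inherited from $\cM(\chi,N,\bfa,c)$.

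The openness argument, however, has a genuine error: you identify the locus $\{x : \ind_x(K_{X_t}+B_t)\leq m\}$ with the locus where $\omega_{X/T}^{[m]}(mB)$ is invertible. For a fixed point $x$, the set of integers $j$ with $j(K_{X_t}+B_t)$ Cartier at $x$ (and $jB_t$ a $\bZ$-divisor there) is the set of \emph{multiples} of $\ind_x(K_{X_t}+B_t)$, so invertibility of $\omega_{X_t}^{[m]}(mB_t)$ at $x$ is equivalent to $\ind_x \mid m$, not to $\ind_x\leq m$. These differ in general: a point of index $3$ with $m=4$ satisfies $\ind_x\leq m$ but lies in the non-invertible locus of $\omega^{[4]}$. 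Consequently your closed "bad" set maps onto the complement of $\{t : \ind_x \mid m \ \forall x\in X_t\}$, which is a strictly smaller open set than the one defining $\cM_m$. Your proposed patch of "replacing $m$ by a controlled multiple" does not repair this, since $\{\ind_x\leq m\}$ is not of the form $\{\ind_x \mid m'\}$ for any single $m'$ (e.g.\ $\ind_x=12$ divides $\mathrm{lcm}(1,\dots,11)$ while $12>11$). The correct argument, which is what the paper does, is to consider for each $1\leq j\leq m$ the open locus $U_j\subset X$ where $jB$ is a marked $\bZ$-divisor and $\omega_{X/T}^{[j]}(jB)$ is invertible (using the flatness and base-change property to detect this fiberwise via Nakayama, exactly as you argue for the single $j=m$), observe that $\{\ind_x\leq m\}=\bigcup_{j=1}^m U_j$, and push forward the closed set $\bigcap_{j=1}^m (X\setminus U_j)$ by the proper morphism $f$. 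With this modification the rest of your write-up goes through.
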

	
	Note that $\cM$ is in general not finite type even in very simple examples, while $\cM_m$ is always finite type by Proposition \ref{p:cM_mfinitetype}. See \cite[Example 9.4]{BABWILD} and Remark \ref{r:notfinitetype}.
	
	\begin{proof}
		To verify the openness, it suffices to show that if $(X,B+D)\to T$ is a family in $\cM$, then 
		\[
		U:= \{ t\in T\, \vert\, {\rm ind}_x(K_{X_t} +B_t) \leq m \text{ for all } x\in X_t\}
		\]
		is open in $T$. 
		By Remark  \ref{rem:moduli-stack-def}, we may assume that $T$ is Noetherian. 
		Now, for $1\leq j\leq m$, let $U_j\subset X$ denote the open locus where $jB$ is a marked $\bZ$-divisor and  $\omega_{X/T}^{[j]}(jB)$ is an invertible sheaf. 
		Now for $t\in T$ and $x\in X_t$, $j(K_{X_t}+B_t)$ is Cartier at $x$ if and only if $\omega_{X_t}^{[j]}(jB_t)$ is a line bundle.
		Since $\omega_{X/B}^{[j]}\vert_{X_b} \simeq \omega_{X_b}^{[j]}$, Nakayama's Lemma implies that the latter holds if and only if $x\in U_j$. Thus $T^\circ = T\setminus f( \cap_{j=1}^m Z_j)$, which is open.

		Since $\cM_m$ is an a locally closed substack of $\cM(\chi, N,{\bf a}, c)$, which is an algebraic stack with affine diagonal by Theorem \ref{t:algebraicstack}, $\cM_m$ satisfies the same properties. 
		In addition, Proposition \ref{p:boundedness}, which follows from \cite{Fuj17}, implies that $ \cM_m$ is of finite type. 
		Alternatively, since we are in dimension two, the necessary boundedness can be deduced from \cite{Kol85}.
	\end{proof}
	
	\begin{rem}
		In the special case when the $\bk$-points of  $\cM^\circ$ parametrize pairs $(\bP^2, \tfrac{3}{d} C)$, where $C\subset \bP^2$ is a smooth degree $d$ curve, \cite{BABWILD} construct a separated good moduli space $\cM_m \to M_m$ for each $m\geq 1$ by verifying that  $\cM_m$ is S-complete and $\Theta$-reductive. 
		The proof of the latter statement in \emph{loc. cit.} is to first show that for a pair $(X,B+D)$ in $\cM(\bk)$ the divisor $D$ can be replaced with a 1-complement \cite[Section 12.1]{BABWILD} and then uses the deformation theory of twisted curves \cite[Section 11.3]{BABWILD}. 
		
		Since  slc Fano surfaces do not always admit 1-complements \cite[Example 3.15]{Mor24}, the latter approach does not extend to our setting.
		Therefore, we will instead study deformations of the surface rather than curves on it. 
		This gives a more direct approach to proving that $\cM_m$ is S-complete and $\Theta$-reductive. 
	\end{rem}

	\subsection{Gorenstein index under deformation}
	
	
	\begin{thm} \label{thm:index-deform}
		Let $g:(X,B) \to S$ be a relative dimension $2$
		family of marked slc pairs over a smooth surface $S$ essentially of finite type over $\bk$.  Assume that $B$ is a marked $\bQ$-divisor with coefficients in the set $\bfT = \{ \tfrac{1}{2} , \tfrac{2}{3} , \tfrac{3}{4},\ldots\}\cup\{1 \}$. (We  allow the case when $B = 1\cdot \emptyset $.)
		
		If $0\in S$ is a closed point
		and $x_0\in X_0$ is a closed point that is not an lc center of $(X_0, B_0)$, then there exists a curve $x_0\in C\subset X$ not contained in $X_0$ such that
		\[
		\ind_{x_0}(K_{X_0}+B_0) = \ind_{x}(K_{X_s}+B_s)
		\]
		for all $x\in C$ and $s=g(x)$. 
	\end{thm}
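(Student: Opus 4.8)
The plan is to reduce the statement, by passing to an index‑one cover, to the deformation theory of a hypersurface surface singularity, where the desired curve emerges as (a component of) the fixed locus of a cyclic group action on the total space. Write $m:=\ind_{x_0}(K_{X_0}+B_0)$. Since the assertion is local at $x_0$ I would work with formal neighbourhoods, writing $\widehat S:=\Spec\widehat{\cO}_{S,0}=\Spec\bk[[s_1,s_2]]$, and algebraize the resulting curve germ at the end. First I would isolate the main case, in which $x_0\notin\Supp(B_0)$ and $X_0$ is normal at $x_0$; as $x_0$ is not an lc center, $(X_0,B_0)=(X_0,0)$ is then klt near $x_0$. (If instead $x_0$ lies on $\Supp(B_0)$ or on the non‑normal locus of $X_0$, then $(X_0,B_0)$ is still semi‑log‑terminal near $x_0$, since $x_0$ is not an lc center, and one runs the same argument with the index‑one cover of the \emph{pair}; these cases involve only extra bookkeeping.) Now, because $S$ is reduced, the locally stable family $(X,B)\to S$ satisfies Koll\'ar's condition near $x_0$, so $\omega_{X/S}^{[i]}(iB)$ is flat over $S$ and restricts to $\omega_{X_0}^{[i]}(iB_0)$ on $X_0$; the latter is invertible near $x_0$ exactly when $m\mid i$, so Nakayama's lemma gives that $K_{X/S}+B$ has index $m$ at $x_0$ as well. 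I would then form the degree‑$m$ relative index‑one cover $\pi\colon Y\to X$ near $x_0$: a cyclic $\mu_m$‑cover with $K_{Y/S}+B_Y=\pi^{*}(K_{X/S}+B)$ Cartier, with $Y\to S$ flat near the single point $y_0:=\pi^{-1}(x_0)$, with $(X,B)=(Y,B_Y)/\mu_m$ near $x_0$, and whose fibre $Y_0$ is the index‑one cover of $(X_0,0)$, in particular normal Gorenstein klt near $y_0$.

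Next I would invoke \cite{KSB88}: a Gorenstein semi‑log‑terminal (in particular, Gorenstein klt) slc surface singularity is a hypersurface singularity, so choosing $\mu_m$‑equivariant formal coordinates (possible in characteristic $0$) one has $\widehat{\cO}_{Y_0,y_0}\cong\bk[[u,v,w]]/(f)$, and since $Y\to S$ is a flat deformation of this hypersurface over the smooth base $\widehat S$, also $\widehat{\cO}_{Y,y_0}\cong\bk[[u,v,w,s_1,s_2]]/(F)$ with $F$ reducing to $f$ modulo $(s_1,s_2)$, the $\mu_m$‑action being linear on $u,v,w$ and trivial on $s_1,s_2$. Let $P\subset\bA^3_{u,v,w}$ be the linear subspace fixed by $\mu_m$. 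Since the index‑one cover $Y_0\to X_0$ is \'etale in codimension $1$ near $y_0$, the $\mu_m$‑fixed locus of $Y_0$ is just $\{y_0\}$, i.e.\ $P\cap\{f=0\}=\{0\}$ as germs. But the $\mu_m$‑fixed locus of $Y$ is $(P\times\widehat S)\cap\{F=0\}$, which is cut out by a single equation inside the regular germ $P\times\widehat S$ of dimension $\dim P+2$, hence is of dimension $\ge\dim P+1\ge1$ at $y_0$ — this is precisely where the hypothesis $\dim S=2$ enters, and over a one‑dimensional base the count would only give $\ge0$, consistent with the fact that the statement fails there. As this fixed locus meets $Y_0$ only in $y_0$, it contains a curve germ $C_Y\ni y_0$ with $C_Y\not\subset Y_0$, and I would set $C:=\pi(C_Y)\subset X$, a curve through $x_0$ with $C\not\subset X_0$. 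Finally, for $x=\pi(y)\in C$ with $y\in C_Y$ the stabilizer of $y$ is all of $\mu_m$, so locally $X_{g(x)}=Y_{g(x)}/\mu_m$ with $K_{Y_{g(x)}}+B_{Y,g(x)}=\pi^{*}(K_{X_{g(x)}}+B_{g(x)})$ Cartier near $y$ (restriction of the Cartier divisor $K_{Y/S}+B_Y$, $Y$ being Gorenstein near $y_0$); thus $\ind_x(K_{X_{g(x)}}+B_{g(x)})$ is the order of the character by which $\mu_m$ acts on $\omega_{Y_{g(x)}}$ at $y$. That character is locally constant along the connected germ $C_Y$, so it equals its value at $(y_0,0)$, which generates $\mu_m$ because $Y_0$ is the index‑one cover; hence $\ind_x(K_{X_{g(x)}}+B_{g(x)})=m=\ind_{x_0}(K_{X_0}+B_0)$ for every $x\in C$.

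I expect the main obstacle to be this reduction to the hypersurface case: one must carefully combine the fact that ``$x_0$ not an lc center'' forces the singularity to be semi‑log‑terminal (so that its index‑one cover is Gorenstein), the classification from \cite{KSB88} of Gorenstein slt slc surface singularities as hypersurface singularities, and the compatibility of the relative index‑one cover with restriction to fibres — i.e.\ Koll\'ar's condition over the reduced base $S$, which is what guarantees both that the relative index at $x_0$ equals $m$ and that the cover of the family restricts to the genuine index‑one cover of $X_0$. (An alternative to the last point would be to lift the germ of the family $X/S$ at $x_0$ directly to a $\mu_m$‑equivariant deformation of the index‑one cover of $X_0$, via the identification of $\bQ$‑Gorenstein deformations of a klt surface singularity with equivariant deformations of its index‑one cover.) The non‑normal and boundary cases add bookkeeping but, using the pair version of the index‑one cover together with the full slt classification of \cite{KSB88}, no essentially new difficulty.
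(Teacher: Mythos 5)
Your proposal follows essentially the same route as the paper's proof: use Koll\'ar's base-change condition to identify $\ind_{x_0}(K_{X/S}+B)$ with $\ind_{x_0}(K_{X_0}+B_0)$, pass to the index-one cover, invoke \cite{KSB88} to see the central fiber of the cover is a hypersurface singularity, and then do a dimension count showing the $\bmu_m$-fixed (equivalently, invariant) locus of the $4$-fold total space has dimension $\geq 1$ while meeting the central fiber only in a point --- this is exactly where $\dim S=2$ is used, as you correctly observe. The paper cuts by three eigenfunction lifts of a basis of $\fm/(\fm^2+(\tw_1,\tw_2))$ and checks triviality of the action via the cotangent space of the resulting curve, rather than linearizing on an explicit hypersurface presentation, but these are interchangeable in characteristic $0$.

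There is, however, one genuine gap in your justification. You assert that Koll\'ar's condition --- flatness and base change of $\omega_{X/S}^{[i]}(\lfloor iB\rfloor)$ for \emph{all} $i$ --- holds ``because $S$ is reduced.'' This is false in general: for a locally stable family over a reduced (even smooth) base, only the reflexive powers that are multiples of the relative index automatically commute with base change; this failure is precisely why Koll\'ar's condition is imposed as an extra axiom in the moduli functor. The correct input is Koll\'ar's theorem \cite[Theorem~1]{Kol18}, which establishes base change of all $\omega_{X/S}^{[i]}(\lfloor iB\rfloor)$ for locally stable families of \emph{relative dimension $2$ with standard marked coefficients} --- and this is the only place the hypothesis that the coefficients lie in $\bfT$ enters the proof. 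Your write-up never uses that hypothesis, which is the tell: as written it would prove the statement for arbitrary coefficients, where the base-change claim (and hence the identification of relative and fiberwise index, and the compatibility of the relative index-one cover with restriction to fibers) can fail. Secondarily, the case $\tx_0\in\Supp(\tB_0)$ is not pure bookkeeping: one needs an actual (short) argument --- plt plus adjunction to the reduced boundary and \cite[Proposition 16.6]{Kol92} --- to conclude that $\tX_0$ is in fact \emph{smooth} at $\tx_0$, so that the hypersurface conclusion still holds; the bare \cite{KSB88} classification applies only when $\tx_0\notin\Supp(\tB_0)$.
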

	
	\begin{proof}
		Denote by $r$ the Gorenstein index of $K_{X_0}+B_0$ at $x_0$. 
		Since $(X,B) \to S$ is a family of slc pair of relative dimension $2$ with standard coefficients, by \cite[Theorem 1]{Kol18} (see also \cite{KK23} for a generalization to higher dimensions) we know that the sheaves $\omega_{X/S}^{[m]}(\lfloor mB\rfloor)$ are flat over $S$ and commute with base change for every $m\in \bZ$.\footnote{Note that the round down is taken with respect to the marking, i.e. if $B= \sum_i a_i B_i$, where the $B_i$ are relative Mumford divisors and $a_i \in \bQ_{\geq 0}$, then $\lfloor m B \rfloor := \sum_i \lfloor m a_i \rfloor B_i$. 
			} 
			In particular, for every $s\in S$ we have
			\begin{equation}\label{eq:kollar-condition}
				\omega_{X/S}^{[m]}(\lfloor mB\rfloor)\otimes \cO_{X_s}\cong \omega_{X_s}^{[m]}(\lfloor mB_s\rfloor).
			\end{equation}
			Thus $\omega_{X/S}^{[m]}(\lfloor mB\rfloor)$ is locally free at $x_0$ if and only if $\omega_{X_0}^{[m]}(\lfloor mB_0\rfloor)$ is locally free at $x_0$. Moreover, since $(X,B)\to S$ is a family of marked pairs, we know that the coefficient set of $B_0$ at $x_0$ is the same as the coefficient set of $B$ at $x_0$. In particular, we know that $mB_0$ is a $\bZ$-divisor at $x_0$ if and only if $mB$ is a $\bZ$-divisor at $x_0$. This together with \eqref{eq:kollar-condition} shows that 
			\[
			\ind_{x_0}(K_{X/S}+B) = \ind_{x_0} (K_{X_0}+B_0) = r.
			\]

			Since the problem is local, after shrinking $X$ around $x_0$ we may assume that $\omega_{X/S}^{[r]}(rB)$ is a trivial line bundle.
			Following \cite[Definition 2.49]{Kol13}, we take the index $1$ cover $\pi: (\tX,\tB) \to (X,B)$ of $K_{X/S}+B$ which is a Galois cover with Galois group $\bmu_r$. More precisely, we have 
			\[
			\tX = \Spec_{X} \cO_{X} \oplus \omega_{X/S}^{[1]}(\lfloor B \rfloor)\oplus \cdots \oplus \omega_{X/S}^{[r-1]}(\lfloor (r-1)B \rfloor)
			\]
			where the ring structure is induced by a fixed isomorphism $\gamma: \omega_{X/S}^{[r]}(rB) \xrightarrow{\cong} \cO_{X}$. Moreover, from \cite[Proposition 2.50(3)]{Kol13} we know that $\omega_{\tX/S}(\tB)$ is locally free at $x_0$, $K_{\tX/S} + \tB \sim_{\bQ} \pi^*(K_{X/S} +B)$,  and $\tB = \pi^* \lfloor B \rfloor $ is a reduced $\bZ$-divisor such that $(\tX, \tB)\to S$ is a locally stable family. Again by \cite[Theorem 1]{Kol18} we know that $(\tX_s, \tB_s)\to (X_s, B_s)$ is an index $1$ cover of $K_{X_s} + B_s$ for $s$ in a neighborhood of $0$ in $S$. 
			By properties of index $1$ covers (see \cite[2.48]{Kol13}), we know that the index of $K_{X/S}+B$ at $x$ equals $r$ if and only if the $\bmu_r$-action on $(\pi^{-1}(x))_{\rm red}$ is trivial.
			
			Since  $(X_0, B_0)$ is slc at $x_0$ whose minimal lc center is not a point, the same is true for $\tx_0\in (\tX_0, \tB_0)$ where $\tx_0$ is the unique preimage of $x_0$ under $\pi$. We claim that $\tx_0\in \tX_0$ is a hypersurface singularity. If $\tx_0\not\in \Supp(\tB_0)$, then  $\tx_0\in \tX_0$ 
			is semi-log-terminal in the sense of \cite[Definition 4.17]{KSB88}. By \cite[Theorem 4.23]{KSB88} we know that $\tx_0 \in \tX_0$ is a hypersurface singularity. If $\tx_0\in \Supp(\tB_0)$, then $\tx_0\in (\tX_0, \tB_0)$ is not klt as $\tB_0$ is reduced. Thus the minimal lc center of $(\tX_0, \tB_0)$ near $x_0$ is a curve, which implies that it is plt near $\tx_0$. Since $K_{\tX_0} + \tB_0$ is Cartier near $\tx_0$, by adjunction we know that $(\tB_0, {\rm Diff}_{\tB_0} (0) )$ is klt where ${\rm Diff}_{\tB_0} (0)$ is a $\bZ$-divisor. By \cite[Proposition 16.6]{Kol92} we know that ${\rm Diff}_{\tB_0} (0) =0$ and $\tB_0$ is a  smooth curve and Cartier in $\tX_0$ near $\tx_0$. This implies that $\tX_0$ is smooth at $\tx_0$. 
			Thus the claim is proved. From the claim we know that $\tx_0\in \tX$ is also a hypersurface singularity by smoothness of $S$.
			
			Next, we analyze the $\bmu_r$-action on $\tX$ near $\tx_0$. Denote by $(A,\fm) = (\cO_{\tX, \tx_0}, \fm_{\tx_0})$. From above arguments, we know that the $\bmu_r$-action on $A/\fm = k(\tx_0)$ is trivial. Let $w_1, w_2\in \cO_{S,0}$ be a regular system of parameters. Denote by $\tw_i=\pi^* w_i \in \fm$ for $i=1,2$. By the construction of the index $1$ cover, we know that each $\tw_i$ is $\bmu_r$-invariant. Since $\tx_0\in \tX_0$ is a hypersurface singularity of dimension $2$, we know that $\edim(A/(\tw_1,\tw_2), \fm/(\tw_1,\tw_2))\leq 3$. As a result, we can find $\bmu_r$-eigenvectors $\oz_1,\oz_2,\oz_3$ that span the $k(\tx_0)$-vector space $\fm/(\fm^2+(\tw_1,\tw_2))$. Since $\bmu_r$ is linearly reductive, there exists a $\bmu_r$-equivariant splitting of  the  quotient map $\fm\twoheadrightarrow \fm/(\fm^2+(\tw_1,\tw_2))$. In particular, there exists a lifting  $z_j\in \fm$ of $\oz_j$ such that $z_j$ is a $\bmu_r$-eigenfunction for $1\leq j\leq 3$. 
			
			Let $\tZ = V(z_1, z_2, z_3)\subset \tX$. Then $\tZ$ is $\bmu_r$-invariant, $\tx_0\in \tZ$, $\cO_{\tZ, \tx_0}\cong A/(z_1,z_2,z_3)$, and $\dim_{\tx_0} \tZ\geq \dim_{\tx_0} \tX - 3 = 1$. We claim that after possibly shrinking $\tX$ around $\tx_0$, the $\bmu_r$-action on $\tZ$ is trivial. Denote by $\fn:=\fm_{\tZ, \tx_0}$.  Then we have $\fn/\fn^2 \cong \fm/(\fm^2 + (z_1,z_2,z_3))$ which is a $k(\tx_0)$-vector space  spanned by (cosets of) $\tw_1$ and $\tw_2$. In particular, the $\bmu_r$-action on $\fn/\fn^2$ is trivial. Since there is a $\bmu_r$-equivariant surjective ring homomorphism $k(\tx_0) [[\fn/\fn^2]] \twoheadrightarrow \widehat{\cO_{\tZ, \tx_0}}$, we know that $\bmu_r$ acts trivially on the completion $\widehat{\cO_{\tZ, \tx_0}}$. Hence $\bmu_r$ acts trivially on the subring $\cO_{\tZ,\tx_0}$, which implies the claim. 
			
			Finally, let $Z= \pi(\tZ)$. Since $\tZ\cap X_0 = V(z_1, z_2, z_3, \tw_1, \tw_2)$ is supported at the point $\tx_0$, we know that $\tZ$ is not contained in $\tX_0$. Hence $x_0\in Z$, $\dim Z =\dim \tZ\geq 1$, and $Z$ is not contained in $X_0$. Since $\bmu_r$ acts trivially on $\tZ$, the index of $K_{X/S}+B$ at every point $x\in Z$ is equal to $r$. By \eqref{eq:kollar-condition}, we know that $K_{X_s} + B_s$ has index $r$ at every point $x\in Z$ where $ s= g(x)$. Thus we can find a curve $x_0\in C$ contained in $Z$ such that $C\not\subset X_0$ and $K_{X_s}+B_s$ has index $r$ at any $x\in C$. The proof is finished.
		\end{proof}

		\subsection{Existence of moduli space}

		\begin{thm}\label{t:existsGMindex}
			If $m\geq N$, then there exists a good moduli space morphism 
			$
			\phi_m:\cM_m \to M_m
			$
			to a separated finite type algebraic space. 
		\end{thm}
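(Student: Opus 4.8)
The plan is to invoke the existence criterion of \cite[Theorem A]{AHLH23} (Theorem~\ref{t:AHLH}). By Proposition~\ref{p:cM_mfinitetype}, $\cM_m$ is a finite type algebraic stack with affine diagonal, so it is enough to show that $\cM_m$ is S-complete and $\Theta$-reductive with respect to DVRs essentially of finite type over $\bk$; this yields a good moduli space morphism $\phi_m\colon\cM_m\to M_m$ to a separated algebraic space, and $M_m$ is automatically of finite type since $\cM_m$ is. We treat S-completeness, the $\Theta$-reductivity argument being identical with $\Theta_R$ in place of $\STR$ and the chart $\Spec R[t]$ in place of $\Spec R[s,t]/(st-\pi)$.

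Since $\cM$ is a closed substack of $\cM(\chi,N,\bfa,c)$, which is S-complete and $\Theta$-reductive with respect to essentially finite type DVRs by Theorem~\ref{t:stackisScomp+Thetared}, and since $\STR\setminus 0$ is schematically dense in $\STR$, the stack $\cM$ enjoys the same extension properties. Thus a morphism $\STR\setminus 0\to\cM_m$ extends uniquely to a morphism $\STR\to\cM$, and the task is to show this factors through the open substack $\cM_m\hookrightarrow\cM$. Passing to the chart $\widetilde S:=\Spec R[s,t]/(st-\pi)$ --- a regular, hence smooth, surface over $\bk$ because $\bk$ has characteristic zero and $R$ is essentially of finite type over $\bk$ --- the extension corresponds to a $\bG_m$-equivariant family of boundary polarized CY surface pairs $(X,B+D)\to\widetilde S$, with structure morphism $g$, whose restriction to $\widetilde S\setminus 0$ lies in $\cM_m$. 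Because $\cM_m\subset\cM$ is open and $\widetilde S\setminus 0$ is dense in $\widetilde S$, it suffices to prove that the fiber $(X_0,B_0+D_0)$ over the closed point $0\in\widetilde S$ satisfies $\ind_{x_0}(K_{X_0}+B_0)\le m$ for every closed point $x_0\in X_0$.

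The key step is a dichotomy. If $x_0$ is \emph{not} an lc center of $(X_0,B_0)$, then Theorem~\ref{thm:index-deform}, applied to $(X,B)\to\widetilde S$, produces a curve $x_0\in C\subset X$ with $C\not\subset X_0$ along which the Gorenstein index of $K_{X_s}+B_s$ is constantly equal to $\ind_{x_0}(K_{X_0}+B_0)$. Choosing any $x\in C$ with $s:=g(x)\ne 0$ --- which exists since $C\not\subset X_0=g^{-1}(0)$ --- the point $x$ lies in a fiber over $\widetilde S\setminus 0$, so $\ind_x(K_{X_s}+B_s)\le m$; hence $\ind_{x_0}(K_{X_0}+B_0)\le m$. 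If instead $\{x_0\}$ is a minimal lc center of $(X_0,B_0)$, pick an lc place $E$ of $(X_0,B_0)$ with center $x_0$. Since $(X_0,B_0+D_0)$ is slc and $D_0\ge 0$, the log discrepancy $A_{X_0,B_0+D_0}(\ord_E)=-\ord_E(D_0)$ is nonnegative, forcing $\ord_E(D_0)=0$; as $D_0$ is $\bQ$-Cartier this forces $x_0\notin\Supp D_0$. Therefore $\cO_{X_0}(N(K_{X_0}+B_0))\cong\cO_{X_0}(N(K_{X_0}+B_0+D_0))$ in a neighborhood of $x_0$, and the right-hand sheaf is invertible there because $N(K_{X_0}+B_0+D_0)\sim 0$ by the defining condition of $\cM(\chi,N,\bfa,c)$; hence $\ind_{x_0}(K_{X_0}+B_0)$ divides $N\le m$. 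In both cases the bound holds, so the extension factors through $\cM_m$, and its uniqueness is inherited from $\cM$. By Theorem~\ref{t:AHLH} we obtain the good moduli space $\phi_m\colon\cM_m\to M_m$.

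The main obstacle is the index bound on the central fiber at points that are not lc centers, which rests entirely on Theorem~\ref{thm:index-deform}, the deformation invariance of the Gorenstein index for slc surface pairs with standard coefficients; the lc-center case is comparatively soft, using only the Calabi--Yau relation $N(K_{X_0}+B_0+D_0)\sim 0$ together with the fact that the polarizing boundary avoids minimal lc centers of the log Fano part (and here the hypothesis $m\ge N$ is used). One minor point needing care is when $x_0$ lies on the non-normal locus of $X_0$: there ``lc center'' and ``lc place'' are to be understood via the normalization $(\oX_0,\oG_0+\oB_0+\oD_0)$, and the lc-center argument goes through unchanged because $\oD_0$, being the pullback of a $\bQ$-Cartier divisor on $X_0$, still avoids the relevant centers.
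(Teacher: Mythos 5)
Your proposal is correct and follows essentially the same route as the paper: extend the punctured family into $\cM$ via Theorem~\ref{t:stackisScomp+Thetared}, show the extension lands in $\cM_m$ by combining Theorem~\ref{thm:index-deform} at non-lc-center points with the observation that lc centers of $(X_0,B_0)$ avoid $\Supp(D_0)$ so the index there divides $N\le m$, and then apply Theorem~\ref{t:AHLH}. The paper packages the index bound on the central fiber as Proposition~\ref{p:cM_mextension} and phrases it contrapositively (index $>m$ forces $x_0\in\Supp(D_0)$, hence $x_0$ is not an lc center, contradicting index-invariance along the curve), but this is the same dichotomy you run.
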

		
		To prove the result, we will show that $\cM_m$ is S-complete and $\Theta$-reductive with respect to DVRs essentially of finite type over $\bk$
		by combining Theorem \ref{t:stackisScomp+Thetared} with the following  consequence of Theorem \ref{thm:index-deform}. 
		
		\begin{prop}\label{p:cM_mextension}
			Let $S$ be a smooth surface essentially of finite type over $\bk$,  $0 \in S$ a closed point, and $S^\circ:= S\setminus \{0\}$.
			
			If $m \geq N$ and $(X,B+D)\to S$ is   a family in $\cM$ such that 
			its restriction  $(X^\circ, B^\circ+D^\circ)\to S^\circ$ is in $\cM_m$,  then $(X,B+D)\to S$ is in $\cM_m$. 
		\end{prop}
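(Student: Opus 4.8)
The plan is to verify the pointwise index bound $\ind_x(K_{X_s}+B_s)\le m$ for all $x\in X$ with $s=g(x)$; for $x$ lying over $S^\circ$ this holds by hypothesis, since the restricted family $(X^\circ,B^\circ+D^\circ)\to S^\circ$ lies in $\cM_m$, so only closed points $x_0\in X_0$ remain. I would first record that $(X,B)\to S$ is a relative dimension two family of marked slc pairs over the smooth surface $S$ (using that $K_{X/S}+B$ is $\bQ$-Cartier because $K_{X/S}+B+D$ and $D$ are, and that $(X_s,B_s)$ is slc because $0\le B_s\le B_s+D_s$), and that $B$ has coefficients in $\bfT$; hence Theorem~\ref{thm:index-deform} is applicable. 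I would then split into two cases according to whether $x_0$ is an lc center of $(X_0,B_0)$.

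If $x_0$ is \emph{not} an lc center of $(X_0,B_0)$, I would apply Theorem~\ref{thm:index-deform} directly: it produces a curve $x_0\in C\subset X$ with $C\not\subset X_0$ along which the Gorenstein index of $K_{X_s}+B_s$ is constant and equal to $r:=\ind_{x_0}(K_{X_0}+B_0)$. Since $C$ is a curve through $x_0$ that is not contained in $X_0=g^{-1}(0)$, the locus $C\setminus X_0$ is a nonempty open subset of $C$ and maps into $S^\circ$. Choosing a closed point $x\in C\setminus X_0$ and setting $s=g(x)\in S^\circ$, the fact that $(X^\circ,B^\circ+D^\circ)\to S^\circ$ lies in $\cM_m$ gives $r=\ind_x(K_{X_s}+B_s)\le m$.

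If $x_0$ \emph{is} an lc center of $(X_0,B_0)$, then Theorem~\ref{thm:index-deform} does not apply and I would instead exploit the boundary polarized CY structure. The key observation is that $x_0\notin\Supp D_0$: if an lc place of $(X_0,B_0)$ with center $\{x_0\}$ had center meeting $\Supp D_0$, then adding the effective $\bQ$-Cartier divisor $D_0$ would push its log discrepancy for $(X_0,B_0+D_0)$ strictly below $-1$, contradicting that $(X_0,B_0+D_0)$ is slc; one checks this after passing to the normalization, where it is the standard fact that if $W$ is an lc center of an lc pair $(Y,\Delta)$ then $(Y,\Delta+\Delta')$ is not lc for any effective $\bQ$-Cartier $\Delta'$ with $W\subseteq\Supp\Delta'$. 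Consequently $D_0$ vanishes in a neighborhood $U$ of $x_0$, so $\omega_{X_0}^{[N]}(NB_0)\vert_U\cong\omega_{X_0}^{[N]}(N(B_0+D_0))\vert_U$. Since the family lies in $\cM\subset\cM(\chi,N,\bfa,c)$, conditions (4)--(5) of Definition~\ref{d:familyofbpcys} together with base change yield $\omega_{X_0}^{[N]}(N(B_0+D_0))\cong\cO_{X_0}$, which is in particular invertible at $x_0$; and as $N\bfa\in\bZ^l$, the divisor $NB_0$ is a marked $\bZ$-divisor near $x_0$. Hence $\ind_{x_0}(K_{X_0}+B_0)\le N\le m$. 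Combining the two cases shows $(X,B+D)\to S$ lies in $\cM_m$.

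The real content is packaged into Theorem~\ref{thm:index-deform}; relative to it the new ingredient is merely the reduction to that theorem at non-lc-center points. I expect the only delicate point to be Case~2: one must confirm that the polarizing boundary avoids the log canonical centers of the log Fano boundary (a general fact about log canonical centers, but one that requires a brief passage to the normalization to state cleanly for slc pairs), after which the CY index bound $N\le m$ closes the argument immediately.
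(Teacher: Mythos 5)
Your proposal is correct and follows essentially the same route as the paper: the paper argues by contradiction, noting that a point of index $>m\geq N$ must lie in $\Supp(D_0)$ (since $N(K_{X_0}+B_0+D_0)\sim 0$), hence cannot be an lc center of $(X_0,B_0)$, and then invokes Theorem~\ref{thm:index-deform} to propagate the large index to fibers over $S^\circ$. Your direct two-case formulation is just the contrapositive arrangement of the same ingredients, so there is nothing substantively different to report.
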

		
		\begin{proof}
			Assume to the contrary that $(X,B+D)\to S$ is not $\cM_m$. 
			Then there exists a closed point $x\in X_0$ such that $m_0 := {\rm ind}_{x_0}(K_{X_0}+B_{0}) >m$. 
			Since  $N(K_{X_0}+B_0+D_0) \sim 0$ and $m\geq N$, 
			$x_0$ must be in $\Supp(D_0)$ and so  $x_0$ is not an lc center of $(X_0,B_0)$. 
			By Theorem \ref{thm:index-deform}, there exists a curve $x_0\in C\subset X$ not contained in $X_0$ such that $\ind_{x} (K_{X_{s}} + B_{s}) = m_0$ for every $x\in C$ and $s\in g(s)$. Thus $(X^\circ,B^\circ+D^\circ)\to S^\circ$ is not in $\cM_m$, which is a contradiction.
		\end{proof}

		\begin{proof}[Proof of Theorem \ref{t:existsGMindex}]
			Let $R$ be a DVR essentially of finite type over $\bk$ with uniformizer $\pi$. Set
			\[
			S:= \Spec (R[s,t]/(st-\pi)) \quad \text{ or } \quad  S:= \Spec( R[t]),
			\]
			where $\bG_m$ acts on $S$ with weight $1$ and $-1$ on $s$ and $t$, respectively. Let $0 \in S$ denote the unique closed fixed point and $S^\circ = S\setminus \{0\}$. Thus $[S/\bG_m]$ equals $\STR$ or $\Theta_R$.
			
			Let $(X^\circ, B^\circ+D^\circ)\to S$ be a $\bG_m$-equivariant family in $\cM_m$.
			By Theorem \ref{t:stackisScomp+Thetared}, the family extends uniquely to a $\bG_m$-equivariant family $(X,B+D)\to S$ in $ \cM(\chi, N,{\bf a},c)$. 
			Since $\cM$ is a closed substack  and $S$ is reduced, $(X,B+D)\to S$ is in $\cM$.
			By Proposition \ref{p:cM_mextension}, $(X,B+D)\to S$ is in $\cM_m$. 
			Therefore $\cM_m$ is S-complete and $\Theta$-reductive with respect to DVRs essentially of finite type over $k$.
			Using additionally that $\cM_m$ is a finite type algebraic stack with affine diagonal by Proposition \ref{p:cM_mfinitetype},
			Theorem \ref{t:AHLH} now implies the existence of a good moduli space morphism $\cM_m\to M_m$ to  a separated finite type algebraic space.
		\end{proof}
		
		\subsection{Properties of moduli space}
		
		We now discuss certain properties of the moduli space constructed in Theorem \ref{t:existsGMindex}. In particular, we construct various subspaces, wall crossing maps, and prove a properness result.

		\subsubsection{Type I and II subspaces}

		\begin{defn}
			Let $\cM^{\rm I}\subset \cM$ (resp., $\cM^{\rm I+II}\subset \cM$) denote the substack consisting of families $(X,B+D)\to T$ in $\cM$ such that $(X_t, B_t+D_t)$ is Type I (resp., Type I or II) for all $t\in T$.  
		\end{defn}
		
		\begin{proposition}\label{p:TypeI+IIfinitetype}
			The subtstacks $\cM^{\rm I}$ and $\cM^{\rm I+II}$ are finite type open substacks of $\cM$. 
		\end{proposition}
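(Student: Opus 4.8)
The plan is to handle openness and finite type separately. Openness of both substacks follows from a single assertion: for any family $(X,B+D)\to T$ in $\cM$ over a finite type scheme, the coregularity $t\mapsto{\rm coreg}(X_t,B_t+D_t)$ is lower semicontinuous. Granting this, $\cM^{\rm I}$ is the locus where ${\rm coreg}=2$ (the maximal value for surface pairs, attained exactly when the pair is klt) and $\cM^{\rm I+II}$ is the locus where ${\rm coreg}\ge 1$, so both are open in $\cM$, and in particular are locally closed substacks of $\cM(\chi,N,\bfa,c)$. Finite type then follows from Proposition~\ref{p:boundedness} together with the boundedness of the Type~I and Type~II loci proved in \cite{BABWILD}.

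To prove lower semicontinuity of coregularity I would combine two inputs. First, $t\mapsto\dim{\rm Src}(X_t,B_t+D_t)$ is a constructible function of $t$: after stratifying $T$ one can simultaneously spread out a dlt modification of each fiber together with its minimal lc centers, using the theory of sources of lc centers in families from \cite{Kol16} and the corresponding analysis in \cite{BABWILD}. Second, coregularity cannot increase under specialization: in a family of slc pairs lc-ness holds on every fiber, and passing to a special fiber can only create new lc centers on a dlt modification, so the minimal lc center, and hence the source, can only drop in dimension. Since a constructible subset of a Noetherian scheme that is stable under generization is open, the two inputs give that $\{t : {\rm coreg}(X_t,B_t+D_t)\ge k\}$ is open for every $k$, which is the desired semicontinuity.

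For finite type, \cite{BABWILD} shows that the boundary polarized CY surface pairs of Type~I or~II with the fixed numerical data $(\chi,N,\bfa,c)$ form a bounded family. Bounding the Gorenstein index cuts out an open subscheme of the base of any finite type family (as in the proof of Proposition~\ref{p:cM_mfinitetype}), so quasi-compactness yields a uniform upper bound $m$ for the Gorenstein index of all pairs in $\cM^{\rm I+II}(\bk)$; taking $r=m!$ then gives a single positive integer with $r(K_X+B)$ Cartier for every such pair. As $\cM^{\rm I+II}$ is a locally closed substack of $\cM(\chi,N,\bfa,c)$ by the previous step, Proposition~\ref{p:boundedness} applies and shows $\cM^{\rm I+II}$ is of finite type; then $\cM^{\rm I}$, an open substack of the now Noetherian stack $\cM^{\rm I+II}$, is of finite type as well.

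The main obstacle is the lower semicontinuity of coregularity — equivalently, the closedness of the Type~III locus. The monotonicity under specialization and the implication ``constructible and stable under generization $\Rightarrow$ open'' are routine; the real content is the constructibility of $t\mapsto\dim{\rm Src}(X_t,B_t+D_t)$, which requires controlling how dlt modifications and minimal lc centers vary in families rather than for a single pair, and is where the argument leans on \cite{Kol16} and the structural results of \cite{BABWILD}.
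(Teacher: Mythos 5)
Your overall strategy is sound and reaches the right conclusion, but it diverges from the paper's proof in both halves, and each half has a soft spot worth flagging. For openness, the paper does not re-derive semicontinuity of coregularity: it simply cites \cite[Proposition 8.8]{BABWILD}, which asserts exactly that $\cM^{\rm I}\subset\cM^{\rm I+II}\subset\cM$ are open. Your sketch (constructibility of $t\mapsto\dim{\rm Src}(X_t,B_t+D_t)$ plus stability under generization, hence openness of $\{{\rm coreg}\geq k\}$) is the morally correct content of that proposition, and you correctly identify that the hard step is spreading out dlt modifications and minimal lc centers over a stratification of the base. But as written this step is asserted rather than proved, so on its own your argument for openness is incomplete; since the needed statement is available as a citation, the honest fix is to invoke it rather than to leave the constructibility as a promissory note.

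For finite type, the two routes genuinely differ. You go: boundedness of the Type I and II pairs $\Rightarrow$ a uniform $r$ with $r(K_X+B)$ Cartier $\Rightarrow$ Proposition~\ref{p:boundedness}. The paper instead notes that \cite[Theorem 8.15]{BABWILD} directly gives finite type of $\cM^{\rm I+II}$, \emph{but only when the log Fano boundary carries a single marked divisor}, and then reduces the general marking $\bfa=(a_1,\ldots,a_l)$ to that case via the finite representable marking-change morphism $\Phi$ of Proposition~\ref{prop:marking-change} (finite preimages of finite type stacks are finite type). Your version quietly applies the BABWILD boundedness "with the fixed numerical data $(\chi,N,\bfa,c)$", i.e.\ in the multi-marked setting where it is not stated; this is exactly the gap the marking-change device is designed to close. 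It is not a fatal problem --- boundedness of the underlying unmarked pairs already yields the uniform Cartier index you need, and Proposition~\ref{p:boundedness} then applies --- but you should either say that explicitly or route through $\Phi$ as the paper does. The final step (deducing finite type of $\cM^{\rm I}$ as an open substack of the Noetherian $\cM^{\rm I+II}$) matches the paper.
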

		
		\begin{proof}
			By \cite[Proposition 8.8]{BABWILD}, $\cM^{\rm I} \subset \cM^{\rm I+II} \subset \cM$ are open substacks of $\cM$. 
			In the case when ${\bf a} = (a_1)$, 
			\cite[Theorem 8.15]{BABWILD} implies that $\cM^{\rm I +II}$ is finite type. (This result does not require that $\cM$ parametrizes pairs  of dimension 2 and that $a_1 \in \bfT$.)
			
			We will reduce to the above special case by altering the marking. 
			Let ${\rm lcd}(\bf a)$ be the least common denominator of $a_1,\ldots, a_\ell$ and ${\bf a'}= 1 / {\rm lcd}({\bf a})$.
			By Proposition \ref{prop:marking-change}, there is a finite representable morphism of algebraic stacks 
			\[
			\Phi: \cM(\chi, N,{\bf a},c) \longrightarrow \cM(\chi, N,{\bf a'}, c)
			\]
			that sends  a family $(X,B+D) \to T$ to $(X, a' ({\rm lcd}({\bf a}) B)+D)\to T$.
			Let $\cM' = \Phi(\cM)$ denote the stack theoretic image. 
			Since $\cM^\circ$ is finite type and dense in $\cM$, $\cM'$ is the closure of a finite type locally closed substack.
			Since $\Phi$ preserves the type of a pair, $\cM^{\rm I+II} = \Phi^{-1} (\cM'^{\rm I+II})$ and so there is an induced finite morphism 
			$
			\cM^{\rm I+II}\to \cM'^{\rm I+II}
			$.
			By \cite[Theorem 8.15]{BABWILD}, $\cM'^{\rm I+II}$ is finite type. 
			Thus $\cM^{\rm I+II}$ is finite type and so is $\cM^{\rm I}$. 
		\end{proof}

		\begin{proposition}\label{p:TypeI+IIsaturated}
			If $m\gg0$, then $\cM^{\rm I+II}\subset \cM_m$. Furthermore, the open substacks $\cM^{\rm I}$ and $\cM^{\rm I+II}$  are both saturated with respect to $\phi_m:\cM_m\to M_m$ and the maps
			\[
			\cM^{\rm I}\to \phi_m(\cM^{\rm I}) \quad \text{ and } \quad 
			\cM^{\rm I+II}\to \phi_m(\cM^{\rm I+II})
			\]
			are coarse and good moduli morphisms, respectively.
		\end{proposition}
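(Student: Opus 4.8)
The plan is to separate the statement into three pieces: the inclusion $\cM^{\rm I+II}\subset\cM_m$ for $m\gg0$, the saturation of $\cM^{\rm I}$ and $\cM^{\rm I+II}$, and the identification of the induced morphisms as coarse (resp.\ good) moduli space morphisms. The first piece is a formal consequence of finiteness: by Proposition \ref{p:TypeI+IIfinitetype} the stack $\cM^{\rm I+II}$ is of finite type, so the boundedness criterion Proposition \ref{p:boundedness} produces an integer $r>0$ with $r(K_X+B)$ Cartier for every $(X,B+D)\in\cM^{\rm I+II}(\bk)$; in particular $\ind_x(K_X+B)\le r$ at every point of every such pair, and since the Gorenstein index is unchanged by base change to the algebraic closure of a residue field, the same bound holds fiberwise for every family in $\cM^{\rm I+II}$. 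Hence $\cM^{\rm I+II}\subset\cM_m$ as soon as $m\ge\max\{N,r\}$, and for such $m$ the good moduli space $\phi_m\colon\cM_m\to M_m$ exists by Theorem \ref{t:existsGMindex}. From now on I would fix such an $m$.

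For saturation I would use the following dictionary on $\bk$-points (which suffices, since $M_m$ is of finite type over the algebraically closed field $\bk$). By Proposition \ref{p:propertiesofGMs}.2, two points $x,x'$ of $\cM_m$ satisfy $\phi_m(x)=\phi_m(x')$ if and only if $\overline{\{x\}}\cap\overline{\{x'\}}\neq\emptyset$ inside $\cM_m$; this a fortiori forces these closures to meet inside the ambient stack $\cM(\chi,N,\bfa,c)$, so by Remark \ref{r:Sequiv}.3 the underlying pairs are S-equivalent, whence by Remark \ref{r:Sequiv}.2 they have the same source and coregularity, in particular the same type (Types I, II, III being coregularity $2$, $1$, $0$). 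Consequently $\phi_m(\cM^{\rm I+II})$ is disjoint from $\phi_m(\cM_m\setminus\cM^{\rm I+II})$: a common point would give $\phi_m(x)=\phi_m(x')$ with $x$ of Type I or II and $x'$ of Type III, contradicting the dictionary; similarly $\phi_m(\cM^{\rm I})$ is disjoint from $\phi_m(\cM_m\setminus\cM^{\rm I})$. Since $\cM_m\setminus\cM^{\rm I+II}$ and $\cM_m\setminus\cM^{\rm I}$ are closed substacks, their images under the good moduli space morphism are closed \cite{Alp13}, so $\phi_m(\cM^{\rm I+II})$ and $\phi_m(\cM^{\rm I})$ lie in the corresponding open complements $V^{\rm I+II}$ and $V^{\rm I}$; applying the dictionary once more in the other direction shows $\phi_m^{-1}(V^{\rm I+II})=\cM^{\rm I+II}$ and $\phi_m^{-1}(V^{\rm I})=\cM^{\rm I}$, i.e.\ both open substacks are saturated. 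Proposition \ref{p:saturatedopen} then gives that $U^{\rm I}:=\phi_m(\cM^{\rm I})$ and $U^{\rm I+II}:=\phi_m(\cM^{\rm I+II})$ are open in $M_m$ and that $\cM^{\rm I}\to U^{\rm I}$ and $\cM^{\rm I+II}\to U^{\rm I+II}$ are good moduli space morphisms.

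Finally I would upgrade $\cM^{\rm I}\to U^{\rm I}$ to a coarse moduli morphism. Type I means klt, hence stable by Proposition \ref{p:stable=klt}; by Proposition \ref{p:stable} stable pairs have discrete automorphism groups, which are finite since the diagonal of the moduli stack is affine and of finite type, so $\cM^{\rm I}$ has finite inertia. Proposition \ref{p:stable} also gives that stable pairs are rigid, i.e.\ $\overline{\{x\}}\cap\overline{\{x'\}}\neq\emptyset$ forces $x=x'$, so $\phi_m$ is injective on $\bk$-points of $\cM^{\rm I}$. A finite-inertia algebraic stack of finite type admits a coarse moduli space by \cite{KM97}, and this coarse space is initial among maps to algebraic spaces, just as the good moduli space $U^{\rm I}$ is by Proposition \ref{p:propertiesofGMs}.1; hence the two coincide, so $\cM^{\rm I}\to U^{\rm I}$ is a coarse moduli morphism. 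I do not expect a genuine obstacle here; the only care required is bookkeeping — correctly passing between closures taken in $\cM_m$ and in the ambient moduli stack when invoking Remark \ref{r:Sequiv}, and noting that, because the relevant complements are closed substacks and good moduli space morphisms have closed image, checking the set-theoretic identities on $\bk$-points is enough to conclude the corresponding equalities of open substacks.
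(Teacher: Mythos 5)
Your proof is correct and follows essentially the same route as the paper: the paper deduces $\cM^{\rm I+II}\subset\cM_m$ directly from quasi-compactness of $\cM^{\rm I+II}$ and the ascending open cover $\cM=\bigcup_{m}\cM_m$, establishes saturation exactly as you do via S-equivalence preserving the type, and concludes the coarse-moduli claim from injectivity of $\cM^{\rm I}(\bK)\to M_m(\bK)$ (two Type I pairs are S-equivalent iff isomorphic) rather than via Keel--Mori. The only slip is in your first step: $r(K_X+B)$ being Cartier does not by itself bound the \emph{marked} Gorenstein index by $r$ (one also needs $rB$ to be a marked $\bZ$-divisor), so you should take $m$ at least a common multiple of $N$ and $r$ rather than $\max\{N,r\}$ --- or simply argue, as the paper does, that a quasi-compact substack of $\cM=\bigcup_m\cM_m$ lies in some $\cM_m$.
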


		\begin{proof}
			Since $\cM^{\rm I+II}$ is quasi-compact  by Proposition \ref{p:TypeI+IIfinitetype} and $\cM_1 \subset \cM_2 \subset \cdots $ is an ascending chain of open substacks with union $\cM$,  $\cM^{\rm I+II}\subset \cM_m$ for $m\gg0$. 
			Next, note that, for any algebraically closed field $\bK\supset \bk$, if the map
			\[
			\cM_m(\bK) \to M_m(\bK)
			\]
			identifies two points $x=[(X,B+D)]$ and $x'=[(X',B'+D')]$ in $\cM_m(\bK)$, then $(X,B+D)$ and $(X',B'+D')$  are S-equivalent and, in particular, have the same type by Proposition \ref{p:propertiesofGMs} and Remark \ref{r:Sequiv}. 
			Thus $\cM^{\rm I}$ and $\cM^{\rm I+II}$ are saturated with respect to $\phi_m$ and so the maps
			\[
			\cM^{\rm I}\to \phi(\cM^{\rm I}) \quad \text{ and } \quad 
			\cM^{\rm I+II}\to \phi(\cM^{\rm I+II})
			\]
			are good moduli spaces by Proposition \ref{p:saturatedopen}. 
			Furthermore, since two Type I pairs are S-equivalent if and only if they are isomorphic by Proposition \ref{p:stable}, the composition 
			\[
			\cM^{\rm I}(\bK) \hookrightarrow \cM_m(\bK) \to M_m(\bK)
			\]
			is injective and so $\cM^{\rm I}\to \phi(\cM^{\rm I})$ is a bijection on geometric points. Therefore $\cM^{\rm I} \to M^{\rm I}$ is a coarse moduli space.
		\end{proof}

		\subsubsection{Wall crossing}	
		
		We now construct a wall crossing diagram with $M_m$ as the model between the the K- and KSBA-moduli spaces.
		
		\begin{proposition}\label{p:Mmwallcrossing}
			If $m\gg0$, then $\cM^{\rm KSBA} \cup \cM^{\rm K}\subset \cM_m$ and there exists a commutative diagram 
			\[
			\begin{tikzcd}
				\cM^{\rm K}\arrow[r,hook] \arrow[d]& \cM_m \arrow[d] & \cM^{\rm KSBA}\arrow[l,hook', swap]\arrow[d]\\
				M^{\K} \arrow[r]& M_{m} & M^{\rm KSBA} \arrow[l],
			\end{tikzcd}
			\]
			where the top row arrows are the open immersions, the bottom row arrows are proper morphisms, and the vertical maps are good moduli space morphisms.
			Furthermore, if the Type I substack $\cM^{\rm I}$ is dense in $\cM$, then the bottom row arrows are birational\footnote{Here, we are using the definition of a \emph{birational} morphism in \cite[\href{https://stacks.math.columbia.edu/tag/0ACV}{Tag 0ACV}]{stacks-project}, which does not require that the spaces are reduced or irreducible.
			} morphisms.
		\end{proposition}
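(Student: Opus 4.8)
The plan is to obtain the diagram formally from universal properties, deduce properness from the fact that $M^{\rm K}$ and $M^{\rm KSBA}$ are proper, and read off birationality from the behaviour over the Type~I locus.

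\emph{Construction of the diagram and properness.} Since $\cM^{\rm K}$ and $\cM^{\rm KSBA}$ are quasi-compact (Theorems~\ref{t:Kmoduli} and \ref{t:KSBA}) and $\cM=\bigcup_{m}\cM_m$ is an ascending union of open substacks, $\cM^{\rm K}\cup\cM^{\rm KSBA}\subseteq\cM_m$ for all $m\gg 0$; fix such an $m$, also with $m\ge N$ so that $\phi_m\colon\cM_m\to M_m$ exists by Theorem~\ref{t:existsGMindex}. The horizontal arrows of the top row are then the open immersions. For the bottom row I would compose $\cM^{\rm K}\hookrightarrow\cM_m\xrightarrow{\phi_m}M_m$; since $\cM^{\rm K}\to M^{\rm K}$ is a good moduli space morphism of a locally Noetherian stack, it is initial among morphisms to algebraic spaces (Proposition~\ref{p:propertiesofGMs}(1)), so this composite factors uniquely as $\cM^{\rm K}\to M^{\rm K}\to M_m$, which is exactly the commutativity of the left square. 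The right square is handled identically, using that in characteristic zero the coarse moduli space morphism $\cM^{\rm KSBA}\to M^{\rm KSBA}$ of the Deligne--Mumford stack $\cM^{\rm KSBA}$ is also a good moduli space morphism and is initial among morphisms to algebraic spaces (Keel--Mori). Finally, $M^{\rm K}$ and $M^{\rm KSBA}$ are projective, hence proper over $\bk$, whereas $M_m$ is separated over $\bk$; a morphism from a proper algebraic space to a separated one is automatically proper (it is of finite type, separated, and universally closed by the cancellation properties of these classes), so both bottom arrows are proper.

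\emph{Birationality.} Assume $\cM^{\rm I}$ is dense in $\cM$. By the discussion in the introduction, a klt (i.e.\ Type~I) boundary polarized CY surface pair admits both a K-stable and a KSBA-stable perturbation, so $\cM^{\rm I}\subseteq\cM^{\rm K}\cap\cM^{\rm KSBA}\subseteq\cM_m$; being open and containing a dense substack, $\cM^{\rm I}$ is dense in each of $\cM^{\rm K}$, $\cM^{\rm KSBA}$, $\cM_m$. Moreover $\cM^{\rm I}$ is saturated with respect to each of the three good moduli space morphisms: if a point of one of these stacks has the same image as a Type~I point, their closures meet (Proposition~\ref{p:propertiesofGMs}(2)), so the two pairs are S-equivalent (Remark~\ref{r:Sequiv}(3)), hence of the same type (Remark~\ref{r:Sequiv}(2)), and in fact coincide since Type~I pairs are stable (Propositions~\ref{p:stable=klt} and \ref{p:stable}). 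By Proposition~\ref{p:saturatedopen} the images $M^{\rm I}:=\phi_m(\cM^{\rm I})$, $\phi^{\rm K}(\cM^{\rm I})$, $\phi^{\rm KSBA}(\cM^{\rm I})$ are open; they are dense because $\cM^{\rm I}$ is a dense saturated open and the three morphisms are surjective (a nonempty open in a complement would pull back to a nonempty open disjoint from $\cM^{\rm I}$); and each restriction of a good moduli space morphism to $\cM^{\rm I}$ over these opens is a coarse moduli space morphism for $\cM^{\rm I}$ (Proposition~\ref{p:TypeI+IIsaturated} for $\phi_m$; the stability of Type~I pairs for $\phi^{\rm K}$ and $\phi^{\rm KSBA}$). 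By uniqueness of coarse moduli spaces, $M^{\rm K}\to M_m$ carries $\phi^{\rm K}(\cM^{\rm I})$ isomorphically onto $M^{\rm I}$, and likewise $M^{\rm KSBA}\to M_m$ carries $\phi^{\rm KSBA}(\cM^{\rm I})$ isomorphically onto $M^{\rm I}$. Since all spaces involved are of finite type, each bottom arrow restricts to an isomorphism between dense opens, i.e.\ is birational in the sense of \cite[Tag~0ACV]{stacks-project}.

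\emph{Main obstacle.} The containments and the ``proper over separated'' argument are routine; the care lies in the birationality step, where one must verify simultaneously that $\cM^{\rm I}$ is saturated in $\cM_m$, $\cM^{\rm K}$, and $\cM^{\rm KSBA}$, that each resulting open subspace is canonically identified with $M^{\rm I}$, and that these identifications are compatible with the wall-crossing maps. This is where the S-equivalence bookkeeping of Remark~\ref{r:Sequiv} and Propositions~\ref{p:stable=klt}--\ref{p:stable}, together with the preservation of density under the good moduli space morphisms, all get used at once.
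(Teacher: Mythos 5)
Your proposal is correct and follows essentially the same route as the paper: quasi-compactness of $\cM^{\rm K}$ and $\cM^{\rm KSBA}$ for the containment, universality of (good/coarse) moduli space morphisms for the diagram, proper-over-separated for properness, and the identification of the three good moduli spaces over the saturated dense open $\cM^{\rm I}$ for birationality. The only cosmetic difference is that you re-derive saturation of $\cM^{\rm I}$ in each of the three stacks directly from the S-equivalence facts, whereas the paper cites Proposition~\ref{p:TypeI+IIsaturated} for $\cM_m$ and transfers saturation to $\cM^{\rm K}$ and $\cM^{\rm KSBA}$ via commutativity of the diagram.
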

		
		For certain choices of $\cM^\circ$, $\cM^{\K}$ or $\cM^{\rm KSBA}$ may be the empty stack. 
		When this is the case, the left or right vertical arrow is the trivial map between the empty stack and empty scheme.
		
		\begin{proof}
			Since $\cM^{\rm KSBA}$ and $\cM^{\rm K}$ are quasi-compact by Theorems \ref{t:KSBA}  and \ref{t:Kmoduli},  
			\[
			\cM^{\rm KSBA}\cup \cM^{\rm K}\subset \cM_m
			\]
			for $m\gg0$.
			By the universality of good moduli spaces, there exist unique morphisms in the bottom row that make the diagram commute. 
			Furthermore, since $M^{\rm K}$ and $M^{\rm KSBA}$ are proper and $M_m$ is separated, the morphisms in bottom row are  proper.

			To verify the final sentence of the proposition, we will show that the bottom row maps are isomorphsms on the Type I loci.
			First, note that $\cM^{\rm I}\subset \cM^{\KSBA}$ clearly holds. 
			In addition, $\cM^{\rm I} \subset \cM^{\K} $, since  
			if $(X,B+D)$ is a klt boundary polarized CY pair, then $(X,B+(1-\varepsilon) D)$ is K-stable for $0<\varepsilon\ll1$ by \cite[Theorem 2.10]{ADL21} and its  extension to the case when $B\neq 0$.
			By Proposition \ref{p:TypeI+IIsaturated},  $\cM^{\rm I}$ is a saturated open subset of $\cM_m$ for $m\gg0$.
			By the commutativity of the above diagram, $\cM^{\rm I}$ is also a saturated open subset of $\cM^\K$ and $\cM^{\KSBA}$.
			Hence, using Proposition \ref{p:saturatedopen} and the uniqueness of good moduli spaces, the natural maps
			\[
			\phi_{\K}( \cM^{\rm I})  \longrightarrow \phi_m(\cM^{\rm I}) \longleftarrow 
			\phi_{\KSBA}( \cM^{\rm I}) 
			\]
			are isomorphisms between open subsets of $M^{\rm K}$, $M_m$, and $M^{\KSBA}$.
			Since $\cM^{\rm I}$ is dense in $\cM$ by assumption, the three open subsets are dense. 
			Therefore the bottom row maps in the commutative diagram are birational.
		\end{proof}
		
		\subsubsection{Properness}

		\begin{proposition}\label{p:properness}
			If $m\gg0$, then the good moduli space $M_m$ is proper. 
		\end{proposition}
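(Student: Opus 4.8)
The plan is to verify the existence part of the valuative criterion for properness. Since $M_m$ is separated and of finite type over $\bk$ by Theorem~\ref{t:existsGMindex}, this suffices, and being of finite type over $\bk$ we may restrict to DVRs essentially of finite type over $\bk$. So let $R$ be such a DVR with fraction field $K$ and let $\eta\colon\Spec K\to M_m$ be a morphism; we want to extend $\eta$ over $\Spec R$ after a finite extension of $K$. First I would lift $\eta$ into the stack: since $\phi_m\colon\cM_m\to M_m$ is a good moduli space morphism and $\cM_m$ has affine diagonal, the unique closed point of the geometric fibre of $\phi_m$ over $\eta$ is defined over a finite extension of $K$, so after replacing $K$ by that extension and $R$ by a dominating DVR, $\eta$ lifts to a boundary polarized CY surface pair $(X_K,B_K+D_K)$ with $\ind_x(K_{X_K}+B_K)\le m$ for all $x\in X_K$. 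Using that $\cM_m$ is $\Theta$-reductive and S-complete (established in the proof of Theorem~\ref{t:existsGMindex}), I may further, after another finite extension, replace this pair by its polystable degeneration over $K$, leaving $\eta$ unchanged.

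The key input is the properness of $M^{\KSBA}$ (Theorem~\ref{t:KSBA}) and of $M^{\rm K}$ (Theorem~\ref{t:Kmoduli}), together with the inclusions $\cM^{\KSBA}\cup\cM^{\rm K}\subset\cM_m$ for $m\gg0$ and the wall-crossing morphisms $M^{\rm K}\to M_m\leftarrow M^{\KSBA}$ from Proposition~\ref{p:Mmwallcrossing}. If $(X_K,B_K+(1+\varepsilon)D_K)$ is slc for $0<\varepsilon\ll1$, then $(X_K,B_K+D_K)$ defines a $K$-point of $\cM^{\KSBA}$, hence of $M^{\KSBA}$, which extends over $\Spec R$ (after a finite extension of $K$) by properness of $M^{\KSBA}$; composing with $M^{\KSBA}\to M_m$ then extends $\eta$. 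The analogous argument via $M^{\rm K}$ handles the case that $(X_K,B_K+(1-\varepsilon)D_K)$ is a K-semistable klt log Fano pair. In particular the Type~I (klt) case is immediate, since then $(X_K,B_K+(1+\varepsilon)D_K)$ is automatically slc and ample, so $(X_K,B_K+D_K)\in\cM^{\KSBA}(K)$. More generally, the same reasoning applies whenever, after a weakly special degeneration over $K$, the pair becomes a member of $\cM^{\KSBA}(K)$ or $\cM^{\rm K}(K)$, because S-equivalent pairs have the same image under $\phi_m$ by Proposition~\ref{p:propertiesofGMs} and Remark~\ref{r:Sequiv}.

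The remaining case — and the main obstacle — is that of a strictly log canonical pair $(X_K,B_K+D_K)$, necessarily Type~II or Type~III, whose S-equivalence class meets neither $\cM^{\KSBA}$ nor $\cM^{\rm K}$. The difficulty is controlling the Gorenstein index: the existence part of the valuative criterion for the full stack $\cM(\chi,N,\bfa,c)$ (Theorem~\ref{t:stackisScomp+Thetared}) does supply, after a finite extension of $K$, a family $(\cX_R,\cB_R+\cD_R)\to\Spec R$ of boundary polarized CY pairs with generic fibre $(X_K,B_K+D_K)$, but its central fibre may have Gorenstein index exceeding $m$ — Theorem~\ref{thm:index-deform} rescues this only for two-dimensional (e.g.\ the $\Theta_R$ and $\STR$) bases, not for $\Spec R$ — so the family need not lie in $\cM_m$. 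One must therefore replace the central fibre, within its S-equivalence class, by a pair of index $\le m$, and show such a replacement exists once $m\gg0$. My plan here is to use the boundedness of the Type~I and Type~II loci of $\cM$ proved in \cite{BABWILD} — which forces the corresponding part of the moduli space to stabilise for $m\gg0$ — together with a direct study of the polystable models of Type~II and Type~III pairs and their weakly special degenerations, producing a family over $\Spec R$ in $\cM_m$ whose generic fibre is S-equivalent to $(X_K,B_K+D_K)$; the image of its central fibre under $\phi_m$ then extends $\eta$. I expect this index-control step for the non-klt pairs to be the technical heart of the proof.
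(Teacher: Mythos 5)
Your proposal correctly reduces the problem to the valuative criterion and correctly disposes of the cases where the $K$-point lifts (after a weakly special degeneration) into $\cM^{\KSBA}$ or $\cM^{\K}$, but the case you flag at the end as ``the technical heart of the proof'' is left as a plan rather than an argument, and that is precisely where the difficulty lies. You are right that Theorem~\ref{t:stackisScomp+Thetared} produces an extension $(\cX_R,\cB_R+\cD_R)\to\Spec R$ in the big stack, and right that Theorem~\ref{thm:index-deform} cannot control the index of the central fibre here: its proof needs a two-dimensional base (one cuts by three $\bmu_r$-eigenfunctions inside a fourfold to get a curve $\tZ$ not contained in the central fibre; over a DVR the ambient space is only a threefold and the cut is a point). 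Moreover, you cannot in general restrict attention to $K$-points of a dense KSBA or K-semistable locus, because neither $\cM^{\KSBA}$ nor $\cM^{\K}$ need be dense in $\cM$ (e.g.\ $\cM^{\KSBA}$ is empty for anticanonical del Pezzo pairs, and $\cM^\circ$ may parametrize only non-klt pairs). So the strictly lc case is a genuine gap: nothing in your proposal produces, for an arbitrary Type~II/III polystable $K$-point, an $R$-family in $\cM_m$ with S-equivalent generic fibre, and the stabilization results you invoke (boundedness of the Type~I+II locus, discreteness of Type~III) are proved later in the paper and themselves use properness of $M_m$ as an input (Proposition~\ref{p:surj+isomI+II}), so leaning on them here risks circularity.

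The paper avoids extending individual $K$-points altogether. First, in the special case where $\cM^{\KSBA}$ is dense in $\cM$: the induced morphism $M^{\KSBA}\to M_m$ from Proposition~\ref{p:Mmwallcrossing} has proper source and separated finite-type target, so its scheme-theoretic image is a closed proper subspace which is also dense, hence topologically all of $M_m$; since universal closedness is topological, $M_m$ is proper. Second, the general case is reduced to this one by changing the marking of the polarizing divisor: replacing $cD_1$ by $c'(dD_1)$ with $dD_1$ very ample, Bertini guarantees that a general member $H\in|dD_1|$ makes $(X,B+c'H)$ KSBA-stable, so the closure $\cM'$ of the corresponding KSBA substack has dense KSBA locus and proper $M'_m$; the finite morphism $\Psi_m\colon\cM_m\to\cM'_m$ furnished by Proposition~\ref{prop:marking-change} then transfers the valuative criterion (in the sense of \cite[Theorem 5.4]{AHLH23}) back to $\cM_m$. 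If you want to keep your direct approach, you would need to supply the missing degeneration-with-bounded-index construction for strictly lc pairs over a DVR; otherwise I recommend the density-plus-remarking route, which sidesteps it entirely.
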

		
		To prove the result, we use the proper morphism $M^{\rm KSBA}\to M_m$ in Proposition \ref{p:Mmwallcrossing} and the  properness of the KSBA-moduli space.
		
		\begin{proof}
			We first prove the result in the special case when $\cM^{\rm KSBA}$ is dense in $\cM$.
			If $m\gg0$, then 
			$\cM^{\rm KSBA}\subset \cM_m$ and there is an induced map $M^{\rm KSBA} \to M_m$  by Proposition \ref{p:Mmwallcrossing}.
			Since $M^{\rm KSBA}$ is proper by Theorem \ref{t:KSBA} and $M_m$ is finite type and separated, the scheme theoretic image of $M^{\rm KSBA}$ in $M_m$ is a closed proper subspace.
			Since $\cM^{\rm KSBA}$ is dense in $\cM_m$ by assumption, the scheme theoretic image of $M^{\rm KSBA}$ in $M_m$ is dense. Therefore $M_m$ is proper.

			To deduce the full result from the above special case, we will  alter the marking on the polarizing boundary divisor.
			Since $\cM^\circ$ is finite type by assumption, there exists a positive integer $d$ such that $d D_1$ is very ample for all $(X,B+D)$ in $\cM^{\circ}(\bk)$.
			Set $c':= \tfrac{c}{d}$ and $N':= dN$. 
			By Proposition \ref{prop:marking-change}, there is a  finite representable morphism of stacks 
			\[
			\Phi:\cM(\chi, N, {\bf a}, c ) 
			\to 
			\cM(\chi, N', {\bf a}, c' ) 
			\]
			that sends a family $(X, \sum_{i=1}^l a_i B_i+cD_1)\to T$
			to $(X,\sum_{i=1}^l a_i B_i+ c'(d D_1)) \to T$.
			
			Let $\cM'^\circ \subset \cM(\chi, N',{\bf a}, c')$ denote the open substack parametrizing pairs $(X,B+D)$ such that  $(X,B+(1+\varepsilon)D)$ is slc for $0<\varepsilon\ll 1$ as in Section \ref{ss:KSBA+K}. By Theorem \ref{t:KSBA}, $\cM'^\circ$ is finite type. 
			Let $\cM'$ denote the stack theoretic closure of $\cM'^{\circ}$ in $\cM(\chi, N', {\bf a}, c' )$. 
			By construction, the  open substack $\cM'^{\rm KSBA} = \cM'^\circ$ and so $\cM'^{\KSBA}$ is dense in $\cM'$.
			For $m\geq N'$, write $\cM'_m\to M'_m$ for the good moduli space morphism.
			By the first paragraph,  $M'_m$ is proper for $m\gg0$ by the first paragraph.
			Thus $\cM'_m$ satisfies the valuative criterion for properness for $m\gg0$ by \cite[Theorem 5.4]{AHLH23}.
			
			We claim that  there is an inclusion of closed substacks $\Phi(\cM) \subset \cM'$. 
			Indeed, if $(X,B+cD_1)$ is in $\cM^\circ(\bk)$, then 
			\begin{enumerate}
				\item[(i)] $(X,B+ c' (dD_1))$ is in $\cM(\chi,N', {\bf a}, c')(\bk)$ and 
				\item[(ii)] $(X,B+ c' H)$ is in $\cM'^\circ(\bk)$ 
				for general $H \in |dD_1|$ by Bertini's Theorem. 
			\end{enumerate}
			Thus $(X,B+c' (dD_1))$ is in $\cM'(\bk)$ and so $\Phi(\cM)\subset \cM'$ as desired.
			Write $\Psi:\cM\to\cM'$ for the induced map.
			Since $\Psi^{-1}(\cM'_m) \cap \cM = \cM_m$,
			the morphism $\Psi\vert_{\cM_m}:\cM_m\to \cM'$  admits  a factorization 
			\[
			\cM_m \overset{\Psi_m}{\longrightarrow} \cM'_m  \lhook\joinrel\xrightarrow{} \cM'
			\]
			with $\Psi_m$ finite.
			Since $\Psi_m$ is proper  and $\cM'_m$ satisfies the valuative criterion for properness for $m\gg0$, $\cM_m$ also satisfies the the valuative criterion for properness for $m\gg0$. 
			Therefore $M_m$ is proper for $m\gg 0$ by \cite[Theorem 5.4]{AHLH23}.
		\end{proof}

		\section{Type II surface pairs}\label{s:TypeII}
		
		In this section we will describe the geometry of polystable Type II surface pairs and then prove Theorem \ref{t:TypeIIfieldofdef}. 
		The latter result will be used in Section \ref{s:asgm} to show that the Type II loci of our moduli space has maximally varying sources.
		
		\subsection{Regularity $0$ pairs}
		
		We first prove a result concerning regularity 0 pairs in arbitrary dimension. 
		Recall, a boundary polarized CY pair $(X,B+D)$ has regularity 0  if
		\[
		\dim ({\rm Src}(X,B+D)) = \dim X-1
		.\]
		Thus a boundary polarized CY surface pair has regularity 0 if and only if it is Type II.

		\begin{prop}\label{p:reg0}
			If $(X,B+D)$ is a regularity 0 boundary polarized CY pair, then there exists a weakly special degeneration 
			\[
			(X,B+D)\rightsquigarrow
			(X_0,B_0+D_0)
			\]
			such that $(X_0,B_0)$ is not klt and there exists a $\bG_m$-action on $(X_0,B_0+D_0)$ that is non-trivial on each irreducible component of $X_0$.
		\end{prop}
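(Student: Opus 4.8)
The plan is to realize $(X_0,B_0+D_0)$ as the special fiber of the test configuration associated to a divisorial valuation that computes the source of $(X,B+D)$, in the spirit of Proposition \ref{p:tcdivisorsonX} and \cite[Proposition 4.9]{BABWILD}.

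First I would produce the relevant valuation. After passing to the normalization $(\oX,\oG+\oB+\oD)$ and fixing a $\bQ$-factorial dlt modification $f\colon (Y,B_Y+D_Y)\to(\oX_i,\oG_i+\oB_i+\oD_i)$ of one of its components, the regularity $0$ hypothesis forces the minimal lc center $S\subset Y$ to be a prime divisor with coefficient $1$ in $B_Y+D_Y$, so that $\ord_S$ (suitably rescaled) determines a divisorial valuation $v$ over $X$ with $A_{X,B+D}(v)=0$. Fixing $r$ with $L:=-r(K_X+B)$ Cartier, setting $R:=R(X,L)$, and forming the filtration $F^\la R_m:=\{s\in R_m : v(s)\ge \la+mr\,A_{X,B}(v)\}$ (which is finitely generated by \cite[Proposition 4.9.i]{BABWILD}, since $A_{X,B+D}(v)=0$), one gets a test configuration of $(X,B+D)$ whose special fiber is, by \cite[Proposition 4.9.ii]{BABWILD} and \cite[Lemma 4.2]{BABWILD}, a boundary polarized CY pair $(\cX_0,\cB_0+\cD_0)$ with $\cX_0$ irreducible and $r\,\ord_{\cX_0}$ restricting to $v$ on $K(X)$; one then descends back to the non-normal pair using that the normalization of a test configuration is the test configuration of the normalization (Section \ref{s:prelim}), and sets $(X_0,B_0+D_0):=(\cX_0,\cB_0+\cD_0)$.

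Next I would check the two required properties. The $\bG_m$-action on $\cX_0$ is non-trivial by the valuation-ideal computation in the proof of Proposition \ref{p:tcdivisorsonX}(2): on a model where $S$ is a prime divisor, that argument shows $\cX_0$ is $\bG_m$-equivariantly birational to $S\times\bA^1$, with $\bG_m$ acting as the product of its action on $S$ and a non-trivial action on $\bA^1$, so its action on the irreducible $\cX_0$ is non-trivial. For non-kltness of $(X_0,B_0)$: by construction $\cX_0$ degenerates $X$ to a projective model of the cone over the source $(S,\mathrm{Diff}_S)$, which by Remark \ref{r:Sequiv} is the common source of $(X,B+D)$ and $(X_0,B_0+D_0)$ and is a klt Calabi--Yau pair of dimension $\dim X-1$. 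The cone over a Calabi--Yau pair has non-klt vertex, so the partial resolution of $\cX_0$ extracting the divisor over the $\bG_m$-fixed vertex locus produces a divisor $F$ over $\cX_0$ with $A_{X_0,B_0}(\ord_F)=0$; hence $(X_0,B_0)$ is not klt.

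The step I expect to be the main obstacle is this last one: one must show the non-kltness forced on $\cX_0$ is visible already on the \emph{log Fano} part $(\cX_0,\cB_0)$, not merely on $(\cX_0,\cB_0+\cD_0)$. The delicate case is when the source ``sees'' the polarizing boundary $D$ (that is, $S$ lies over $\Supp D$), so one cannot simply carry a coefficient-$1$ component of $B$ into $\cB_0$; instead one argues via the cone structure of $\cX_0$ over $(S,\mathrm{Diff}_S)$ that the crepant divisor over the vertex is already computed by $K_{\cX_0}+\cB_0$, using that $(S,\mathrm{Diff}_S)$ — and hence the cone — carries its Calabi--Yau structure independently of the splitting $\cB_0+\cD_0$. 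A secondary technical point is the normalization bookkeeping: when $X$ is non-normal one runs the construction on a component of the normalization and descends, checking that $\cX_0$ is demi-normal (which holds since $(\cX_0,\cB_0+\cD_0)$ is slc) before invoking $\ord_F$ on it. One could instead first arrange a preliminary weakly special degeneration making a minimal lc center a prime divisor on (a component of) the normalized total space and then apply Proposition \ref{p:tcdivisorsonX} with $l=1$ directly — its conclusions $(1)$--$(3)$ then give everything except that non-kltness of the log Fano part still needs the cone argument, so little is gained.
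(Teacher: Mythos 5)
Your construction is not the paper's, and the step you yourself flag as the main obstacle is a genuine gap that your proposed fix does not close. The problem is exactly the case where $(X,B)$ is klt, so that every lc place $S$ of $(X,B+D)$ satisfies $A_{X,B}(S)=\ord_S(D)>0$. Degenerating via $\ord_S$ then produces a central fiber whose vertex divisor $F$ satisfies $A_{\cX_0,\cB_0}(F)=A_{\cX_0,\cB_0+\cD_0}(F)+\ord_F(\cD_0)=\ord_F(\cD_0)$, and this is strictly positive whenever $\cD_0$ has components through the cone point --- which happens whenever the residual part of $D$ meets $S$. The Calabi--Yau structure of the cone over the source is \emph{not} independent of the splitting $\cB_0+\cD_0$: the vertex is an lc place of the full CY pair, not of the log Fano part. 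Concretely, take $X=\bP^2$, $B=0$, $D=C+\tfrac12 L_1+\tfrac12 L_2$ with $C$ a smooth conic and $L_1,L_2$ general lines. This is a regularity $0$ pair with $(X,B)$ klt whose unique lc place is $\ord_C$; your degeneration via $\ord_C$ yields the projective cone over $(C,N_{C/\bP^2})$, whose vertex is a cyclic quotient singularity, so $(\cX_0,\cB_0)=(\cX_0,0)$ is klt and the required conclusion fails for this test configuration. Since $\ord_C$ is the only available source-computing valuation here, the single-valuation approach cannot be repaired by a different choice of $S$.

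The paper avoids this by treating the klt case with a structurally different degeneration: it extracts the quasi-monomial combination $\cE$ of an lc place $\ord_E$ and $\ord_{X\times 0}$ on $X\times\bA^1$, runs a $-(K_{\cX}+\cB)$-MMP over $\bA^1$ whose steps are shown to be flips (because $-K_{\cX}-\cB\sim_{\bQ,\bA^1}\cD$ is nef off the central fiber), so that the special fiber of the total space remains a union of \emph{two} intersecting prime divisors. Non-plt-ness of $(\cX',\cB'+\cX'_0)$ along their double locus --- which is horizontal and does not involve $\cD'$ --- then gives non-kltness of $(\cX'_0,\cB'_0)$ by adjunction. The non-trivial $\bG_m$-action on each component is arranged afterwards by composing with a further degeneration (the non-normal and already-non-klt cases, which you fold into one construction, are handled separately and easily there), using Lemma~\ref{l:connecting} to chain weakly special degenerations. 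If you want to salvage your outline, you would need to replace the cone argument in the klt case by some mechanism that forces the non-kltness of the central fiber to be witnessed away from $\Supp(\cD_0)$; the two-component total-space fiber is precisely such a mechanism.
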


		The result immediately implies that if $(X,B+D)$ is a  regularity 0 boundary polarized CY pair that is polystable, then $(X,B)$ is not klt and there exists a $\bG_m$-action on $(X,B+D)$ that is non-trivial on each irreducible component of $X$.
		
		\begin{proof}
			First, we assume that $X$ is non-normal.
			Let $(\oX,\oG+\oB+\oD) := \sqcup_{i=1}^r (\oX_i, \oG_{i}+\oB_i+\oD_i)$ denote the normalization of the pair. 
			By Proposition \ref{p:tcdivisorsonX} applied to the divisorial valuations induced by the irreducible components of $\oG_i$, there exists a weakly special degeneration 
			\[
			(\oX,\oG+\oB+\oD)\rightsquigarrow(\oX_0,\oG_0+\oB_0+\oD_0)
			\]
			such that induced test configuration of each component of $\oG^n$ is trivial and the $\bG_m$-action on $\oX_0$ is non-trivial on each component.  
			Thus Lemma \ref{l:tcgluing} produces a weakly special degeneration 
			\[
			(X,B+D) \rightsquigarrow(X_0,B_0+D_0)
			\]
			such that the induced $\bG_m$-action on each component of $X_0$ is non-trivial. Since $X$ is non-normal, $X_0$ is non-normal and so $(X_0,B_0)$ is not klt. 
			
			Next, we assume that $X$ is normal and $(X,B)$ is not klt.
			By \cite[Theorem 4.8]{BABWILD} (see also \cite{CZ22b}), there exists a non-trivial test configuration that induces a  weakly degeneration of 
			\[
			(X,B+D) \rightsquigarrow (X_0,B_0+D_0)
			\]
			such that $X_0$ is irreducible. Since the test configuration is non-trivial, $(X_0,B_0+D_0)$ admits a non-trivial $\bG_m$-action. 
			Since $(X,B)$ is not klt, $(X_0,B_0)$ is not klt and so we are done in this case.
			
			It remains to consider the case when $(X,B)$ is klt. By the previous cases and Lemma \ref{l:connecting}, it suffices to a construct a weakly special degeneration to a pair $(X_0,B_0+D_0)$ such that $(X_0,B_0)$ is not klt.
			To proceed, choose a divisor $E$ over $X$ with $A_{X,B+D}(E)=0$. 
			Let $\cE$ be the divisor over $X_{\bA^1}:=X\times \bA^1$ such that $\ord_{\cE}$ is the quasi-monomial combination of $\ord_E$ and $\ord_{X\times 0}$ with weights $(1,1)$.
			Since $\cE$ is an lc center of 
			$(X_{\bA^1},B_{\bA^1}+D_{\bA^1}+X\times 0)$, \cite[Corollary 1.38]{Kol13} produces a proper birational morphism of normal varieties 
			$\cX\to X_{\bA^1}$ such that $\cX$ is $\bQ$-factorial and $\cE\subset \cX$ is the sole exceptional divisor. 
			By choosing the log resolution in the proof of \emph{loc. cit.} to be $\bG_m$-equivariant, 
			we may assume that $\cX \to X_{\bA^1}$ is $\bG_m$-equivariant, since the steps of the MMP in the proof will  be $\bG_m$-equivariant as $\bG_m$ is a connected group.
			
			Let $\cB$ and $\cD$ denote the strict transforms of $B_{\bA^1}$ and $D_{\bA^1}$ on $\cX$.
			Since 
			\[
			(\cX, \cB+\cD+\cX_0)\to (X_{\bA^1},B_{\bA^1}+D_{\bA^1}+X\times \{0\})
			\]
			is crepant, $(\cX, \cB+\cD+\cX_0)$ is lc and CY over $\bA^1$. 
			In addition, for $0<c\ll1$, the crepant pullback of  $(X_{\bA^1},B_{\bA^1} + (1-c) (D_{\bA^1} +X\times 0))$ to $\cX$ is  a klt pair with big and nef anti-log canonical divisor over $\bA^1$. 
			Thus $\cX$ is Fano type over $\bA^1$. 
			By \cite[Corollary 1.3.1]{BCHM10},
			we can run a $-(K_{\cX}+\cB)$ MMP over $\bA^1$ and obtain a composition of divisorial contractions and flips
			\[
			\cX\dashrightarrow \cX^1 \dashrightarrow \cdots \dashrightarrow \cX^r:=\cX^{\rm m}
			\]
			such that $-K_{\cX^{\rm m}} - \cB^{\rm m}$ is big and semiample over $\bA^1$, where $\cB^{\rm m}$ is the birational transform of $\cB$.
			Since
			\[
			-K_{\cX}-\cB \sim_{\bQ, \bA^1} \cD+\cX_0 \sim \cD
			\]
			and $-K_{\cX}-\cB$ is nef over $\A^1\setminus 0$,
			each map $\cX^i \dashrightarrow \cX^{i+1}$ only contracts curves in $\Supp(\cD_i)\cap \cX^i_0$. 
			Thus the steps of the MMP are flips and so $\cX_0^{\rm m}$ is a union of two prime divisors.
			
			Now $-K_{\cX^{\rm m}} - \cB^{\rm m}$ is big and semiample over $\bA^1$. Let $\cX^{\rm m}\to \cX'$ denote its ample model.
			Thus $-K_{\cX'}- \cB'$ is ample over $\bA^1$. 
			Additionally, since $(\cX,\cB+\cD+\cX_0)$ is an lc CY pair over $\bA^1$ and $\cX\dashrightarrow \cX'$ is a birational contraction,  $(\cX',\cB'+\cD'+\cX'_0)$
			is  an  lc CY pair over $\bA^1$. 
			Therefore, using Proposition \ref{p:familyslcpairsovercurve}, we deduce that $(\cX',\cB'+\cD')\to \bA^1$ is  $\bG_m$-equivariant family of boundary polarized CY pairs and, hence, a test configuration of $(X,B+D)$. 
			Finally, note that $(\cX^{\rm m},\cB^{\rm m}+\cX^{\rm m}_0)$ is not plt, since $\cX^{\rm m}_0$ is the union of two intersecting prime divisors. 
			Since discrepancies do not change when taking the anti-log canonical model using \cite[Lemma 3.38]{KM97}, $(\cX',\cB'+\cX'_0)$ is not plt. Therefore $(\cX'_0,\cB'_0)$ is not klt and the proof is finished.
		\end{proof}

		\subsection{Seifert Bundles}
		We briefly recall the definition of a Seifert $\bG_m$-bundle and its compactication.
		The concept will be used to understand the geometry of polystable Type II pairs. 
		
		\begin{defn}[\cite{Kol04}]
			Let $T$ be a normal variety.
			A \emph{Seifert $\bG_m$-bundle} over $T$ is  the data of a normal variety $Y$ with a $\bG_m$-action and a morphism $Y\to T$ such that: 
			\begin{enumerate}
				\item $f:Y\to T$ is affine and $\bG_m$-equivariant with respect to the trivial $\bG_m$-action on $T$, 
				\item For every $t\in T$, the $\bG_m$-action on the reduced fiber $Y_t^{\rm red}:= {\rm red}( f^{-1}(t))$ is $\bG_m$-equivariantly isomorphic to the natural left $\bG_m$-action on $\bG_m/ \bmu_{m(t)}$ for some $m (t) \in \bZ_{>0}$. 
				\item The integer $m(t) = 1$ for  $t$ in a dense open subset of $T$.
			\end{enumerate}
		\end{defn}

		If $L$ is a $\bQ$-Cartier $\bQ$-divisor on a normal variety $T$, then 
		\[
		Y_L := {\bf Spec}_T  \bigoplus_{i \in \bZ} \cO_{T}\left(\lfloor i L \rfloor \right)
		,\]
		is a Seifert $\bG_m$-bundle, where the $\bG_m$-action on $Y_{L}$ is induced by the $\bZ$-grading and $\cO_T\left(\lfloor i L \rfloor \right)$ is the weight $i$-eigenspace.
		Furthermore, if $Y\to T$ is a Seifert $\bG_m$-bundle, then there exists a $\bQ$-Cartier $\bQ$-divisor $L$ (that is unique up to $\bZ$-linear equivalence) such that $Y$ is isomorphic to $Y_L$ as Seifert $\bG_m$-bundles; see \cite[Theorem 7]{Kol04}.
		
		If $Y\to T$ is a Seifert $\bG_m$-bundle over a normal variety with $Y \simeq Y_L$, then it has a \emph{partial compactification} and \emph{compactification}  
		\[
		\overline{Y}_L^a :=
		{\bf Spec}_T \bigoplus_{i\in \bN} \cO_{T}\left(\lfloor iL \rfloor \right) 
		\quad \text{ and } \quad
		\overline{Y}_L :={\bf Proj}_T \bigoplus_{m\in \bN} \bigoplus_{i=0}^m \cO_{T}\left(\lfloor iL \rfloor \right) 
		,
		\]
		where the Proj is taken with respect to the $m$-grading and both schemes admit a $\bG_m$-action induced by the $i$-grading. 
		There are natural open embedding $Y_L \hookrightarrow \overline{Y}^a_L\hookrightarrow \overline{Y}_L$.
		The complements 
		$T_\infty :=\overline{Y}_L \setminus \overline{Y}^a_L$
		and
		$T_0 := \overline{Y}^a_L\setminus Y_L$
		are two disjoint divisors  both isomorphic to $T$.
		See \cite[Section 2.6]{BABWILD}  or \cite[14]{Kol04} for further details. 
		
		\begin{prop}\label{p:toricsingularity}
			Let $T$ be a smooth curve and $L$ a $\bQ$-divisor on $T$, 
			Let $X:= \overline{Y}_L^a$ with projection $g: X\to T$ and $C\subset X$ denote the zero section.
			If $p\in C(\bk)$, then 
			$(X, C+ g^{-1}(g(p))) $ is \'etale locally isomorphic to a toric surface pair at $p$. 
		\end{prop}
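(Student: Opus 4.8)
The plan is to reduce the statement to an explicit local computation with the $\bZ$-graded ring $\bigoplus_{i\in\bN}\cO_T(\lfloor iL\rfloor)$ near the point $g(p)\in T$. Since $T$ is a smooth curve, in an \'etale (or analytic) neighborhood of $g(p)$ we may write $T=\Spec R$ with $R$ a regular local ring of dimension one, so $R=\bk[[u]]$ after completion, and $L=\tfrac{a}{b}\{g(p)=0\}$ for coprime integers $a\ge 0$, $b>0$ (the contribution of $L$ away from $g(p)$ disappears after localizing). Then $\lfloor iL\rfloor = \lfloor ia/b\rfloor\{0\}$, so $\cO_T(\lfloor iL\rfloor)$ is the fractional ideal $u^{-\lfloor ia/b\rfloor}R\subset \mathrm{Frac}(R)$, and
\[
X = \Spec \bigoplus_{i\in\bN} u^{-\lfloor ia/b\rfloor} R\, s^i,
\]
where $s$ is the degree-one generator cutting out the zero section $C$. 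The first step is therefore to identify this ring explicitly: it is generated over $R$ by the elements $s$ (degree $1$) and $u^{-\lfloor ia/b\rfloor}s^i$ for $i$ up to $b$, and in fact it is the subring of $R[s,s^{-1}]\cong R[u^{-1},s]$ generated by $u$, $s$, and $x:=u^{-a'}s^{b}$ where... more cleanly: it is the normal affine semigroup ring over $\bk$ generated by the monomials $u$, $s$, $u^{-\lfloor ia/b\rfloor}s^i$.

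The second step is to recognize this as a toric surface. The ring $\bigoplus_{i\in\bN} u^{-\lfloor ia/b\rfloor}R\,s^i$ is the semigroup algebra $\bk[\sigma^\vee\cap M]$ for $M=\bZ^2$ with coordinates recording the exponent of $u$ (allowed negative) and the exponent of $s$ (which is the nonnegative grading index $i$), and $\sigma^\vee$ is the cone cut out by the inequalities $i\ge 0$ and $(\text{exponent of }u) + \lfloor ia/b\rfloor \ge 0$, i.e. $j b + i a \ge 0$ with $j\ge -\lfloor ia/b\rfloor$. Concretely $X$ is the affine toric surface $U_\sigma$ for the cone $\sigma$ spanned by $(0,1)$ and $(b,-a)$ in $N=\bZ^2$ — this is the standard description of a Seifert bundle partial compactification over a smooth curve, and it appears in \cite[Section 2.6]{BABWILD} and \cite[14]{Kol04}. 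The torus-invariant divisors of $U_\sigma$ are exactly the zero section $C$ (corresponding to the ray $(0,1)$) and the fiber $g^{-1}(g(p)) = \{u=0\}$ (corresponding to the ray $(b,-a)$), and away from $g(p)$ the map $g$ is a trivial $\bG_m$-bundle so there is nothing to check. Hence the pair $(X, C + g^{-1}(g(p)))$ at $p$ is the toric surface pair $(U_\sigma, \partial U_\sigma)$ with its full toric boundary.

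The third step is to handle the point $p$ itself: the zero section $C$ meets the fiber $g^{-1}(g(p))$ in a single point, which is the unique torus-fixed point of $U_\sigma$, and that is exactly the point $p$ (the zero section is $\{s=\infty\}$... rather, in $\overline Y^a_L$ the zero section is cut out by the degree-one coordinate, and it meets $u=0$ at the distinguished point of $\sigma$). Thus \'etale locally at $p$, the pair $(X,C+g^{-1}(g(p)))$ is isomorphic to $(U_\sigma,\partial U_\sigma)$ near its fixed point, which is precisely the assertion. I expect the only mildly delicate point to be bookkeeping the $\lfloor\,\cdot\,\rfloor$'s to see that the semigroup $\{(j,i): i\ge 0,\ jb+ia\ge 0\}$ is saturated and that its algebra is exactly $\bigoplus_{i\ge0}\cO_T(\lfloor iL\rfloor)s^i$ after passing to the completion of $T$ at $g(p)$; once the semigroup is identified, the conclusion is immediate from the dictionary between affine semigroup rings and affine toric varieties. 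Everything else — that $g$ is trivial away from $g(p)$, that $C$ and the fiber are the toric boundary divisors — is formal from the construction of $\overline Y^a_L$.
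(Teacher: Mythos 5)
Your proposal is correct and is essentially the computation the paper relies on: the paper's proof is a one-line citation to \cite[Proposition 3.8]{FZ03}, which carries out exactly this identification of $\bigoplus_{i\ge 0}\cO_T(\lfloor iL\rfloor)s^i$ with a normal affine semigroup ring (Zariski-locally for $T=\bA^1$, whence the general case by an \'etale chart $u:U\to\bA^1$ given by a uniformizer at $g(p)$). The only bookkeeping worth making explicit is that after shrinking $U$ so that $\Supp(L)\cap U\subset\{g(p)\}=\mathrm{div}(u)|_U$, the equality $\cO_U(\lfloor iL\rfloor)=u^{-\lfloor ia/b\rfloor}\cO_U$ holds as fractional ideals compatibly with multiplication, so the graded \emph{algebra} (not just each graded piece) is the monomial algebra of the cone you describe.
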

		
		\begin{proof}
			This follows from the computation in \cite[Proposition 3.8]{FZ03}, which shows that, $(X, C+ \pi^{-1}(\pi(p))) $ is Zariski locally isomorphic to a toric pair when $T \simeq \bA^1$.
		\end{proof}

		\subsection{Geometry of Type II surface pairs}
		We now analyze the geometry of Type II polystable surface pairs. 
		In particular, we will show that their normalizations are crepant birational to  Seifert $\bG_m$-bundles in an  explicit way.

		\begin{prop}\label{p:TypeIISeifert}
			Let $(X,B+D)$ be a  Type II  boundary polarized CY surface pair with an effective $\bG_m$-action such that $X$ is normal and $(X,B)$ is not klt.
			
			Then there exist two lc CY pairs $(Y,D_Y)$ and $(Z,D_Z)$ with  $\bG_m$-actions and  crepant birational morphisms
			\[
			\begin{tikzcd}
				&(Y,D_Y)\arrow[ld,"f", swap] \arrow[rd,"g"]&\\
				(X,B+D)& & (Z,D_Z)
			\end{tikzcd}
			\]	
			that are  $\bG_m$-equivariant and satisfy:
			\begin{enumerate}
				\item[(1)] The divisor $\lfloor D_Y\rfloor=E_1+E_2$, where $E_1$ and $E_2$ are prime divisors with 
				$
				\Exc(f) \subset  E_1 \cup E_2
				$.
				\item[(2)] The variety $Z$ is a compactified Seifert $\bG_m$-bundle with zero and infinity section  $F_1 := g_*E_1$ and $F_2 :=g_* E_2$.
			\end{enumerate}
		\end{prop}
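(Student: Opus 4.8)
The plan is to realize $Y$ as a $\bG_m$-equivariant dlt modification of $(X,B+D)$ and $Z$ as the relative minimal model of a $\bG_m$-equivariant $\bP^1$-fibration on $Y$, the analysis of $\bG_m$-surfaces being the heart of the matter. First I would take a $\bG_m$-equivariant dlt modification $f\colon (Y,D_Y)\to (X,B+D)$; this exists because a dlt modification of a surface pair is obtained from a log resolution followed by a relative MMP, both of which can be run $\bG_m$-equivariantly since $\bG_m$ is connected (compare the proof of Proposition~\ref{p:reg0}). Then $(Y,D_Y)$ is $\bQ$-factorial dlt, $K_Y+D_Y=f^*(K_X+B+D)\sim_{\bQ}0$, the induced $\bG_m$-action on $Y$ is non-trivial, and every $f$-exceptional divisor has log discrepancy $0$ and hence appears in $\lfloor D_Y\rfloor$; in particular $\Exc(f)\subset\Supp\lfloor D_Y\rfloor$. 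Since $(X,B)$, hence $(X,B+D)$, is not klt we have $\lfloor D_Y\rfloor\neq 0$, and since $(X,B+D)$ is Type II its minimal lc center is a curve, so no two components of $\lfloor D_Y\rfloor$ meet (an intersection point would be a $0$-dimensional lc center); thus each component of $\lfloor D_Y\rfloor$ is a smooth $\bG_m$-invariant curve crepant birational to the source.

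Next I would analyse the $\bG_m$-action on the normal projective surface $Y$. Using that the source of $(X,B+D)$ is a curve together with the hypothesis that $(X,B)$ is not klt, one shows that the action is of hyperbolic type with both its source and its sink being curves; concretely, the orbit map is a morphism $\rho\colon Y\to T$ onto a smooth projective curve whose general fibre is a $\bP^1$ with a weight $\pm1$ action fixing two points, and the source and sink of the action are two disjoint $\bG_m$-fixed curves $E_1,E_2$ mapping isomorphically onto $T$. The non-kltness of $(X,B)$ enters precisely here: in each excluded configuration --- an $\bA^1$-fibration, a fixed point in the role of source or sink (which would be a cone point and make the general orbit closure have positive self-intersection, i.e.\ the elliptic case), or an irreducible bisection fixed locus --- one checks that it is the ample boundary $D$ that makes the pair non-klt, so $(X,B)$ would be klt. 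Restricting $K_Y+D_Y\sim_{\bQ}0$ to a general fibre $F\cong\bP^1$ forces $E_1$ and $E_2$ to meet $F$ with coefficient $1$ and total degree $2$; since the only $\bG_m$-invariant horizontal curves on $Y$ are $E_1,E_2$, and a vertical component of $\lfloor D_Y\rfloor$ would meet $E_1\cup E_2$, we conclude $\lfloor D_Y\rfloor=E_1+E_2$, which gives~(1).

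Finally I would run a $\bG_m$-equivariant relative MMP for $Y$ over $T$ contracting exactly the $\rho$-vertical curves disjoint from $E_1\cup E_2$, obtaining a birational contraction $g\colon Y\to Z$ over $T$; since the contracted curves lie in neither $E_1$ nor $E_2$, the pushforwards $F_1:=g_*E_1$ and $F_2:=g_*E_2$ remain disjoint sections, and the fibres of $Z\to T$ become irreducible, each a $\bP^1$ carrying a $\bG_m$-action with its fixed points on $F_1$ and $F_2$. By the classification of Seifert $\bG_m$-bundles recalled above (see \cite{Kol04}), $Z$ is then a compactified Seifert $\bG_m$-bundle $\overline{Y}_L$ over $T$ with $F_1$ and $F_2$ its zero and infinity sections, giving~(2). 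Setting $D_Z:=g_*D_Y$, the negativity lemma applied to the crepant birational contraction $g$ shows $K_Z+D_Z\sim_{\bQ}0$ and that $(Z,D_Z)$ is lc; all $\bG_m$-equivariance and crepancy assertions hold by construction. I expect the main obstacle to be the middle step: pinning down the $\bG_m$-equivariant birational geometry of $Y$ --- in particular, deducing from the non-kltness of $(X,B)$ that the fixed locus becomes two disjoint sections after the vertical curves are contracted, and that $\lfloor D_Y\rfloor$ contains no further components.
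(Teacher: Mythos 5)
Your overall architecture (a crepant model $Y$ carrying the two distinguished lc places, followed by a relative MMP over the orbit curve to reach a standard $\bP^1$-link, identified as a compactified Seifert bundle) matches the paper's, but you reach the key intermediate facts by a different and, as written, incomplete route. The paper does not take a full dlt modification and then analyse $Y$ as a $\bG_m$-surface; it first produces $E_1,E_2$ directly as the lc places induced by the two product test configurations coming from the $\bG_m$-action and its inverse (\cite[Theorem 4.8]{BABWILD}), invokes \cite[Lemma 8.9]{BABWILD} to see that a Type II pair has \emph{no other} lc places, and only then extracts exactly $E_1,E_2$ via \cite[Corollary 1.38]{Kol13}, so that $\lfloor D_Y\rfloor=E_1+E_2$ and $\Exc(f)\subset E_1\cup E_2$ hold by construction. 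That counting lemma is precisely the input your sketch is missing.

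The concrete gap is your claim that $\lfloor D_Y\rfloor$ has no components beyond $E_1,E_2$. You argue that any extra component would be vertical and ``would meet $E_1\cup E_2$,'' contradicting the absence of $0$-dimensional lc centers; but a vertical component sitting in the interior of a reducible fibre need not meet $E_1\cup E_2$ at all, so this exclusion fails, and without a bound on the number of lc places you also cannot guarantee that the dlt modification extracts only two divisors. Relatedly, your exclusion of a point source or sink is attributed to the non-kltness of $(X,B)$; the correct mechanism is that the valuation attached to the product test configuration is an lc place whose center would then be a $0$-dimensional lc center, contradicting Type II. Two smaller points: you never specify which MMP you run over $T$ (since $K_Y+D_Y\sim_{\bQ}0$ one must perturb, e.g.\ by $-\varepsilon(E_1+E_2)$ as the paper does), and that MMP contracts curves with \emph{positive} intersection against $E_1+E_2$, not curves disjoint from it; and to apply Koll\'ar's classification of Seifert bundles one must also know the induced $\bG_m$-action on $T$ is trivial, which the paper deduces from the klt CY structure of $(F_i,{\rm Diff}_{F_i}(D_Z-F_i))$, though in your setup it also follows because $E_1,E_2$ are pointwise fixed sections.
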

		
		\begin{proof}
			The $\bG_m$-action on $(X,B+D)$ and its inverse
			induce two distinct product test configurations of $(X,B+D)$. 
			By \cite[Theorem 4.8]{BABWILD}, the product test configurations induce two distinct prime divisors $E_1$ and $E_2$ over $(X,B+D)$ with log discrepancy zero.
			Since $(X,B+D)$ is Type II, \cite[Lemma 8.9]{BABWILD} implies that there are no other prime divisors over $(X,B+D)$ with log discrepancy 0.

			By \cite[Corollary 1.38]{Kol13}, there exists a crepant birational morphism  
			\[
			f:(Y,D_Y)\to (X,B+D)
			\]
			such that (1) holds and $Y$ is $\bQ$-factorial. 
			Additionally, by modifying the proof in \emph{loc. cit.} to choose the log resolution to be $\bG_m$-equivariant, we may construct $f:Y\to X$ so that $\bG_m$ acts on $(Y,D_Y)$ and $f$ is $\bG_m$-equivariant. 
			
			To construct $Z$, fix $0<\varepsilon< 1$ and set 
			$
			D'_Y:=D_Y -\varepsilon E_1-\varepsilon E_2
			$.
			Since $K_{Y}+D'_Y$ is not pseudoeffective, 
			the $(K_Y+D'_Y)$-MMP process terminates with morphisms
			\[
			Y \overset{g}{\to} Z \overset{\pi}{\to} T
			,\]
			where $g$ is a composition of divisorial contractions and $\pi$ is a Mori fiber space. 
			Since $\bG_m$ is a connected algebraic group, every step of the above MMP is $\bG_m$-equivariant. 
			In particular, $(Z,g_*D'_Y)$ admits a $\bG_m$-action and the morphisms $g$ and $\pi$ are $\bG_m$-equivariant.
			Therefore $(Z, D_Z := g_* D_Y)$ admits a $\bG_m$-action as well.
			Since $K_Y+D_Y \sim_{\bQ}0$, $K_{Z}+D_{Z}\sim_{\bQ}0$ and  $(Y,D_Y)\to (Z,D_Z)$ is crepant. 
			Now, note that the $(K_Y+D'_Y)$-MMP is the same as the $(K_Y+D_Y- E_1-E_2)$-MMP, since 
			\[
			K_Y+D_Y-E_1 - E_2 \sim_{\bQ} \varepsilon (K_Y+D'_Y)
			.
			\] 
			Thus \cite[Proof of Proposition 4.37]{Kol13} shows that $T$ is a curve and $(Z,D_Z)$ is a standard $\bP^1$-link, which means that 
			$(Z,D_Z)$ is a $\bQ$-factorial plt CY pair,
			$\pi|_{F_i}: F_i \to T$ are both isomorphisms, 
			and the reduced fibers of $Z\to T$ are isomorphic to $\bP^1$.

			We will now show that the $\bG_m$-action fixes $T$. 
			Since $\bG_m$ acts on $(Z,D_Z)$ and the $F_i$ are $\bG_m$-invariant, 
			there is an induced
			$\bG_m$-action on 
			\[
			(F_i, D_{F_i}:= {\rm Diff}_{F_i}(D_Z-F_i))
			.\] 
			Since $(F_i, D_{F_i})$ is a klt CY pair by adjunction, 
			either $F_i$ is an elliptic curve or $F_i \simeq \bP^1$ and $\Supp(\Gamma_{F_i})$ contains at least three points. 
			In either case, the $\bG_m$-action on $F_i$ must be trivial. 
			Using that $\pi\vert_{F_i}: F_i \to T$ is surjective and $\bG_m$-equivariant, the $\bG_m$-action on $T$ must also be trivial. 
			Since $(Z,D_Z) \to T$ is a standard $\bP^1$-link with an effective $\bG_m$-action fixing $T$, 
			\cite[Proposition 2.25]{BABWILD} implies that (2) holds. 
		\end{proof}

		\begin{prop}\label{p:TypeIIblowup}
			Keep the same setup and notation as in Proposition \ref{p:TypeIISeifert}. 
			There exists a sequence of  birational morphisms
			\[
			Z_r \overset{h_{r-1}}{\to} Z_{r-1} \to \cdots \to Z_1 \to Z_0 =Z
			\]
			satisfying the following conditions:
			\begin{enumerate}
				\item The morphism $Z_1\to Z_0$ is the minimal resolution of the surface $Z_0=Z$.
				\item 
				For $j\geq 1$,  $ Z_{j+1} \to Z_{j} $ is the blowup of a point at the intersection of two distinct curves in $\Supp(D_{Z_j})$, 
				where $D_{Z_j}$ is the $\bQ$-divisor defined inductively by 
				\[
				K_{Z_{j}} + D_{Z_j} = h_j^*( K_{Z_{j-1}} +D_{Z_{j-1}})
				.\]
				\item The induced birational map $Z_r \dasharrow  Y$ is a morphism.
			\end{enumerate}
		\end{prop}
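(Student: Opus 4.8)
The plan is to take $Z_r$ to be the minimal resolution $\widetilde{Y}$ of $Y$, so that condition (3) is automatic (the map $Z_r\dashrightarrow Y$ is then the resolution morphism), and to exhibit the induced birational morphism $\widetilde{Y}\to Z_1$ as a sequence of blowups at boundary nodes.

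First I would invoke the classical fact that every resolution of a surface factors through its minimal resolution. Applying this to the composition $\widetilde{Y}\to Y\xrightarrow{\,g\,}Z$ gives a birational morphism $q\colon\widetilde{Y}\to Z_1$, where $Z_1\to Z$ is the minimal resolution; thus $q$ is a composition of point blowups, and the task reduces to ordering these blowups so that each is centered at the intersection of two distinct components of $\Supp(D_{Z_j})$, where $D_{Z_j}$ is the crepant pullback of $D_Z$. At this stage I would record the bookkeeping: since $(Z,D_Z)$ is plt its underlying surface $Z$ is klt, so $D_{Z_1}\geq 0$ and $\lfloor D_{Z_1}\rfloor$ is the union of the strict transforms $\widetilde{F}_1,\widetilde{F}_2$ of the two sections; and blowing up a node of two coefficient-$1$ curves produces an exceptional curve which again has coefficient $1$ (capped by log canonicity) and meets the two strict transforms at two new nodes. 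Hence the family of smooth surfaces reachable from $Z_1$ by iterated node blowups is closed under the operation and, by finiteness of $q$, every such sequence terminates; it remains to see that this family contains $\widetilde{Y}$.

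The main obstacle is therefore to show that every $q$-exceptional divisor is obtained from $Z_1$ by repeatedly blowing up nodes of the coefficient-$1$ part of the current crepant boundary. The key inputs are: (i) by \cite[Lemma 8.9]{BABWILD}, $E_1$ and $E_2$ are the only divisors over the Type II pair with log discrepancy $0$, so every $q$-exceptional divisor is distinct from the transforms of $F_1,F_2$ and has log discrepancy strictly between $0$ and $1$ with respect to $(Z,D_Z)$; (ii) every $q$-exceptional divisor is centered on $F_1\cup F_2$, which I would deduce from crepancy of $g$ together with the fact that the $\bG_m$-equivariant MMP producing $g$ contracts only curves meeting the transform of $E_1\cup E_2$ — indeed $K_Y+D_Y\equiv 0$ forces any $(K_Y+D_Y-\varepsilon(E_1+E_2))$-negative curve to meet $E_1\cup E_2$; and (iii) by Proposition \ref{p:toricsingularity}, in an \'etale neighborhood of each point of $F_i$ the pair $(Z,F_i+(\text{a fiber}))$ is toric, so the divisors over such a point with bounded log discrepancy are exactly the toric divisors obtained by star subdivisions, i.e.\ by node blowups, and these are naturally ordered by distance from $Z_1$. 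Combining (i)--(iii), I would order the $q$-exceptional divisors by distance from $Z_1$ to produce the chain $Z_1,\dots,Z_r=\widetilde{Y}$ with $Z_{j+1}\to Z_j$ the blowup of the corresponding node. The two points requiring care are the centering claim (ii), which must be tracked through the steps of the MMP, and the verification that a blowup at a point lying on at most one boundary component is never forced; both follow from the toroidal local structure near $E_1\cup E_2\subset Y$ and its image in $Z_1$.
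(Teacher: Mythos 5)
There is a genuine gap, and it sits exactly where the paper has to work hardest. Your step (iii) claims that, because $(Z,F_i+\text{fiber})$ is \'etale-locally toric along $F_i$, every divisor over a point of $F_i$ with log discrepancy $\le 1$ is a toric valuation, hence is reached by star subdivisions (node blowups). That is false: over a toric surface pair with reduced boundary, only the log discrepancy $0$ valuations are forced to be toric. For instance, blowing up the torus-fixed point of $(\bA^2,\{xy=0\})$ and then a free point of the exceptional curve produces a divisor of log discrepancy exactly $1$ that is not toric and whose center on the intermediate model lies on a single boundary component, not at a node. This endpoint case $A=1$ is precisely what occurs for the curves $C\subset Y$ contracted by $Y\to Z$ with $C\not\subset\Supp(D_Y)$ (note also that your assertion that the relevant log discrepancies are \emph{strictly} between $0$ and $1$ is wrong for the same reason). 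No local or toroidal argument near $E_1\cup E_2$ can exclude this configuration, because nothing is locally wrong with it. The paper excludes it globally: if some blowup center lay on $\lfloor D_{Z_j}\rfloor$ but not at a node, then the whole fiber $\rho^{-1}(t)$ of $Y\to T$ through the offending curve would be a chain of rational curves $G_1\cup\cdots\cup G_s$ with each $G_i\not\subset\Supp(D_Y)$, and then an end component $G_1$ disjoint from $E_2$ would give $0<D\cdot f(G_1)=f^*D\cdot G_1=0$, contradicting the ampleness of the polarizing divisor $D$ on $X$. The ampleness of $D$ is the essential input, and it is absent from your proposal.

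A secondary issue: you take $Z_r$ to be the minimal resolution of $Y$, whereas the paper only extracts (via \cite[Lemma 2.22]{Kol13}) the finitely many curves contracted by $Y\to Z$, which already makes $Z_r\dashrightarrow Y$ a morphism. Your choice forces you to also realize every exceptional divisor of $\widetilde{Y}\to Y$ as a node blowup; since $Y$ is only known to be $\bQ$-factorial klt, you would additionally need to show that all singular points of $Y$ lie at nodes of $\Supp(D_Y)$ and resolve torically, which is a further claim your sketch does not address. The dichotomy from \cite[Proposition 2.7]{Kol13} (center at a node, or center merely on $\lfloor D_{Z_j}\rfloor$) is the right organizing tool, but the second branch must be killed by the global ampleness argument, not by local toroidal geometry.
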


		\begin{proof}
			Let $h_0:Z_1\to Z_0=Z$ be the minimal resolution of $Z$.
			We claim that $h_0$ is a log resolution of $(Z,D_Z)$.
			By the proof of Proposition \ref{p:TypeIISeifert},  $\pi:Z \to T $ is a compacticatified Seifert $\bG_m$-bundle with zero and infinity section $F_1$ and $F_2$. 
			Thus there exists a $\bQ$-divisor $L$ on $T$ and a $\bG_m$-equivariant isomorphism $Z \simeq  \overline{Y}_{L}$.
			Since $D_{Z}$ is $\bG_m$-invariant, $\Supp(D_Z)$ is 
			the union of $F_1\cup F_2$ and, a possibly empty, sum of fibers.  
			Now, fix a point $z\in Z$ in the non log smooth locus of $(Z,\Supp(D_Z))$ and set $t= \pi(z)$.
			Since the non log smooth locus is discrete and $\bG_m$-invariant, $z\in F_i$ for some $i$.
			By Proposition \ref{p:toricsingularity},
			$(Z,F_1+F_2+\pi(\pi^{-1}(z))$ is \'etale locally isomorphic to a toric surface pair at $z$.
			Therefore $Z_1 \to Z$ is a log resolution of $(Z,F_1+F_2+\pi(\pi^{-1}(z))$  at $z$ and so is also a log resolution of $(Z,D_Z)$ at $z$.
			Hence the claim holds.
			
			Next, we construct the remaining morphisms.
			By \cite[Lemma 2.22]{Kol13} applied to the curves contracted by $Y\to Z$, there exists finitely many point blowups 
			\[
			Z_r \to Z_{r-1} \to  \cdots  \to Z_2 \to Z_1
			\]
			such that 
			each $Z_{j+1} \to Z_{j}$ is the blowup at the center of $\ord_C$ on $Z_{j}$
			for some curve $C\subset Y$ contracted by $Y\to Z$
			and the curves contracted by $Y\to Z$ are not contracted by $Y \dashrightarrow Z_r$. 
			Thus  $Z_r\dashrightarrow Y$ is defined at codimension 1 points and, hence, is a morphism.

			It remains to show that this blowup process satisfies condition (2) for each fixed $j\geq 1$. 
			Write $C\subset Y$ for the curve contracted by $Y\to Z$ mentioned above 
			and   $p \in Z_j$ for its center.
			Observe that
			\[
			A_{ Z_{j},D_{Z_j}} (\ord_C)= A_{Z,D_Z} (\ord_C)= A_{Y,D_Y}(\ord_C) \leq 1
			,\]
			where the equalities use the pairs are crepant and the inequality uses that $C\subset Y$ and $D_Y$ is effective.
			By the above inequality and the fact that 
			$D_{Z_j}$ has snc support and coefficients $\leq 1$,
			\cite[Proposition 2.7]{Kol13} implies that 
			\begin{itemize}
				\item[(i)] $p$ is a point at the intersection of two  curves in
				$\Supp(D_{Z_j})$ or 
				\item[(ii)] $p$ is contained in $\Supp( \lfloor D_{Z_j}\rfloor)$. 
			\end{itemize}
			It remains to show that (i) always holds. 
			To simplify notation, let 
			\[
			\rho:= \pi \circ g :Y\to T
			\]
			and $t:= \rho(C)$.
			We will proceed to show that if (i) does not hold, then the ampleness of $D$ will be violated, which will be a contradiction.

			\medskip 
			
			\noindent  \emph{Claim}: If (i) does not hold, then $\rho^{-1} ( t)$ is a chain of rational curves $G_1 \cup \cdots \cup G_s$ with $s\geq 2$ and each component satisfies $G_i\not\subset \Supp(D_Y)$. 
			\medskip 
			
			\noindent \emph{Proof of the Claim.}
			Write $F_{ij}$ for the birational transform of $F_i$ on $Z_{j}$. 
			Since $(Z_j,D_{Z_j})\to (Z,D_Z)$ is crepant, $\Supp(\lfloor D_{Z_j}\rfloor)= F_{1j} \cup F_{2j}$. Since $(Z_1,\Supp(D_{Z_1}))$ and the $h_i$ are point blowups, $(Z_j, \Supp(D_{Z_j})$ is also log smooth. 
			Now, if (i) does not hold, then 
			\[
			p \notin \Supp( {\rm Diff}_{F_{ij}}(D_{Z_j}-F_{ij}))
			\]
			for $i=1,2$.
			Since $(Z_j, D_{Z_j}) \to (Z,D_Z)$ is crepant birational, 
			the isomorphism $F_{ij}\to F_{i}$ induces an isomorphism 
			\[
			(F_{ij}, {\rm Diff}_{F_{ij}}(D_{Z_j}-F_{ij}) )
			\overset{\sim}{\longrightarrow}
			(F_i,  {\rm Diff}_{F_i}(D_{Z}-F_i) )
			.\]
			Thus, using the formula for the different \cite[Corollary 3.45]{Kol13}, 
			the image of $p$ on $Z$ is a smooth point not contained in $\Supp(D_{Z}- F_{1}-F_{2})$. 
			Therefore each curve $G \subset \rho^{-1}(t)$ satisfies 
			\[
			1\leq A_{Z,D_Z}(G) = A_{Y,D_{Y}}(G) \leq 1
			,\]
			where the first inequality holds by \cite[Proposition 2.7]{Kol13} and the previous sentence. 
			Thus $A_{Y,D_Y}(G)=1$ and so  $G \not \subset \Supp(D_Y)$.
			
			It remains to show that $\rho^{-1}(t)$ is a chain of at least two rational curves. 
			First, since $Y\to Z$ contracts $C$, $\rho^{-1}(t)$ must contain at least two irreducible components. 
			Using that the fiber of $Z_r \to T$ over $t$ is a chain of rational curves and that $Z_r \to Y$ only contracts curves in the fibers, we conclude that $\rho^{-1}(t)$ is a chain of rational curves.
			\qed
			\medskip 
			
			Now assume that (i) does not hold and write $\rho^{-1}(t)=G_1 \cup \cdots \cup G_s$  with $s\geq 2$ and $G_i \not\subset \Supp(D_Y)$ for the chain of rational curves that exists by the above claim. 
			Since $(X,B)$ is not klt,
			after possibly switching $E_1$ and $E_2$,
			we may assume that $E_1$ is an lc center of 
			$(X,B)$.
			Note that  $E_1 \not\subset \Supp(f^*D)$, since $(X,B+D)$ is also lc. 
			After reindexing the chain of curves, we may assume 
			$
			G_1\cap E_2 = \emptyset
			$.
			Now observe that
			\[
			0<D\cdot f(G_1)  = f^*D \cdot G_1 =0
			.\]
			Indeed, the first inequality holds, since  $D$ is ample and $f$ does not contract $G_1$ by Proposition \ref{p:TypeIISeifert}.1.
			The  equality is by the projection formula.
			The final equality holds, since
			\[
			\Supp(f^*D) \cap G_1  \subset \Supp(D_Y-E_1) \cap G_1= E_2\cap G_1=\emptyset
			.\]
			Here we are using the fact that $\Supp(D_Y-E_1-E_2)$ is contained in a finite union of fibers of $\rho$ that are disjoint from $\rho^{-1}(t)$ by the claim.
			Therefore we have reached a contradiction and so (i) must always hold as desired.
		\end{proof}

		\subsection{Fields of definition}
		
		We now prove a result on the field of definition for polystable Type II boundary polarized  CY surface pairs. 
		
		\begin{thm}\label{t:TypeIIfieldofdef}
			Let $(X,B+D)$ be a polystable Type II boundary polarized CY surface pair defined over $\bk$
			and  $(E,D_E):= {\rm Src}(X,B+D)$.
			
			If  there exists a pair $(E',D_{E'})$ over an algebraically closed subfield $\bk'\subset \bk$ such that 
			\[
			(E,D_E)\simeq (E',D_{E'}) \times_{\bk'} \bk,
			\]
			then there exists a pair $(X',B'+D')$ defined over $\bk'$ 
			such that 
			\[
			(X,B+D) \simeq (X',B'+D') \times_{\bk'}\bk.
			\] 
		\end{thm}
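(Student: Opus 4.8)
The plan is to reconstruct $(X,B+D)$ explicitly from the source $(E,D_E)$ together with a finite collection of discrete invariants, show that this reconstruction is well defined up to isomorphism, and then run it over $\bk'$. First I would reduce to the case where $X$ is normal: if $X$ is non-normal, then by Proposition \ref{p:reg0} the normalization $(\oX,\oG+\oB+\oD)=\sqcup_i(\oX_i,\oG_i+\oB_i+\oD_i)$ is a disjoint union of normal regularity $0$ pairs, and $(X,B+D)$ is recovered from this normalization together with the gluing involution $\tau$ on the conductor curves. Since the conductor curves $\oG_i$ are among the boundary divisors appearing in $\lfloor D_{Y_i}\rfloor$ of Proposition \ref{p:TypeIISeifert} and $\tau$ matches up differents, it suffices to carry out the argument on each normal piece and glue the resulting $\bk'$-models.

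So assume $X$ is normal. By Proposition \ref{p:reg0}, $(X,B)$ is not klt and $(X,B+D)$ carries an effective $\bG_m$-action, so Propositions \ref{p:TypeIISeifert} and \ref{p:TypeIIblowup} apply: we get $\bG_m$-equivariant crepant birational maps $X\xleftarrow{f}Y\xrightarrow{g}Z$ with $Z=\overline{Y}_L$ a compactified Seifert $\bG_m$-bundle $\pi\colon Z\to T$ over a smooth curve $T$, sections $F_1,F_2$, and an explicit blow-up sequence $Z_r\to\cdots\to Z_0=Z$ with $Z_r\to Y$ a morphism. The key observation is that $T$ is canonically the curve underlying the source: $\pi|_{F_i}\colon F_i\xrightarrow{\sim}T$, and by the structure of Type II sources (Definition \ref{d:source} together with \cite[Lemma 8.9]{BABWILD}) one has, up to isomorphism, $(E,D_E)\cong(F_i,\mathrm{Diff}_{F_i}(D_Z-F_i))$ for one value of $i$. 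Hence the assumption that $(E,D_E)$ descends to $\bk'$ provides over $\bk'$: the curve $T$, the Seifert orbifold locus (the points with $m(t)>1$, equivalently the fractional part of $L$), and the fiber part of $D_Z$ — all of which are read off from $\mathrm{Diff}_{F_i}(D_Z-F_i)=D_E$ via the different formula \cite[Corollary 3.45]{Kol13}.

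The remaining data needed to rebuild $(X,B+D)$ are: (i) the degree of $L$, pinned down by $K_Z+D_Z\sim_\bQ 0$ once the fractional part of $L$ and the coefficients of $D_Z$ are known; (ii) the combinatorics of the blow-ups $Z_r\to Z_0$ and the contractions $Z_r\to Y\to X$, a finite discrete choice; and (iii) the polarizing boundary $D$, which is $\bG_m$-invariant, has coefficient $c$, and satisfies $D\sim_\bQ-(K_X+B)$. For (iii) I would argue $D$ is forced: since $D$ is ample on the surface $X$, a Zariski-lemma computation on $Y$ (as already used in the proof of Proposition \ref{p:TypeIIblowup}) shows $f^*D$ cannot be supported only on fibers of $\rho=\pi\circ g$, so $D$ must contain the section $F_2$ — the one that is not an lc center of $(X,B)$; and if $D$ contained a fiber component over a point of $T$ not already distinguished by $D_E$ or the Seifert structure, moving that point would produce a non-product test configuration of $(X,B+D)$ via Proposition \ref{p:tcdivisorsonX}, contradicting polystability. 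So $D$ is determined by the data in (i)–(ii). Finally the one genuinely continuous choice, the class $L\in\Pic(T)$ itself (beyond its degree and fractional part), is irrelevant up to isomorphism: if $T$ is elliptic then $D_E=0$ and $\Aut(T)$ acts transitively on line bundles of fixed degree, inducing isomorphisms of the resulting Seifert bundles; if $T\cong\bP^1$ then $\Pic(T)$ and the relevant marked points are already $\bk'$-rational. Therefore the reconstruction $(E,D_E)+(\text{discrete data})\rightsquigarrow(X,B+D)$ is well defined up to isomorphism; performing it over $\bk'$ on $(E',D_{E'})$ and on the (finitely many, hence $\bk'$-defined) discrete invariants produces $(X',B'+D')$ over $\bk'$, and by uniqueness of the reconstruction $(X',B'+D')\times_{\bk'}\bk\cong(X,B+D)$.

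The main obstacle is step (iii): the source only records $(X,B)$-type information, so one genuinely must exploit polystability — combined with the precise geometry of compactified Seifert bundles over $T$ from Propositions \ref{p:TypeIISeifert}–\ref{p:TypeIIblowup} — to rule out moduli in the choice of $D$. Showing that a "wandering" fiber component of $D$ destroys polystability, and doing the bookkeeping that reconciles the coefficients of $D$ and of $D_Z$ with the Calabi--Yau condition, is where the bulk of the work lies; the reduction to the normal case and the descent of the discrete invariants are comparatively routine.
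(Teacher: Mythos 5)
Your overall architecture (reduce to the normal case, use the effective $\bG_m$-action and the Seifert-bundle picture of Propositions \ref{p:TypeIISeifert}--\ref{p:TypeIIblowup}, descend $T$, $L$, the blow-up centers and the boundary to $\bk'$) is the same as the paper's, and most of the pieces are in place. But step (iii) — the descent of the polarizing boundary — is justified incorrectly. Your claim that a fiber component of $D$ over a point ``not distinguished by $D_E$'' could be ``moved'' to produce a non-product test configuration contradicting polystability does not work: moving a point of $T$ is not an isotrivial degeneration, Proposition \ref{p:tcdivisorsonX} only produces degenerations from valuations along divisors in $\lfloor B+D\rfloor$ (which a fractional fiber component of $D$ need not be), and in any case polystability does not forbid non-product test configurations — it only requires the central fiber to be isomorphic to the original pair. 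The correct (and much simpler) reason there is no such wandering component is that the premise is vacuous: since $\Supp(D_Z)$ is $\bG_m$-invariant, every component is a section or a fiber, and any fiber component $C\subset\Supp(D_Z)$ forces $C\cap F_i\in\Supp({\rm Diff}_{F_i}(D_Z-F_i))=\Supp(D_{E})$ by the different formula, hence lies over a $\bk'$-point. No uniqueness or ``reconstruction'' statement is needed; one just checks that each curve in $\Supp(D_{Z_r})\cup\Exc(h)$ is defined over $\bk'$ and assigns it the same coefficients over $\bk'$, then takes the ample model of $-K_{Z'_r}-B_{Z'_r}$.

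The second gap is in the non-normal case. Saying that ``$\tau$ matches up differents, so it suffices to glue the $\bk'$-models'' skips the essential point: you must show the gluing involution $\tau:\oG^n\to\oG^n$ is itself defined over $\bk'$, not merely that it is compatible with data defined over $\bk'$. The paper does this by observing that each conductor curve $\oG_i$ is a $\bP^1$ whose different contains at least three points, all of which are $\bk'$-points once the normal case is applied to each component; since an automorphism of $\bP^1$ is determined by the images of three points, $\tau$ descends, and then Koll\'ar's gluing theory (\cite[Corollary 5.33]{Kol13}) produces the quotient over $\bk'$. You should also note that the components of the normalization, while not themselves polystable, are non-klt (the conductor is nonempty) and carry a nontrivial $\bG_m$-action, which is all the normal-case argument actually uses.
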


		
		\begin{proof}
			Since $(X,B+D)$ is polystable, Proposition \ref{p:reg0} implies that $(X,B)$ is not klt and that there exists a $\bG_m$-action on $(X,B+D)$ that is non-trivial on each component.
			We now break the proof up into two cases.

			First, we assume  that $X$ is normal.
			Since the $\bG_m$-action on $X$ is non-trivial, the subgroup $H:=\ker( \bG_m \to \Aut(X))$ is finite and so $\bG_m / H \simeq \bG_m$.
			Thus, by replacing $\bG_m$ with $\bG_m/H$, we may assume the $\bG_m$-action on $X$ is effective. 
			Thus the hypotheses of
			Propositions \ref{p:TypeIISeifert} and Proposition \ref{p:TypeIIblowup} are satisfied and we can use the results and notation from the propositions including the  crepant birational morphisms 
			\[
			(X,B+D) \overset{f}{\leftarrow} (Y,D_Y) \overset{g}{\rightarrow} (Z,D_Z)
			\]
			such that $Z$ is a compactified Seifert $\bG_m$-bundle with zero and infinity section $F_1$ and $F_2$. 
			Since the above pairs are crepant birational, the isomorphism $E_i \to F_i$ induces an isomorphism
			\[
			(E,D_E) = (E_{i}, D_{E_i}:= {\rm Diff}_{E_i}(D_Y-E_i)) \simeq (F_i, D_{F_i}:={\rm Diff}_{F_i}(E_Z-F_i))
			\]
			by \cite[Proposition 4.6]{Kol13}. 
			Thus $Z$ is a compactified Seifert $\bG_m$-bundle over $(E,D_E)$ with respect to some ample $\bQ$-divisor $L$.
			
			We claim that after possibly replacing $L$ with a linearly equivalent divisor or changing the isomorphism $(E,D_E)\simeq (E',D_{E'}) \times_{\bk'} \bk$, $L$ is supported at $\bk'$ points of $E$.
			Indeed, since $(E,D_E)$ is a klt CY pair, either (i) $E$ is a genus 1 curve and $D_E=0$ or (ii) $E\simeq \bP^1$ and $D_E\neq 0$. 
			By \cite[Proposition 2.20]{BABWILD},
			\[
			\Supp (L - \lfloor L \rfloor ) \subset \Supp( {\rm Diff}_{E_i}(0)) \subset \Supp (D_E)
			.\]
			In case (i), we get $L = \lfloor L \rfloor$.
			By properties of divisors on elliptic curves, after replacing $L$ with a linearly equivalent divisor, we may assume $L = d \cdot p$ for some $p \in E$ and $d:= \deg(L)$.
			After twisting the isomorphism $E\simeq E' \times_{\bk'} \bk$ with a translation of $E$, we may further assume that $p$ is a $\bk'$-point. 
			In case (ii), we get $L - \lfloor L \rfloor \subset \Supp(D_E)$. 
			Hence, $L- \lfloor L \rfloor $ is supported at $\bk'$-points. 
			Since $E\simeq \bP^1$ in this case, after replacing $L$ with a linearly equivalent divisor, we may additionally assume that $\lfloor L \rfloor$ is supported at $\bk'$.
			Therefore the claim holds. 
			
			By the claim, there exists an ample $\bQ$-divisor $L'$ on $E'$ such that 
			\[
			(E',L')\times_{\bk'} \bk \simeq (E,L).
			\]
			Let $Z'$ be the compactified Seifert $\bG_m$-bundle over $E'$ with respect to $L'$. 
			By construction,  
			\[
			Z' \times_{\bk'} \bk\simeq Z
			.\] 
			Since $\Supp (D_Z)$ is $\bG_m$-invariant, every curves $C\subset \Supp(D_Z)$ is either one of the two $\G_m$-invariant sections of $Z\to E$ or  a fiber $Z\to E$. 
			In case the latter case, $C\cap E_i \subset \Supp( D_{E_i})$ and hence $C$ is the fiber of $Z\to E$ over a $\bk'$-point.
			Thus there exists a $\bQ$-divisor $D_{Z'}$ on $Z'$ such that 
			\[
			(Z',D_{Z'}) \times_{\bk'} \bk\simeq (Z,D_Z)
			\]
			Next, let $Z'_1 \to Z'$ be the  minimal resolution of $Z'$. 
			Since the blowups of $Z_i \to Z_{i-1}$ occur at $\bk'$ points by Proposition \ref{p:TypeIIblowup}.2, we can choose a sequence of point blowups
			\[
			Z'_r  \to \cdots \to Z'_{1} 
			\]
			that base changes to 
			\[
			Z_r \to \cdots \to Z_1
			.\]
			
			We will now construct $X'$.
			Let $h$ denote the composition 
			\[
			\begin{tikzcd}
				Z_r\arrow[r] \arrow[rr,bend left,"h"]& Y\arrow[r,"f"] & X
			\end{tikzcd}
			\]
			and define $B_{Z_r}$ by 
			$
			K_{Z_r}+B_{Z_r}= h^*(K_X+B)
			$.
			Since
			\[
			\Supp(B_{Z_r})\subset \Supp(D_{Z_r}) \cup \Exc(h)
			,\]
			there exists a $\bQ$-divisor 
			$B_{Z'_r}$ such that 
			\[
			(Z_r, B_{Z_r}) \simeq (Z_r, B_{Z'_r}) \times_{\bk'} \bk .
			\]
			Since 
			$-K_{Z_r}- B_{Z_r}$ is semiample and $h:Z_r\to X$ is the morphism to the ample model,
			$-K_{Z'_r} - B_{Z'_r}$ is semiample
			and
			the morphism to its ample model 
			\[
			Z'_r \overset{h'}{\longrightarrow} X'
			\]
			satisfies $h\simeq h \times_{\bk'}\bk$.
			Finally, if we set 
			$
			B':= h'_* B_{Z'_r} $ and $ D':= h'_* D_{Z'_r} -B_{Z'_r}$, then
			\[
			(X,B+D) \simeq (X',B'+D') \times_{\bk'} \bk
			\] 
			as desired.

			It remains to prove the result when $(X,B+D)$ is not normal. 
			In this case, let
			\[
			(\overline{X},\overline{G}+ \overline{B}+\overline{D}) :=\sqcup_{i=1}^r (\overline{X}_i,\overline{G}_i+\overline{B}_i+ \overline{D}_i) 
			\]
			denote the pairs normalization.
			Since $(X,B+D)$ is Type II, $\overline{X}$ has at most two connected components and the conductor divisor $\overline{G}$ normal \cite[Proposition 8.10]{BABWILD}.
			Write 
			\[
			(\overline{G}, D_{\overline{G}}) : = \sqcup_{i=1}^r (\overline{G}_i,D_{\overline{G}_i}),
			\]
			where $D_{\overline{G}_i}:= {\rm Diff}_{G_i}(\overline{B}_i+\overline{D}_i)$,
			and  $\tau: (\overline{G},D_{\overline{G}})\to (\overline{G},D_{\overline{G}})$ for the induced involution.
			By adjunction, $(\overline{G}_i, D_{\overline{G}_i}) $ is a klt CY pair and
			$(\overline{G}_i, {\rm Diff}_{\overline{G}_i}(\overline{B}_i))$ is a klt log Fano pair.
			Therefore $\overline{G}_i\simeq \bP^1$ and $\Supp( \overline{D}_{\overline{G}_i}  )$ contains at least three points. 
			Note that the $\bG_m$-action on $(X,B+D)$ induces a $\bG_m$-action on $(\oX,\oG+\oB+\oD)$ that is non-trivial on each component. 
			
			Since $X$ is non-normal, each component of $(\overline{X},\overline{G}+\overline{B})$ is not klt.
			Thus the previous paragraphs of the proof  imply that there exists a boundary polarized CY pair $(\overline{X}',\overline{G}'+\overline{B}' +\overline{D}')$   defined over $\bk'$ such that 
			\[
			(\overline{X},\overline{G}+ \overline{B}+\overline{D}) \simeq (\overline{X}',\overline{G}' +\overline{B}'+\overline{D}')\times_{\bk'} \bk
			\]
			In particular,
			\[
			(\overline{G}, D_{\overline{G}}) \simeq (\overline{G}', D_{\overline{G}'} ) \times_{\bk'} \bk
			,\]
			where $D_{\overline{G}'} := {\rm Diff}_{\overline{G}'}(B'+D')$,
			and so the points of $\Supp(\overline{D}_{\overline{G}_i})$ are all $\bk'$ points. 
			Since the data of an automorphism of $\bP^1$ is equivalent to the data of where  three points get sent, 
			there exists an involution $\tau':  (\overline{G}', D_{\overline{G}'} )\to (\overline{G}', D_{\overline{G}'})$ 
			whose base change  to $\bk$ is $\tau$.
			The gluing relation $R({\tau}')\rightrightarrows \oX$ as in \cite[Definition 5.31]{Kol13} has finite equivalence classes, since its base change to $\bk$ has the same property (alternatively, this can be seen more explicitly as we are in dimension two). 
			Thus the geometric quotient of the pair exists by \cite[Corollary 5.33]{Kol13} and is a boundary polarized CY pair $(X',B'+D')$  and satisfies $(X,B+D) \simeq (X',B'+D') \times_{\bk'} \bk$.
		\end{proof}

		\section{Type III surface pairs}\label{s:TypeIII}
		
		In this section, we  prove that every Type III boundary polarized CY surface pair admits a weakly special degeneration to a pair whose normalization is toric (Theorem \ref{t:TypeIIItoricdegen}).
		This result will be used in Section \ref{s:asgm} verify that the Type III locus of the CY moduli space is discrete.
		
		To prove the result, we will first study the geometry of normal boundary polarized CY and their weakly special degenerations induced by  curves in their log Fano boundary.
		We will then prove the toric degeneration result  by degenerating the normalization of the pair and then carefully gluing the degenerations.
		
		\subsection{Toric pairs}
		A \emph{normal toric variety} $X$ is a normal variety $X$ with the data of a $\bT:= \bG_m^{r}$-action on $X$ and a $\bT$-equivariant open embedding $\bT \hookrightarrow X$.
		A pair $(X,B)$ is \emph{toric} if $X$ is a normal toric  variety and  $B$ is $\T$-invariant.
		
		If a boundary polarized CY pair $(X,B+D)$ is a toric pair, then $B+D$ equals the reduced toric boundary, which we denote by $\Delta_{\bT}$. 
		Indeed, since $(X,B+D)$ is lc and $B+D$ is $\bT$-invariant, $B+D \leq \Delta_{\bT}$. 
		Since $K_{X}+\Delta_{\bT} \sim 0$ always holds on a toric variety, we conclude that $B+D = \Delta_{\bT}$. 
		
		\begin{prop}\label{p:torusactionfinitekernel->toric}
			If a normal boundary polarized CY pair $(X,B+D)$ admits a $\bT:= \bG_m^{\dim X}$-action with finite kernel, then $(X,B+D)$ can be endowed with the structure of a toric pair.
		\end{prop}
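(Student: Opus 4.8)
The plan is to produce a dense $\bT$-orbit in $X$ that is isomorphic to $\bT$ as a $\bT$-variety; granting this, $X$ is a normal toric variety, and since the $\bT$-action preserves $B$ and $D$ the divisor $B+D$ is $\bT$-invariant, so $(X,B+D)$ is a toric pair (and, as recorded just before the proposition, $K_X+\Delta_{\bT}\sim 0$ together with $K_X+B+D\sim_{\bQ}0$ and log canonicity then force $B+D=\Delta_{\bT}$).

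First I would reduce to a faithful action. The kernel $\Gamma$ of $\bT\to\Aut(X)$ is a closed subgroup scheme of the torus $\bT$; in characteristic $0$ it is a reduced finite abelian group, and $\bT/\Gamma$ is again a torus with $\dim(\bT/\Gamma)=\dim\bT$. The $\bT$-action on $(X,B+D)$ factors through $\bT/\Gamma$, which acts faithfully and still preserves $B$ and $D$, so after replacing $\bT$ by $\bT/\Gamma$ we may assume the action is faithful. Set $n:=\dim\bT$; since $X$ is normal and connected it is irreducible of dimension $n$.

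Next I would show the generic stabilizer is trivial. Let $d$ be the dimension of a general $\bT$-orbit. The stabilizer group scheme $\cS\to X$ has fibers of dimension $n-d$ over a dense $\bT$-invariant open $U\subseteq X$, and by generic flatness together with the irreducibility of $U$, after shrinking $U$ there is a single subtorus $S\subseteq\bT$ of dimension $n-d$ contained in the stabilizer of every $x\in U$. Then $S$ fixes every point of $U$, hence acts trivially on $U$ and so on $X$; faithfulness forces $S=\{1\}$, i.e.\ $d=n$. A $\bT$-orbit $O$ of dimension $n$ inside the irreducible $n$-fold $X$ is dense, and the orbit map $\bT\to O$ is bijective, hence (in characteristic $0$) birational; since $O$ is normal, being open in the normal variety $X$, Zariski's Main Theorem shows $\bT\to O$ is an isomorphism. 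Being an orbit, $O$ is locally closed, and being dense it is open, so $\bT\hookrightarrow X$ is a $\bT$-equivariant open immersion and $X$ is a normal toric variety.

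Finally, $B$ and $D$ are $\bT$-invariant by hypothesis, so $B+D$ is a $\bT$-invariant effective divisor and $(X,B+D)$ is a toric pair; the discussion preceding the statement then identifies $B+D$ with the reduced toric boundary $\Delta_{\bT}$. The step requiring genuine care is the triviality of the generic stabilizer, which is exactly where the two hypotheses $\dim\bT=\dim X$ and finiteness of the kernel enter; the passage to $\bT/\Gamma$, the Zariski's-Main-Theorem identification of the dense orbit, and the identification of the boundary are all routine.
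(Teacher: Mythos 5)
Your overall strategy matches the paper's: reduce to an effective action of a torus of dimension $\dim X$, find a point with trivial stabilizer, and conclude by a dimension count that its orbit map gives the required equivariant open immersion $\bT\hookrightarrow X$. The technical route to the trivial-stabilizer statement differs, though. The paper picks an equivariant projective embedding $X\hookrightarrow\bP^N$ coming from the $\bT$-linearization of $\cO_X(-r(K_X+B))$ and stratifies $\bP^N$ by the vanishing pattern of the coordinates; on each stratum the stabilizer is literally constant, so on a dense open $U\subset X$ the full stabilizer is one fixed subgroup $S$, which fixes $U$ pointwise, hence acts trivially on $X$, hence is killed by effectivity. You instead use generic flatness of the stabilizer group scheme plus (implicitly) rigidity of diagonalizable groups to produce a single subtorus $S$ of dimension $n-d$ inside all generic stabilizers. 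That is a legitimate alternative, but you should invoke rigidity explicitly: generic flatness alone gives constant fiber dimension, not that the identity components are a single subtorus.

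The one step that does not follow as written is ``the orbit map $\bT\to O$ is bijective.'' What you have proved at that point is only that the identity component of the generic stabilizer is trivial, i.e.\ that the generic stabilizer is \emph{finite}; a priori a point with dense orbit could have stabilizer a nontrivial finite subgroup $\Gamma_x$, in which case $\bT\to O$ is $|\Gamma_x|$-to-one rather than bijective. The gap is easy to close: since $\bT$ is abelian, the stabilizer is constant along the orbit, so $\Gamma_x$ fixes the dense orbit pointwise, hence acts trivially on the separated reduced scheme $X$, hence lies in the kernel, which is trivial after your reduction. Alternatively, even without this, $O\simeq\bT/\Gamma_x$ is again an $n$-dimensional torus, so $X$ carries a toric structure for $\bT/\Gamma_x$, which is all the proposition asks for. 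With either patch the remainder of your argument --- Zariski's Main Theorem identifying $\bT$ with the dense open orbit, and the identification of $B+D$ with the reduced toric boundary --- is correct and agrees with the paper.
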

		
		\begin{proof} 
			Let $H$ denote the kernel of $\bT\to \Aut(X)$. Since $H$ is a  normal subgroup of $\bT$ and finite by assumption, $G:= \bT/ H$ is a connected algebraic group of dimension $\dim X$ and the induced $G$-action on $X$ is effective. Since $\bT$ is diagonalizable, $G$ is diagonalizable by \cite[Theorem 12.9.c]{Milne}. Since $G$ is a connected and diagonalizable,  $G$ is an algebraic torus by \cite[12.5 and 12.8]{Milne}. 
			Thus by replacing  $\bT$ with $G$, we may assume that that $\bT$-action is effective.
			
			We  claim that there exists a decomposition of $X$ into finitely many locall closed sets on which the stabilizer groups are  constant. 
			To proceed,  fix a positive integer $r$ such that $-r(K_X+B)$ is a very ample Cartier divisor.
			Since $\cO_X(-r(K_{X}+B))$ admits a canonical $\bT$-linearization, there is an induced $\bT$-action on 
			$V:= H^0(X, -r(K_X+B))$
			and so  there is a direct sum decomposition
			$V= \oplus_{ \alpha \in M} V_{\alpha}$, where $M:= \Hom (\bT, \bG_m)$ and  $t\in \bT $ acts on $V_\alpha$ via multiplication by $\alpha(t)$.
			Thus there exists a basis $(s_{0}, \ldots, s_N)$ for $V$  and $\alpha_0,\ldots, \alpha_N \in V$ such that  $s_i \in V_{ \alpha_i}$. 
			The basis induces a $\bT$-equivariant embedding 
			$
			X\hookrightarrow \bP^N
			$
			where $\bT$ acts on $\bP^N$ by 
			\[
			t\cdot [x_0:\ldots : x_n] = [\alpha_0(t) x_0 : \cdots \alpha_N(t)  x_N ]
			.\]
			For a subset $I \subset \{0,\ldots, N\}$, consider the locally closed subset $Z_{I}:= \{ x_i = 0 \, \vert\, i \in I\} \cap \{ x_i \neq 0 \, \vert\, i \notin I\} \subset \bP^N$.
			Note that the stabilizer of $x\in Z_I$ depend only on $I$. 
			Now $X =  \cup_I X_I$, where  $X_I:=X\cap Z_I$ is  a locally closed subset of $X$ such that the stabilizer group of $x\in X_I$ depends only on $I$. 
			Therefore the claim holds. 
			
			By the claim, there exists an open set $U \subset X$ on which the stabilizer group is constant.
			Since the $\bT$-action on $X$ has finite kernel, the stabilizer group must be trivial for all $x\in U$.
			Fix an element $e \in U(\bk)$. 
			The $\bT$-equvariant morphism  $j:\bT \to  X$  defined by $t\mapsto t \cdot e$ is a locally closed embedding.
			Since $\dim X = \dim \bT = \dim   j(\bT)$,  $j$ is an open embedding. 
			With the data of this embedding,  $X$ is a normal toric variety and  so $(X,B+D)$ is a toric pair. 
		\end{proof}


		\subsection{Normal surface pairs}\label{ss:normalsurfacepairs}
		
		\subsubsection{Classification}
		
		We now study the structure of curves appearing with coefficient one in the boundary of a  boundary polarized CY surface pair.

		\begin{proposition}\label{p:normallogFanoboundary}
			If $(X,B+D)$ is an lc boundary polarized CY surface pair, then either
			
			\begin{enumerate}
				\item  $\lfloor B \rfloor =0$,
				\item $\lfloor B \rfloor = C$, where $C\simeq \bP^1$, or
				\item $\lfloor B \rfloor = C_1 \cup C_2$, where each $C_i \simeq \bP^1$ and  the curves meet at a single node. 
			\end{enumerate}
		\end{proposition}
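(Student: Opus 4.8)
The plan is to read off the structure of $C:=\lfloor B\rfloor$ from adjunction along its components, using crucially that $D$ is ample and that $-(K_X+B)\sim_{\bQ}D$ is nef and big. Throughout, $X$ is a normal surface (the running assumption of this subsection), and $K_X+B\sim_{\bQ}-D$ is $\bQ$-Cartier, so adjunction applies. If $C=0$ we are in case~(1), so assume $C\neq 0$.

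First I would analyze a single component $C_i\subset C$. Writing $C_i^n\to C_i$ for the normalization, Koll\'ar's adjunction \cite[Section 4.1]{Kol13} gives $(K_X+B)|_{C_i^n}\sim_{\bQ}K_{C_i^n}+\Theta_i$, where $\Theta_i:=\mathrm{Diff}_{C_i^n}(B-C_i)$ is an effective $\bQ$-divisor whose coefficients lie in $[0,1]$ because $(X,B)$ is lc. Restricting $K_X+B+D\sim_{\bQ}0$ to $C_i^n$ and taking degrees yields
\[
\deg\Theta_i+D\cdot C_i=2-2g(C_i^n).
\]
Since $D$ is ample, $D\cdot C_i>0$, hence $g(C_i^n)=0$ and $\deg\Theta_i=2-D\cdot C_i<2$. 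Now $(X,C_i)$ is lc on a normal surface, so a curve in $\lfloor B\rfloor$ can only be singular at a node (I would invoke seminormality of the reduced boundary, \cite{Kol13}); but a node puts a coefficient $1$ at each of its two preimages in $\Theta_i$, forcing $\deg\Theta_i\geq 2$, a contradiction. Therefore $C_i\cong\bP^1$.

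Next I would control how the components fit together, using two facts. (a) By connectedness of non-klt centers \cite{Kol13}, applied to the nef and big divisor $-(K_X+B)$, the set $\mathrm{Nklt}(X,B)$ is connected; since a normal surface has only finitely many singular points, any curve not contained in $\lfloor B\rfloor$ fails to be an lc center, so the one-dimensional lc centers of $(X,B)$ are exactly the components of $C$, and hence $C$ itself is connected. (b) For a point $q\in C_i$ lying on another component $C_j$, the different $\Theta_i$ has coefficient $1$ over $q$ and $(C_i\cdot C_j)_q=1$ (the local intersection number is bounded above by that coefficient, which is $\leq 1$); summing gives $\#\big(C_i\cap(C-C_i)\big)\leq\deg\Theta_i<2$, so $C_i$ meets the rest of $C$ in at most one transverse point. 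A connected union of $\bP^1$'s in which no component meets another in more than one point and (by the previous step) no component has a self-node is either a single $\bP^1$, giving case~(2), or two $\bP^1$'s glued at one point, giving case~(3).

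The step I expect to require the most care is the adjunction bookkeeping: justifying that a node, respectively the presence of a second boundary component through a point, contributes exactly $1$ to the relevant different, and pinning down the precise reference that $\lfloor B\rfloor$ is seminormal so that ``smooth'' and ``nodal'' exhaust the local possibilities on the curves $C_i$. Once the $C_i\cong\bP^1$ are in hand, the remaining estimates are elementary intersection theory.
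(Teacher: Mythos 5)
Your argument is correct and follows essentially the same route as the paper's proof: nodality of $\lfloor B\rfloor$ (the paper cites \cite[Theorem 2.31]{Kol13}), connectedness of the non-klt locus via ampleness of $-K_X-B$ (\cite[Theorem 17.4]{Kol92}), and adjunction to the boundary. The only cosmetic difference is that you carry out the adjunction degree bookkeeping on each $C_i^n$ by hand, whereas the paper packages the same computation as the statement that $(C,\mathrm{Diff}_C(B-C))$ is an slc log Fano curve pair and reads off the two possibilities from there.
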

		
		\begin{proof}
			If (1) does not occur, then $C:=\lfloor B \rfloor$ is a curve with at worst nodal singularities by \cite[Theorem 2.31]{Kol13}.
			Since $X$ is a normal surface, the non-klt locus  of $(X,B)$ is the union of $C$ and a finite collection of points.
			Using that  $-K_X-B$ is ample,  \cite[Theorem 17.4]{Kol92} implies that the non-klt locus of $(X,B)$ is connected and  so $C$ must be connected. 
			Finally, using that $(C, {\rm Diff}_{C}(B-C ))$ is an slc log Fano pair by adjunction, we conclude that either (2) or (3) is satisfied.
		\end{proof}

		\begin{proposition}\label{p:existsE}
			Let $(X,B+D)$ be an lc boundary polarized CY surface pair with a non-trivial $\bG_m$-action. 
			If $C\subset \lfloor  B \rfloor $ is an irreducible curve 
			whose points are $\bG_m$-fixed and 
			\[
			(C,  {\rm Diff}_{C}( B+D-C )  )
			\]
			is not klt at a point $p \in C$, then there exists a curve $E\subset \lfloor B+D-C\rfloor$ such that $p \in E \cap C$ and $E$ is not contained in the $\bG_m$-fixed locus of $X$. 
		\end{proposition}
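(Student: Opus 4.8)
The plan is to analyze the local structure of $(X,B+D)$ near the point $p\in C$, using adjunction along $C$ together with the classification of lc centers. Since $p$ is a non-klt point of $(C,{\rm Diff}_C(B+D-C))$, by inversion of adjunction there must be a prime divisor $E$ over $X$ with $A_{X,B+D}(E)=0$ whose center on $X$ contains $p$ but which is not equal to $C$. I would first argue that, by the structure of lc centers of surface pairs (the non-klt locus of the plt-outside-$C$ pair near $p$ is either $C$ alone or $C$ together with one other curve through $p$, by Proposition \ref{p:normallogFanoboundary} applied locally or by \cite[Theorem 2.31]{Kol13}), the divisor $E$ can be taken to be an actual curve $E\subset X$ with ${\rm coeff}_E(B+D)=1$, so $E\subset\lfloor B+D-C\rfloor$ and $p\in E\cap C$. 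This uses that $X$ is a normal surface and $-K_X-B$ is ample, so the non-klt locus is one-dimensional near any such $p$ (a genuinely isolated non-klt point would force $(C,{\rm Diff}_C(\cdots))$ to be klt at $p$ by adjunction, contradicting the hypothesis).

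The remaining and main point is to show that such an $E$ can be chosen \emph{not} contained in the $\bG_m$-fixed locus of $X$. Here I would use that the $\bG_m$-action on $X$ is non-trivial, so its fixed locus $X^{\bG_m}$ is a proper closed subset, hence a finite union of points and curves. Suppose for contradiction that \emph{every} curve in $\lfloor B+D-C\rfloor$ through $p$ meeting $C$ at $p$ lies in $X^{\bG_m}$. In a neighborhood of $p$, the non-klt locus of $(X,B)$ (resp. $(X,B+D)$) together with $C$ is at most a nodal curve by \cite[Theorem 2.31]{Kol13}; since the points of $C$ are $\bG_m$-fixed by hypothesis, and $E$ would also be $\bG_m$-fixed, the entire local non-klt structure near $p$ would be fixed. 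I would then invoke the local toric-like description of $\bG_m$-actions near a fixed point: étale-locally at $p$, the surface $X$ with its $\bG_m$-action is modeled on a toric or cyclic-quotient singularity with a linearizable action, and a $\bG_m$-fixed curve through $p$ is a coordinate axis. Concretely, one examines the $\bG_m$-weights on the Zariski tangent space $T_pX$: $C$ being pointwise fixed means $C$ is tangent to a zero-weight direction, and if $E$ is also pointwise fixed, $T_pX$ would have a two-dimensional zero-weight subspace, forcing the $\bG_m$-action to be trivial in a neighborhood of $p$ (by the same completion/linear-reductivity argument used in the proof of Theorem \ref{thm:index-deform}: $\bG_m$ acts trivially on $\widehat{\cO_{X,p}}$ hence on $\cO_{X,p}$), contradicting that the action is non-trivial.

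The hard part will be making the last step rigorous at a possibly singular point $p\in X$: one must ensure that the tangent-space weight argument survives passing to the (cyclic quotient or general slc surface) singularity, and that "$C$ pointwise fixed" and "$E$ pointwise fixed" really do produce two independent zero-weight tangent directions rather than, say, $C$ and $E$ being tangent to the same direction. I would handle this by noting that $C$ and $E$ meet at $p$ as two distinct branches of the nodal curve $\lfloor B+D\rfloor$ (they are distinct components meeting at $p$, so they are either transverse or at worst form a node, hence have distinct tangent directions after normalization), and then either pass to the minimal resolution or the index-one cover — on which $p$ lifts to a smooth or hypersurface point and the $\bG_m$-action lifts — to reduce to the smooth case where the weight computation is unambiguous. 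An alternative, cleaner route avoiding singular subtleties: run the degeneration of Proposition \ref{p:tcdivisorsonX} using $\ord_C$ (with the appropriate multiple so that $A_{X,B}(\ord_C)$ matches), note that the induced test configuration of $C$ is trivial, and track how the non-klt locus through $p$ behaves; but I expect the direct local argument above to be the most transparent, with the resolution/index-one-cover reduction as the safeguard for singular $p$.
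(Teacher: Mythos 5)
Your first step contains a genuine gap, and the justification you give for it is backwards. From the hypothesis that $(C,{\rm Diff}_C(B+D-C))$ is not klt at $p$ you conclude that there is an honest curve $E\subset\lfloor B+D-C\rfloor$ through $p$, asserting that ``a genuinely isolated non-klt point would force $(C,{\rm Diff}_C(\cdots))$ to be klt at $p$.'' The opposite is true: a zero-dimensional lc center at $p$ not contained in any divisorial lc center other than $C$ is exactly what makes the different have coefficient $\geq 1$ at $p$, i.e.\ makes $(C,{\rm Diff}_C(B+D-C))$ non-klt there. Concretely, if near a smooth point $p$ one had $B+D=C+\tfrac12 D_1+\tfrac12 D_2$ with $D_1,D_2$ distinct smooth curves transverse to $C$, the pair is lc, ${\rm Diff}_C(B+D-C)$ has coefficient $1$ at $p$, yet $\lfloor B+D-C\rfloor=0$. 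So the existence of $E$ cannot be extracted from the non-klt hypothesis and the classification of surface lc centers alone; the $\bG_m$-action must already enter at this stage. The paper's proof does this as follows: by Sumihiro's theorem there is an invariant affine neighborhood $U$ of $p$, and since the action is non-trivial and fixes $C\cap U$ pointwise, \cite[Theorem 3.2]{FZ03} identifies $U$ equivariantly with a partially compactified Seifert $\bG_m$-bundle $\overline{Y}^a_L$ over $C\cap U$. Hence there is a \emph{unique} $\bG_m$-invariant curve $E\neq C$ through $p$, namely the fiber over $p$; since $\Supp(B+D)$ is $\bG_m$-invariant, $B+D=C+aE$ near $p$ with $a={\rm coeff}_E(B+D)$. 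Proposition \ref{p:toricsingularity} gives that $(X,C+E)$ is lc at $p$, so $(X,C+aE)$ is plt at $p$ for $a<1$; inversion of adjunction then converts your non-klt hypothesis into $a=1$.

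Once $E$ is produced this way, your second step becomes unnecessary: $E$ is a fiber of the Seifert bundle, on which $\bG_m$ acts with nonzero weight, so it is automatically not contained in the fixed locus. Your tangent-weight argument for ruling out a pointwise-fixed $E$ is salvageable in principle --- for instance, the fixed locus of a $\bG_m$-action on a normal surface is normal by \cite[1.1 and 1.2]{FZ03}, so it cannot contain two distinct curves crossing at $p$ unless the action is trivial near $p$ --- and you correctly flag the embedding-dimension subtlety at singular $p$. But the cleaner route is the one above, in which non-fixedness is built into the construction of $E$ rather than proved afterwards by contradiction.
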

		
		\begin{proof}
			After replacing the $\bG_m$-action by the action by $\bG_m/H$, where $H:= \ker(\bG_m \to \Aut(X))$, we may assume that the $\bG_m$-action is effective.
			After possibly replacing the $\bG_m$-action with its inverse, we may assume that $C$ is in the attracting locus.
			Since $X$ is a normal variety with a $\bG_m$-action, a classical theorem of Sumihiro implies that $X$ is covered by $\bG_m$-invariant affine open sets. 
			Thus there exists a $\bG_m$-invariant affine open set $p\in U \subset X$.
			Since $U$ is normal and admits a non-trivial $\bG_m$-action fixing the points of $C\cap U $, \cite[Theorem 3.2]{FZ03} implies that there exists a $\bQ$-divisor $L$ on $ C\cap U$ and $\bG_m$-equivariant isomorphism
			$
			U \simeq \overline{Y}^a_L
			$.
			Since  the natural map $\overline{Y}^a_L\to C \cap U$ is $\bG_m$-equivariant and has reduced fibers isomorphic to $\bA^1$, there is a unique $\bG_m$-invariant curve $E\subset X$ that is not equal to $C$ and contains $p$.
			By Proposition \ref{p:toricsingularity}, 
			$(X,C+E)$ is lc at $p$.
			Now, let $a:= {\rm coeff}_E(B+D)$.
			Since   $\Supp(B+D)$ is $\bG_m$-invariant, $B+D$ and $C+aE$ agree in a neighborhood of $p$. 
			Thus $(C,{\rm Diff}_C(aE))$ is not klt at $p$ by our assumption. 
			By inversion of adjunction,  $(X,C+aE)$ is not plt at $p$
			Therefore $a=1$ and so $E\subset \lfloor B+D\rfloor$.
		\end{proof}
		
		\subsubsection{Degenerations}
		
		We now analyze certain  weakly special degenerations of normal boundary polarized CY surface pairs.
		Since these results will  be used in Section \ref{ss:nonnormalpairs} to construct degenerations of non-normal boundary polarized CY pairs via gluing, it will be important to analyze the induced degenerations of  the curves with coefficient one in the boundary. 
		
		First, we analyze a degeneration induced by two curves in the log Fano boundary of a pair.
		
		\begin{proposition}\label{p:degentwocurves}
			If $(X,B+D)$ is an lc boundary polarized CY surface pair such that  $ \lfloor B \rfloor$ is the union of two rational curves $C_1 \cup C_2$, then there exists a weakly special degeneration
			\[
			(X,B+D) \rightsquigarrow (X_0,B_0+D_0)
			\]
			with the following properties:
			\begin{enumerate}
				\item The scheme $X_0$ is a union of two normal surfaces glued along a smooth rational curve.
				\item The  $\bG_m$-action on $X_0$ is non-trivial on each component.
				\item The induced test configurations of $(C_i,B_{C_i}+D_{C_i})$ is trivial  for $i=1,2$ . 
			\end{enumerate}
		\end{proposition}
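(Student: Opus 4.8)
The plan is to produce the desired degeneration by applying Proposition \ref{p:tcdivisorsonX} to the two divisorial valuations coming from $C_1$ and $C_2$, and then to analyze the central fibre of the resulting test configuration. By Proposition \ref{p:normallogFanoboundary}, $C_1$ and $C_2$ are smooth rational curves meeting at a single node $q$, and $X$ is normal. Since $C_i \subset \lfloor B\rfloor$ the coefficient of $C_i$ in $B$ is $1$, so $A_{X,B}(\ord_{C_1}) = A_{X,B}(\ord_{C_2}) = 0$; in particular these log discrepancies are equal. I would therefore invoke Proposition \ref{p:tcdivisorsonX} with $l = 2$, $v_1 := \ord_{C_1}$, $v_2 := \ord_{C_2}$, $c_1 = c_2 = 1$ and $E_i = C_i$ (distinct prime divisors in $\Supp(\lfloor B+D\rfloor)$), obtaining a test configuration $(\cX,\cB+\cD)$ of $(X,B+D)$, hence a weakly special degeneration $(X,B+D) \rightsquigarrow (\cX_0,\cB_0+\cD_0) =: (X_0,B_0+D_0)$, such that $\cX_0$ has exactly two irreducible components $\cX_0^1, \cX_0^2$, the $\bG_m$-action is non-trivial on each, and the induced degeneration of $C_1 \cup C_2$ is trivial.

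The second of these three properties is precisely $(2)$. For $(3)$, let $\cC_i$ denote the closure of $C_i \times (\bA^1\setminus 0)$ in $\cX$; since $\cX$ is normal and $\cB$ is the closure of $B\times(\bA^1\setminus 0)$, one has $\cC_i \subset \lfloor \cB\rfloor$, so $(\cC_i^n, \cB_{\cC_i^n}+\cD_{\cC_i^n})$ is the induced test configuration of $(C_i, B_{C_i}+D_{C_i})$ in the sense of Section \ref{sss:tcrestriction}. The triviality of the degeneration of $C_1 \cup C_2$ restricts to a $\bG_m$-equivariant isomorphism $\cC_i \simeq C_i \times \bA^1$ with trivial action on $C_i$, and the boundary divisors on $\cC_i$, being $\bG_m$-invariant and restricting to $B_{C_i}$, $D_{C_i}$ over $\bA^1\setminus 0$, are product divisors; hence this test configuration is trivial, which is $(3)$.

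The substance of the proof is $(1)$. Since $(\cX_0,\cB_0+\cD_0)$ is a boundary polarized CY surface pair, $\cX_0$ is demi-normal, and it is connected (a degeneration of the connected $X$), so $\Gamma := \cX_0^1 \cap \cX_0^2$ is a non-empty curve, pure of codimension one in each component, along whose generic point $\cX_0$ is nodal. I would first record that $C_i$ lies in $\cX_0^i$ and not in $\cX_0^{3-i}$: by the construction in Proposition \ref{p:tcdivisorsonX} the component $\cX_0^i$ is the one whose order-of-vanishing restricts on $K(X)$ to $\ord_{C_i}$, and the trivial degeneration places $C_i = \cC_i\cap\cX_0$ inside it. Next I would show each $\cX_0^i$ is normal. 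Near $\Gamma$ this is local: localizing the filtration $F^\la R_m = \{s : v_1(s),v_2(s)\ge\la\}$ at the limit of the node $q$ identifies $\cX$ étale-locally with a finite cyclic quotient of the ordinary double point $\{st = xy\}$, with $t$ the family parameter, whose fibre over $t=0$ is a finite quotient of two smooth planes meeting along a line; hence the components are normal and $\Gamma$ is smooth near $q$, and an analogous (in fact simple normal crossing) picture holds at the generic point of $\Gamma$. Away from $\Gamma$ each $\cX_0^i$ agrees locally with $\cX_0$, so is $S_2$, and is regular in codimension one because the normalization of $\cX_0^i$ together with its conductor is an lc CY surface pair and hence has only quotient singularities off its boundary. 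Thus $\cX_0^i$ is normal. Finally, by the normalization construction $\cX_0^i$ carries a normal boundary polarized CY pair whose reduced log Fano boundary contains both the conductor $\Gamma$ (which, being the double locus, is not a component of $\cB_0$) and the curve $C_i \simeq \bP^1$; by Proposition \ref{p:normallogFanoboundary} that reduced boundary is exactly two smooth rational curves meeting at a node, and $\Gamma$ is one of them, so $\Gamma \simeq \bP^1$. This yields $(1)$.

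I expect the main obstacle to be the normality statement in $(1)$, that is, controlling the singularities of $\cX_0$ along and near the conductor. The toric, conifold-type description of $\cX$ near the degenerate node is the crucial input, and combining it with the $S_2$ and codimension-one regularity arguments away from the conductor — and with the rigidity of $\Gamma$ forced by Proposition \ref{p:normallogFanoboundary} — is where the real work lies; extracting $(2)$ and $(3)$ from Proposition \ref{p:tcdivisorsonX} is routine.
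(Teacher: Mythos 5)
Your opening move and your treatment of (2) and (3) match the paper exactly: apply Proposition \ref{p:tcdivisorsonX} to $v_1=\ord_{C_1}$ and $v_2=\ord_{C_2}$ (both with $A_{X,B}(v_i)=0$) and read (2) and (3) off its conclusions. The divergence, and the problem, is in your proof of (1). You try to prove directly that each irreducible component $\cX_0^i$ of the central fibre is normal, via an \'etale-local ``cyclic quotient of $\{st=xy\}$'' model near the limit of the node $q=C_1\cap C_2$, plus an $S_2$ and $R_1$ argument elsewhere. Two things go wrong. First, the local model is asserted rather than proved: $q$ is a zero-dimensional lc centre of $(X,C_1+C_2)$, so $X$ may be a nontrivial cyclic quotient singularity there, and identifying $\cX$ with a quotient of the conifold requires the classification of such lc surface germs, which you do not invoke. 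Second, and more seriously, your $R_1$ claim for $\cX_0^i$ is a non sequitur: the normalization of $\cX_0^i$ being an lc CY pair says nothing about whether $\cX_0^i$ is glued to \emph{itself} along a curve of the double locus of $\cX_0$ disjoint from $\Gamma:=\cX_0^1\cap\cX_0^2$. At a generic double point of $\cX_0$ the component $\cX_0^i$ is regular only if the two branches lie on different components; your local analysis only inspects a neighbourhood of $q$, so self-gluing elsewhere --- which would make $\cX_0^i$ non-normal --- is never excluded.

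The paper closes this gap by running your final step first, on the normalization rather than on the components. Write $(\oX_0,\oG_0+\oB_0+\oD_0)=\sqcup_{i=1}^2(\oX_0^i,\oG_0^i+\oB_0^i+\oD_0^i)$ for the normalization. By Proposition \ref{p:normallogFanoboundary}, each $\lfloor\oG_0^i+\oB_0^i\rfloor$ is a union of at most two smooth rational curves. Since $X_0$ is connected with two components, each $\oG_0^i$ contains at least one curve; and the trivially degenerated curves $C_1,C_2$ give two further curves in $\lfloor\oB_0\rfloor$ (they are components of the Mumford divisor $B_0$, hence not contained in the conductor), necessarily distributed one per component, since three boundary curves on one component would violate the classification. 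So each $\oG_0^i$ is a single smooth rational curve, and connectedness forces the gluing involution to identify $\oG_0^1$ with $\oG_0^2$ rather than self-glue either one. This gives normality of the components and $\Gamma\simeq\bP^1$ simultaneously, with no local computation. I would replace your local-analytic argument for (1) with this.
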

		\begin{proof}
			Set $v_i = \ord_{C_i}$ for $i=1,2$
			and note that $A_{X,B}(v_i) = 0$. 
			By Proposition \ref{p:tcdivisorsonX} applied to $v_1$ and $v_2$, there exists a weakly special degeneration
			\[
			(X,B+D) \rightsquigarrow (X_0,B_0+D_0)
			\]
			such that $X_0$ is a union of two irreducible surfaces and that (2) and (3) both hold.  
			Write 
			\[
			(\oX_0,\oG_0 + \oB_0+ \oD_0) := \sqcup_{i=1}^2  (\oX_0^i,\oG_0^i + \oB_0^i+ \oD_0^i) 
			\]
			for the normalization of $(X_0,B_0+D_0)$ and its decomposition into irreducible pairs.
			Since  $X_0$ is connected and has two irreducible components, each $\oG_0^i$ must contain at least one  curve.
			Also note that the degeneration of $C_1\cup C_2$ in $X_0$ is the image of a union of two curves in $\lfloor \oB_0 \rfloor$.
			Since  $\lfloor \oG_0^i+\oB^i_0\rfloor $  is a union of one or two smooth rational curves by
			Proposition \ref{p:normallogFanoboundary}, 
			we conclude that each $\oG_0^i$ is a smooth rational curve and so (1) holds.
		\end{proof}
		
		Next, we analyze a degeneration induced by a curve intersecting the log Fano boundary.

		\begin{proposition}\label{p:degenbyE}
			Let $(X,B+D)$ be an lc boundary polarized CY surface pair with a non-trivial $\bG_m$-action, 
			$C\subset \lfloor B\rfloor$  an irreducible curve 
			whose points are $\bG_m$-fixed, and $E\subset \lfloor B+D-C \rfloor $  an irreducible curve that intersects $C$. 
			Then $E$ induces a weakly special degeneration
			\[
			(X,B+D) \rightsquigarrow (X_0,B_0+D_0)
			\]
			with the following properties:
			\begin{enumerate}
				\item The normalization of $(X_0,B_0+D_0)$ is an irreducible toric pair. 
				\item The induced degeneration of $E$ is trivial.
				\item The induced test configuration of $(C,B_C+D_C)$  is induced by the valuation $m\ord_p$, where $p:= C \cap E$  and $m$ is a positive integer. 
			\end{enumerate}
			Furthermore, if $E\subset \lfloor B \rfloor$, then $X_0$ is normal. 
		\end{proposition}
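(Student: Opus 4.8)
To prove the final assertion, set $p := C\cap E$. Since $E$ and $C$ are two \emph{distinct} prime divisors of $\lfloor B\rfloor$ (distinct because $\coeff_E(C)=0$), Proposition \ref{p:normallogFanoboundary} forces $\lfloor B\rfloor = C\cup E$ with $C\simeq E\simeq\bP^1$ meeting exactly at the node $p$; in particular $\coeff_E(B)=1$, so $A_{X,B}(\ord_E)=0$. Recall that the degeneration $(X,B+D)\rightsquigarrow(X_0,B_0+D_0)$ was produced by applying Proposition \ref{p:tcdivisorsonX} to the single valuation $v_1:=\ord_E$; since the offset $A_{X,B}(\ord_E)$ vanishes, Lemma \ref{l:tcfiltformula} shows that the defining filtration is the \emph{unshifted} $\ord_E$-adic filtration $F^\lambda R_m=\{s\in R_m:\ord_E(s)\ge\lambda\}$, so that $\cX=\Proj_{\bA^1}(\mathrm{Rees}(F))$ and $\cX_0=\Proj\big(\bigoplus_{m,\lambda}\gr_F^\lambda R_m\big)$ with no rounding entering the Rees algebra. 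The plan is to use this to show that the irreducible demi-normal surface $X_0=\cX_0$ is normal.

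First I would analyse $\cX_0$ over the smooth locus $U\subset X$ by revisiting the proof of Proposition \ref{p:tcdivisorsonX}(2): with $\cJ:=\cI_{U\cap E}$ one finds there the smooth $\bP^1$-bundle $Z:=\Proj_{U\cap E}\big({\rm Sym}(\cO_{U\cap E}\oplus\cJ/\cJ^2)\big)$ over $U\cap E\simeq\bP^1$ together with a $\bG_m$-equivariant birational map $Z\dashrightarrow\cX_0$. The key point — which I expect to be the main work — is that when the offset vanishes this birational map is in fact a proper birational \emph{morphism} $f:Z\to\cX_0$ with $f_*\cO_Z=\cO_{\cX_0}$ (equivalently, $\gr_{\ord_E}R$ is the section ring of a semiample divisor on $Z$ and $\cX_0$ is its ample model). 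Granting this, $\cX_0$ is normal over $U$, because if $f:Z\to W$ is a proper birational morphism with $f_*\cO_Z=\cO_W$ and $Z$ smooth then $f$ factors through the normalization of $W$ and hence $W$ is normal. Since the singular locus of $X$ is finite and its closure in $\cX$ meets $\cX_0$ in finitely many points, $\cX_0$ is then normal away from a finite set; as $\cX_0$ is demi-normal by Proposition \ref{p:familyslcpairsovercurve}, hence $S_2$, it is therefore $R_1$ and $S_2$, i.e.\ normal, so $X_0$ is normal.

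Thus the main obstacle is the claim that, for the vanishing offset, the birational map $Z\dashrightarrow\cX_0$ of the proof of Proposition \ref{p:tcdivisorsonX} is a morphism with connected fibres — that no hidden ``flip'' occurs in the passage from $Z$ to $\cX_0$ when there is no rounding. Should this prove delicate, an alternative I would pursue is to argue via the toric structure: by Proposition \ref{p:tcdivisorsonXextra} the induced $\bG_m^2$-action on $\cX_0$ has finite kernel (as $E$ meets the pointwise-$\bG_m$-fixed curve $C$ and the $\bG_m$-action is non-trivial, $E$ is not contained in the $\bG_m$-fixed locus of $X$), so by Proposition \ref{p:torusactionfinitekernel->toric} the normalization $\pi:\oX_0\to\cX_0$ has $\oX_0$ a projective toric surface whose toric boundary is $\oG_0+\oB_0+\oD_0$, where $\oG_0$ is the conductor; one then shows $\oG_0=0$ by tracking the torus-invariant curves $\pi^{-1}(\cE_0)$ and $\pi^{-1}(\cC_0)$ — with $\cE_0:=\cE\cap\cX_0\simeq E$ and $\cC_0:=\cC\cap\cX_0$ — and the adjacency in the fan of $\oX_0$ imposed by $\cE_0\cap\cC_0$, using that $\cX_0$ is smooth near that point (Proposition \ref{p:toricsingularity} together with the triviality of the degeneration of $E$) and that a nonzero conductor divisor cannot be contracted by $\pi$.
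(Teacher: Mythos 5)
Your write-up addresses only the final assertion of the proposition and takes (1)--(3) as given. Of these, (3) is the part that actually requires work: one must show that the degeneration $C_0$ of $C$ in $X_0$ is irreducible (the paper does this by noting that $C_0$ is fixed by the induced $\bG_m$-action, that its preimage $\oC_0$ in the normalization is connected by Proposition \ref{p:normallogFanoboundary}, and that the $\bG_m$-fixed locus of a normal surface is normal, hence $\oC_0$ is irreducible), and then compute the restricted filtration on $R(C,L\vert_C)$ via Proposition \ref{p:tcrestriction} and Lemma \ref{l:tcfiltformula} to conclude that $v_{\cC_0}=m\,\ord_p$. None of this appears in your proposal, so as a proof of the full statement it is incomplete.

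For the final assertion itself, your main route rests on a step you yourself flag as unproven: that the birational map $Z\dashrightarrow\cX_0$ from the proof of Proposition \ref{p:tcdivisorsonX} is a proper birational morphism with $f_*\cO_Z=\cO_{\cX_0}$, i.e.\ that no contraction or flip intervenes between the normal cone and the ample model. This is precisely the delicate point, and moreover $Z$ is only constructed over $U\times\bA^1$ for $U$ the smooth locus, so controlling $\cX_0$ away from the locus lying over $U$ requires a further argument you do not supply; the toric fallback is likewise only a sketch. The intended argument is far shorter and avoids all of this: since $E\subset\lfloor B\rfloor$, the divisor $\lfloor B\rfloor$ contains the two distinct curves $C$ and $E$, so $\lfloor B_0\rfloor$, and hence $\lfloor \oB_0\rfloor$, contains at least two irreducible curves (their degenerations cannot share a component, as that would produce a coefficient exceeding $1$). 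But $\oG_0+\oB_0$ is the log Fano boundary of the irreducible lc pair $(\oX_0,\oG_0+\oB_0+\oD_0)$, and Proposition \ref{p:normallogFanoboundary} caps $\lfloor\oG_0+\oB_0\rfloor$ at two irreducible curves; hence $\oG_0=0$ and $X_0$ is normal. You invoked exactly this proposition on $(X,B+D)$ at the start of your argument to see that $\lfloor B\rfloor=C\cup E$; applying it once more to the central fiber is all that is needed.
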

		
		\begin{proof}
			Fix an integer $r>0$ such that $L:=-r(K_X+B)$ is a Cartier divisor. 
			Let $F^\bullet$ denote the filtration of $R:= R(X,L)$ 
			defined by 
			\[
			F^\la R_m := \{ s\in R_m \, \vert\, \ord_E(s) \geq \la + mr A_{X,B}(\ord_E)\}
			.\]
			By
			Propositions \ref{p:tcdivisorsonX}, the filtration is finitely generated and induces a test configuration $(\cX,\cB+\cD)$ of $(X,B+D)$ such that  $\cX_0$ irreducible and (2) holds.
			Note that $E$ is not in the $\bG_m$-fixed locus of $X$ by the  proof of Proposition \ref{p:existsE}.
			Thus Propositions \ref{p:tcdivisorsonXextra} and \ref{p:torusactionfinitekernel->toric}
			imply that $(X_0,B_0+D_0):= (\cX_0,\cB_0+\cD_0)$ satisfies (1).
			
			Next, write $(\oX_0,\oG_0+\oB_0+\oD_0)$ for the normalization of $(X_0,B_0+D_0)$.
			Write $C_0 \subset \lfloor B_0 \rfloor $ for the degeneration of $C$ in $X_0$ and $\oC_0\subset \oX_0$ for its preimage under $\oX_0\to X_0$.
			We claim that $C_0$ is irreducible. 
			To see this, note that $\bG_m$-action on $X$ induces a fiberwise $\bG_m$-action on $\cX$ that fixes $C\times (\bA^1\setminus 0)$.
			Hence this $\bG_m$-action on $(X_0,B_0+D_0)$ fixes $C_0$ and so the induced $\bG_m$-action on $\oX_0$ fixes $\oC_0$. 
			Since $\oC_0 \subset \lfloor \oB_0+\oG_0 \rfloor$,  $\oC_0$ is connected by Proposition \ref{p:normallogFanoboundary}.
			Using that the $\bG_m$-fixed locus of a normal surface with a $\bG_m$-action is necessarily normal (see e.g. \cite[1.1 and 1.2]{FZ03}), $\oC_0$ must be normal. 
			Therefore $\oC_0$ is irreducible and so $C_0$ is irreducible. 
			
			Next, write $(\cC,\cB_{\cC}+\cD_{\cC})$ for the induced test configuration of $(C,B_C+D_C)$
			and $F_C^\bullet$ for the induced filtration of $R_C:= R(C,L\vert_C)$.
			Set $c:= A_{X,B}(E)$
			and note that
			\[
			F^{-mrc}R_m = R_m\quad \text{ and } \quad {\rm Bs} (F^{-mrc+1}R_m) = E
			\]
			for $m>0$ sufficiently divisible by the formula for $F^\bullet$.
			By Proposition \ref{p:tcrestriction}, we see
			\[
			F^{-mrc} R_{C,m}  = R_{C,m} \quad \text{and } \quad {\rm Bs}(F^{-mrc+1}R_{C_m} )= \{p\}
			\] 
			for all $m>0$ sufficiently divisible. 
			Using that $\cC_0$ is irreducible, we conclude using  Lemma \ref{l:tcfiltformula} 
			that $v_{\cC_0}= b  \ord_{p}$ for some integer $b>0$.
			Therefore (3) holds.
			
			Finally, assume that $E\subset \lfloor B\rfloor $. 
			Since $\lfloor B \rfloor $ contains at least two  irreduicble curves, $\lfloor B_0\rfloor$ contains  at least two irreducible curves and so does $\lfloor \oB_0 \rfloor$. 
			Using that  $ \oB_0+\oG_0$ is the log Fano boundary of $(\oX_0,\oB_0+\oG_0+\oD_0)$,  $\oG_0= 0$ by  Proposition \ref{p:normallogFanoboundary}.
			Therefore $X_0$ is normal.
		\end{proof}
		
		Combining the previous two propositions, we produce a useful two step degeneration.
		
		\begin{proposition}\label{p:twostep}
			If $(X,B+D)$ is an lc boundary polarized CY surface pair such that $\lfloor B \rfloor$ is a union of two rational curves intersecting at a point, then there exists two weakly special degenerations 
			\[
			(X,B+D) \rightsquigarrow (X_0,B_0+D_0) \rightsquigarrow (X'_0,B'_0+D'_0)
			\]
			and a positive integer $m$
			satisfying the following conditions: 
			\begin{enumerate}
				\item The first degeneration induces the trivial test configuration on the components $\lfloor B\rfloor$. 
				\item The second degeneration induces a non-trivial test configuration on $(C_0,B_{C_0}+D_{C_0})$ for each curve  $C_0\subset \lfloor B_0 \rfloor $ given by  $m \ord_{p_0}$, where $p_0 $ is the nodal point of $\lfloor B_0 \rfloor$.
				\item Each irreducible irreducible component of $(X'_0,B'_0+D'_0)$ are toric pairs.
			\end{enumerate}
		\end{proposition}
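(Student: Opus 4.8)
The plan is to produce the chain of degenerations in two steps: first apply Proposition~\ref{p:degentwocurves} to the two curves making up $\lfloor B\rfloor$, and then degenerate the two components of the normalization of the intermediate pair by (the argument of) Proposition~\ref{p:degenbyE} and reglue the families. So first I would apply Proposition~\ref{p:degentwocurves} to $C_1\cup C_2=\lfloor B\rfloor$, obtaining a weakly special degeneration $(X,B+D)\rightsquigarrow(X_0,B_0+D_0)$ satisfying conclusion (1), with $X_0=X_0^1\cup X_0^2$ two normal surfaces glued along a smooth rational curve $G_0$, with $\bG_m$ acting non-trivially on each $X_0^i$, and with the induced test configuration of each $(C_i,B_{C_i}+D_{C_i})$ trivial. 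From this I would extract the geometric picture needed later: triviality of the test configuration of $(C_i,B_{C_i}+D_{C_i})$ forces its central fibre — the preimage in the normalization $\oX_0=\sqcup_i\oX_0^i$ of the corresponding component of $\lfloor B_0\rfloor$ — to be $C_i\cong\bP^1$ with trivial $\bG_m$-action, so $\lfloor B_0\rfloor=C_{1,0}\cup C_{2,0}$ is a union of two smooth rational curves, each pointwise $\bG_m$-fixed; by connectedness of the non-klt locus of a log Fano pair these meet, necessarily at a single node $p_0$, and since they lie in different components of $X_0$ the node $p_0$ lies on $G_0$. Counting curves with Proposition~\ref{p:normallogFanoboundary} (each $\lfloor\oG_0^i+\oB_0^i\rfloor$ is at most two smooth rational curves, one being the conductor $\oG_0^i$) shows $\lfloor\oB_0^i\rfloor$ is a single curve $\oC_{i,0}$ in each normalized component $(\oX_0^i,\oG_0^i+\oB_0^i+\oD_0^i)$, meeting $\oG_0^i$ transversally at the single point $\oq_i$ over $p_0$, and that $\oG_0^i$ is not pointwise $\bG_m$-fixed (else the action would be trivial near $\oq_i$).

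Next I would degenerate each normalized component using the conductor. Since $\oG_0^i$ lies in the log Fano boundary with coefficient $1$, $A_{\oX_0^i,\oG_0^i+\oB_0^i}(\ord_{\oG_0^i})=0$, so for any $c_i\in\bZ_{>0}$ Proposition~\ref{p:tcdivisorsonX} applied to $v_i:=c_i\ord_{\oG_0^i}$ produces a test configuration of $(\oX_0^i,\oG_0^i+\oB_0^i+\oD_0^i)$ with irreducible central fibre on which the induced $\bG_m^2$-action has finite kernel (Proposition~\ref{p:tcdivisorsonXextra}, as $\oG_0^i$ is not $\bG_m$-fixed), hence a toric central fibre by Proposition~\ref{p:torusactionfinitekernel->toric}; moreover the degeneration of $\oG_0^i$, together with its different, is trivial, and the central fibre is normal since $\oG_0^i$ lies in the log Fano boundary (the ``Furthermore'' of Proposition~\ref{p:degenbyE}). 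Repeating the restriction argument of Proposition~\ref{p:degenbyE} with $C=\oC_{i,0}$, $E=\oG_0^i$ shows the induced test configuration of $(\oC_{i,0},{\rm Diff}_{\oC_{i,0}}(\cdot))$ is given by $c_im_i\ord_{\oq_i}$ for a positive integer $m_i$ independent of $c_i$ (the induced valuation scales linearly in $c_i$, since rescaling $v_i$ rescales the grading on the Rees algebra). I would then set $m:=\mathrm{lcm}(m_1,m_2)$ and $c_i:=m/m_i$, so that both induced test configurations are $m\ord_{\oq_i}$, the other conclusions being unaffected.

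Finally I would reglue. Over the conductor, both families $\ocX^{i,\prime}\to\bA^1$ restrict to $\oG_0^i\times\bA^1$ with trivial $\bG_m$-action on $\oG_0^i$ and with the same different, so the gluing involution $\tau$ of the conductor of $X_0$ extends to $\tau\times\mathrm{id}$ and Lemma~\ref{l:tcgluing} reglues the two families into a test configuration of $(X_0,B_0+D_0)$, whose central fibre $(X_0',B_0'+D_0')$ is the gluing of the toric pairs $(\oX_0^{i,\prime},\ldots)$ along the smooth rational curve $\oG_0$. Since this gluing identifies a curve in one component with a curve in the other and does not self-identify, each irreducible component of $X_0'$ is $S_2$ and regular in codimension $1$, hence normal, hence equal to $\oX_0^{i,\prime}$, a toric pair — this gives (3). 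The induced test configuration of a component $C_{i,0}\subset\lfloor B_0'\rfloor$ is that of its normalization $\oC_{i,0}\cong\bP^1$ computed above, namely $m\ord_{p_0}$ (identifying $\oq_i$ with its image $p_0$), which is non-trivial because any non-constant divisorial valuation on a polarized curve induces a non-trivial test configuration — this gives (2); and (1) is conclusion (3) of Proposition~\ref{p:degentwocurves}.

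The main obstacle I expect is the bookkeeping across Steps~2 and~3: arranging the component-wise degenerations to keep the conductors $\oG_0^i$ (with their differents) rigid, so that regluing is possible, while simultaneously using the rescalings $c_i$ to make a single integer $m$ govern the induced test configuration on both $\oC_{i,0}$; and then verifying that the reglued family is a genuine weakly special degeneration (slc central fibre with $\bQ$-Cartier ample polarizing boundary), which is exactly where Lemma~\ref{l:tcgluing} and Koll\'ar's demi-normal gluing theory enter.
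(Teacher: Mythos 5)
Your proposal is correct and follows essentially the same route as the paper: Proposition~\ref{p:degentwocurves} for the first degeneration, then Proposition~\ref{p:degenbyE} applied on each normalized component with $C=\oC^i_0$ and $E=\oG_0^i$ (you inline its proof with explicit scaling constants $c_i$, where the paper applies it as a black box and rescales the two test configurations afterward to equalize $m_1$ and $m_2$), and finally Lemma~\ref{l:tcgluing} to reglue along the trivially-degenerating conductor. The extra geometric bookkeeping you supply (locating the node $p_0$ on the gluing curve, normality of the components of $X_0'$) is consistent with, and slightly more detailed than, the paper's argument.
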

		
		\begin{proof}
			Write $\lfloor B\rfloor = C^1\cup C^2$. 
			Proposition \ref{p:degentwocurves} produces a weakly special degeneration 
			\[
			(X,B+D) \rightsquigarrow(X_0,B_0+D_0)
			\]
			such that the induced test configuration of each $C^i$ is trivial,  $X_0$ is a union of two normal surfaces glued along a smooth rational curve, and the $\bG_m$-action on each component is non-trivial.
			Write 
			\[
			(\oX_0,\oG_0 + \oB_0+ \oD_0) := \sqcup_{i=1}^2  (\oX_0^i,\oG_0^i + \oB_0^i+ \oD_0^i) 
			\]
			for the normalization of $(X_0,B_0+D_0)$ and $\oC^i_{0} \subset \lfloor \oB_0^i\rfloor$ for the preimage of the degeneration of $C^i$ on the normalization. 
			Since the $\bG_m$-action on $\oC^i_{0}$ is trivial and $\oG_0^i$ intersects $\oC^i_{0}$,
			Proposition \ref{p:degenbyE} applied with $E=\oG_0^i$ produces a weakly special degeneration 
			\begin{equation}\label{e:oXi}
				(\oX_0^i,\oG_0^i + \oB_0^i+ \oD_0^i) 
				\rightsquigarrow
				(\oX'^i_0,\oG'^i_0+\oB'^i_0+\oD'^i_0)
			\end{equation}
			such that the degeneration is a toric pair,the induced test configuration  of $\oG^i_{0}$ is trivial, and the  induced test configuration of $\oC^i_0$ is induced by $m_i \ord_{p_o}$, where $p_i = \oC^i_0 \cap \oG^i_0$.
			After possibly scaling the two test configuration, we may assume that $m:=m_1=m_2$.
			Now Lemma \ref{l:tcgluing} (which is stated in the next section) produces a weakly special degeneration
			\[
			(X, B_0+D_0)\rightsquigarrow(X'_0,B'_0+D'_0)
			\]
			satisfying (2) and (3).
		\end{proof}
		
		We analyze a final way to degenerate a boundary polarized CY pair. This time, we use two curves intersecting the log Fano boundary to induce the degeneration.
		
		\begin{proposition}\label{p:degeninvolution}
			Let $(X,B+D)$ be a boundary polarized CY pair admitting a non-trivial $\bG_m$-action that fixes the points of a curve $C\subset\lfloor B\rfloor$.
			If
			\[
			\lfloor B_C+D_C\rfloor = p_1+p_2
			\]
			for some points $p_1,p_2\in E$ and there is an involution $\tau: (C,B_C+D_C)$ that interchanges $p_1$ and $p_2$, then there exists a weakly special degeneration 
			\[
			(X,B+D) \rightsquigarrow
			(X_0,B_0+D_0)
			\]
			satisfying the following: 
			\begin{enumerate}
				\item The normalization of $(X_0,B_0+D_0)$ is a disjoint union of two toric pairs.
				
				\item The induced test configuration of $(C,B_C+D_C)$ is $\tau$-equivariant.
			\end{enumerate}
		\end{proposition}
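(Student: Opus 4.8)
The plan is to realize the required degeneration by applying Proposition \ref{p:tcdivisorsonX} to a pair of divisorial valuations attached to curves through $p_1$ and $p_2$, and then to exploit the symmetry exchanging these two curves to see that the induced test configuration on $C$ is $\tau$-equivariant.

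First I would replace $\bG_m$ by $\bG_m/\ker(\bG_m\to \Aut X)$ to assume the action is effective. Since $\lfloor B_C+D_C\rfloor = p_1+p_2$ and $B_C+D_C = {\rm Diff}_C(B+D-C)$, the pair $(C,{\rm Diff}_C(B+D-C))$ is not klt at $p_1$ and at $p_2$, so Proposition \ref{p:existsE}, applied with the curve $C$ and each point $p_i$, produces irreducible curves $E_1,E_2\subset \lfloor B+D-C\rfloor$ with $p_i\in E_i\cap C$ and with $E_i$ not contained in the $\bG_m$-fixed locus of $X$. From the local description in the proof of Proposition \ref{p:existsE} (near $p_i$ the divisor $B+D$ agrees with the reduced divisor $C+E_i$ and $(X,C+E_i)$ is lc), together with the fact that $\tau$ is an isomorphism of the marked pair $(C,B_C+D_C)$ interchanging $p_1$ and $p_2$, the local structures of $(X,B+D)$ at $p_1$ and at $p_2$ agree; in particular $A_{X,B}(\ord_{E_1})$ and $A_{X,B}(\ord_{E_2})$ have a common positive rational multiple, so after scaling I may fix positive integers $c_1,c_2$ with $c_1A_{X,B}(\ord_{E_1}) = c_2A_{X,B}(\ord_{E_2}) =: c$ and set $v_i := c_i\ord_{E_i}$.

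Next I would apply Proposition \ref{p:tcdivisorsonX} to $v_1,v_2$, obtaining a weakly special degeneration $(X,B+D)\rightsquigarrow (X_0,B_0+D_0)$ such that $\cX_0$ has exactly two irreducible components, the $\bG_m$-action is non-trivial on each, and the induced degeneration of $E_1\cup E_2$ is trivial. Since $\bG_m$ is connected the symmetry action on $(X,B+D)$ extends to the test configuration and commutes with the degeneration $\bG_m$, so $\cX$ carries a $\bG_m^2$-action; as the $E_i$ are not in the $\bG_m$-fixed locus, Proposition \ref{p:tcdivisorsonXextra} gives that the induced $\bG_m^2$-action on each irreducible component of $\cX_0$ has finite kernel. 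Passing to the normalization $\overline{X}_0 = \overline{\cX_0^1}\sqcup\overline{\cX_0^2}$ of $(X_0,B_0+D_0)$ — a boundary polarized CY pair with two components, each a normal surface with a $\bG_m^2$-action of finite kernel — Proposition \ref{p:torusactionfinitekernel->toric} endows each $\overline{\cX_0^i}$ with the structure of a toric pair, which is statement (1). For statement (2) I would restrict to $C$: since $C\simeq\bP^1$ by Proposition \ref{p:normallogFanoboundary}, $\tau$ acts on $R_C := R(C,L\vert_C)$ with $L:=-r(K_X+B)$, and by Proposition \ref{p:tcrestriction} the induced filtration on $R_C$ is the image of $F^\lambda R_m = \bigcap_{i=1,2}\{\,s : v_i(s)\geq \lambda+mrc\,\}$. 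Combining this with the base-locus computation in the proof of Proposition \ref{p:degenbyE} and Lemma \ref{l:tcfiltformula} applied to the test configuration of $C$, the induced test configuration of $(C,B_C+D_C)$ is defined by the filtration $\bigcap_{i=1,2}\{\,s : b\,\ord_{p_i}(s)\geq \lambda+mrA_{C,B_C}(\ord_{p_i})\,\}$ for a single positive integer $b$, with $A_{C,B_C}(\ord_{p_1}) = A_{C,B_C}(\ord_{p_2})$ because $\tau$ interchanges the two points; this filtration is stable under $\tau$, so $\tau$ extends to an automorphism of the corresponding Rees algebra and hence of the test configuration of $(C,B_C+D_C)$ commuting with $\bG_m$, which is (2).

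The main obstacle I anticipate is the bookkeeping needed to invoke Proposition \ref{p:tcdivisorsonX}: one must check carefully that $A_{X,B}(\ord_{E_1})$ and $A_{X,B}(\ord_{E_2})$ can indeed be matched after integral rescaling — equivalently that the local structure of $(X,B+D)$ at $p_1$ and $p_2$, including whether $E_i$ lies in $B$ or in $D$, is genuinely exchanged by $\tau$ — and then to propagate this symmetry through the restriction isomorphisms of Proposition \ref{p:tcrestriction} and the canonical adjunction isomorphisms so that the resulting filtration on $R_C$ is literally $\tau$-stable, not merely $\tau$-stable up to rescaling the weights.
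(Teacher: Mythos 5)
Your proposal is correct and follows essentially the same route as the paper: Proposition \ref{p:existsE} to produce $E_1,E_2$ through $p_1,p_2$, Proposition \ref{p:tcdivisorsonX} applied to rescaled valuations $c_i\ord_{E_i}$ with equal log discrepancy, Propositions \ref{p:tcdivisorsonXextra} and \ref{p:torusactionfinitekernel->toric} for the toric structure on the normalization, and Proposition \ref{p:tcrestriction} together with Lemma \ref{l:tcfiltformula} to identify the restricted filtration as $\bigcap_i\{s: (c/a)\ord_{p_i}(s)\geq\lambda+mrc\}$, which is visibly $\tau$-stable. The obstacle you flag resolves in one line: since $B_C=(1-a)(p_1+p_2)$ with $a>0$, the pair $(C,B_C)$ is klt at each $p_i$, so $(X,B)$ is plt there by inversion of adjunction and hence $A_{X,B}(\ord_{E_i})>0$; any two positive rationals then admit a common positive integer $c$ with $c/a_1,c/a_2\in\bZ$, and no further comparison of the local structures at $p_1$ and $p_2$ is needed.
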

		
		By an involution $\tau$ of $(C,B_C+D_C)$, we mean that $\tau$ is an involution of $C$  satisfying $\tau^*(B_C)=B_C$ and $\tau^*(D_C)=D_C$.
		
		\begin{proof}
			By Proposition \ref{p:existsE}, there exist  curves $E_1$ and $E_2$ in $\Supp(\lfloor B+D\rfloor) $ such that 
			$E_i$ is not in the $\bG_m$-fixed locus of $X$ and $p_i \in E_i$.
			Since   $\lfloor B_C+D_C\rfloor=p_1+p_2$  and $(C,B_C+D_C)$ is an lc CY curve pair, $C\simeq \bP^1$ and
			$
			B_C+D_C = p_1+p_2
			$.
			By our assumption on $\tau$, there exists $a\in (0,1]$ such that
			\[
			B_C = (1-a) (p_1+p_2) \quad \text{ and } \quad D_C= a (p_1+p_2)
			.\]
			Since $(C,B_C)$ is klt at $p_i$, $(X,B+D)$ is plt at $p_i$ and so $a_i:=A_{X,B}(\ord_{E_i})>0$. 
			
			To construct the test configuration,
			fix an integer $r>0$ such that $L:= -r(K_X+B)$ is a Cartier divisor.
			Fix an  integer $c>0$ such that $c_1:= c/a_i$ and $c_2:= c/ a_i$ are integers.
			Now set 
			\[
			v_1 = c_1 \ord_{E_1} \quad \text{ and } \quad v_2:=c_2 \ord_{E_2}
			,\]
			which satisfy 
			$
			A_{X,B}(v_1) = c = A_{X,B}(v_2)
			$.
			Proposition \ref{p:tcdivisorsonX}
			implies that the  filtration $F^\bullet$ of $R:=R(X,L)$ defined by 
			\[
			F^\la R_m :=  \cap_{i=1}^2\{ s\in R_m \, \vert\, v_i(s) \geq \la +mr c   \}
			\]
			is finitely generated and induces a  test configuration $(\cX,\cB+\cD)$  of $(X,B+D)$.
			Furthermore, $(X_0,B_0+D_0):=(\cX_0,\cB_0+\cD_0)$ satisfies (1) by Propositions \ref{p:tcdivisorsonXextra} and \ref{p:torusactionfinitekernel->toric}.
			
			To verify (2), 
			let $(\cC, \cB_\cC+\cD_\cC)$ denote the induced test configuration of $(C,B_C+D_C)$
			and write $F_C^\bullet$ for the corresponding filtration of $R_C := R(C,L\vert_C)$. 
			By the formula for $F^\bullet$,
			\[
			F^{-mrc}R_m = R_m \quad \text{ and } {\rm Bs}(F^{-mrc+1}R_m) = E_1\cup E_2.
			\]
			for $m>0$ sufficiently divisible.
			Thus, using Proposition \ref{p:tcrestriction}, we see that
			\begin{equation}\label{e:filtFC}
				F_C^{-mrc} R_{C,m}= R_{C,m} \quad \text{ and } \quad 
				{\rm Bs} (F_C^{-mrc+1} R_{C,m}) = \{ p_1, p_2\}
			\end{equation}
			for $m>0$ sufficiently divisble. 
			Now, note that $\cC_0$ has at most two irreducible, since $(\cC_0,\cB_{\cC_0})$ is an  slc log Fano curve pair. 
			Using Lemma  \ref{l:tcfiltformula} and \eqref{e:filtFC}, we see  that 
			\[
			F^\la_C R_{C,m} := \cap_{i=1}^{2} \{ s\in R_{C,m} \, \vert\, w_i(s) \geq \la+m rc\}
			,\]
			where  
			$w_i:= (c/a) \ord_{p_i}$.
			Since $\tau$-interchanges $w_1$ and $w_2$, the filtration $F_C^\bullet$ of $R_C$ is $\tau$-invariant. 
			Using that $\cC = \Proj ( {\rm Rees}(F_C))$, we conclude that  the fiberwise involution on $\cC\setminus \cC_0$ extends to an involution of $\cC$. Therefore (2) holds.
		\end{proof}

		\subsection{Not necessarily normal pairs}\label{ss:nonnormalpairs}
		
		\subsubsection{Classification}
		In order to produce weakly special degenerations of non-normal boundary polarized CY surface pairs, we need to analyze the geometry of their conductor divisors.

		\begin{proposition}\label{p:slcclassification}
			Let $(X,B+D)$ be a boundary polarized CY surface pair and write 
			\[
			(\oX,\oG+\oB+\oD):= \sqcup_{i=1}^r(\oX_i,\oG_i+\oB_i+\oD_i)
			\]
			for its normalization and decomposition into irreducible components. 
			For each $i$, either
			\begin{center}
				\textrm{(1) $ \oG_i = \emptyset$,\quad  (2) $ \oG_i = \bP^1$,\quad or (3) $\oG_i=\bP^1 \cup \bP^1$,}
			\end{center}
			where in (3) the curves meet at a node. 
			Furthermore, after possible reordering the components, one of the following holds:
			\begin{enumerate}
				\item[(A)] $X$ is normal.
				\item[(B)] $X$ is non-normal, irreducible, and $\oG$ satisfies (2). 
				\item[(C)] $X$ is a union of two normal surfaces with $\oG_1$ and $\oG_2$ both satisfying (2). 
				\item[(D)] $X$ is a union of $r\geq 1$ normal surfaces $\oX_1,\ldots, \oX_r$ such that each $\oG_i$ satisfies (3) and each $\oX_{i}$ is glued along a rational curve to $\oX_{i+1 \text{ mod } r} $.
				\item[(E)] $\oX$ is a union of $r\geq 1$ surfaces $\oX_1,\ldots, \oX_r$ such that
				$\oX_i$ is glued to  $\oX_{i+1}$ along a rational curve for $1\leq i\leq r-1$
				and $\oG_i$ satisfies (3) for $1< i < r$, while $\oG_1$ and $\oG_r$ satisfy either (2) or (3).
			\end{enumerate}
			Additionally, in case (E) with $r\leq 2$, we require that at least one of the $\oG_i$ satisfies (3) so that we are not in in cases (B) or (C).
		\end{proposition}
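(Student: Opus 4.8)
The plan is to establish the trichotomy for each conductor piece $\oG_i$ by applying Proposition~\ref{p:normallogFanoboundary} to the normalized components, and then to read off the global list (A)--(E) from the combinatorics of the gluing involution $\tau$. The one substantive input is the resulting bound that each $\oG_i$ has at most two irreducible components; this is exactly what forces the dual graph of the gluing to be a point, a path, or a cycle, so there is no branching to contend with.

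For the first assertion, note that each pair $(\oX_i,\oG_i+\oB_i+\oD_i)$ is a connected lc boundary polarized CY surface pair with $\oX_i$ normal and log Fano boundary $\oG_i+\oB_i$. Since $\oG_i$ is reduced and has no component in common with $\oB_i$, we have $\oG_i\le\lfloor\oG_i+\oB_i\rfloor$, and $\oG_i$ is a sum of whole irreducible components of the latter. By Proposition~\ref{p:normallogFanoboundary}, $\lfloor\oG_i+\oB_i\rfloor$ is empty, a single $\bP^1$, or a nodal union of two $\bP^1$'s; hence so is $\oG_i$, and in particular $\oG_i$ has at most two components, as claimed in (1)--(3).

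For the global structure, I would first dispose of case (A): if some $\oG_i$ is empty, then the conductor on $\oX_i$ vanishes, so $\oX_i$ maps isomorphically onto an open and closed subset of $X$; as $X$ is connected this forces $X\cong\oX_i$, so $X$ is normal, hence irreducible and $r=1$. So assume $X$ is non-normal; then every $\oG_i$ has one or two components. Because $\tau$ is generically fixed point free, it fixes no irreducible component of $\oG^n$ pointwise, so it permutes the components of $\oG^n$ in orbits of size one (a component preserved setwise, necessarily by a nontrivial involution of $\bP^1$) or size two. Let $\Gamma$ be the graph with vertex set $\{1,\dots,r\}$ having one edge $\{i,j\}$ (a loop if $i=j$) for each size-two orbit that pairs a component of $\oG_i$ with one of $\oG_j$, and no edge for a size-one orbit. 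Each edge incident to $i$, and each loop at $i$ counted twice, consumes one component of $\oG_i$, so every vertex of $\Gamma$ has valency at most two; and $X$ connected makes $\Gamma$ connected. Hence $\Gamma$ is a single vertex, a path, or a cycle.

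The remaining step is to match these three shapes with (B)--(E), which is essentially bookkeeping. A single vertex whose unique slot is used by a size-one orbit is case (B) ($\oG_1\cong\bP^1$); a single vertex whose nodal pair of conductor curves is either swapped by $\tau$ or self-glued componentwise is case (D) or (E) with $r=1$. If $\Gamma$ is a path $\oX_1-\cdots-\oX_r$ with $r\ge 2$, the two endpoints use one slot for their unique edge, so $\oG_1,\oG_r$ satisfy (2) or (3), while each interior vertex uses both slots for its two edges, so $\oG_i$ satisfies (3): this is case (E), which specializes to case (C) when $r=2$ and both endpoint conductors are a single $\bP^1$ --- the stipulation imposed on (E) for $r\le 2$ is precisely what keeps (E) disjoint from (B) and (C). If $\Gamma$ is a cycle (a single loop when $r=1$, a double edge when $r=2$), every vertex uses both slots for edges, so every $\oG_i$ satisfies (3) and the $\oX_i$ are glued cyclically: case (D). The only care needed in the whole argument is in this last matching --- keeping (D)/(E) and (C)/(E) disjoint and handling the small values $r\le 2$ --- since the genuine geometric content is confined to the two-component bound of the second paragraph, which is immediate from Proposition~\ref{p:normallogFanoboundary}.
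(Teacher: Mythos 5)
Your proposal is correct and takes essentially the same approach as the paper: the trichotomy for each $\oG_i$ follows by applying Proposition \ref{p:normallogFanoboundary} to the log Fano boundary $\oG_i+\oB_i$ of each normalized component, and cases (A)--(E) are then read off from the combinatorics of the generically fixed point free involution $\tau$ (each curve of $\oG$ is either self-glued or identified with a different curve). The paper compresses the combinatorial step into a single sentence; your dual-graph argument (valency at most two, connectedness, hence point/path/cycle) is a correct elaboration of exactly that step.
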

		
		See \cite[Theorem 5.5]{Hac04} for a similar statement when $B=0$.
		
		\begin{proof}
			Since $\oG_i +\oB_i$ is the log Fano boundary of  $(\oX_i,\oG_i+\oB_i+\oD_i)$, 
			Proposition \ref{p:normallogFanoboundary} immediately implies the first statement.
			The second statement uses that $X$ is the quotient of $\oX$ by a finite equivalence relation given by a generically fixed point free involution $\tau:\oG^n\to \oG^n$. 
			Note that the involution either identifies a curve  in $ \oG$ with a different curve or induces a self gluing of the curve.
			Therefore we get the above cases. (Note that, in case (E), when $\oG_1$ or $\oG_r$ satisfies (3), then one of the two curve in the support must be self glued.)
		\end{proof}
		
		\subsubsection{Degeneration}
		We are now ready to prove the main result of this section.
		
		\begin{theorem}\label{t:TypeIIItoricdegen}
			If $(X,B+D)$ is a Type III boundary polarized CY surface pair, then there exists a weakly special degeneration 
			\[
			(X,B+D) \rightsquigarrow(X_0,B_0+D_0)
			\]
			such that the normalization of $(X_0,B_0+D_0)$ is a union of toric pairs. 
		\end{theorem}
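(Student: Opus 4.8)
The plan is to argue by the classification of (not necessarily normal) boundary polarized CY surface pairs in Proposition~\ref{p:slcclassification}, to degenerate the irreducible components of the normalization to toric pairs one at a time, and then to glue the component‑wise degenerations back together. Write $(\oX,\oG+\oB+\oD)=\sqcup_{i=1}^r(\oX_i,\oG_i+\oB_i+\oD_i)$ for the normalization of $(X,B+D)$. Since $(X,B+D)$ has coregularity $0$, each normalized component $(\oX_i,\oG_i+\oB_i)$ has a zero‑dimensional lc center, so by Proposition~\ref{p:normallogFanoboundary} and adjunction either $\oG_i$ is already a union of two rational curves meeting at a node, or $\lfloor\oG_i+\oB_i\rfloor$ contains a rational curve $C_i$ such that $(C_i,{\rm Diff}_{C_i}(\oG_i+\oB_i+\oD_i-C_i))$ is not klt at some point. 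The first step is a preliminary weakly special degeneration $(X,B+D)\rightsquigarrow(X^{(1)},B^{(1)}+D^{(1)})$: applying Proposition~\ref{p:tcdivisorsonX} to the valuations $\ord_G$ as $G$ ranges over the irreducible curves of the conductors $\oG_i$ (each has log discrepancy $0$ for $(\oX_i,\oG_i+\oB_i)$), and then gluing by Lemma~\ref{l:tcgluing} as in the non‑normal case of the proof of Proposition~\ref{p:reg0}, one arranges that the normalization of $(X^{(1)},B^{(1)}+D^{(1)})$ carries a $\bG_m$‑action non‑trivial on each component with every conductor curve pointwise fixed. When $X$ is normal and $\lfloor B+D\rfloor\neq\emptyset$ one uses $\ord_{C_i}$ in place of the conductor valuations; when $X$ is normal with $\lfloor B+D\rfloor=\emptyset$ one first degenerates via an lc place realizing the coregularity, as in the klt case of the proof of Proposition~\ref{p:reg0}, which produces a non‑normal pair and lands us in one of the cases (B)--(E).

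Next, for each normalized component $Z$ of $X^{(1)}$ — now equipped with a non‑trivial $\bG_m$‑action fixing its conductor curves pointwise — I would degenerate $Z$ (with its boundary) to a toric pair. The mechanism is Proposition~\ref{p:existsE} combined with Propositions~\ref{p:tcdivisorsonXextra} and \ref{p:torusactionfinitekernel->toric}: at a non‑klt point of the adjunction pair on a curve $C$ appearing with coefficient one in the log Fano boundary of $Z$ (for instance the node of $\oG_i$, or a non‑klt point of a conductor‑adjacency different) one produces a curve $E$ in the boundary of $Z$ through that point which is not $\bG_m$‑fixed; degenerating $Z$ along $\ord_E$, together with a companion valuation when one needs the induced $\bG_m^2$‑action to have finite kernel on every component, gives a degeneration whose normalized components carry an effective torus action, hence are toric. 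Propositions~\ref{p:degenbyE}, \ref{p:twostep}, and \ref{p:degeninvolution} are the packaged forms of this that also record the induced test configuration on a chosen boundary curve $C$ as $m\,\ord_p$ at a distinguished point $p$, and in the self‑glued situations provide a $\tau$‑equivariant degeneration of $C$. The crucial structural fact is that the distinguished points on the conductor curves — the nodes and the non‑klt points of the conductor differents — are matched to one another by the gluing involution $\tau$, since $\tau$ identifies the two differents on each pair of glued conductor curves.

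Finally I would rescale the component‑wise degenerations (replacing the Cartier index, equivalently the integers $m$, by a common multiple) so that along every pair of glued conductor curves the two induced test configurations agree under $\tau$, use the $\tau$‑equivariance from Proposition~\ref{p:degeninvolution} along self‑glued conductor curves, and invoke Lemma~\ref{l:tcgluing} to assemble everything into a single weakly special degeneration $(X,B+D)\rightsquigarrow(X_0,B_0+D_0)$ whose normalization is a union of toric pairs. The main obstacle is exactly this assembly: one must choose the auxiliary curves $E$ on each component so that the induced test configurations on the conductor curves are supported at $\tau$‑compatible points and can be rescaled simultaneously to agree all the way around the chain (case (E)) or cycle (case (D)) of components, while keeping the degenerations $\tau$‑equivariant along the self‑glued curves. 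Making these choices consistently — rather than producing an isolated toric degeneration of each component — is precisely what forces the detailed bookkeeping built into Propositions~\ref{p:degenbyE}, \ref{p:twostep}, and \ref{p:degeninvolution}, and I expect verifying that the rescalings can be carried out globally to be the technical heart of the argument.
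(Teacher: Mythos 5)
Your proposal is correct and follows essentially the same route as the paper: reduce via the classification of Proposition \ref{p:slcclassification}, first degenerate along the conductor (or floor) valuations to obtain a $\bG_m$-action fixing the conductor curves, then degenerate each normalized component to a toric pair using Propositions \ref{p:degenbyE}, \ref{p:twostep}, and \ref{p:degeninvolution}, and finally rescale the weights $m_i$ and glue with Lemma \ref{l:tcgluing}, using $\tau$-compatibility of the distinguished points and $\tau$-equivariance along self-glued conductor curves. You also correctly identify the technical heart of the argument, namely arranging the componentwise test configurations consistently around the chain or cycle of components so that they descend through the gluing.
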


		Throughout the proof, we will repeatedly use the following two  lemmas sometimes without mention.
		The first lemma allows us to reduce  proving the theorem to constructing  a sequence of weakly special degenerations to a pair whose normalization is toric.
		The second lemma allows us to ``glue'' test configurations of the normalization.

		\begin{lem}\label{l:connecting}
			If there exist two  weakly special degenerations of boundary polarized CY pairs
			\[
			(X,B+D) \rightsquigarrow(X_0,B_0+D_0)
			\rightsquigarrow(X'_0,B'_0+D'_0)
			,\]	
			then there is a weakly special degeneration 
			\[
			(X,B+D)\rightsquigarrow(X'_0,B'_0+D'_0)
			.\]
		\end{lem}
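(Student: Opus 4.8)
The plan is to prove the lemma by composing the two test configurations at the level of finitely generated filtrations of section rings. Write $(\cX,\cB+\cD)\to\bA^1$ for a test configuration of $(X,B+D)$ with $(\cX_0,\cB_0+\cD_0)\simeq(X_0,B_0+D_0)$, and $(\cX',\cB'+\cD')\to\bA^1$ for one of $(X_0,B_0+D_0)$ with $(\cX'_0,\cB'_0+\cD'_0)\simeq(X'_0,B'_0+D'_0)$; both carry commuting $\bG_m$-actions, the second being automatically equivariant for the $\bG_m$-action on $X_0$ coming from the first by \cite{BABWILD}. Fix $r>0$ so that $L:=-r(K_X+B)$ and the analogous relatively ample Cartier divisors on $\cX,\cX'$ make sense, and set $R:=R(X,L)=\bigoplus_m R_m$. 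The first test configuration is induced by a finitely generated filtration $F^\bullet$ of $R$, and, after replacing $r$ by a multiple, $R_0:=R(X_0,L_0)\cong\gr_F R$ as graded rings, compatibly with the extra $\bZ$-grading coming from the $\bG_m$-action (cf. \cite{BHJ17} and \eqref{e:Rees}, \eqref{e:weightdecomponmcL}). Likewise the second test configuration is induced by a finitely generated, $\bZ$-graded (i.e.\ $\bG_m$-equivariant) filtration $G^\bullet$ of $R_0$.

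First I would form the concatenated filtration. For $(\la,\mu)\in\bZ^2$ let $\widetilde F^{(\la,\mu)}R_m$ be the preimage of $G^\mu\big(\gr_F^\la R_m\big)$ under the surjection $F^\la R_m\twoheadrightarrow\gr_F^\la R_m$. Because $\gr_F R$ is a graded ring on which $G$ is multiplicative, $\widetilde F$ is a decreasing, multiplicative $\bZ^2$-filtration of $R$ whose Rees algebra is finitely generated over $\bk[u,v]$ (the Rees algebras of $F$ and of $G$ being finitely generated). Then I would flatten it to a $\bZ$-filtration by choosing $N\gg0$ and setting $H^\nu R_m:=\sum_{N\la+\mu\ge\nu}\widetilde F^{(\la,\mu)}R_m$. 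For $N$ larger than a bound depending only on the finitely many generators of ${\rm Rees}(\widetilde F)$, one checks that $H^\bullet$ is finitely generated and that $\gr_H R\cong\gr_G\gr_F R\cong\gr_G R_0$ as graded rings.

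Next, $\cW:=\Proj_{\bA^1}{\rm Rees}(H)$ is a $\bG_m$-equivariant flat projective family over $\bA^1$ whose restriction over $\bA^1\setminus 0$ is the trivial family with fiber $X$ and whose special fiber is $\Proj(\gr_H R)\simeq\Proj(\gr_G R_0)\simeq X'_0$; equivalently, $\cW$ is the restriction to a suitable arc $t\mapsto(t,t^N)$ of the bi-parameter family $\Proj_{\bA^2}{\rm Rees}(\widetilde F)$, which degenerates $X\rightsquigarrow X_0\rightsquigarrow X'_0$. Letting $\cB_\cW$, $\cD_\cW$ be the closures of $B\times(\bA^1\setminus0)$, $D\times(\bA^1\setminus0)$ in $\cW$, the same matching of section modules shows $(\cB_\cW)_0=B'_0$ and $(\cD_\cW)_0=D'_0$. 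Since $(X'_0,B'_0+D'_0)$ is a boundary polarized CY pair by hypothesis and the general fibers are isomorphic to $(X,B+D)$, Proposition \ref{p:familyslcpairsovercurve} shows $(\cW,\cB_\cW+\cD_\cW)\to\bA^1$ is a family of slc pairs, while $\cD_\cW$ is $\bQ$-Cartier, relatively ample, with $K_{\cW/\bA^1}+\cB_\cW+\cD_\cW\sim_{\bQ,\bA^1}0$ by $\bG_m$-equivariance; hence $(\cW,\cB_\cW+\cD_\cW)$ is a test configuration of $(X,B+D)$ realizing the desired weakly special degeneration $(X,B+D)\rightsquigarrow(X'_0,B'_0+D'_0)$.

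The main obstacle is the finite-generation and associated-graded computation for the flattened filtration $H$ — verifying that for $N\gg0$ the weight $N\la+\mu$ collapses ${\rm Rees}(\widetilde F)$ to a finitely generated algebra with $\gr_H R=\gr_G\gr_F R$ — together with the bookkeeping needed to see that the divisorial parts $\cB_\cW,\cD_\cW$ specialize to $B'_0,D'_0$ rather than to some other boundary. Once the underlying polarized family is in hand, the pair-theoretic conclusions follow formally from the hypothesis that $(X'_0,B'_0+D'_0)$ is already a boundary polarized CY pair.
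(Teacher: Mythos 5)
Your argument is correct, but it takes a genuinely different route from the paper's. The paper disposes of this lemma in one line by invoking the argument of \cite[Lemma 6.10]{BABWILD}: a weakly special degeneration $(X,B+D)\rightsquigarrow(X_0,B_0+D_0)$ is equivalent to the specialization $[(X_0,B_0+D_0)]\in\overline{\{[(X,B+D)]\}}$ of points in the moduli stack $\cM(\chi,N,\bfa,c)$, so the composite degeneration follows from transitivity of specialization in the stack together with the (nontrivial) converse direction of that closure characterization. Your proof instead performs the composition directly at the level of section rings: you use equivariance of the second test configuration to get a $\bZ$-graded filtration $G^\bullet$ of $\gr_F R$, form the multiplicative $\bZ^2$-filtration $\widetilde F$, and flatten along the weight $N\la+\mu$ for $N\gg0$ so that $\gr_H R\cong\gr_G\gr_F R$. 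This is the standard two-step degeneration device from the K-stability literature, and all the steps you sketch do go through. What the stack-theoretic route buys is that one never has to verify finite generation of the flattened filtration or track the boundary divisors explicitly — both are absorbed into the already-established correspondence between degenerations and closures of points, which in turn rests on $\Theta$-reductivity/S-completeness of the stack (Theorem \ref{t:stackisScomp+Thetared}). What your route buys is self-containedness and an explicit model $\cW=\Proj_{\bA^1}{\rm Rees}(H)$ of the composed test configuration. The two places in your write-up that genuinely require the care you flag are (i) the finite generation of $H^\bullet$ and the isomorphism $\gr_HR\cong\gr_G\gr_FR$ for $N\gg0$, which is a known lemma (it appears, e.g., in the reductivity arguments of \cite{ABHLX20}), and (ii) the identification $(\cB_\cW)_0=B'_0$, $(\cD_\cW)_0=D'_0$, which is cleanest via the $\bA^2$-family $\Proj_{\bA^2}{\rm Rees}(\widetilde F)$ and the observation that for $N\gg0$ the flat limit along the arc $t\mapsto(t,t^N)$ computes the iterated flat limit of the Mumford divisors; once those are in place, Proposition \ref{p:familyslcpairsovercurve} and the hypothesis that $(X'_0,B'_0+D'_0)$ is itself a boundary polarized CY pair finish the argument exactly as you say.
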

		
		\begin{proof}
			The statement follows from the argument in \cite[Proof of Lemma 6.10]{BABWILD}.
		\end{proof}

		\begin{lem}\label{l:tcgluing}
			Let $(X,B+D)$ be  a boundary polarized CY pair and  $(\ocX,\ocG+\ocB+\ocD)$ be a test configuration of the normalization $(\oX,\oG+\oB+\oD)$.
			
			If the induced test configuration of $(\oG^n, B_{\oG^n} + D_{\oG^n})$ is equivariant with respect to the involution of $\oG^n$, then there exists a test configuration
			$
			(\cX,\cB+\cD)
			$
			of $(X,B+D)$ whose normalization is $(\ocX,\ocG+\ocB+\ocD)$.
		\end{lem}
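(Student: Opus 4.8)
The plan is to recover $\cX$ as the gluing of $\ocX$ along a $\bG_m$-equivariant involution extending the gluing involution of $(X,B+D)$, using Koll\'ar's theory of quotients by finite equivalence relations \cite[Section 5.1]{Kol13}. Recall that the triple $(\oX,\oG+\oB+\oD,\tau)$, where $\tau\colon \oG^n\to\oG^n$ is the generically fixed point free gluing involution, recovers $(X,B+D)$ as the quotient $\oX/R(\tau)$ by the finite equivalence relation generated by the pre-relation $\oG^n\xrightarrow{(\iota,\iota\circ\tau)}\oX\times\oX$ with $\iota\colon\oG^n\to\oG\hookrightarrow\oX$, and $\tau$ fixes ${\rm Diff}_{\oG^n}(\oB+\oD)$. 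First I would unpack the hypothesis: since the induced test configuration of $(\oG^n,B_{\oG^n}+D_{\oG^n})$ is equivariant with respect to $\tau$, the fiberwise involution $\tau\times\id$ on $\oG^n\times(\bA^1\setminus 0)$ extends to an automorphism $\tilde\tau$ of $\ocG^n$ over $\bA^1$ commuting with the $\bG_m$-action. Because $\ocG^n$ is normal and $\tilde\tau$ agrees with $\tau\times\id$ on the dense open locus over $\bA^1\setminus0$, it follows that $\tilde\tau$ is itself a generically fixed point free involution and that it fixes the different ${\rm Diff}_{\ocG^n/\bA^1}(\ocB+\ocD)$.

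Next I would form the $\bG_m$-equivariant pre-relation $R(\tilde\tau)\rightrightarrows\ocX$ over $\bA^1$, namely $\ocG^n\xrightarrow{(\iota,\iota\circ\tilde\tau)}\ocX\times\ocX$ with $\iota\colon\ocG^n\to\ocG\hookrightarrow\ocX$, and argue that it generates a finite equivalence relation. Over $\bA^1\setminus0$ this is $R(\tau)\times(\bA^1\setminus0)$, which has finite equivalence classes because $R(\tau)$ does; the remaining content is concentrated on the surface $\ocX_0$ and can be disposed of either by the base change/explicit dimension argument used in the proof of Theorem \ref{t:TypeIIfieldofdef} or directly from \cite[Definition 5.31]{Kol13} and the discussion around it. Granting this, \cite[Corollary 5.33]{Kol13} produces the geometric quotient $\cX:=\ocX/R(\tilde\tau)$ together with a finite morphism $\nu\colon\ocX\to\cX$ that is the normalization of $\cX$, has conductor $\ocG$, and glues via $\tilde\tau$. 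The scheme $\cX$ inherits a $\bG_m$-action, and the morphism $\ocX\to\bA^1$ descends to a $\bG_m$-equivariant morphism $\cX\to\bA^1$ (it is constant on $R(\tilde\tau)$-classes), which restricts to $X\times(\bA^1\setminus0)\to\bA^1\setminus0$ over $\bA^1\setminus0$. Put $\cB:=\nu_*\ocB$ and $\cD:=\nu_*\ocD$.

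It then remains to check that $(\cX,\cB+\cD)\to\bA^1$ is a family of boundary polarized CY pairs, which by definition makes it a test configuration of $(X,B+D)$ with the stated normalization $(\ocX,\ocG+\ocB+\ocD)$. Flatness of $\cX\to\bA^1$ holds because $\cX$ is reduced with no irreducible component contained in a fiber (as $\nu$ is finite surjective and $\ocX\to\bA^1$ is flat) and $\bA^1$ is a smooth curve. Applying Koll\'ar's gluing to the enlarged boundary $(\ocX,\ocG+\ocB+\ocD+\ocX_0)$, which is lc and for which $\tilde\tau$ still preserves the relevant different (being a morphism over $\bA^1$, it preserves $\ocG^n\cap\ocX_0$), yields that $(\cX,\cB+\cD+\cX_0)$ is slc; since the analogous statement holds over $\bA^1\setminus0$, Proposition \ref{p:familyslcpairsovercurve} gives that $(\cX,\cB+\cD)\to\bA^1$ is a family of slc pairs. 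Finally $\nu^*(K_{\cX/\bA^1}+\cB+\cD)=K_{\ocX/\bA^1}+\ocG+\ocB+\ocD\sim_{\bQ,\bA^1}0$ and $\nu^*\cD=\ocD$ is relatively ample, from which one deduces that $\cD$ is $\bQ$-Cartier and relatively ample and $K_{\cX/\bA^1}+\cB+\cD\sim_{\bQ,\bA^1}0$ (here one uses, besides ampleness descending along the finite surjection $\nu$, that $\cX$ is connected since $\cD_0$ is ample, and that a $\bQ$-Cartier divisor on the normal variety $\ocX$ which is $\bQ$-linearly trivial and supported on the fiber $\ocX_0$ is a multiple of $\ocX_0$, together with the product structure over $\bA^1\setminus0$). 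The genuinely delicate point I expect is the verification that $R(\tilde\tau)$ generates a finite equivalence relation, so that the quotient $\cX$ exists over the base $\bA^1$ rather than over a field, and the parallel fact that the glued special fiber is slc; both ultimately reduce, via the product structure over $\bA^1\setminus0$, to the surface-gluing statements already used elsewhere in the paper.
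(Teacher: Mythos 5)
Your argument is correct, but the paper's own ``proof'' of Lemma \ref{l:tcgluing} is a one-line citation of \cite[Proposition 2.12]{BABWILD}; what you have written is essentially a reconstruction of that cited result via Koll\'ar's theory of quotients by finite lc equivalence relations, which is indeed how it is proved there. The key points you isolate are the right ones: the $\tau$-equivariance hypothesis is exactly what lets the gluing involution extend over $\bA^1$, finiteness of the equivalence relation and slc-ness of the quotient are obtained by applying \cite[Theorems 5.36--5.38 and Corollary 5.33]{Kol13} to the lc total space $(\ocX,\ocG+\ocB+\ocD+\ocX_0)$ rather than fiberwise, and the CY/ampleness conditions descend along the finite normalization map together with Proposition \ref{p:familyslcpairsovercurve}. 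So the proposal is a valid, self-contained substitute for the citation.
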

		
		\begin{proof}
			The statement follows from  \cite[Proposition 2.12]{BABWILD}.
		\end{proof}

		\begin{proof}[Proof of Theorem \ref{t:TypeIIItoricdegen}]
			We will prove the result by analyzing Cases (A)--(E) of the possible geometry of $(X,B+D)$ listed in Proposition \ref{p:slcclassification}.
			In some cases, we reduce the result to a later cases via  applying a weakly special degeneration.	
			\medskip 
			
			\noindent \emph{Case (A)}.
			Since $X$ is normal and $(X,B+D)$ Type III, \cite[Proposition 8.17]{BABWILD} implies that there exists a non-trivial weakly special degeneration 
			\[
			(X,B+D)\rightsquigarrow (X_0,B_0+D_0)
			\]
			such $X_0$ is irreducible and $\lfloor B_0+D_0\rfloor = B_0+D_0$.
			by the non-triviality of the degeneration, there is a $\bG_m$-action on $\lfloor B_0+D_0 \rfloor$
			If $X_0$ is non-normal, then we can use Lemma \ref{l:tcgluing} to reduce to prove the statement in cases (B)--(E). 
			If $X_0$ is normal, then we claim that there is a curve $E\subset \lfloor B_0+D_0\rfloor $ not in the $\bG_m$-fixed locus. 
			To see the claim, fix a curve $C\subset \lfloor B_0+D_0\rfloor$.
			Since $(X,B+D)$ is Type III, $(C,{\rm Diff}_C(B+D-C))$ is not klt. 
			Thus Proposition \ref{p:existsE} implies that either $C=E$ or a curve $E$ intersecting $C$ satisfies the claim. 
			Now Proposition \ref{p:tcdivisorsonX}  and  \ref{p:tcdivisorsonXextra}
			applied to $v_1 = \ord_E$ produces a weakly special degeneration of 
			\[
			(X_0,B_0+D_0) \rightsquigarrow (X'_0,B'_0+D'_0)
			\]
			such that $X'_0$ is irreducible and the $\bG_m^2$ acts on $X'_0$ has finite kernel. Therefore the normalization of $(X'_0,B'_0+D'_0)$ is toric by Proposition \ref{p:torusactionfinitekernel->toric}.
			\medskip 
			
			Before  proceeding to the next case, we make a reduction.
			We claim that if $(X,B+D)$ satisfies (B) or (C) of Proposition \ref{p:slcclassification}, then it admits a weakly special degeneration to a pair 
			\[
			(X,B+D)\rightsquigarrow (X_0,B_0+D_0)
			\]
			such that one of the following holds:
			\begin{enumerate}
				\item  The pair $(X_0,B_0+D_0)$ satisfies  (B) or (C) and admits an  $\bG_m$-action that is non-trivial on each component and fixes the conductor divisor.
				\item The pair $(X_0,B_0+D_0)$ satisfies (D) or (E). 
			\end{enumerate} 
			Assuming the claim, to prove the theorem it suffices to verify cases (B) and (C) when (1) holds and then prove cases (D) and (E).
			
			To see the latter claim holds, let
			$
			(\oX,\oG+\oB+\oD):= \sqcup_{i=1}^r 
			(\oX_i,\oG_i+\oB_i+\oD_i)
			$
			denote the normalization of  $(X,B+D)$.
			By Proposition \ref{p:tcdivisorsonX} applied to the valuation $\ord_{\overline{G}_i}$ on $\oX_i$ and Lemma \ref{l:tcgluing}, there exists a weakly  special degeneration 
			$
			(X,B+D) \rightsquigarrow(X_0,B_0+D_0)
			$
			such that the $\bG_m$-action on $X_0$ acts non-trivially on each component and fixes the degeneration of $\overline{G}$. 
			Thus, if $(X_0,B_0+D_0)$ satisfies (B) or (C), then (1) holds. 
			If not, then (2) holds and the proof of the claim is complete.
			\medskip 
			
			\noindent \emph{Case (B)}. By the above discussion, we may  assume that (1) holds.
			Now consider the involution $\tau$ on  the lc CY curve pair $(\oG, B_{\oG}+D_{\oG})$. 
			By the Type III assumption, $(\oG, B_{\oG}+D_{\oG})$ is not klt and so  $\lfloor B_{\oG}+D_{\oG} \rfloor $ is the sum of two points or a single point. 
			If there are two points and $\tau$ interchanges them, then Proposition \ref{p:degeninvolution} and Lemma \ref{l:tcgluing} imply that there exists  a weakly special degeneration 
			\[
			(X,B+D)\rightsquigarrow(X_0,B_0+D_0)
			\]
			such that the normalization of the later pair is toric. 
			
			If not, then $\tau$ fixes the points of $\lfloor B_{\oG}+D_{\oG} \rfloor$. 
			Choose an arbitrary  point $p \in \Supp( \lfloor B_{\oG}+D_{\oG}\rfloor )$. By Proposition \ref{p:existsE}, there exists a curve $E\subset \lfloor \oB+\oG \rfloor$ such that $p \in E\cap \oG$.
			By  Proposition \ref{p:degenbyE} and Lemma \ref{l:tcgluing}, $\ord_E$ induces a weakly special degeneration
			\[
			(\oX,\oG+\oB+\oD)\rightsquigarrow(\oX_0,\oG_0+\oB_0+\oD_0)
			\]
			such that the latter pairs normalization is toric.
			
			We claim that induced test configuration of $(\oG,B_{\oG}+D_{\oG})$ is equivariant with respect to the involution $\tau:\oG\to \oG$. 
			Indeed,  the  filtration of the section ring of $(\oG,B_{\oG}+D_{\oG})$ induced by the test configuration is given by the formula in Lemma \ref{l:tcfiltformula}, where the valuations are of the form  $v_1= c_1 \ord_{p_1},\ldots,v_r= c_r \ord_{p_r}$ and the $p_i$ are lc places of $(\oG,\oB_{\oG}+D_{\oG})$.
			Thus $\tau$ fixes each $v_i$ and hence fixes the filtration. 
			Since the test configuration is the $\Proj$ of the Rees algebra of the filtration, the test configuration is $\tau$-equivariant.
			Thus  Lemma \ref{l:tcgluing} produces a weakly special degeneration 
			\[
			(X,B+D) \rightsquigarrow (X_0,B_0+D_0)
			\]
			to a pair whose normalizaiton is toric.
			\medskip 
			
			\noindent \emph{Case (C)}. As above, we may assume that (1) holds.
			Since $(X,B+D)$ is Type III, $(\oG_i, B_{\oG_i}+D_{\oG_i})$ is an lc CY curve pair that is not klt. 
			Thus we may choose points 
			\[
			p_1 \in \lfloor B_{\oG_1}+ D_{\oG_1}\rfloor
			\quad \text{ and } \quad
			p_2 \in \lfloor B_{\oG_2}+ D_{\oG_2}\rfloor
			\]
			such that $p_1 = \tau (p_2)$, where $\tau$ is the involution of $\oG$.
			By Proposition \ref{p:existsE}, there exist curves $E_i\subset \lfloor \oB_i +\oD_i \rfloor $ such that $p_i \in E_i \cap \oG_i$. 
			Then Proposition \ref{p:degenbyE} produces a weakly special degeneration
			\[
			(\oX_i,\oG_i+\oB_i+\oD_i) \rightsquigarrow (\oX_{i0},\oG_{i0}+\oB_{i0}+\oD_{i0})
			\]
			for $i=1,2$ 
			such that the normalization of the pair on the right is toric and the induced test configurations of $\oG_i$ are induced by a  $m_i\ord_{p_i}$ for some positive integer $m_i$. 
			After scaling these two test configurations, we may assume that $m_1=m_2$.
			Thus Lemma \ref{l:tcgluing} produces  a weakly special degeneration 
			\[
			(X,B+D)\rightsquigarrow(X_0,B_0+D_0),
			\]
			where the normalization of the latter pair is toric.
			\medskip 
			
			\noindent \emph{Cases (D) and (E)}.
			We claim that  after possibly replacing the pair with a weakly special degeneration, we may assume that $\lfloor \oG_i+\oB_i\rfloor$ is a union of two  curves for all  $1\leq i \leq r$. 
			Indeed, the statement holds in Case (D). If we are in Case (E), then the statement holds for $1<i < r$. 
			If the statement does not hold for $i=1$, let $C_1\subset \oG_1$ be the curve that gets glued to a curve  $C_2 \subset \oG_2$. 
			Now  consider  the weakly special degenerations  
			\[
			(\oX_i , \oG_i +\oB_i+\oD_i ) \rightsquigarrow (\oX_{i0} , \oG_{i0} +\oB_{i0}+\oD_{i0} ) 
			\]
			produced by Proposition \ref{p:tcdivisorsonX} applied to $\ord_{C_1}$ for $i=1$ and the trivial test configuration for $1<i\leq r$.
			By Proposition \ref{p:tcdivisorsonX},  the induced $\bG_m$-action on $\oX_{10}$ is non-trivial and induces the trivial test configuration on $C_{1}$.  
			By Lemma \ref{l:tcgluing},  the above weakly special degenerations glue to give a weakly special degeneration
			\[
			(X,B+D)\rightsquigarrow(X_0,B_0+D_0)
			.\]
			So after replacing $(X,B+D)$ with $(X_0,B_0+D_0)$, we may assume that $(\oX_1,\oB_1+\oG_1+\oD_1)$ admits a $\bG_m$-action that fixes $C_1$. 
			Since $\lfloor \oG_{2} +\oB_{2}\rfloor$ is the union of two smooth  curves meeting at a point 
			\[
			\lfloor {\rm Diff}_{C_{2}}(\oG_{2}+\oB_{2}-C_{2})\rfloor \neq \emptyset 
			.
			\]
			Using that the involution $\tau:\oG^n \to \oG^n$ sends ${\rm Diff}_{\oG^n}( \oB)$ to itself,
			we see 
			\[
			\lfloor {\rm Diff}_{C_{1}}(\oG_{1}+\oB_{1}-C_{1})\rfloor \neq 0
			\]
			and so Proposition \ref{p:existsE} implies that there exists a curve $E \subset \lfloor G_1+B_1-C_1\rfloor$. Thus $\lfloor G_1+B_1\rfloor $ is a union of two curves as desired. Arguing similarly for $i=r$ proves the claim.

			Now, using that  each $\lfloor \oG_i + \oB_i\rfloor$ is a union of two   curves  for all $1\leq i \leq r$,
			Proposition \ref{p:twostep} implies that each component of the normalization of $(X,B+D)$ admits a sequence of weakly special degenerations
			\[
			(\oX_i,\oG_i+\oB_i+\oD_i) 
			\rightsquigarrow
			(\oX_{i0},\oG_{i0}+\oB_{i0}+\oD_{i0})
			\rightsquigarrow
			(\oX'_{i0},\oG'_{i0}+\oB'_{i0}+\oD'_{i0})
			\]
			such that 
			\begin{enumerate}
				\item[(i)] the normalization of the last pair is toric, 
				\item[(ii)] the induced test configuration on each component of   $\lfloor \oG_i+\oB_i \rfloor $ is trivial, and 
				\item[(iii)] if  we write  $\lfloor \oG_{i0}+\oB_{i0} \rfloor=C_{i0}^1\cup C_{i0}^2$, then the   induced test configuration of $C_{i0}^j$ is given by  $m_i \ord_{p_{i0}}$, where   $m_{i} $ is a positive integer   and $p_{i0} = C_{i0}^1 \cap C_{i0}^2$.
			\end{enumerate}
			After scaling the test configuration, we may assume that the $m_i$ are equal.
			By Lemma \ref{l:tcgluing},  the sequence of weakly special degenerations of the normalization descend to give  weakly special degenerations
			\[
			(X,B+D)
			\rightsquigarrow
			(X_0,B_0+D_0)
			\rightsquigarrow
			(X'_0,B'_0+D'_0)
			\]
			such that the normalization of $(X'_0,B'_0+D'_0)$ is toric. 
			Thus the result holds in this case by Lemma \ref{l:connecting}.
		\end{proof}
		
		\subsection{Fields of definition}
		Using the previous toric degeneration result, we prove a statement that will be used in Section \ref{s:asgm} to show that the Type III locus in the CY moduli space is discrete.
		
		\begin{corollary}\label{c:TypeIIIvariation}
			If $\bk'\subset \bk$ is an extension of algebraically closed fields and $(X,B+D)$ is a Type III boundary polarized CY surface pair defined over $\bk$, then there exists a weakly special degeneration 
			\[
			(X,B+D) \rightsquigarrow (X_0,B_0+D_0)
			\]
			such that $(X_0,B_0+D_0)$ is the base change of a boundary polarized CY pair defined over $\bk'$.
		\end{corollary}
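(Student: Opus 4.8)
The plan is to deduce the statement from the toric degeneration result, Theorem~\ref{t:TypeIIItoricdegen}, together with a descent argument for the gluing data that runs parallel to the non-normal case of Theorem~\ref{t:TypeIIfieldofdef}; the key simplification over that theorem is that the source here is a point, hence automatically defined over $\bk'$, and moreover we are allowed to first pass to a weakly special degeneration.

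First I would apply Theorem~\ref{t:TypeIIItoricdegen} to produce a weakly special degeneration
\[
(X,B+D)\rightsquigarrow (X_0,B_0+D_0)
\]
whose normalization $(\oX_0,\oG_0+\oB_0+\oD_0)=\sqcup_{i=1}^r(\oX_{0,i},\oG_{0,i}+\oB_{0,i}+\oD_{0,i})$ is a disjoint union of toric pairs. By \cite[Section 5]{Kol13} the pair $(X_0,B_0+D_0)$ is recovered as the quotient of $\oX_0$ by the finite gluing relation attached to the generically fixed-point-free involution $\tau\colon\oG_0^n\to\oG_0^n$ of the normalized conductor, so it suffices to descend the triple $(\oX_0,\oG_0+\oB_0+\oD_0,\tau)$ to $\bk'$.

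The second step is to descend the normalization. Each toric boundary polarized CY pair $(\oX_{0,i},\oG_{0,i}+\oB_{0,i}+\oD_{0,i})$ has boundary equal to the full reduced toric boundary, and a toric pair is determined by its fan, which lives over $\bZ$; hence it is the base change of a toric pair defined over $\bk'$ (indeed over $\bQ$), compatibly with the torus actions. Every marked component of $\oB_{0,i}$ and $\oD_{0,i}$ is a torus-invariant prime divisor and therefore descends, so the marked normalization descends to $\bk'$. In particular the conductor curves, being torus-invariant curves in these surfaces, are copies of $\bP^1$ defined over $\bk'$, and by CY adjunction the restriction of the boundary to each conductor component is a degree $2$ divisor supported on its two torus-fixed points, which are $\bk'$-rational.

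The hard part is descending the involution $\tau$. It permutes the conductor components and their distinguished torus-fixed points, and sends ${\rm Diff}_{\oG_0^n}(\oB_0)$ to itself. As in the non-normal case of Theorem~\ref{t:TypeIIfieldofdef}, an isomorphism between conductor components is almost pinned down by where it sends finitely many $\bk'$-points; the only new point compared with \emph{loc.\ cit.} is that a conductor $\bP^1$ carries two rather than three distinguished points, so one must additionally use that a non-trivial involution of $\bP^1$ is conjugate to the standard one, and, in the case where $\tau$ interchanges the two distinguished points on a component, that we are free to precompose the descent isomorphisms of the $\oX_{0,i}$ with toric automorphisms (or to apply one further weakly special degeneration, e.g. via Proposition~\ref{p:degeninvolution}) to normalize the remaining parameter. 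This yields an involution $\tau'$ of the $\bk'$-conductor whose base change is $\tau$. Finally, the gluing relation $R(\tau')\rightrightarrows\oX_0'$ has finite equivalence classes — it base changes to one with finite classes, and in any case we are in dimension two — so \cite[Corollary 5.33]{Kol13} produces a pair $(X_0',B_0'+D_0')$ over $\bk'$ with $(X_0',B_0'+D_0')\times_{\bk'}\bk\simeq(X_0,B_0+D_0)$. Since ampleness of $D_0'$, the condition $K_{X_0'}+B_0'+D_0'\sim_{\bQ}0$, and the slc property are all insensitive to the extension $\bk'\subset\bk$, the pair $(X_0',B_0'+D_0')$ is a boundary polarized CY pair, which completes the proof.
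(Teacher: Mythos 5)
Your proposal is correct and follows essentially the same route as the paper: apply Theorem~\ref{t:TypeIIItoricdegen}, descend the toric normalization to $\bk'$, renormalize the involution of the conductor by changing the choice of toric origins (equivalently, precomposing with toric automorphisms), and glue via \cite[Corollary 5.33]{Kol13}. The only details the paper spells out that you leave implicit are (a) that when a component's conductor consists of two invariant curves one can still choose a single origin on the surface realizing prescribed origins on both curves simultaneously, and (b) that in the self-glued case where $\tau$ fixes both torus-fixed points one can only arrange $\tau(p)=-p$ rather than $\tau(p)=p$, which still suffices since $-p$ is a $\bk'$-point.
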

		
		To prove the result, we will use Theorem \ref{t:TypeIIItoricdegen} and that a  toric pair over $\bk$ is the base change of a toric pair defined over $\bk'$.
		
		\begin{proof}
			By Theorem \ref{t:TypeIIItoricdegen}, there exists a weakly special degeneration of $(X,B+D)$ to a pair $(X_0,B_0+D_0)$ such that the normalization of $(X_0,B_0+D_0)$
			is a union of toric pairs
			\[
			(\oX_0,\oG_0+\oB_0+\oD_0):= \sqcup_{i=1}^r (\oX^i_0,\oG^i_0+\oB^i_0+\oD^i_0)
			.\]
			Thus there exists a $\bT:= \bG_m^2$-action on  $(\oX^i_0,\oG^i_0+\oB^i_0+\oD^i_0)$ and an origin ${\bf 1}_i \in \oX^i_0\setminus \Supp(\oG^i_0+\oB^i_0+\oD^i_0)$ such that the map $\bT \to \bT \cdot {\bf 1}_i$ defined by $t\mapsto t \cdot {\bf 1}_i$ is an isomorphism.
			Note that the structure of $(\oX^i_0,\oG^i_0+\oB^i_0+\oD^i_0)$ as a toric pair is not unique, since we can act on $\oX^i_0$ by an element in $g_i\in \bT(\bk)$ to produce a new origin $g_i\cdot {\bf 1}_i$.

			Now write $\tau: \oG_0^n\to \oG_0^n$ for the  generically fixed point free involution on the normalization of the conductor divisor and write
			\[
			({\oG^{i,n}_0}, B_{\oG^{i,n}_0}+ D_{\oG^{i,n}_0}) := \bigsqcup_{j=1}^r (C_{ij}, B_{ij}+D_{ij})
			,\]
			where each $C_{ij}$ is connected. 
			Since the pairs are on the right are toric CY pairs, there are isomorphisms
			\[
			(C_{ij}, B_{ij}+D_{ij}) \simeq (\bP^1_{\bk}, \{0\}+ \{\infty\})
			\]
			as pairs. (Here, we are  ignore the decomposition of the boundary into the sum of $B_{ij}$ and $D_{ij}$).
			On each pair, there is an induced origin ${\bf 1}_{ij}\in C_{ij}$ defined by ${\bf 1}_{ij}:= \lim_{t\to 0} \la(t)\cdot {\bf 1}_{ij}$, where $\la:\bG_m \to \bT$ is an arbitrary 1-PS such that the limit lies in $C_{ij}\setminus \Supp(B_{ij}+D_{ij})$.
			
			We seek to choose new origins so that the involution will be defined over $\bk'$.
			To proceed, note that the
			generically fixed point free involutions  $\bP^1_{\bk}\to \bP^1_{\bk}$ sending $\{0,\infty\}$ to $\{0,\infty\}$
			are of the form:
			\begin{enumerate}
				\item[(i)] $ [x:y]\mapsto [x:-y]$ or
				\item[(ii)] $[x:y]\mapsto [y:cx]$ for some $c\in \bk^\times$. 
			\end{enumerate}
			The first involution fixes both $0$ and $\infty$ and sends $[1:1]$ to $[1:-1]$. 
			The second involution interchanges $0$ and $\infty$ and fixes both $[1: \pm\sqrt{c}]$.
			By the above classification, 
			we may choose points $p_{ij} \in C_{ij} \setminus \Supp(B_{ij}+D_{ij})$ satisfying the following: if  $\tau(C_{ij}) = C_{kl}$, then $\tau(p_{ij})=p_{kl}$ when  we are not in the case case when $C_{ij} = C_{kl}$ and  $\tau$ fixes both points in $\Supp(B_{ij}+D_{ij})$.
			Using that $\oG^i_0 \subset \oX^i_0$ is either a toric invariant curve  or a  union of two intersecting toric invariant curves on $\oX^i_0$, 
			there exists a new origin ${\bf 1}_i \in \oX^i_0$ such that ${\bf 1}_{ij}= p_{ij}$ for each $j$.
			The new choice of origins endows the connected components of 
			$(\oX_0,\oG_0+\oB_0+\oD_0)$ with the structure of a toric pair. 
			Additionally, the involution 
			\[
			\tau:(\oG^n,B_{\oG^n}+D_{\oG^n})\to (\oG^n,B_{\oG^n}+D_{\oG^n})
			\]
			sends the toric boundary and  origins to points defined over $\bk'$.
			Here, we are using that ${\bf 1}_{ij}$ is a $\bk'$-point and that $\tau({\bf 1}_{ij})$ equals either ${\bf 1}_{ij}$ or $-{\bf 1}_{ij}$.
			Thus there exists a
			not necessarily connected boundary polarized CY surface pair $(\oX'_0,\oG'_0+\oB'_0+\oD'_0)$ defined over $\bk'$ and an involution
			$\tau': \oG'^n_0 \to {\oG'}^n_0$ whose base change is our original toric pair and involution. 
			The gluing relation $R({\tau}')\rightrightarrows \oX'_0$ as in \cite[Definition 5.31]{Kol13} has finite equivalence classes, since its base change to $\bk$ has the same property (alternatively, this can be seen more explicitly as we are in dimension two). 
			Thus the geometric quotient of the pair exists by \cite[Corollary 5.33]{Kol13} and is a boundary polarized CY pair $(X'_0,B'_0+D'_0)$  over $\bk'$ whose base change to $\bk$ is $(X_0,B_0+D_0)$.
		\end{proof}

		\section{Asymptotically good moduli spaces}\label{s:asgm}
		
		In this section, we construct an asymptotically good moduli space parametrizing boundary polarized CY surface pairs and describe its properties.  Theorem \ref{t:mainintro} will  be deduced as a consequence of these results.
		\medskip
		
		\subsection{Statement of main result}
		
		Throughout Section \ref{s:asgm}, we fix a finite type locally closed substack 
		$\cM^\circ \subset \cM(\chi,N, {\bf a},c)$   of the moduli stack in Definition \ref{def:moduli-stack} such that there exists $v>0$ such that $\chi(m  N) = v  m^2 + O(m)$ for $m\geq 0$ and the coefficients  $a_1,\ldots, a_l$ are in  $\mathbf{T}:= \{\tfrac{1}{2}, \tfrac{2}{3}, \tfrac{3}{4},\ldots \}\cup\big\{1\big\}$. 
		We will use the following notation:
		\begin{itemize}
			\item Let $\cM$ denote the stack theoretic closure of $\cM^\circ$ in $\cM(\chi,N,{\bf a}, c)$ and $\cM^{\CY}$ denote its seminormalization.
			\item  Let $\cM_m\subset \cM$ denote the open substack of $\cM$ parametrizing pairs $(X,B+D)$ in $\cM$ such that $\ind_x(K_X+B) \leq m$ for each $x\in X$.  See Section \ref{ss:indexboundsubstack}.
			There is a chain of inclusions
			\[
			\cM_1 \subset \cM_2 \subset \cM_3 \subset \cdots \subset \cM
			.\]
			\item 
			Let $\cM^{\K}$ and $\cM^{\KSBA}$ denote the open substacks of $\cM$ defined in Section \ref{ss:KSBA+K}. 
			We write 
			$\cM^{\K} \to M^{\K}$ and $\cM^{\KSBA}\to M^{\rm KSBA}$ for their good moduli space morphisms.   
			
		\end{itemize}
		By Theorem \ref{t:existsGMindex}, if $m\geq N$, then there exists a good moduli space $\cM_m\to M_m$.
		By the universality of good moduli space morphisms, there exists a commutative diagram of morphisms, where
		\begin{equation}\label{e:diagMmMm+1}
			\begin{tikzcd}
				\cM_m \arrow[r,hook]\arrow[d,"\phi_m"]& \cM_{m+1}\arrow[r,hook]\arrow[d,"\phi_{m+1}"] & \cM_{m+2} \arrow[r,hook]\arrow[d,"\phi_{m+2}"]& ...\\
				M_m \arrow[r,"\psi_m"] & M_{m+1}\arrow[r,"\psi_{m+1}"]& M_{m+2} \arrow[r,"\psi_{m+2}"]& ...
			\end{tikzcd}
		\end{equation}
		where the  inclusion in the top row  are the natural open embeddings and the vertical arrows are good moduli space morphisms.
		The following theorem is the main result of this paper.
		
		\begin{theorem}\label{t:mainCY}
			There exists a projective seminormal scheme $M^{\rm CY}$ and an asymptotically good moduli space morphism $\phi:\cM^{\CY} \to M^{\rm CY} $
			such that:
			\begin{enumerate}
				\item The morphism $\phi$ is universal among maps from $\cM^{\CY}$ to algebraic spaces.
				\item The map $\cM^{\CY}(\bk) \to M^{\CY}(\bk)$ is surjective and identifies two pairs if and only if they are S-equivalent.
				\item  There exists a commutative diagram
				\[
				\begin{tikzcd}
					\cM^{\rm K,\sn}\arrow[r,hook] \arrow[d]& \cM^{\CY} \arrow[d] &\cM^{\rm KSBA, \sn}\arrow[l,hook', swap]\arrow[d]\\
					M^{\rm K,\sn} \arrow[r]& M^{\rm CY} & M^{\rm KSBA,\sn} \arrow[l]
				\end{tikzcd}
				\]
				where the top row arrows are open immersions, the vertical arrows are (asymptotically) good moduli space morphisms, and the bottom row arrows are projective morphisms.
				In addition, if The Type I loci  is dense in $\cM$, then the bottom row arrows are  birational.

				\item The  Hodge line bundle on $M^{\rm CY}$ is ample.
			\end{enumerate}
		\end{theorem}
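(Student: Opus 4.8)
The strategy is to prove that the good moduli spaces $M_m$ of Theorem~\ref{t:existsGMindex} stabilize for $m\gg0$, to set $M^{\CY}$ equal to this eventual value, and then to read off the four assertions from the good moduli space formalism, the wall crossing diagram of Proposition~\ref{p:Mmwallcrossing}, and Hodge-theoretic positivity. First I would show that for $m\gg0$ the morphism $\psi_m\colon M_m\to M_{m+1}$ is an isomorphism. For $m\gg0$ it is proper by Proposition~\ref{p:properness} and separatedness of $M_{m+1}$, and it is surjective since $\cM_m$ contains the dense finite type substack $\cM^\circ$ and hence is dense in $\cM_{m+1}$. For injectivity on closed points I would use that a point of $M_m$ is determined by the $S$-equivalence class of its pair (Proposition~\ref{p:propertiesofGMs}(2) and Remark~\ref{r:Sequiv}(3)) and that $S$-equivalence preserves the type (Remark~\ref{r:Sequiv}(2)): the Type~I and Type~II locus $\cM^{\mathrm{I+II}}$ is a finite type, saturated open substack of $\cM_m$ for $m\gg0$ by Propositions~\ref{p:TypeI+IIfinitetype} and~\ref{p:TypeI+IIsaturated}, so its image in $M_m$ stabilizes, while the Type~III locus of $M_m$ is discrete for $m\gg0$. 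The last point is where Theorem~\ref{t:TypeIIItoricdegen} and Corollary~\ref{c:TypeIIIvariation} enter: every Type~III pair weakly specially degenerates to one with toric normalization, so every closed point of the Type~III locus is $S$-equivalent to a pair defined over a fixed algebraically closed subfield, which forces that locus to be finite. Hence for $m\gg0$ the map $\psi_m$ is a finite bijection that is an isomorphism over a dense open (the stabilized finite type stratum), so it is birational; since $M_{m+1}$ is seminormal (the good moduli space of the seminormal stack $\cM_{m+1}$), a finite birational bijection onto it is an isomorphism. I then set $M^{\CY}:=M_m$ for $m\gg0$; choosing the cofinal chain $\cU_i:=\cM_{m+i-1}$ exhibits a canonical morphism $\cM^{\CY}\to M^{\CY}$ restricting to a good moduli space morphism on each $\cU_i$, i.e.\ an asymptotically good moduli space morphism.

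Property (1) then follows because each $\phi_j\colon\cM_j\to M_j=M^{\CY}$ with $j\ge m$ is universal among maps to algebraic spaces by Proposition~\ref{p:propertiesofGMs}(1), and these agree on overlaps over the exhaustion $\cM^{\CY}=\bigcup_j\cM_j$ by uniqueness, so any map $\cM^{\CY}\to Y$ factors uniquely through $M^{\CY}$. For property (2), surjectivity of $\cM^{\CY}(\bk)\to M^{\CY}(\bk)$ is a general feature of good moduli space morphisms, and the $S$-equivalence description is again Proposition~\ref{p:propertiesofGMs}(2) together with Remark~\ref{r:Sequiv}(3). Property (3) is obtained by applying Proposition~\ref{p:Mmwallcrossing} with $m\gg0$: the open immersions $\cM^{\mathrm K}\hookrightarrow\cM_m\hookleftarrow\cM^{\mathrm{KSBA}}$ induce proper morphisms $M^{\mathrm K}\to M_m\leftarrow M^{\mathrm{KSBA}}$; passing to seminormalizations and using that $M^{\mathrm K}$ and $M^{\mathrm{KSBA}}$ are projective (Theorems~\ref{t:Kmoduli} and~\ref{t:KSBA}) makes these projective morphisms, and the last sentence of Proposition~\ref{p:Mmwallcrossing} gives birationality when $\cM^{\mathrm I}$ is dense.

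It remains to treat property (4), which is the heart of the matter. The Hodge line bundle on $\cM^{\CY}$ depends only on the source of a pair and sources are preserved by $S$-equivalence (Remark~\ref{r:Sequiv}(2)), so it descends to a line bundle $\lambda$ on $M^{\CY}$. Pulling back to families over projective curves and invoking the semipositivity results for moduli of lc CY pairs that rest on variations of Hodge structure \cite{Amb05,Kaw97,Kol07}, $\lambda$ is nef, and the same circle of ideas shows that $\lambda$ is ample precisely when the sources vary maximally along $M^{\CY}$, i.e.\ the induced map to the moduli of sources is finite. I would verify this stratum by stratum. On the Type~I locus the source of $(X,B+D)$ is $(X,B+D)$ itself, so maximal variation is tautological. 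On the Type~III locus it is vacuous since the source is a point, but that locus is discrete, so $\lambda$ is automatically ample there. On the Type~II locus the source is a curve pair, and Theorem~\ref{t:TypeIIfieldofdef} is exactly what is needed---a positive-dimensional family of polystable Type~II pairs with constant source would force every member to be the base change of a single pair, a contradiction---so the map to the moduli of curve pair sources is quasi-finite. Finally, a nef line bundle on a proper scheme whose restriction to each member of a finite stratification by locally closed subsets is ample is itself ample, so $\lambda$ is ample; since $M^{\CY}$ is proper by the stabilization step, it is then projective.

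The main obstacle is the Type~II part of property (4): establishing maximal variation of the source there, which requires the detailed structural description of polystable Type~II pairs underlying Theorem~\ref{t:TypeIIfieldofdef} together with a careful application of the Hodge-theoretic positivity package. The discreteness of the Type~III locus used in the stabilization step, which rests on Theorem~\ref{t:TypeIIItoricdegen} and Corollary~\ref{c:TypeIIIvariation}, is the second most delicate point.
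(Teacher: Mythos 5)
Your proposal follows essentially the same route as the paper: stabilization of the spaces $M_m$ via the saturated Type I+II locus and the discreteness of the Type III locus (this is Theorem \ref{t:stabilization}, proved exactly through Propositions \ref{p:surj+isomI+II} and \ref{p:TypeIIIlocusdiscrete} using Corollary \ref{c:TypeIIIvariation}), the wall crossing diagram from Proposition \ref{p:Mmwallcrossing}, and ampleness of the Hodge line bundle via maximal variation of sources with Theorem \ref{t:TypeIIfieldofdef} handling the Type II stratum (Theorem \ref{t:amplehodge} and Proposition \ref{p:variationsource}). The one imprecise step is the final glue in (4): the claim that a nef line bundle that is ample on each member of a finite locally closed stratification is ample is not a usable lemma as stated (ampleness on a non-proper stratum is not the right notion); the paper instead verifies the Nakai--Moishezon criterion directly, showing $L_{\rm Hodge}\vert_Z^{\dim Z}>0$ for every positive-dimensional irreducible closed $Z\subset M_m$ by combining the finiteness of the equal-source loci with the positivity result \cite[Theorem 14.13]{BABWILD}, which is the correct formulation of the argument you sketch and requires no separate patching lemma.
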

		
		In the above theorem, the superscript $\rm sn$ denotes the seminormalization of a scheme or algebraic stack. 
		For information on seminormalization in these settings, see  \cite[Section 10.8]{Kol23} and \cite[Section 13.2]{BABWILD}, respectively.
		
		The setup and notation in Theorem \ref{t:mainCY} differs slightly from that in Theorem \ref{t:mainintro}.
		In the above theorem, we allow the moduli space to parametrize pairs with non-trivial log Fano boundary as long as the marked coefficients are in $\bfT$. 
		In addition, $\cM^{\K}$
		and $\cM^{\KSBA}$ refer to  the open substacks in the seminormalization of $\cM^{\CY}$ in Theorem  \ref{t:mainintro}, rather than open substacks of $\cM$. There is a similar discrepancy between $M^{\K}$ and $M^{\KSBA}$.

		\subsection{Stabilization}
		
		
		\begin{theorem}[Stabilization]\label{t:stabilization}
			If $m\gg0$, then the map
			$
			M_m(\bk)\to M_{m+1}(\bk)
			$
			is a bijection. 
		\end{theorem}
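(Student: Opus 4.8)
The plan is to stratify the $\bk$-points of $M_m$ (which, as $M_m$ is of finite type over $\bk=\bar{\bk}$, are exactly its closed points) according to the type of the corresponding boundary polarized CY surface pair, and to show that each stratum, together with the map induced by $\psi_m$, stabilizes for $m\gg0$. Each closed point of $M_m$ is the image under $\phi_m$ of the unique closed point of its fibre, and since points of $\cM_m$ with intersecting closures are $S$-equivalent and hence of the same type by Remark~\ref{r:Sequiv}, there is a well-defined type attached to each point of $M_m(\bk)$; moreover $\psi_m\circ\phi_m=\phi_{m+1}$ on $\cM_m(\bk)$, so $\psi_m$ preserves the type of a closed point. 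For the Type I and II points, Propositions~\ref{p:TypeI+IIfinitetype} and~\ref{p:TypeI+IIsaturated} give, for $m\gg0$, that the finite type open substack $\cM^{\rm I+II}$ lies in $\cM_m$ and is saturated for $\phi_m$; thus $M^{\rm I+II}:=\phi_m(\cM^{\rm I+II})$ is an open subspace of $M_m$ and $\phi_m$ restricts to a good moduli space morphism $\cM^{\rm I+II}\to M^{\rm I+II}$. By uniqueness of good moduli spaces the subspace $M^{\rm I+II}$, and in particular its set of $\bk$-points, is independent of $m\gg0$, and the commutativity in~\eqref{e:diagMmMm+1} forces $\psi_m$ to restrict to an isomorphism between the copies of $M^{\rm I+II}$ in $M_m$ and $M_{m+1}$. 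Since every Type I or II point of $M_m(\bk)$ lies in $M^{\rm I+II}$, this part of $M_m(\bk)$ stabilizes, and it remains to handle the Type III points.

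For the Type III points, let $\mathfrak{G}$ denote the set of isomorphism classes of Type III boundary polarized CY surface pairs in $\cM(\bk)$ whose normalization is a union of toric pairs. First, $\mathfrak{G}$ is finite: the number of components of the normalization and their anticanonical volumes are bounded in terms of the fixed data $(\chi,N,\bfa,c)$, so there are finitely many possibilities for the normalization, and by the analysis of gluing involutions of unions of $\bP^{1}$'s carried out in the proof of Corollary~\ref{c:TypeIIIvariation} there are finitely many ways to glue them. Choose $m_0$ larger than every Gorenstein index occurring for a pair in $\mathfrak{G}$, so that $\mathfrak{G}\subset\cM_m(\bk)$ for all $m\geq m_0$, and for such $m$ set $\mathfrak{G}_m:=\{y\in\mathfrak{G} : y\text{ is closed in }\cM_m\}$. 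Since $\cM_m\subset\cM_{m+1}$ is open, a point closed in $\cM_{m+1}$ is closed in $\cM_m$, so $\mathfrak{G}_{m+1}\subseteq\mathfrak{G}_m$; as $\mathfrak{G}$ is finite, $\mathfrak{G}_m=\mathfrak{G}_\infty:=\bigcap_m\mathfrak{G}_m$ for $m\gg0$, and $\mathfrak{G}_\infty$ is the set of $y\in\mathfrak{G}$ closed in $\cM=\bigcup_m\cM_m$, that is, the polystable ones. I claim that for $m\gg0$ the Type III points of $M_m(\bk)$ are exactly $\phi_m(\mathfrak{G}_m)$ and that $\phi_m$ is injective on $\mathfrak{G}_m$. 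Injectivity is clear since distinct elements of $\mathfrak{G}_m$ are closed in $\cM_m$ and hence each is the unique closed point of its $\phi_m$-fibre. For the equality, each $y\in\mathfrak{G}_m$ is closed in $\cM_m$ and Type III, so $\phi_m(y)$ is a Type III point of $M_m(\bk)$; conversely, if $p$ is a Type III point of $M_m(\bk)$ and $y$ is the closed point of $\phi_m^{-1}(p)$, then Theorem~\ref{t:TypeIIItoricdegen} gives a weakly special degeneration $y\rightsquigarrow y^{\flat}$ with $y^{\flat}$ of toric normalization, whence $y^{\flat}$ is a Type III pair in $\cM(\bk)$ (of the same type as $y$ by Remark~\ref{r:Sequiv}) with toric normalization, so $y^{\flat}\in\mathfrak{G}\subset\cM_m$ for $m\geq m_0$; a weakly special degeneration then exhibits $y^{\flat}$ as a specialization of $y$ inside $\cM_m$, and as $y$ is closed in $\cM_m$ we get $y^{\flat}=y$, so $y\in\mathfrak{G}_m$. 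Finally, $\psi_m\circ\phi_m=\phi_{m+1}$ shows that these identifications of the Type III points of $M_m(\bk)$ with $\mathfrak{G}_\infty$ (valid for $m\gg0$) are compatible with $\psi_m$, so $\psi_m$ is a bijection on the Type III stratum.

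Combining the two strata, and using that $\psi_m$ preserves type, shows that $M_m(\bk)\to M_{m+1}(\bk)$ is a bijection for $m\gg0$. The main obstacle is the Type III analysis: it hinges on the toric degeneration result Theorem~\ref{t:TypeIIItoricdegen} of Section~\ref{s:TypeIII}, which forces the closed point of each Type III fibre of $\phi_m$ to have toric normalization for $m\gg0$ and thereby confines it to the finite set $\mathfrak{G}$; the accompanying finiteness of $\mathfrak{G}$ is a combinatorial count of bounded toric surface pairs and their equivariant gluings, and the remainder is formal from the good moduli space formalism together with the boundedness of the Type I and II locus.
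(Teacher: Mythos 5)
Your Type I+II analysis matches the paper's (Proposition \ref{p:surj+isomI+II}): saturation of $\cM^{\rm I+II}$ plus uniqueness of good moduli spaces gives the isomorphism on that stratum. The Type III part, however, has a genuine gap. Your set $\mathfrak{G}$ of Type III pairs in $\cM(\bk)$ with toric normalization is \emph{not} finite: as noted in Remark \ref{r:notfinitetype}, $\cM$ contains degenerations to cones over cycles of $\bP^1$'s of arbitrary length, which are Type III pairs whose normalizations are unions of toric pairs. The total anticanonical volume being fixed does not bound the number of components, because the polarizing divisor is only a $\bQ$-Cartier Weil divisor and its degree on each component can be arbitrarily small; correspondingly the Gorenstein indices of elements of $\mathfrak{G}$ are unbounded. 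This breaks your choice of $m_0$ with $\mathfrak{G}\subset \cM_{m_0}(\bk)$, and with it the key step: given a Type III point $y$ closed in $\cM_m$, Theorem \ref{t:TypeIIItoricdegen} produces a degeneration $y\rightsquigarrow y^{\flat}$, but $y^{\flat}$ may have Gorenstein index larger than $m$, so the specialization takes place only in some $\cM_{m'}$ with $m'>m$ and you cannot conclude $y^{\flat}=y$ from $y$ being closed in $\cM_m$. Indeed, the closed point of a Type III fibre of $\phi_m$ need not have toric normalization, and the finiteness of the set of \emph{polystable} Type III pairs is essentially the conclusion you are trying to prove, not something you may assume.

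The paper's route around this is worth internalizing: it first proves surjectivity of $M_m\to M_{m+1}$ for $m\gg0$ (properness of $M_m$ plus density of $\cM_m$), and then, instead of enumerating toric pairs, applies Corollary \ref{c:TypeIIIvariation} to the \emph{geometric generic point} of each irreducible component $Z$ of $M_m^{\rm III}$. This yields a degeneration of the generic pair to one defined over $\bk$, which lies in $\cM_l$ for some possibly much larger $l$; hence $\psi_{m,l}(Z)$ is a single closed point, and $M_l^{\rm III}$ is finite for $l\gg m$ simply because $M_m^{\rm III}$ has finitely many irreducible components. The stabilization then follows formally: a chain of surjections of finite sets $M_m^{\rm III}(\bk)\to M_{m+1}^{\rm III}(\bk)\to\cdots$ is eventually bijective. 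The freedom to increase the index bound $l$ after degenerating is exactly what your argument is missing.
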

		
		In order to prove the result, we first analyze the loci determined by type. 
		Let $\cM^{\rm I+II}\subset \cM$ denote the Type I+II substack. 
		Recall, if $m\gg0$, then $\cM^{\rm I+II}$ is a saturated open substack of $\cM_m$ by Proposition \ref{p:TypeI+IIsaturated} and hence 
		\[
		M_m^{\rm I+II}:=\phi_m(\cM^{\rm I+II}) \subset M_m 
		\]
		is an open subspace.
		We set $M_m^{\rm III}:= M_{m}\setminus M_m^{\rm I+II}$.
		
		\begin{proposition}\label{p:surj+isomI+II}
			If $m\gg0$, then $M_m \to M_{m+1}$ is surjective and
			$
			M_m^{\rm I+II}\to M_{m+1}^{\rm I+II}
			$
			is an isomorphism.
		\end{proposition}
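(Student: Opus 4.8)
The plan is to prove the two assertions separately, in each case reducing to formal properties of good moduli spaces combined with the properness of $M_m$ (Proposition \ref{p:properness}) and the saturation of the Type I+II locus (Proposition \ref{p:TypeI+IIsaturated}). Fix $m$ large enough that simultaneously $\cM^\circ\subset\cM_m$ (possible since $\cM^\circ$ is quasi-compact and $\cM=\bigcup_m\cM_m$ is an increasing union of open substacks), $\cM^{\rm I+II}\subset\cM_m$ is a saturated open substack with respect to both $\phi_m$ and $\phi_{m+1}$, and $M_m$ is proper. All three conditions hold for all sufficiently large $m$, so they can be arranged at once.

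For surjectivity of $\psi_m\colon M_m\to M_{m+1}$, I would first observe that $\cM^\circ$ is dense in $\cM$, hence in $\cM_{m+1}$, and that consequently $\phi_{m+1}(\cM^\circ)$ is dense in $M_{m+1}$: indeed $\phi_{m+1}^{-1}\!\big(\overline{\phi_{m+1}(\cM^\circ)}\big)$ is a closed substack containing the dense substack $\cM^\circ$, hence equals $\cM_{m+1}$, and surjectivity of the good moduli space morphism $\phi_{m+1}$ then forces $\overline{\phi_{m+1}(\cM^\circ)}=M_{m+1}$. Restricting the commutative square \eqref{e:diagMmMm+1} to $\cM^\circ$ gives $\psi_m\circ\phi_m|_{\cM^\circ}=\phi_{m+1}|_{\cM^\circ}$, so the image of $\psi_m$ contains the dense subset $\phi_{m+1}(\cM^\circ)$. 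Since $M_m$ is proper and $M_{m+1}$ is separated, $\psi_m$ is proper, hence closed, so its image is closed and dense, i.e. all of $M_{m+1}$; thus $\psi_m$ is surjective, and in particular surjective on $\bk$-points as all the spaces involved are of finite type over $\bk$.

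For the isomorphism $M_m^{\rm I+II}\to M_{m+1}^{\rm I+II}$, I would use that by Proposition \ref{p:TypeI+IIsaturated} together with Proposition \ref{p:saturatedopen}, both $\phi_m\colon\cM^{\rm I+II}\to M_m^{\rm I+II}$ and $\phi_{m+1}\colon\cM^{\rm I+II}\to M_{m+1}^{\rm I+II}$ are good moduli space morphisms onto open subspaces. Restricting \eqref{e:diagMmMm+1} to $\cM^{\rm I+II}$ and using that $\phi_m$ is surjective onto $M_m^{\rm I+II}$ shows $\psi_m(M_m^{\rm I+II})=\phi_{m+1}(\cM^{\rm I+II})=M_{m+1}^{\rm I+II}$, so $\psi_m$ restricts to a morphism $M_m^{\rm I+II}\to M_{m+1}^{\rm I+II}$ sitting in a triangle with these two good moduli space morphisms. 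Since $\cM^{\rm I+II}$ is of finite type, hence locally Noetherian, Proposition \ref{p:propertiesofGMs}(1) shows that each of $\phi_m$ and $\phi_{m+1}$ is universal among maps from $\cM^{\rm I+II}$ to algebraic spaces; applying this universal property in both directions produces mutually inverse morphisms $M_m^{\rm I+II}\rightleftarrows M_{m+1}^{\rm I+II}$, the forward one being $\psi_m$ by uniqueness. Hence $\psi_m$ restricts to an isomorphism on the Type I+II loci.

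I do not expect a genuine obstacle here: the argument is essentially formal once Propositions \ref{p:TypeI+IIfinitetype}, \ref{p:TypeI+IIsaturated}, and \ref{p:properness} are available, the only substantive input being the properness of $M_m$ for $m\gg0$ (which itself rests, via Proposition \ref{p:Mmwallcrossing}, on the properness of the KSBA moduli space). The one point requiring care is the simultaneous choice of $m$, handled in the first paragraph. The real difficulty in the stabilization statement is the complementary assertion about the Type III locus, which is treated separately in Theorem \ref{t:stabilization} using the toric degeneration results of Section \ref{s:TypeIII}.
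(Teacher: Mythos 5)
Your proposal is correct and follows essentially the same route as the paper: density (routed through $\cM^\circ$ rather than through $\cM_m$ itself, a trivial variation) plus properness of $M_m$ gives surjectivity, and the saturation of $\cM^{\rm I+II}$ together with the uniqueness/universality of good moduli spaces gives the isomorphism on the Type I+II loci.
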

		
		\begin{proof}
			Note that $\cM^\circ$ is finite type. 
			Thus, if $m\gg0$, then  $\cM^\circ \subset \cM_m$  and so $\cM= \overline{\cM_m}$, which implies $\cM_{m}$ is dense in $\cM_{m+1}$.
			For such $m$,   $\cM_m$ is dense in $\cM_{m+1}$ and so $\psi_{m}(M_m)$ is dense in $M_{m+1}$.
			In addition, if $m\gg0$, $M_m$ is proper by Proposition \ref{p:properness}
			and so  $\psi_m(M_m)$ is closed in $M_{m+1}$.
			Therefore, if $m\gg0$, then $M_m\to M_{m+1}$ surjective.
			
			Next, we analyze the Type I and II locus.
			By Proposition \ref{p:TypeI+IIsaturated}, if $m\gg0$, then $\cM^{\rm I+II}$ is a saturated open substack of $\cM_m$ and there is a commutative diagram 
			\[
			\begin{tikzcd}
				\cM^{\rm I+II}\arrow[r,"="]\arrow[d] & \cM^{\rm I+II} \arrow[d]\\
				M_{m}^{\rm I+II} \arrow[r]& M_{m+1}^{\rm I+II}
			\end{tikzcd}
			,\]
			where the vertical arrows are good moduli space morphisms. 
			By the  uniqueness and universality of good moduli space morphisms, the  arrow in the bottom row is an isomorphism.
		\end{proof}
		
		\begin{proposition}\label{p:TypeIIIlocusdiscrete}
			If $m\gg0$, then $M_{m}^{\rm III}$ is a finite set of closed points.
		\end{proposition}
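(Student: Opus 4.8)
The plan is to combine the toric degeneration result for Type III pairs (Theorem \ref{t:TypeIIItoricdegen}, or more precisely its field-of-definition strengthening Corollary \ref{c:TypeIIIvariation}) with the boundedness of the Type I and II loci and the properness of $M_m$. First I would note that $M_m^{\rm III}$ is a closed subspace of the proper space $M_m$, hence itself proper; so to show it is a finite set of closed points it suffices to show that it is \emph{discrete}, i.e. has dimension $0$. Equivalently, since $\cM^{\rm III}:=\cM_m\setminus\cM^{\rm I+II}$ is saturated with respect to $\phi_m$ for $m\gg0$ (the complement of a saturated open substack is saturated), it suffices to show that the set of $\bk$-points of $M_m^{\rm III}$, which parametrizes S-equivalence classes of polystable Type III pairs of Gorenstein index $\le m$, is finite. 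Since each S-equivalence class has a polystable (i.e.\ closed-orbit) representative by Proposition \ref{p:stable}, I would reduce to counting polystable Type III boundary polarized CY surface pairs of index $\le m$ up to isomorphism.

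The key step is the following: by Corollary \ref{c:TypeIIIvariation}, for any algebraically closed subfield $\bk'\subset\bk$ over which $\cM^\circ$ is defined, every Type III pair $(X,B+D)$ over $\bk$ admits a weakly special degeneration to a pair $(X_0,B_0+D_0)$ that is the base change of a pair defined over $\bk'$. Applying this with $\bk'$ the algebraic closure of a field of definition of the data $(\chi,N,\mathbf{a},c)$ and of $\cM^\circ$ (which is finite type, hence definable over a finitely generated field, whose algebraic closure we call $\bk_0$), we conclude that every polystable Type III pair in $\cM_m(\bk)$ is S-equivalent to — and, being polystable, hence isomorphic to, by Remark \ref{r:Sequiv} and Proposition \ref{p:stable} — the base change of a pair defined over $\bk_0$. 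Therefore the $\bk$-points of $M_m^{\rm III}$ are all defined over $\bk_0$. Now I would invoke properness once more: a finite type separated algebraic space $M_m^{\rm III}$ of positive dimension over the fixed ground field $\bk$ has $\bk$-points that do not all descend to $\bk_0$ — indeed its generic point lies over a transcendental point — so the fact that \emph{all} $\bk$-points descend to $\bk_0$ forces $\dim M_m^{\rm III}=0$. Combined with properness, $M_m^{\rm III}$ is a finite set of closed points.

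I would phrase the last implication carefully to avoid circularity: rather than "all $\bk$-points descend," the cleanest formulation is that the inclusion $M_m^{\rm III}\times_{\bk_0}\bk \hookrightarrow M_m^{\rm III}$ (after spreading out $M_m^{\rm III}$ over a model over $\bk_0$, which exists because $\cM_m$ and $\phi_m$ are defined over $\bk_0$) is surjective on $\bk$-points; since a positive-dimensional variety over $\bk_0$ base-changed to $\bk$ always acquires new $\bk$-points (e.g.\ the image of $\Spec\bk\to$ a curve hitting a transcendental point), $M_m^{\rm III}$ must be $0$-dimensional.

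The main obstacle I anticipate is the bookkeeping around \emph{which} $m$ works: Corollary \ref{c:TypeIIIvariation} produces a degeneration, but I need the degenerate pair $(X_0,B_0+D_0)$ to still have Gorenstein index $\le m$, i.e.\ to lie in $\cM_m$ and not escape to a larger index. This is exactly where the $S$-completeness/$\Theta$-reductivity inputs of Section \ref{s:Goreinsteinindex} (specifically Proposition \ref{p:cM_mextension}, the deformation-invariance of Gorenstein index) are needed: the test configuration realizing the weakly special degeneration is a family over $\bA^1$ whose general fiber has index $\le m$, so its special fiber does too, provided $m\ge N$. With that in hand the argument closes. A secondary point to handle is that $\cM^{\rm III}$ being saturated uses $\cM^{\rm I+II}\subset\cM_m$ for $m\gg0$ (Proposition \ref{p:TypeI+IIsaturated}), so the proposition is only claimed for $m\gg0$, consistent with the statement.
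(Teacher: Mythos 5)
Your overall strategy (degenerate Type III pairs to pairs defined over a smaller field via Corollary \ref{c:TypeIIIvariation}, then use properness) is the right one, but there is a genuine gap in how you handle the Gorenstein index, and it occurs at exactly the two places you flag. First, the $\bk$-points of $M_m^{\rm III}$ correspond to \emph{closed points of $\cM_m$}, not to polystable pairs in the sense of Proposition \ref{p:stable}: that proposition characterizes closed points of the full stack $\cM(\chi,N,\mathbf{a},c)$, and since $\cM_m$ is only an open substack, a pair whose orbit is closed in $\cM_m$ may still admit a non-trivial weakly special degeneration whose central fiber has Gorenstein index $>m$ and hence leaves $\cM_m$. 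So you cannot conclude that such a pair is isomorphic to the degeneration produced by Corollary \ref{c:TypeIIIvariation}. Second, your proposed repair is false: Proposition \ref{p:cM_mextension} (and Theorem \ref{thm:index-deform} behind it) concerns families over a punctured smooth \emph{surface}, and the two-dimensionality of the base is essential — the proof cuts the four-dimensional total space by three eigenfunctions to produce a curve along which the index is constant. Over the base $\bA^1$ of a test configuration the special fiber is a divisor and the index genuinely can jump there; this is precisely why $\cM$ fails to be of finite type (Remark \ref{r:notfinitetype}) and why any argument must allow the degenerate pair to land in $\cM_l$ for some $l>m$.

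The paper's proof is built to absorb this jump. It fixes $m$ with $\psi_{m,l}(M_m^{\rm III})=M_l^{\rm III}$ for all $l\geq m$ (Proposition \ref{p:surj+isomI+II}), takes an irreducible component $Z\subset M_m^{\rm III}$, lifts its generic point to a (not necessarily polystable) Type III pair over $\bK=\overline{K(Z)}$, and applies Corollary \ref{c:TypeIIIvariation} with the roles of the two fields played by $\bk\subset\bK$ to degenerate it to the base change of a pair defined over $\bk$. Choosing $l$ large enough that this degenerate pair lies in $\cM_l(\bk)$, the image of the generic point of $Z$ in $M_l$ is a $\bk$-point, so $\psi_{m,l}(Z)$ is a single closed point; hence $M_l^{\rm III}$ is finite for $l\gg0$, which is the assertion. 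If you wanted to keep your descent-to-$\bk_0$ formulation you would likewise have to run it in $M_l$ for $l\gg m$, at which point it essentially collapses to the paper's argument.
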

		
		\begin{proof}
			Let $\psi_{m,l}: M_m \to M_l$ denote the composition $\psi_{l-1} \circ \cdots \circ \psi_m$ when $l> m$. 
			By Proposition \ref{p:surj+isomI+II}, there exists an integer $m$ such that $\psi_{m,l}(M_{m}^{\rm III})= M_{l}^{\rm III}$ for all $l\geq m$. 
			Thus, it suffices to prove that $\psi_{m,l}(M_{m}^{\rm III})$ is a finite set of closed points for $l\gg m$. 
			
			To prove the latter statement, fix an irreducible closed subset $Z\subset M_{m}^{\rm III}$.
			Let $\bK:= \overline{K(Z)}$ denote the algebraic closure of its fraction field 
			and $\overline{\eta}:\Spec(\bK)\to M_m$ denote the corresponding geometric point.
			Since $\bK$ is algebraically closed, $\cM_m(\bK)\to M_m(\bK)$ is surjective
			and so
			there exists a Type III pair $(X,B+D)$ in $\cM_m(\bK)$ 
			that maps to $\overline{\eta}\in M_m(\bK)$. 
			By Corollary \ref{c:TypeIIIvariation}, there exists a weakly special degeneration 
			\[
			(X,B+D)\rightsquigarrow (X_0,B_0+D_0)
			\]
			to a pair  $(X_0,B_0+D_0)$ that is the base change of a boundary polarized CY pair $(X^{\bk}_0,B^{\bk}_0+D^{\bk}_0)$ defined over $\bk$.
			Fix $l\geq m$ so that $(X^{\bk}_0,B^{\bk}_0+D^{\bk}_0)$ is in $\cM_l(\bk)$. 
			Now observe that 
			\[
			\psi_{m,l}(\overline{\eta}) = \phi_l([(X,B+D)]) = \phi_l([X_0,B_0+D_0)])
			\]
			and the latter $\bK$-point of $M_m$ factors through a $\bk$-point. 
			Thus $\psi_{m,l}(Z)$ is a closed point.
			Repeating the above argument for each irreducible component of $M_m^{\rm III}$ shows that $\psi_{m,l}(M_m^{\rm III})$ is a finite set of closed points when $l\gg m$.
		\end{proof}
		
		Theorem \ref{t:stabilization} is now a simple consequence of the the above results. 
		
		\begin{proof}[Proof of Theorem \ref{t:stabilization}]
			By Propositions \ref{p:surj+isomI+II}  and \ref{p:TypeIIIlocusdiscrete}, 
			there exists an integer $m_0$ satisfying: if $m\geq m_0$, then
			$
			\psi_m:M_{m}(\bk) \to M_{m+1}(\bk)
			$
			is surjective, the induced map 
			$
			M_m^{\rm I+II}\to M_m^{\rm I+II}
			$
			is an isomorphism, and $M_m^{\rm III}(\bk)$ is a finite set. 
			Thus, if $ m\geq m_0$,
			then 
			\[
			M_m^{\rm III}(\bk) \to M_{m+1}^{\rm III}(\bk) \to M_{m+2}^{\rm III}(\bk)\to \cdots
			\] 
			is a sequence of surjective  maps  between finite sets. Therefore the latter maps are all bijections for sufficiently large $m$
			and so  $M_m(\bk)\to M_{m}(\bk)$ is also a bijection for sufficiently large $m$.
		\end{proof}

		\subsection{Variation of source}\label{ss:vationsource}
		We now prove  a variation result, which  is the key ingredient in verifying the ampleness of the Hodge line bundle on our moduli space.
		Its proof relies on the results in Sections \ref{s:TypeII} and \ref{s:TypeIII} on Type II and III boundary polarized CY surface pairs.

		\begin{proposition}\label{p:variationsource}
			If $m\gg0$, then, for each $x \in M_m(\bk)$, the set of points with the same source
			\[
			W_x:=\{y \in M_m(\bk) \, \vert\, {\rm Src}(x) \simeq {\rm Src}(y) \}  
			\]
			is finite.
		\end{proposition}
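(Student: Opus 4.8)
The plan is to stratify $M_m(\bk)$ by type (I, II, III) and handle each stratum separately, using that the source is preserved under S-equivalence (Remark \ref{r:Sequiv}.2), so that ``having the same source'' is a well-defined equivalence relation on closed points of $M_m$. For the Type III locus $M_m^{\rm III}$, Proposition \ref{p:TypeIIIlocusdiscrete} already gives that it is a finite set of closed points once $m\gg0$, so $W_x$ intersected with $M_m^{\rm III}$ is automatically finite; there is nothing to prove there. For the Type I locus, the source of $(X,B+D)$ is the pair $(X,B+D)$ itself, and the map $\cM^{\rm I}\to M_m^{\rm I}$ is a coarse moduli space by Proposition \ref{p:TypeI+IIsaturated}, so two Type I points with isomorphic sources are actually equal; hence $W_x\cap M_m^{\rm I}$ is a single point. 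The content is therefore entirely in the Type II locus.

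For Type II, fix $x\in M_m^{\rm II}(\bk)$ with source $(E,D_E)$, a klt CY curve pair — so $E$ is either an elliptic curve with $D_E=0$ or $E\simeq\bP^1$ with $D_E$ supported at $\geq 3$ points (or exactly $2$ points with coefficient $1$ — in fact the lc-CY curve pairs have $\deg D_E$ bounded since the coefficients of $B$ lie in $\bfT$ and $D$ has coefficient $c$ from a fixed finite set). First I would observe that $(E,D_E)$ varies in a bounded family: the Type II locus $\cM^{\rm I+II}$ is finite type by Proposition \ref{p:TypeI+IIfinitetype}, so there are only finitely many deformation types, and the $j$-invariant of $E$ (elliptic case) or the cross-ratios of the marked points (rational case) live in a finite-dimensional moduli space. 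The key point is then that \emph{for polystable Type II pairs, the source determines the pair up to finite ambiguity}. Every closed point of $M_m$ is represented by a unique polystable pair $(X,B+D)$ by Proposition \ref{p:stable}.1, and Theorem \ref{t:TypeIIfieldofdef} says precisely that the field of definition of a polystable Type II pair is controlled by that of its source: if $(E,D_E)$ descends to a subfield $\bk'$, so does $(X,B+D)$. Applying this with $\bk'$ the algebraic closure of the field generated by the coefficients and the moduli of $(E,D_E)$ — which is finitely generated over $\bQ$, hence has only countably many $\bk$-points but, more relevantly, the set of polystable $(X,B+D)$ over $\bk'$ with prescribed source $(E,D_E)$ forms a scheme of finite type over $\bk'$ all of whose $\bk$-points have the \emph{same} source — forces that set to be finite, because a positive-dimensional such family would produce infinitely many non-isomorphic polystable pairs sharing a source, contradicting that each gives a distinct closed point whose source is fixed; combined with the geometric description in Propositions \ref{p:TypeIISeifert} and \ref{p:TypeIIblowup} (the normalization of $(X,B+D)$ is a compactified Seifert $\bG_m$-bundle over $(E,D_E)$ determined by an ample $\bQ$-divisor $L$ with bounded degree and support in $\Supp(D_E)$, glued via an involution of $\bP^1$ which is itself finitely determined), the set of such $(X,B+D)$ up to isomorphism is finite.

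The main obstacle I anticipate is making the last step rigorous: one must argue that, although $L$ is only determined up to $\bZ$-linear equivalence and the origin of the Seifert bundle can be translated, the \emph{isomorphism class} of the resulting boundary polarized CY pair ranges over a finite set. Concretely, in the elliptic case $L = d\cdot p$ with $d = \deg L$ bounded and $p\in E$ arbitrary, but translating $E$ absorbs the choice of $p$, so $L$ is determined up to the finitely many values of $d$; in the rational case the floor $\lfloor L\rfloor$ and fractional part are both supported at the finitely many points of $\Supp(D_E)$ with bounded degree, giving finitely many choices; the gluing involution on each $\bP^1$-component of the conductor is, by the classification in the proof of Corollary \ref{c:TypeIIIvariation} (which applies verbatim), one of finitely many possibilities once the marked points are fixed. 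Feeding this back through the explicit reconstruction $Z \leftarrow Z_r \to Y \to X$ of Proposition \ref{p:TypeIIblowup} — where the intermediate blowups occur at the finitely many intersection points of boundary curves — yields finitely many isomorphism classes of $(X,B+D)$. Since there are only finitely many deformation types of source $(E,D_E)$ on $M_m^{\rm II}$ to begin with by boundedness, and for each the set $W_x$ is finite by the above, taking the union over the three strata gives that $W_x$ is finite for every $x\in M_m(\bk)$ when $m\gg0$. $\qed$
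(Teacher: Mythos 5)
Your stratification by type and your treatment of the Type I and Type III strata match the paper (modulo the small point that the Type I fibre of $W_x$ is finite rather than a single point, because of markings). The Type II stratum is where the real content lies, and there your argument has a genuine gap. You invoke Theorem \ref{t:TypeIIfieldofdef} in the wrong direction: descending each individual polystable pair to a small field $\bk'$ generated by the moduli of the source shows at best that $W_x$ is contained in the $\bk'$-points of $M_m$, which for a countable $\bk'$ only bounds $W_x$ by a countable set --- a countable dense subset of a positive-dimensional component of $\overline{W_x}$ is perfectly possible, so no contradiction is reached. The sentence ``a positive-dimensional such family would produce infinitely many non-isomorphic polystable pairs sharing a source, contradicting that each gives a distinct closed point whose source is fixed'' is circular: the existence of infinitely many such closed points is exactly what you are trying to rule out. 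The paper's mechanism is the opposite one: it takes an irreducible component $Z$ of $\overline{W_x}$, covers it by a smooth variety $T$ carrying a family of polystable Type II pairs, uses Lemma \ref{l:srcfield} (a spreading-out statement about dlt modifications in the family and the induced map to the moduli of elliptic or marked rational curves, which you never address) to show that the source of the \emph{geometric generic fibre} is the base change of a pair over $\bk$, and only then applies Theorem \ref{t:TypeIIfieldofdef} with $\bk'=\bk$ inside the large field $\overline{K(T)}$ to conclude that the geometric generic fibre itself descends to $\bk$; this forces the generic point of $T$ to map to a closed point of $M_m$, hence $\dim Z=0$.

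Your fallback --- a direct enumeration of polystable Type II pairs with prescribed source via the Seifert-bundle and blowup structure of Propositions \ref{p:TypeIISeifert} and \ref{p:TypeIIblowup} --- is a genuinely different route that could in principle work, but as written it is far from complete: you would need to bound the degree of $L$ and the length $r$ of the blowup sequence uniformly (this does follow from the finite-typeness of $\cM^{\rm I+II}$, but it must be extracted), handle the non-normal polystable pairs (two components glued along conductors, where the relevant finiteness of involutions comes from the ``three points determine an automorphism of $\bP^1$'' argument in the proof of Theorem \ref{t:TypeIIfieldofdef}, not from the toric classification in Corollary \ref{c:TypeIIIvariation} that you cite), and show that the decomposition of the boundary of $Z_r$ into the strict transforms of $B$ and $D$ together with the crepant-pullback coefficients admits only finitely many values. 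None of these steps is carried out, so the proposal does not yet constitute a proof.
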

		
		In the proposition, the notation ${\rm Src}(x)$ for a point $x\in M_m(\bk)$ denotes the source of a boundary polarized CY pair that maps to $y$ under the map $\cM_m(\bk)\to M_m(\bk)$.
		Note that ${\rm Src}(y)$ is well defined, since if two pairs in $\cM_m(\bk)$ map to $y$, then they are S-equivalent by  Proposition \ref{p:propertiesofGMs} and Remark \ref{r:Sequiv}.3 and so  they have the same source by Remark \ref{r:Sequiv}.1.

		\begin{proof}
			If $m\gg0$, then Proposition \ref{p:TypeI+IIsaturated} and Theorem \ref{t:stabilization}  imply that $\cM^{\rm I+II}\subset \cM_m$ and  that $M_m\to M_l$ is a  bijection on $\bk$-points for all $l\geq m$.
			If $x \in M_m(\bk)$ is in the Type I locus,  then $W_x$ is finite, since the source of a Type I pair $(X,B+D)$ is  the underlying klt CY pair and it admits finitely many possible markings in our moduli problem. 
			If $x\in M_m(\bk)$ is a Type III pair, then $W_x$ is finite by Proposition \ref{p:TypeIIIlocusdiscrete}.

			It remains to analyze the case when $x\in M_m(\bk)$ is in the Type II locus.
			Let $W:= \overline{W_x}\subset M_m$ denote the closure.
			Fix an irreducible component $Z\subset W$.
			By \cite[Theorem 14.11]{BABWILD}, there exists a smooth quasi-projective variety $T$ and commutative diagram
			\[
			\begin{tikzcd}
				T \arrow[r] \arrow[d]& \cM_m\arrow[d] \\
				Z \arrow[r] & M_m
			\end{tikzcd}
			\]
			such that $T\to Z$ is a dominant generically finite morphism and $T\to \cM_m$ maps closed points to closed points.
			The morphism $T\to \cM_m$ induces a family of boundary polarized CY pairs
			$(X,B+D)\to T$.
			Since the Type II locus  of $M_m$ is constructible, after possibly shrinking $T$, we may assume that all fibers of $(X,B+D)\to T$ are  Type II.  
			Since $\cM^{\rm I+II}\subset \cM_m$, the condition $T\to \cM_m$ maps closed points to closed points  implies that the fibers of $(X,B+D) \to T$ are polystable.
			
			We aim to show that $\dim Z=0$.
			Let $\eta \in T$ denote the generic point. 
			By Lemma \ref{l:srcfield} below, the source of $(X_{\overline{\eta}},B_{\overline{\eta}}+D_{\overline{\eta}})$ is the base change of a CY pair defined over $\bk$. 
			Thus Theorem \ref{t:TypeIIfieldofdef} implies that $(X_{\overline{\eta}},B_{\overline{\eta}}+D_{\overline{\eta}})$ is  the base change of a pair defined over $\bk$. The latter pair over $\bk$ maps to  a closed point $p \in M_m$.
			Thus the composition $T \to Z\hookrightarrow  M_m$ maps $\eta$ to $p$.
			Since $T\to Z$ is dominant, $Z= \{p\}$.
			Arguing the same way for each irreducible component of $W$ shows that $W$ is finite  and hence so $W_x$ is also finite. 
		\end{proof}
		
		\begin{lemma}\label{l:srcfield}
			Let $(X,B+D)\to T$ be a family of Type II boundary polarized CY surface pairs over a normal variety $T$.
			If $x \in T(\bk)$ and
			\[
			\{ t\in T(\bk) \, \vert\, {\rm Src}(X_t, B_t+D_t) \simeq {\rm Src}(X_x,B_x+D_x)\}
			\]
			is dense in $T$,
			then the source of the geometric generic fiber of the family is the base change of ${\rm Src}(X_x,B_x+D_x)$.
		\end{lemma}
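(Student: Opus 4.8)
The plan is to spread out the source over an open subset of $T$, reduce to a situation where the source appears in a family over $T$, and then use a specialization/density argument together with rigidity of klt CY pairs to identify the geometric generic source with the fixed source $\mathrm{Src}(X_x,B_x+D_x)$.

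\medskip

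\emph{Step 1: construct a family of sources.} After possibly shrinking $T$ around $x$ (which preserves the density hypothesis, since a dense constructible subset of $T$ stays dense in any open subset meeting $x$), I would produce a simultaneous dlt modification of the family $(X,B)\to T$. Concretely, take a dlt modification $(\cY,\cB_\cY)\to (X,B)$ of the total space, which is crepant; after shrinking $T$ one may assume it restricts fiberwise to dlt modifications of $(X_t,B_t)$ for all $t$ in a dense open (this is where one uses generic flatness and the openness of the relevant loci, cf.\ the discussion of dlt modifications in Section \ref{ss:source} and \cite[Theorem 1.34]{Kol13}). Choosing a fiberwise minimal lc center $\cS\subset\cY$ (again possible after shrinking $T$, since the number and dimension of lc centers is constructible), one gets a family $(\cS,\cB_\cS)\to T$ whose fiber over each $t$ in a dense open is crepant birational to $\mathrm{Src}(X_t,B_t+D_t)$; since the pairs are Type II, the source is a single pair in its crepant birational class (Definition \ref{d:source} and the remark after it), so the fiber $(\cS_t,\cB_{\cS_t})$ \emph{is} $\mathrm{Src}(X_t,B_t+D_t)$ up to crepant birational equivalence, hence we can even run a relative MMP / pass to the ample model to assume $(\cS_t,\cB_{\cS_t})=\mathrm{Src}(X_t,B_t+D_t)$ on a dense open $T^\circ\subset T$.

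\medskip

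\emph{Step 2: the isotriviality argument.} Now $(\cS,\cB_\cS)\to T^\circ$ is a family of klt CY \emph{curve} pairs (Type II means the source is one-dimensional), and by hypothesis there is a dense subset $T'\subset T^\circ(\bk)$ on which the fiber is isomorphic to the fixed pair $(E_0,D_{E_0}):=\mathrm{Src}(X_x,B_x+D_x)$. A klt CY curve pair is either an elliptic curve with zero boundary or $(\bP^1,\sum a_ip_i)$ with the $a_i$ summing appropriately; in either case it has finite (hence in particular no positive-dimensional) moduli once the marking is fixed, and more importantly the family $(\cS,\cB_\cS)\to T^\circ$ induces a map from $T^\circ$ to the (finite-type, separated) moduli stack of such pairs. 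Since this map is constant on a dense subset of closed points, it is constant; equivalently, the family is isotrivial. Then a standard argument (étale-local triviality of isotrivial families of pairs with finite automorphisms, or directly: the $j$-invariant resp.\ the cross-ratio of the boundary points is a regular function on $T^\circ$ taking a single value on a dense set, hence constant) shows $(\cS_{\bar\eta},\cB_{\cS_{\bar\eta}})\simeq (E_0,D_{E_0})\times_{\bk}\overline{K(T)}$ for the geometric generic point $\bar\eta$.

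\medskip

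\emph{Step 3: conclude.} By Step 1, $\mathrm{Src}(X_{\bar\eta},B_{\bar\eta}+D_{\bar\eta})=(\cS_{\bar\eta},\cB_{\cS_{\bar\eta}})$ (the generic point lies in $T^\circ$), which by Step 2 is the base change of $(E_0,D_{E_0})=\mathrm{Src}(X_x,B_x+D_x)$ to $\overline{K(T)}$. This is exactly the assertion. The main obstacle I anticipate is Step 1: making the dlt modification and the choice of minimal lc center work \emph{in families} — i.e.\ checking that after shrinking $T$ the total-space construction restricts fiberwise to the source of each fiber. In dimension two one can be fairly hands-on (the source of a Type II surface pair is a $\bP^1$ or elliptic curve sitting on a dlt model, and one can use the explicit geometry from Section \ref{s:TypeII}, e.g.\ Propositions \ref{p:TypeIISeifert} and \ref{p:TypeIIblowup}, to realize it in a family), but the bookkeeping of which loci are open/constructible is the part that needs care; the isotriviality in Step 2 is then routine given that klt CY curve pairs have no positive-dimensional moduli.
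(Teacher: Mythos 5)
Your proposal is correct and follows essentially the same route as the paper: take a dlt modification of the total space, arrange (after shrinking $T$ and passing to a generically finite cover) that a component of the reduced boundary gives a family of sources via adjunction, and then conclude isotriviality from the induced map to the moduli of elliptic curves resp.\ genus-zero curves with marked points being constant on a dense set. The only cosmetic difference is that the paper works directly with a prime divisor $C\subset\Supp(\lfloor\Delta\rfloor)$ and the different ${\rm Diff}_C(\Delta-C)$ rather than an abstractly chosen minimal lc center, which sidesteps the relative-MMP step you mention.
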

		
		\begin{proof}
			Throughout the proof, we may replace $T$ with a dense open subset or a generically finite cover, since this will not change the hypotheses or the geometric generic fiber.
			Thus, by replacing $T$ with an open set, we may assume that $T$ is smooth. 
			By replacing $(X,B+D)$ with an irreducible component of its normalization, we may assume that $\oX$ is normal.
			Next, let 
			\[
			(Y,\Delta)\to (X,B+D)
			\]
			be a dlt modification. 
			By replacing  $T$ with a dense open set, we may assume that 
			$
			(Y_t, \Delta_t)\to (X_t,B_t+D_t)
			$ is a dlt modification for all $t\in T$. 
			By replacing $T$ with a generically finite cover, we may assume that there exists a prime divisor $C\subset \Supp(\lfloor \Delta \rfloor )$
			such that $C\to T$ is dominant and has irreducible fibers. 
			Let $\Gamma:= {\rm Diff}_C(\Delta-C)$ and note that $(C, \Gamma)$ is an lc pair by adjunction.
			By replacing $T$ with a dense open subset, we may assume $(C,\Gamma)\to T$ is  a family of lc pairs. 
			Since the fibers of $(X,B+D)\to T$ are Type II surface pairs, $(C_t,\Gamma_t)$ is the source of $(X_t,B_t+D_t)$ for $t\in T$ and so each $(C_t,\Gamma_t)$ is a klt CY curve pair. 
			Thus either 
			\begin{enumerate}
				\item[(a)]  $C\to T$ is a family of genus 1 curves and $\Gamma=0$ or
				\item[(b)] $C\to T$ is a family of genus 0 curves and each $\Supp(\Gamma_t)$ contains at least three points.
			\end{enumerate}
			In case (a), after replacing $T$ with a generically finite cover, $C\to T$ admits a section. Hence we get a family of elliptic curves.
			In case (b), after replacing $T$ with a generically finite cover, we can write
			$\Supp(\Gamma)= \Gamma_1+\cdots + \Gamma_r$, where the $\Gamma_i$ are prime divisors and $C\to T$ admits sections with  images $\Gamma_1,\ldots, \Gamma_r$. 
			Thus we get a family of genus 0 curves with $r\geq 3$ marked points. 
			This give a map
			\[
			\pi:T\to M^{\rm curve}
			\]
			to the coarse moduli space of elliptic curves or genus zero curves with $r\geq 3$ marked points. 
			By our assumption on the sources,  $\pi(x)$ is dense in $\pi(T)$ and so $\{\pi(x)\}=\pi(T) $. 
			Thus the geometric generic fiber of $(C,\Gamma)\to T$ is the base change of $(C_x,\Gamma_x)$.
		\end{proof}

		\subsection{Hodge Line Bundle}
		We now proceed to prove the ampleness of the Hodge line bundle on our moduli space.
		Before proceeding, we recall its definition in various settings.  
		
		\begin{definition}[Hodge Line Bundle]
			\hfill
			
			\begin{enumerate}
				\item If $f:(X,B+D)\to T$ is a family of boundary polarized CY pairs in $\cM(\chi, N,{\bf a},c)$, then the \emph{Hodge line bundle} of $f$ is the sheaf
				\[
				\la_{{\rm Hodge}, f,N}: = f_* \omega_{X/T}^{[N]}(N(B+D))
				.\]
				By \cite[Proposition 14.7.1]{BABWILD}, the above sheaf  is the unique line bundle on $T$ such that $\omega_{X/S}^{[N]}(N(B+D))=f^* \la_{{\rm Hodge},f,N}$.
				
				\item The line bundle in (1) is functorial by \cite[Proposition 14.7.2]{BABWILD} and so induces a  line bundle on $\cM$ that we denote by $\la_{{\rm Hodge},N}$.

				\item Since $\cM_m\to M_m$ is a good moduli space morphism, 
				\cite[14.9 and 14.10.1]{BABWILD} implies that, for any sufficiently divisible positive integer $l$, the line bundle $\la_{{\rm Hodge},N}^{\otimes l}$ restricted to  $\cM_m$ descends to a line bundle $L_{ {\rm Hodge},Nl}$. 
				We set $L_{\rm Hodge}:= L_{ {\rm Hodge}, Nl}^{\otimes 1/l}$, which is a $\bQ$-line bundle, and a bit abusively refer to it as the \emph{Hodge line bundle} on $M_m$.
			\end{enumerate}	
		\end{definition}
		
		\begin{rem}
			The Hodge line bundle is related to various other important functorial line bundles that appear when studying families of Calabi--Yau varieties. 
			
			\begin{enumerate}
				\item  The Hodge line bundle is closely related to the moduli divisor of an lc trivial fibration in \cite{Kaw97,Amb05}.
				See \cite[Proposition 14.7.3]{BABWILD} for a  precise statement. 
				
				\item The Hodge line bundle agrees up to a positive constant with the associated CM line bundle of the polarized family of pairs;  see \cite[pg. 17]{ADL19} for details.
			\end{enumerate}
		\end{rem}

		\begin{theorem}[Ampleness]\label{t:amplehodge}
			If $m\gg0$, then the Hodge line bundle  $L_{\rm Hodge}$  on $M_m$ is ample.
		\end{theorem}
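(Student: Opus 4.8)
\textbf{Proof proposal for Theorem \ref{t:amplehodge}.}

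The plan is to reduce the ampleness of $L_{\rm Hodge}$ on the proper scheme $M_m$ (for $m\gg0$) to the known positivity results for Hodge-type line bundles coming from variations of Hodge structures, via the theory of lc-trivial fibrations. First I would recall that by Proposition \ref{p:properness} the scheme $M_m$ is proper for $m\gg0$, so it suffices to prove that $L_{\rm Hodge}$ is strictly nef and that some power is globally generated, or more directly to invoke a Nakai--Moishezon-type criterion: $L_{\rm Hodge}$ is ample if and only if $L_{\rm Hodge}^{\dim Z}\cdot Z>0$ for every positive-dimensional irreducible closed subvariety $Z\subseteq M_m$. Equivalently, using the semipositivity results of \cite{Amb05,Kaw97,Kol07}, the pullback of $L_{\rm Hodge}$ to any family is nef and big whenever the family has maximal variation of its associated Hodge structure, i.e.\ whenever the source pairs vary maximally. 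So the real content is: (i) $L_{\rm Hodge}$ is nef on $M_m$, and (ii) for every positive-dimensional $Z\subseteq M_m$ the restriction $L_{\rm Hodge}|_Z$ is big, which will follow once we know the sources of the points parametrized by $Z$ are non-isotrivial.

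For step (i), I would use \cite[Theorem 14.11]{BABWILD} (already invoked in the proof of Proposition \ref{p:variationsource}) to find, for any curve $Z\subseteq M_m$, a smooth quasi-projective $T$ with a generically finite dominant map $T\to Z$ lifting to $\cM_m$, giving a family $(X,B+D)\to T$; then by the definition of the Hodge line bundle and its functoriality, $L_{\rm Hodge}|_Z$ pulls back to $\lambda_{{\rm Hodge},f,N}^{\otimes 1/(\text{const})}$ on a compactification of $T$. Passing to a dlt modification and the source, $\lambda_{{\rm Hodge},f,N}$ is (a multiple of) the moduli part of an lc-trivial fibration whose fibers are the klt CY pairs ${\rm Src}(X_t,B_t+D_t)$; by \cite{Amb05,Kaw97,Kol07} the moduli part is nef (after a birational modification of the base, which does not affect nefness of the descended bundle on the proper $M_m$). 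This gives nefness of $L_{\rm Hodge}$ on all curves, hence nefness on $M_m$.

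For step (ii), I would combine nefness with Proposition \ref{p:variationsource}: for $m\gg0$ and any positive-dimensional irreducible $Z\subseteq M_m$, the set $W_x$ of points with a fixed source is finite, so the source varies non-trivially along $Z$; hence the lc-trivial fibration constructed above over (a modification of) $Z$ has maximal variation in the sense that the moduli map to the relevant coarse moduli of klt CY pairs (of dimension $0$, $1$, or $2$) is non-constant — indeed generically finite onto its image after the modification, since fibers of constant source form a finite set. By the Ambro--Kawamata--Koll\'ar bigness criterion for the moduli part of a maximally-varying lc-trivial fibration (applied on Type I, II, and III loci separately: on Type I it is tautological, on Type III the locus is finite by Proposition \ref{p:TypeIIIlocusdiscrete} so there is nothing to check, and on Type II one uses the explicit description of the source as a genus $\le1$ curve pair from Section \ref{s:TypeII}), $L_{\rm Hodge}|_Z$ is big. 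Combined with nefness, $L_{\rm Hodge}^{\dim Z}\cdot Z>0$ for all such $Z$, so Nakai--Moishezon gives ampleness.

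The main obstacle I expect is step (ii) on the Type II locus: one must verify that \emph{maximal variation of the source} really does translate into \emph{bigness of the moduli part} in the precise form needed, which requires knowing that the source pairs $(C_t,\Gamma_t)$ (genus $0$ with $\ge3$ marked points, or genus $1$) vary with positive-dimensional image in their moduli, and that the Hodge line bundle on the total family genuinely restricts to the pullback of an ample class from that moduli of sources — this is exactly what Lemma \ref{l:srcfield}, Theorem \ref{t:TypeIIfieldofdef}, and the Seifert-bundle analysis of Section \ref{s:TypeII} are set up to supply, but assembling them into a clean bigness statement (handling the generically-finite base changes and the descent through the good moduli space morphism $\phi_m$) is the delicate part. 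A secondary subtlety is ensuring that passing to birational modifications of the base in the Ambro--Kawamata semipositivity theorems is harmless here, which is fine because $M_m$ is proper and $L_{\rm Hodge}$ is already a genuine $\mathbb{Q}$-line bundle on it, so nefness and intersection numbers are computed downstairs.
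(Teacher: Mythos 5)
Your proposal follows essentially the same route as the paper: properness of $M_m$ (Proposition \ref{p:properness}), the Nakai--Moishezon criterion for proper algebraic spaces, and the finiteness of same-source fibers from Proposition \ref{p:variationsource}. The only difference is that the paper cites \cite[Theorem 14.13]{BABWILD} as a black box for the implication ``finite same-source sets on $Z$ implies $L_{\rm Hodge}\vert_Z^{\dim Z}>0$,'' whereas you sketch how that implication would be proved via the Ambro--Kawamata--Koll\'ar positivity of the moduli part of an lc-trivial fibration — which is indeed what that cited theorem rests on, so your unpacking is consistent with the paper's argument.
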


		The proof relies on a positivity result for the Hodge line bundle on a family of boundary polarized CY pairs with maximally varying source  in \cite[Theorem 14.13]{BABWILD}, which is a consequence of positivity results for the moduli divisor of an lc trivial fibration in \cite{Kaw97,Amb05,Kol07}.

		\begin{proof}[Proof of Theorem \ref{t:amplehodge}]
			Fix $m\gg0$ so that the conclusions of Propositions 
			\ref{p:properness} and  \ref{p:variationsource}  hold. 
			By the Nakai-Moshezon criterion for ampleness on proper algebraic spaces \cite[Theorem 3.11]{Kol90},
			it suffices to verify that $L_{\rm Hodge}\vert_Z^{\dim Z}> 0 $ for each irreducible closed subset $Z\subset X$ of positive dimension. 
			By \cite[Theorem 14.13]{BABWILD} (and its proof for the case when $\cM$ parametrizes pairs with more than one marked divisors in the log Fano boundary), the latter holds if, for each $x \in Z(\bk)$, the subset
			\[
			\{ y\in Z(\bk)\, \vert\, {\rm Src}(y)\simeq {\rm Src}(x) \} \subset Z(\bk)
			\]
			is finite.
			Since $M_m$ satisfies the conclusion of Proposition \ref{p:variationsource}, the finiteness holds and so $L_{\rm Hodge}\vert_Z^{\dim Z}> 0 $. 
			Therefore $L_{\rm Hodge}$ is ample on $M_m$.
		\end{proof}
		
		\begin{corollary}[Projectivity]\label{c:projectivity}
			If $m\gg0$, then $M_m$ is projective. 
		\end{corollary}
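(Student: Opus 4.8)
The plan is to deduce projectivity formally from the two preceding results, invoking only the standard principle that a proper algebraic space carrying an ample invertible sheaf is a projective scheme. First I would choose $m$ large enough that the conclusions of both Proposition~\ref{p:properness} and Theorem~\ref{t:amplehodge} hold (take the larger of the two thresholds). For such $m$ the algebraic space $M_m$ is proper and of finite type over $\bk$, and the Hodge $\bQ$-line bundle $L_{\rm Hodge}$ on $M_m$ is ample.

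Next I would pass from the $\bQ$-line bundle to an honest one: by construction $L_{\rm Hodge} = L_{{\rm Hodge},Nl}^{\otimes 1/l}$ for $l$ sufficiently divisible, where $L_{{\rm Hodge},Nl}$ is the descent to $M_m$ of $\lambda_{{\rm Hodge},N}^{\otimes l}|_{\cM_m}$, and ampleness of $L_{\rm Hodge}$ means that $L_{{\rm Hodge},Nl}$ is an ample line bundle on $M_m$ (after possibly enlarging $l$). Thus $M_m$ is a proper algebraic space of finite type over $\bk$ equipped with an ample invertible sheaf, hence a projective scheme by \cite[Section 3]{Kol90} (the companion statement to the Nakai--Moshezon criterion \cite[Theorem 3.11]{Kol90} already used in the proof of Theorem~\ref{t:amplehodge}; see also \cite{stacks-project}). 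This is exactly the assertion.

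There is no genuine obstacle here: all of the content has already been carried out, the properness of $M_m$ being imported from the properness of the KSBA moduli space through the wall-crossing morphism of Proposition~\ref{p:Mmwallcrossing} together with \cite[Theorem 5.4]{AHLH23}, and the ampleness of $L_{\rm Hodge}$ coming from the maximal variation of sources (Proposition~\ref{p:variationsource}) fed into the positivity input \cite[Theorem 14.13]{BABWILD}. The only bookkeeping worth flagging is that ``$m\gg0$'' should be read as the maximum of the finitely many thresholds appearing in Propositions~\ref{p:properness} and~\ref{p:variationsource} and Theorem~\ref{t:amplehodge}; enlarging $m$ beyond this is harmless since $M_m\to M_{m+1}$ is an isomorphism for $m\gg0$ by Theorem~\ref{t:stabilization}, which is precisely what makes $M^{\rm CY}$ well defined in Theorem~\ref{t:mainCY}.
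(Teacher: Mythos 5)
Your proposal is correct and is exactly the paper's argument: the paper deduces Corollary~\ref{c:projectivity} immediately from Proposition~\ref{p:properness} (properness) and Theorem~\ref{t:amplehodge} (ampleness of $L_{\rm Hodge}$), just as you do. The extra bookkeeping you supply — replacing the $\bQ$-line bundle by a genuine ample invertible sheaf and taking $m$ past the maximum of the finitely many thresholds — is the routine content the paper leaves implicit.
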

		
		\begin{proof}
			This follows immediately from Proposition \ref{p:properness} and Theorem \ref{t:amplehodge}.
		\end{proof}
		
		\begin{remark}
			The b-semiampleness conjecture of Prokhorov and Shokurov predicts that the moduli divisor of a lc trivial fibration $(X,D) \to Y$ is base point free; see  \cite[Conjecture 7.13.1]{PS09} for details. 
			The proof of the conjecture when $\dim X-\dim Y=2$ was completed in \cite{BABWILD} using the ampleness of the Hodge line bundle on 
			the CY moduli compactification of pairs of the form $(\bP^2, \tfrac{3}{d} C)$.
			The results of this paper give a  slightly different proof. 
			
			The proof of the b-semiampleness conjecture in relative dimension two  relies on
			\cite{Fil20}, which reduces the conjecture to the case when either $D=0$ or $X\to Y$ is a Mori fiber space (in which the general fiber of $(X,D)\to Y$ is a boundary polarized CY pair).
			When $D=0$, the conjecture holds  by \cite{Fuj03}.
			When $X\to Y$ is a Mori fiber space, \cite{Fil20} further reduces the conjecture  to the case when a general fiber of $X\to Y$ is $\bP^2$, which was then proven in \cite{BABWILD}.
			Using the results of this paper and, in particular, Theorem \ref{t:amplehodge}, we can prove an extension of  \cite[Proposition 4.15]{BABWILD} to that the moduli divisor of an lc trivial fibration $(X,D)\to Y$ is b-semiample when $X\to Y$ is a Mori fiber space of relative dimension two without reducing to the $\bP^2$ case.
		\end{remark}
		
		\subsection{Asymptotically good moduli spaces}

		We will now construct an asymptotically good moduli space for $\cM^{\CY}$ and then prove Theorem \ref{t:mainCY}. 
		
		\begin{definition}
			A morphism $\phi:\cX\to X$ from an algebraic stack to an algebraic space  is an \emph{asymptotically good moduli space} if there exists an ascending chain of open substacks of $\cX$ 
			\[
			\cU_1 \subset \cU_2 \subset \cU_3 \subset \cdots 
			\]
			such that $\cX = \cup_{i \geq 1} \cU_i$ and the composition $\cU_i \to \cX\to X$ is a good moduli space morphism for all $i \geq 1$.
		\end{definition}
		
		
		
		\begin{proposition}\label{p:existsAGMcy}
			There exists an asymptotically good moduli space morphism 
			\[
			\phi: \cM^{\CY}\to M^{\rm CY}
			\]
			and, for $m\gg0$, the composition 
			$
			\cM_m^{\sn} \hookrightarrow \cM^{\CY} \to M^{\rm CY}
			$
			is a good moduli space morphism. 
		\end{proposition}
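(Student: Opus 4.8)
The plan is to take $M^{\CY}$ to be the common value of the seminormalizations $M_m^{\sn}$ of the good moduli spaces $M_m$ for $m\gg 0$, and to glue the good moduli space morphisms of the substacks $\cM_m^{\sn}$. First I would record the seminormalization bookkeeping: since seminormalization commutes with open immersions, the $\cM_m^{\sn}$ are open substacks of $\cM^{\CY}$ with $\cM^{\CY}=\bigcup_{m\ge 1}\cM_m^{\sn}$. By the compatibility of good moduli space morphisms with seminormalization (cf.\ \cite[Section 13.2]{BABWILD} and \cite[Section 10.8]{Kol23}), the good moduli space morphism $\phi_m\colon\cM_m\to M_m$ of Theorem \ref{t:existsGMindex} induces a good moduli space morphism $\phi_m^{\sn}\colon\cM_m^{\sn}\to M_m^{\sn}$, and seminormalizing the diagram \eqref{e:diagMmMm+1} yields a commutative diagram with vertical arrows $\phi_m^{\sn}$ and horizontal arrows $\psi_m^{\sn}\colon M_m^{\sn}\to M_{m+1}^{\sn}$.

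Next I would prove that $\psi_m^{\sn}$ is an isomorphism for $m\gg 0$, so that the schemes $M_m^{\sn}$ stabilize. Fix $m$ large enough that Corollary \ref{c:projectivity}, Theorem \ref{t:stabilization}, and Propositions \ref{p:surj+isomI+II} and \ref{p:TypeIIIlocusdiscrete} apply for all $m'\ge m$. Then $M_m$ is projective, hence so is $M_m^{\sn}$ (it is finite over $M_m$); thus $\psi_m^{\sn}$, being a morphism from a proper scheme to a separated one, is proper, and since it is bijective on $\bk$-points by Theorem \ref{t:stabilization} it is quasi-finite, hence finite. By Proposition \ref{p:surj+isomI+II}, $\psi_m$ maps the Type I+II locus $M_m^{\rm I+II}$ isomorphically onto $M_{m+1}^{\rm I+II}$, and these are dense open subsets since their complements are the finite sets $M_m^{\rm III}$, $M_{m+1}^{\rm III}$ by Proposition \ref{p:TypeIIIlocusdiscrete}; the same then holds after seminormalizing. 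Consequently, on each positive-dimensional connected component of $M_m^{\sn}$, the map $\psi_m^{\sn}$ is finite, bijective, and an isomorphism over a dense open subset, hence birational; since $M_{m+1}^{\sn}$ is seminormal, such a morphism is an isomorphism. On each $0$-dimensional component $\psi_m^{\sn}$ is an isomorphism of reduced $\bk$-points. Hence $\psi_m^{\sn}$ is an isomorphism.

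Finally I would assemble the asymptotically good moduli space. Fix $m_0\gg 0$ so the previous step applies and set $M^{\CY}:=M_{m_0}^{\sn}$. Using the isomorphisms $\psi_m^{\sn}$ for $m\ge m_0$ to identify every $M_m^{\sn}$ with $M^{\CY}$, the morphisms $\phi_m^{\sn}$ agree on overlaps and glue to a morphism $\phi\colon\cM^{\CY}\to M^{\CY}$ whose restriction to each open substack $\cM_m^{\sn}$ with $m\ge m_0$ is $\phi_m^{\sn}$, a good moduli space morphism. Taking the exhausting chain $\cU_i:=\cM_{m_0+i}^{\sn}$ ($i\ge 1$) of open substacks of $\cM^{\CY}$ exhibits $\phi$ as an asymptotically good moduli space morphism, and for $m\gg 0$ the composition $\cM_m^{\sn}\hookrightarrow\cM^{\CY}\to M^{\CY}$ is a good moduli space morphism, as required.

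The step I expect to be the main obstacle is the upgrade, in the second paragraph, from the bijection on $\bk$-points supplied by Theorem \ref{t:stabilization} to an honest isomorphism of schemes. This is exactly where seminormality is indispensable, and it forces one to check birationality on each connected component: this is controlled by the density of the Type I+II locus (an open subset with finite Type III complement) together with the isomorphism statement of Proposition \ref{p:surj+isomI+II}, and it relies on the compatibility of seminormalization with the good moduli space morphisms invoked in the first paragraph, which should be taken from \cite[Section 13.2]{BABWILD}.
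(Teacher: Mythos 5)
Your proposal is correct and follows essentially the same route as the paper: seminormalize the diagram of good moduli spaces via the compatibility result from \cite[Lemma 13.7]{BABWILD}, show the maps $M_m^{\sn}\to M_{m+1}^{\sn}$ are finite bijections between seminormal projective schemes and hence isomorphisms, and glue. The only cosmetic difference is that you justify the final isomorphism step by first establishing birationality on the dense Type I+II locus via Proposition \ref{p:surj+isomI+II}, whereas the paper invokes directly that a proper morphism of seminormal projective schemes which is bijective on $\bk$-points is an isomorphism; both are valid.
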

		
		For the definitition and properties of the seminormalization of an algebraic stack, see  \cite[Section 13.2]{BABWILD}.
		
		\begin{proof}
			Since seminormalizaiton is functorial, taking the seminormalization of the stacks (and schemes) in \eqref{e:diagMmMm+1} induces a commutative diagram 
			\[
			\begin{tikzcd}
				\cM_m^{\rm sn} \arrow[r,hook]\arrow[d]& \cM_{m+1}^{\rm sn}\arrow[r,hook]\arrow[d] & \cM_{m+2}^{\rm sn} \arrow[r,hook]\arrow[d,]& ...\\
				M_m^{\rm sn} \arrow[r,] & M_{m+1}^{\rm sn} \arrow[r]& M_{m+2}^{\rm sn} \arrow[r]& ...
			\end{tikzcd}
			,\]
			where the maps in the top row are open embeddings. 
			Since the seminormalization of a good moduli space morphism is a good moduli space morphism by
			\cite[Lemma 13.7]{BABWILD}, the vertical maps are good moduli space morphisms.
			
			By Theorems \ref{t:stabilization} and \ref{t:amplehodge} and Proposition \ref{p:properness}, after possibly increasing $m$, the bottom row maps are proper morphisms of seminormal projective schemes and isomorphisms on $\bk$-points. 
			Thus the bottom row maps are all isomorphisms. Set $M^{\rm CY} := M_{m}^{\rm sn}$. 
			Since the diagram is commutative and the bottom row maps are isomorphisms, there are morphisms
			\[
			(\cM_l^{\rm sn} \to M^{\rm CY})_{l \geq m}
			\]
			that
			glue to give a morphism $\phi:\cM^{\CY} \to M^{\rm CY}$ such that the composition $\cM_l^{\rm sn}\to \cM^{\CY}\to M^{\rm CY}$ is a good moduli space. 
			Since $\cM^{\CY}= \cup_{l\geq m} \cM_l^{\rm sn} $,  $\phi$ is an asymptotically good moduli space.
		\end{proof}
		
		Theorem \ref{t:mainCY} is  a consequence of  results previously proven in this paper. 
		
		\begin{proof}[Proof of Theorem \ref{t:mainCY}]
			By Proposition \ref{p:existsAGMcy}, 
			there exists an asymptotically good moduli space morphism $\cM^{\CY}\to M^{\rm CY}$
			and $M^{\rm CY} \simeq M_m^{\rm sn}$ for $m$ sufficiently large.
			By Corollary \ref{c:projectivity}, $M_m$ is a projective scheme. 
			Since the seminormalization morphism $M^{\rm sn}_m\to M_m$ is finite, $M_m^{\rm sn}$ is a projective scheme as well. It remains to show that  (1)--(4)  hold. 
			\begin{enumerate}
				\item Since $\cM^{\CY}$ is a locally of finite type stack and $\phi$ is an asymptotically good moduli space morphism, \cite[Proposition 13.3.1]{BABWILD} implies that $\phi$ is universal among maps to algebraic spaces. 
				\item Since $\phi$ is an asymptotically good moduli space, \cite[Proposition 13.3.3]{BABWILD} implies that $\phi$ identifies two points $x=[(X,B+D)]$ and $x' = [(X',B'+D')]$ in $\cM^{\CY}(\bk)$ if and only if their closures in $\cM^{\CY}(\bk)$ intersect. 
				By Remark \ref{r:Sequiv}, the latter holds if and only if the pairs are S-equivalent. 
				
				\item Taking the seminormalization of the stacks and schemes in Proposition \ref{p:Mmwallcrossing} produces the desired wall crossing diagram. 
				
				\item Since the Hodge line bundle is ample on $M_m$ by Theorem \ref{t:amplehodge}, its pullback via the finite map $M^{\CY}\to M_m$ is also ample.
				
			\end{enumerate}
			Therefore the proof is complete.
		\end{proof}
		
		Next, we deduce Theorem \ref{t:mainintro}
		from Theorem \ref{t:mainCY}.
		This involves translating between slightly different setups and notations.
		
		\begin{proof}[Proof of Theorem \ref{t:mainintro}]
			Let  $(X,D)\to T$ be a family of marked boundary polarized CY surface pairs and write $\cM$ for the closure of the image of $T$ in the relevant moduli stack of boundary polarized CY pairs. 
			In the setup of Theorem \ref{t:mainintro}, 
			the stacks $\cM^{\K}$ and $\cM^{\rm KSBA}$ denote the open K and KSBA loci of $\cM^{\CY}$ respectively.
			
			The above notation conflicts with notation from earlier in this section.
			Indeed,  $\cM^{\K}$ and $\cM^{\KSBA}$ in Theorem \ref{t:mainintro} denote  the  seminormalizations of the stacks with the same notation in Theorem \ref{t:mainCY}. 
			Furthermore, since the seminormalization of a good moduli space morphism is a good moduli space morphism by \cite[Lemma 13.7]{BABWILD}, $M^{\K}$ and $M^{\KSBA}$ in Theorem \ref{t:mainintro} denote seminormalizations of the schemes with the same notation in Theorem \ref{t:mainCY}.
			With this translation,  Theorem \ref{t:mainCY} immediately implies Theorem \ref{t:mainintro}.
		\end{proof}

		\section{K3 surfaces and del Pezzo pairs}\label{s:examples}
		
		In this section, we discuss examples of the CY-moduli space constructed in Theorem \ref{t:mainintro} related to K3 surfaces and del Pezzo surfaces. In the process, we will prove Theorems \ref{t:delpezzointro} and \ref{t:nonsymplecticautintro}.

		\subsection{Moduli of del Pezzo pairs}\label{sec:dP}
		
		\subsubsection{Del Pezzo pairs}

		\begin{defn}\label{def:dP-pairs}
			Let $1\leq d\leq 9$ and $r\geq 1$ be integers. Let $\chi(m):= \chi(X,\omega_X^{[-m]}) = \frac{d}{2}(m^2+m) +1$ where $X$ is a smooth del Pezzo surface of degree $(-K_X)^2 = d$. 
			Let 
			\[
			\mathcal{DP}_{d,r}\subset \cM(\chi, r, 1, \tfrac{1}{r})
			\]
			denote the open substack parametrizing pairs $(X,B+D)$ where $X$ is a smooth del Pezzo surface of degree $d$, $B = 1\cdot \emptyset$, $D =\frac{1}{r}C$, and $C\in |-rK_X|$ is a smooth curve. 
			
			Let $\cDP_{d,r}^{\CY}$ be the seminormalization of the closure of $\cDP_{d,r}$ in $\cM(\chi, r, 1, \tfrac{1}{r})$. Following Section \ref{ss:KSBA+K}, let $\cDP_{d,r}^{\K}$ (resp. $\cDP_{d,r}^{\KSBA}$) be the open substack of $\cDP_{d,r}^{\CY}$ parametrizing boundary polarized CY pairs $(X,\frac{1}{r}C)$ in $\cDP_{d,r}^{\CY}$ such that $(X, \frac{1-\epsilon}{r}C)$ is K-semistable (resp. $(X, \frac{1+\epsilon}{r}C)$ is KSBA-stable) for $0<\epsilon\ll 1$.
		\end{defn}
		
		Since the deformation of a smooth del Pezzo surface $X$ is unobstructed, and the curve $C$ is Cartier on $X$, by arguments from $\bQ$-Gorenstein deformation theory (see e.g. \cite[Section 3]{Hac04} or \cite[Section 11.2]{BABWILD}, though as $X$ is smooth, the argument is much simpler than in the citations) 
		we know that $\mathcal{DP}_{d,r}$ is a smooth algebraic stack of finite type. 
		Moreover, since $(X, \frac{1-\epsilon}{r}C)$ is always K-stable for $(X,\frac{1}{r}C)\in \cDP_{d,r}$ by \cite[Theorem 2.10]{ADL21}, we know that $\mathcal{DP}_{d,r}$ is a Deligne-Mumford saturated open substack of $\cDP_{d,r}^{\K}$ using \cite{BX19}. Hence using Theorem \ref{t:Kmoduli} we know that $\mathcal{DP}_{d,r}$ admits a coarse moduli space 
		morphism to a normal quasi-projective scheme $\DP_{d,r}$. Note that the analogous statement holds using KSBA-moduli stacks and spaces if $r>1$.
		
		We also note that from the classification of smooth del Pezzo surfaces, $\mathcal{DP}_{d,r}$ and $\DP_{d,r}$ are irreducible except when $d=8$, where we have two irreducible components whose underlying surfaces are $\bP^1\times\bP^1$ or $\bF_1$.

		\begin{thm}\label{thm:delpezzo}
			For each integers $1\leq d \leq 9$ and  $r\geq 1$, there exists an asymptotically good moduli space morphism $\cDP_{d,r}^{\CY}\to \DP_{d,r}^{\CY}$ to a projective seminormal scheme $\DP_{d,r}^{\CY}$ whose closed points parametrize S-equivalence classes of pairs in $\mathcal{DP}_{d,r}^{\CY}(\bk)$.
			We have the following wall crossing diagram:
			\[
			\begin{tikzcd}
				\cDP^{\K}_{d,r} \arrow[r,hook] \arrow[d] &
				\cDP^{\CY}_{d,r} \arrow[d] & 
				\cDP^{\KSBA}_{d,r} \arrow[l,hook'] \arrow[d] \\
				\DP^{\K}_{d,r} \arrow[r]  & 
				\DP^{\CY}_{d,r}  & 
				\DP^{\KSBA}_{d,r} \arrow[l]
			\end{tikzcd}
			\]
			Here the top arrows are open immersions of algebraic stacks, the bottom arrows are birational morphisms when $r> 1$, and the vertical arrows are (asymptotically) good moduli space morphisms.
			Furthermore,  the Hodge line bundle is ample on $\DP_{d,r}^{\CY}$.
		\end{thm}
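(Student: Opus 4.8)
The plan is to obtain Theorem \ref{thm:delpezzo} as a direct specialization of Theorem \ref{t:mainCY}. Concretely, I would apply that theorem with $\cM^\circ := \cDP_{d,r}$ regarded as a substack of $\cM(\chi, r, 1, \tfrac{1}{r})$, after checking that the hypotheses imposed at the start of Section \ref{s:asgm} hold: by the discussion following Definition \ref{def:dP-pairs}, $\cDP_{d,r}$ is a finite type locally closed (in fact open) substack; the single marked coefficient of the log Fano boundary is $1$, which lies in $\bfT$; and taking $N = r$ one has $\chi(mr) = \tfrac{d}{2}(m^2r^2 + mr) + 1 = \tfrac{dr^2}{2}m^2 + O(m)$, so the required constant $v = \tfrac{dr^2}{2}$ is positive. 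With these verified, the stack theoretic closure of $\cDP_{d,r}$ is the stack called $\cM$ in Section \ref{s:asgm}, its seminormalization is $\cDP_{d,r}^{\CY} = \cM^{\CY}$, and Theorem \ref{t:mainCY} produces the asymptotically good moduli space morphism $\cDP_{d,r}^{\CY} \to \DP_{d,r}^{\CY} := M^{\CY}$ with $\DP_{d,r}^{\CY}$ projective and seminormal.

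Having set this up, I would transcribe the remaining assertions. Part (2) of Theorem \ref{t:mainCY} gives that the closed points of $\DP_{d,r}^{\CY}$ are the S-equivalence classes of pairs in $\cDP_{d,r}^{\CY}(\bk)$. Part (3) gives the wall crossing diagram; here the only thing to check is that the substacks $\cM^{\rm K,\sn}$ and $\cM^{\rm KSBA,\sn}$ appearing there coincide with $\cDP_{d,r}^{\K}$ and $\cDP_{d,r}^{\KSBA}$ of Definition \ref{def:dP-pairs}, which is immediate since both are defined as the K- respectively KSBA-semistable loci of $\cDP_{d,r}^{\CY}$, and the induced maps of good moduli spaces then identify $M^{\rm K,\sn}$ and $M^{\rm KSBA,\sn}$ with $\DP_{d,r}^{\K}$ and $\DP_{d,r}^{\KSBA}$ (using that seminormalization commutes with good moduli space morphisms, \cite[Lemma 13.7]{BABWILD}). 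When $r = 1$ the KSBA substack is empty, since the perturbed coefficient $\tfrac{1+\varepsilon}{r}$ exceeds $1$, and the diagram degenerates as in the remark following Proposition \ref{p:Mmwallcrossing}. Finally, Part (4) gives that the Hodge line bundle is ample on $\DP_{d,r}^{\CY}$.

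It remains to explain the birationality of the bottom arrows for $r > 1$, which is the last sentence of Theorem \ref{t:mainCY}(3) and requires the Type I locus of $\cM$ to be dense. This is immediate: for $r > 1$, every pair $(X,\tfrac{1}{r}C) \in \cDP_{d,r}(\bk)$ has $X$ a smooth del Pezzo surface and $\tfrac{1}{r}C$ a $\bQ$-divisor with $\lfloor \tfrac{1}{r}C\rfloor = 0$ supported on the smooth curve $C$, hence $(X,\tfrac{1}{r}C)$ is klt, i.e. Type I; since $\cM$ is the stack theoretic closure of $\cDP_{d,r}$, the Type I locus is dense. (When $r = 1$ the pair $(X,C)$ is lc but not klt, and in fact Type II with source a genus one curve, so no such density holds, which is precisely why birationality is only asserted for $r > 1$.)

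Since all of the substantive work is carried out in Theorem \ref{t:mainCY}, there is no genuine obstacle here; the only points demanding a moment's care are the notational identification of the K- and KSBA-substacks and moduli spaces between the two setups, and keeping straight which objects have been seminormalized.
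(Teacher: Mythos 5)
Your proposal is correct and follows essentially the same route as the paper: both deduce the theorem by applying Theorem \ref{t:mainCY} to $\cM^\circ = \cDP_{d,r}$ (the paper cites Theorem \ref{t:mainintro} for everything except birationality), and both obtain the birationality for $r>1$ from Theorem \ref{t:mainCY}.3 together with the observation that every pair $(X,\tfrac{1}{r}C)$ in $\cDP_{d,r}$ is klt, so the Type I locus is dense. Your extra care in verifying the hypotheses of Section \ref{s:asgm} and in matching the K- and KSBA-substacks across the two setups is welcome but does not change the argument.
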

		
		\begin{proof}
			The existence and projectivity (including the ampleness of Hodge line bundle) of asymptotically good moduli space $F_{\rho}^{\CY}$ and the wall crossing diagram (except the birationality) follows from Theorem \ref{t:mainintro}. The birationality follows from Theorem \ref{t:mainCY}.3 and the fact that if $r>1$ then every pair $(X, \frac{1}{r}C)\in \cDP_{d,r}$ is klt.
		\end{proof}
		
		\begin{proof}[Proof of Theorem \ref{t:delpezzointro}]
			This follows directly from Theorem \ref{thm:delpezzo}.
		\end{proof}
		
		\begin{rem}\label{r:notfinitetype}
			Similar to \cite[Example 9.4]{BABWILD}, one can show that $\cDP_{d,r}^{\CY}$ is not of finite type by either using the infinite toric degenerations of smooth del Pezzo surfaces from \cite[Theorem 4.1]{HP10} or the stable toric degenerations to a cone over cycle of $\bP^1$'s with arbitrary length. As a result, in Theorem \ref{thm:delpezzo} we cannot remove the ``asymptotically'' assumption on the moduli space.
		\end{rem}
		
		\begin{rem}
			When $r=1$, the morphism $\DP^{\K}_{d,1} \to \DP^{\CY}_{d,1}$ is not birational except when $d=9$, while $\DP^{\KSBA}_{d,1}$ is empty. This can be seen from the simple fact that every pair $(X,C)\in \cDP_{d,1}$ admits a weakly special degeneration to $(X_0, C_0)$ where $X_0$ is the projective cone over $C$ with polarization $\cO_X(C)|_C$, and $C_0$ is the section at infinity. Thus $\DP^{\CY}_{d,1}$ is isomorphic to $\bP^1$ via the $j$-invariant map of $C$, while $\dim \DP^{\K}_{d,1} = \dim \DP_{d,1} = 10-d\geq 2$ if $d<9$. When $d=9$, i.e. a general surface is $\bP^2$, we know that $\DP^{\K}_{9,1} \to \DP^{\CY}_{9,1}$ is an isomorphism by \cite[Proposition 16.1]{BABWILD}. See \cite{MGPZ24} for an explicit description of $\DP_{d,1}^{\K}$ when $d\geq 2$.
		\end{rem}
		
		\subsubsection{Odd bidegree curves on $\bP^1\times\bP^1$}
		
		Let $r\geq 3$ be an odd integer. Let $\chi'(m):=(2m+1)^2$. Following \cite[Section 16.3]{BABWILD}, let $\cDP_{8, \frac{r}{2}}$ denote the open substack of $\cM(\chi', r, 1, \frac{2}{r})$ parametrizing pairs $(X, 1\cdot \emptyset + \frac{2}{r}C)$ where $X\cong \bP^1\times\bP^1$ and $C\in |\cO_X(r,r)|$ is a smooth curve. Let $\cDP_{8, \frac{r}{2}}^{\CY}$ be the seminormalization of the stack theoretic closure of $\cDP_{8, \frac{r}{2}}$ in  $\cM(\chi', r, 1, \frac{2}{r})$. We define $\cDP_{8, \frac{r}{2}}^{\K}$, $\cDP_{8, \frac{r}{2}}^{\KSBA}$ and their projective good moduli spaces $\DP_{8, \frac{r}{2}}^{\K}$, $\DP_{8, \frac{r}{2}}^{\KSBA}$ similar to Definition \ref{def:dP-pairs} and Theorem \ref{thm:delpezzo}. Here we change the notation from \cite[Definition 16.10]{BABWILD} to be consistent with earlier discussions on general del Pezzo pairs.
		
		\begin{thm}
			For each odd integer $r\geq 3$, there exists a good moduli space morphism $\cDP_{8,\frac{r}{2}}^{\CY} \to \DP_{8,\frac{r}{2}}^{\CY}$ to a projective scheme. We have the following wall crossing diagram of birational morphisms between good moduli spaces:
			\[
			\DP_{8,\frac{r}{2}}^{\K}\to \DP_{8,\frac{r}{2}}^{\CY}\leftarrow \DP_{8,\frac{r}{2}}^{\KSBA}
			\]
			Furthermore, the Hodge line bundle is ample on $\DP_{8,\frac{r}{2}}^{\CY}$.
		\end{thm}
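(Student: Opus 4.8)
The plan is to derive the theorem directly from Theorem~\ref{t:mainintro} (equivalently Theorem~\ref{t:mainCY}), in exact parallel with the proof of Theorem~\ref{thm:delpezzo}. The first step is to check that the data $(\chi',r,1,\tfrac{2}{r})$ together with $\cM^{\circ}:=\cDP_{8,\frac{r}{2}}$ meet the standing hypotheses of Section~\ref{s:asgm}: the substack $\cDP_{8,\frac{r}{2}}\subset \cM(\chi',r,1,\tfrac{2}{r})$ is finite type and locally closed (indeed smooth, by the same unobstructedness argument used for $\cDP_{d,r}$, since $\bP^1\times\bP^1$ is rigid and $C$ is Cartier); the Hilbert function satisfies $\chi'(mr)=(2mr+1)^2=4r^2m^2+O(m)$, so the required normalization holds with $v=4r^2>0$; and the lone marked coefficient of the log Fano boundary is $a_1=1\in\bfT$ because $B=1\cdot\emptyset$. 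Hence $\cDP_{8,\frac{r}{2}}^{\CY}$ is precisely the seminormalized stack-theoretic closure to which Theorem~\ref{t:mainCY} applies, and that theorem provides an asymptotically good moduli space morphism onto a projective seminormal scheme $\DP_{8,\frac{r}{2}}^{\CY}$ whose closed points are the S-equivalence classes, together with the projective wall-crossing morphisms $\DP_{8,\frac{r}{2}}^{\K}\to\DP_{8,\frac{r}{2}}^{\CY}\leftarrow\DP_{8,\frac{r}{2}}^{\KSBA}$ and the ampleness of the Hodge line bundle on $\DP_{8,\frac{r}{2}}^{\CY}$.

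To upgrade ``asymptotically good'' to an honest good moduli space morphism, I would follow \cite[Section 16.3]{BABWILD} and bound the Gorenstein index of all slc degenerations of $(\bP^1\times\bP^1,\tfrac{2}{r}C)$ by a constant depending only on the fixed odd integer $r$; this forces the stack-theoretic closure $\cM$ of $\cDP_{8,\frac{r}{2}}$ to equal its bounded-Gorenstein-index open substack $\cM_m$ of Section~\ref{ss:indexboundsubstack} once $m$ is large enough, hence to be of finite type, so that $\cDP_{8,\frac{r}{2}}^{\CY}=\cM^{\sn}$ is finite type as well and the morphism is quasi-compact.

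For the birationality of the wall-crossing maps and the non-emptiness of the two open substacks, the key observation is that every pair $(\bP^1\times\bP^1,\tfrac{2}{r}C)$ with $C$ smooth is klt: the surface is smooth, $C$ is a smooth curve, and the coefficient $\tfrac{2}{r}$ lies in $(0,1)$ since $r\geq 3$. Hence the Type~I locus $\cM^{\rm I}$ contains the dense open substack $\cDP_{8,\frac{r}{2}}$ and is therefore dense in $\cM$; moreover such a klt boundary polarized CY pair lies in $\cDP_{8,\frac{r}{2}}^{\K}$ by \cite[Theorem 2.10]{ADL21} and in $\cDP_{8,\frac{r}{2}}^{\KSBA}$ because $K_X+\tfrac{1+\epsilon}{r}C\sim_{\bQ}\tfrac{\epsilon}{r}C$ is ample, so both substacks are non-empty. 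Birationality of the bottom-row morphisms then follows from Theorem~\ref{t:mainCY}(3).

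The main obstacle is not the deduction itself --- all the substantive geometry is packaged into Theorem~\ref{t:mainintro} --- but the case-specific finite-typeness input: controlling the Gorenstein index of arbitrary slc degenerations of odd-bidegree curve pairs on $\bP^1\times\bP^1$. Everything else, namely the Riemann--Roch computation of $\chi'$, the recognition of the setup as an instance of Theorem~\ref{t:mainCY}, and the density of the klt locus, is routine.
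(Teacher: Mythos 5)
Your proposal is correct, but it routes the existence statement differently from the paper. The paper does \emph{not} run the general machinery of Theorem \ref{t:mainintro}/\ref{t:mainCY} for this example: it simply cites \cite[Theorem 16.11]{BABWILD}, where the good moduli space for odd-bidegree pairs on $\bP^1\times\bP^1$ was already constructed, precisely because the stack $\cDP_{8,\frac{r}{2}}^{\CY}$ is bounded in this case; only the birationality (via Theorem \ref{t:mainCY}.3 and klt-ness of the generic pair, exactly as you argue) and the ampleness of the Hodge line bundle (via Theorem \ref{t:amplehodge}) are new inputs from this paper. You instead apply Theorem \ref{t:mainCY} wholesale to get an asymptotically good moduli space and then upgrade to an honest good moduli space using the same boundedness. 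Both approaches rest on the identical external input --- the finite-typeness of $\cDP_{8,\frac{r}{2}}^{\CY}$ established in \cite[Section 16.3]{BABWILD} --- so your deferral of that step to the citation is on the same footing as the paper's. What the paper's route buys is economy (no need to re-derive projectivity and the wall-crossing morphisms when they are already in the literature for the bounded case); what your route buys is uniformity with the proof of Theorem \ref{thm:delpezzo} and a cleaner logical dependence on a single general theorem. One caveat: your sketch of the boundedness upgrade (``bound the Gorenstein index of all slc degenerations by a constant depending only on $r$'') is stated as a plan rather than proved, and this is genuinely the only case-specific content of the theorem; since it is exactly what \cite[Section 16.3]{BABWILD} supplies, the citation closes the gap, but the argument there (which exploits the parity of the bidegree to constrain degenerations) is not something your write-up reconstructs.
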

		
		\begin{proof}
			The whole statement except the birationality and the ampleness of the Hodge line bundle follows from \cite[Theorem 16.11]{BABWILD} (since the stack $\cDP_{8,\frac{r}{2}}^{\CY}$ is bounded, we can avoid applying Theorem \ref{t:mainintro}). The birationality
			follows Theorem \ref{t:mainCY}.3 and  the fact that every pair $(X, \frac{2}{r}C)$ in $\DP_{8,\frac{r}{2}}$ is klt. 
			The ampleness of Hodge line bundle follows from Theorem \ref{t:amplehodge}.
		\end{proof}
		
		\begin{rem}
			As we shall see later in Example \ref{expl:K3}, the normalization of the moduli space $\DP_{8,\frac{3}{2}}^{\CY}$ is indeed isomorphic to the Baily--Borel compactification of one of Kond\={o}'s ball quotient models from \cite{Kon02}. Note that the KSBA moduli space $\DP_{8,\frac{3}{2}}^{\KSBA}$ was studied in detail in \cite{DH21}.
		\end{rem}
		
		\subsubsection{Del Pezzo surfaces with sum of lines}
		
		Let $(d,r)\in \{(4,4), (3,9), (2, 28), (1, 240)\}$. Let $\cY_{9-d}$ denote the seminormalization of the locally closed substack of $\cDP_{d, r}^{\CY}$ parametrizing pairs $(X, \frac{1}{r}C)$ where $X$ is a smooth del Pezzo surface of degree $d$, and $C = \sum_{i=1}^{dr} C_i$ is the unmarked divisor as the sum of  lines in $X$. Let $\cY_{9-d}^{\CY}$ denote the seminormalization of the closure of the image of $\cY_{9-d}$ in $\cDP_{d, r}^{\CY}$. We call $\cY_{9-d}^{\CY}$ the \emph{CY-moduli stack of unmarked del Pezzo surfaces of degree $d$.} 
		We define $\cY_{9-d}^{\K}$, $\cY_{9-d}^{\KSBA}$ and their projective good moduli spaces $\rmY_{9-d}^{\K}$, $\rmY_{9-d}^{\KSBA}$ similar to Definition \ref{def:dP-pairs} and Theorem \ref{thm:delpezzo}.
		
		\begin{thm}\label{thm:dP-lines}
			Notation as above. For each  integer $1\leq d\leq 4$, there exists an asymptotically good moduli space morphism $\cY_{9-d}^{\CY}\to \rmY_{9-d}^{\CY}$ to a projective seminormal scheme $\rmY_{9-d}^{\CY}$, which is called the \emph{CY-moduli space of unmarked del Pezzo surfaces of degree $d$}. We have the following wall crossing diagram of birational morphisms between (asymptotically) good moduli spaces:
			\[
			\rmY_{9-d}^{\K}\to  \rmY_{9-d}^{\CY}\leftarrow \rmY_{9-d}^{\KSBA}
			\]
			Furthermore, the Hodge line bundle is ample on $\rmY_{9-d}^{\CY}$.
		\end{thm}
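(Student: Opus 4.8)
The plan is to realize $\cY_{9-d}^{\CY}$ as an instance of the asymptotically good moduli space constructed in Theorem \ref{t:mainCY}, so that the projectivity and seminormality of $\rmY_{9-d}^{\CY}$, the description of its closed points as $\mathrm{S}$-equivalence classes, the wall crossing diagram with (asymptotically) good moduli space vertical arrows and projective horizontal arrows, and the ampleness of the Hodge line bundle all come for free. Concretely, I would take $(\chi, N, \bfa, c) := (\chi, r, (1), \tfrac1r)$ with $\chi(m) = \tfrac d2(m^2+m)+1$ the Hilbert function of a degree $d$ del Pezzo surface, and let $\cM^\circ \subset \cM(\chi, r, (1), \tfrac1r)$ be the locally closed substack whose points are the pairs $(X, 1\cdot\emptyset + \tfrac1r C)$ with $X$ a smooth del Pezzo surface of degree $d$ and $C = \sum_{i=1}^{dr} C_i$ the sum of the lines of $X$; here $\sum_i C_i \sim -rK_X$ and $dr$ equals the number of lines for each of $(d,r)\in\{(4,4),(3,9),(2,28),(1,240)\}$. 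The bookkeeping to check is: $\cM^\circ$ is of finite type, since smooth del Pezzo surfaces of degree $d$ form a bounded family and $C$ is determined by $X$; the Hilbert function satisfies $\chi(mr) = \tfrac{dr^2}{2} m^2 + O(m)$ with positive leading term; and the unique log Fano boundary coefficient, $1$, lies in $\bfT$. Thus $\cM^\circ$ is an admissible input for Section \ref{s:asgm}.

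Next I would check that the $\cM^{\CY}$ attached to this $\cM^\circ$ is exactly $\cY_{9-d}^{\CY}$. Since $\cDP_{d,r}^{\CY}$ is the seminormalization of the stack theoretic closure $\cM_{\cDP}$ of $\cDP_{d,r}$ inside $\cM(\chi, r, 1, \tfrac1r)$, the defining locally closed substack of $\cY_{9-d}$ corresponds, under the universal homeomorphism $\cDP_{d,r}^{\CY} \to \cM_{\cDP}$, to $\cM^\circ \subset \cM_{\cDP}$; forming the closure and then the seminormalization is insensitive to this identification, since both sides become the seminormalization of $\overline{\cM^\circ}$. Hence $\cY_{9-d}^{\CY} \cong \cM^{\CY}$, and applying Theorem \ref{t:mainCY} (equivalently Theorem \ref{t:mainintro}) yields the asymptotically good moduli space morphism $\cY_{9-d}^{\CY} \to \rmY_{9-d}^{\CY} := M^{\CY}$ together with all the asserted properties except the birationality of the horizontal arrows; this is the analogue for the sum-of-lines loci of the passage from Theorem \ref{t:mainintro} to Theorems \ref{t:delpezzointro} and \ref{thm:delpezzo}.

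For the birationality it suffices, by Theorem \ref{t:mainCY}.3, to prove that the Type I locus is dense in $\cM := \overline{\cM^\circ}$. Since every smooth del Pezzo surface of degree $1 \le d \le 4$ is a blow up of $\bP^2$ at $9-d$ points, $\cM^\circ$ (hence $\cM$) is irreducible, so it is enough to produce one klt member. For a general such $X$ no three lines are concurrent (classical; for instance a general cubic surface has no Eckardt point), so through any point of $X$ pass at most two of the curves $C_i$, meeting transversally; as $\tfrac1r < 1$, the pair $(X, \tfrac1r C)$ is then snc with all coefficients $< 1$, hence klt. Therefore the Type I locus is a nonempty, hence dense, open substack of $\cM$, and the bottom row arrows of the wall crossing diagram are birational.

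The argument involves no genuinely new difficulty: all of the hard birational-geometric and variational input is already packaged into Theorem \ref{t:mainCY}. The only points requiring care are of bookkeeping nature — verifying that $\cM^\circ$ is finite type and admissible, and that the closure/seminormalization operations in the definition of $\cY_{9-d}^{\CY}$ match those of Section \ref{s:asgm} — together with the elementary, classical observation that a general del Pezzo surface equipped with the sum of its lines is klt, which is what feeds birationality into the wall crossing diagram.
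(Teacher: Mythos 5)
Your proposal is correct and follows essentially the same route as the paper: the paper deduces everything except birationality from Theorem \ref{thm:delpezzo} (itself an instance of Theorem \ref{t:mainintro}), whereas you apply Theorem \ref{t:mainCY} directly to the substack of sum-of-lines pairs, which amounts to the same thing. The birationality step is identical — a general del Pezzo surface of degree $1\le d\le 4$ with the sum of its lines is snc with coefficients $<1$, hence klt, so the Type I locus is dense and Theorem \ref{t:mainCY}.3 applies.
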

		
		\begin{proof}
			The statement except the birationality follows from Theorem \ref{thm:delpezzo}. The birationality follows from the fact that for a general smooth del Pezzo surface $X$, the sum of its lines $C=\sum_{i=1}^{dr}C_i$ is a simple normal crossing divisor and so $(X, \frac{1}{r}C)$ is klt.
			Therefore Theorem \ref{t:mainCY}.3 implies  the birationality.
		\end{proof}
		
		\begin{rem}
			For each $c\in [0,1]\cap \bQ$, we denote by $\rmY_{9-d}(c)$ the seminormalization of the K-/CY-/KSBA-moduli compactification of $(X,cC)$ according to when $c$ is smaller than/equal to/larger than $\frac{1}{r}$, where $X$ is a smooth del Pezzo surface of degree $d$ and $C=\sum_{i=1}^{dr}C_i$ is the unmarked divisor as the sum of lines on $X$. In particular, we have $\rmY_{9-d}^{\K} = \rmY_{9-d}(\frac{1}{r}-\epsilon)$, $\rmY_{9-d}^{\CY} = \rmY_{9-d}(\frac{1}{r})$, $\rmY_{9-d}^{\KSBA} = \rmY_{9-d}(\frac{1}{r}+\epsilon)$ for $0<\epsilon\ll 1$. Then Theorem \ref{thm:dP-lines} implies that there is a log Calabi--Yau wall crossing at $c= \frac{1}{r}$. This together with the wall crossing theory for K-moduli \cite{ADL19, Zho23} and KSBA-moduli \cite{ABIP23, MZ23} gives a complete wall-crossing framework for moduli spaces $\rmY_{9-d}(c)$ in the entire interval $c\in [0,1]$. 
			
			Note that these moduli spaces have been studied before. When $c<\frac{1}{r}$, $Y_{9-d}(0)$ recovers the K-moduli compactification of smooth del Pezzo surfaces \cite{MM93, OSS16}, and explicit K-moduli wall crossings for $\rmY_{9-d}(c)$ were studied in \cite{Zha23} where it was shown that there are no K-moduli walls for $d=2,3,4$. When $c>\frac{1}{r}$, let $\ormY_{9-d}(c)$ be the corresponding marked KSBA moduli space. Then $\ormY_{9-d}(c)\to \rmY_{9-d}(c)$ is a Galois morphism as the quotient by the Weyl group $W(E_{9-d})$-action permuting the lines. Then $\ormY_{9-d}(1)$ recovers the moduli spaces studied in \cite{HKT09}, $\ormY_{6}(\frac{1}{9}+\epsilon)$ was studied in \cite{GKS21}, and explicit KSBA-moduli wall crossings for $\ormY_{9-d}(c)$ was studied in \cite{Sch23} for $d = 3,4$. 
			
			It is an interesting question to explicitly describe the CY-moduli space $\rmY_{9-d}^{\CY}$. 
			When $d=3,4$, $\rmY_{9-d}^{\K}$ has Picard rank $1$, 
			since the moduli space $\rmY_{9-d}^{\K}$ is isomorphic to $\rmY_{9-d}(0)$ by \cite{Zha23} which is also isomorphic to the corresponding GIT moduli spaces by \cite{MM93, OSS16}. Thus Theorem \ref{thm:dP-lines} implies that the normalization of $\rmY_{9-d}^{\CY}$ is isomorphic to $\rmY_{9-d}^{\K}$ and hence to the GIT moduli space. 
		\end{rem}

		\subsection{Moduli of K3 surfaces with a non-symplectic automorphism} 
		Throughout this subsection, assume that the base field $\bk=\bC$.
		We now consider the moduli of K3 surfaces with a non-symplectic automorphism $(S,\sigma)$. Recall that an automorphism $\sigma$ of a  smooth K3 surface $S$ is called \emph{non-symplectic} if $\sigma$ has finite order $n>1$, and $\sigma^*\omega_S = \zeta_n\omega_S$ where $\omega_S \in H^{2,0}(S)$ is a non-zero holomorphic $2$-form and $\zeta_n$ is a primitive $n$-th root of unity. In addition, we will always assume that the fixed locus of $\sigma$ contains a curve $R$ of genus at least $2$, which is called the $(\exists g\geq 2)$ assumption in \cite{AEH21}.
		By the Hodge index theorem, such a curve $R$ is unique if exists.
		
		Denote by $L_{K3}:= U^{\oplus 2} \oplus E_8^{\oplus 3}$ the K3 lattice. Let $\rho\in \rmO(L_{K3})$ be an isometry of order $n>1$.  A \emph{$\rho$-marking} $\phi$ of a smooth K3 surface with a non-symplectic automorphism $(S,\sigma)$ is an isometry of lattices $\phi: H^2(S,\bZ) \to L_{K3}$ such that $\sigma^* = \phi^{-1}\circ \rho \circ \phi$. We say $(S,\sigma)$ is \emph{$\rho$-markable} if such a $\rho$-marking exists. Recall that the period domain $\bD$ for complex K3 surfaces is 
		\[
		\bD:= \bP \{x\in L_{K3, \bC} \mid x\cdot x = 0, ~x\cdot \bar{x}>0\}.
		\]
		Let $L_{K3,\bC}^{\zeta_n}$ be the $\rho$-eigenspace of $L_{K3,\bC}$ of eigenvalue $\zeta_n$. 
		Let $\bD_{\rho}:=\bP(L_{K3,\bC}^{\zeta_n})\cap \bD$ be
		the period domain for $\rho$-markable K3 surfaces and $\Gamma_{\rho}< \rmO(L_{K3})$ where $\Gamma_{\rho}:=\{\gamma\in \rmO(L_{K3})\mid \gamma\circ \rho = \rho\circ \gamma\}$. By \cite{DK07, AEH21} we know that $\bD_{\rho}$ is a Hermitian symmetric domain of Type IV (resp. Type I) for  $n=2$ (resp. $n\geq 3$).
		
		By \cite{AEH21}, there exists a Deligne-Mumford stack $\sF_{\rho}^{\rm sm}$ parametrizing $\rho$-markable smooth K3 surfaces with a non-symplectic automorphism. Moreover, the separated quotient of $\sF_{\rho}^{\rm sm}$ is isomorphic to a separated Deligne-Mumford stack $\sF_{\rho}^{\rm ade}$ which parametrizes  pairs $(\oS, \oR)$ where $\pi:S\to \oS$ is the ample model of $R$ with $\oS$ a K3 surface with ADE singularities, and $\oR = \pi_* R$ is an ample Cartier divisor on $\oS$. Moreover, $\sF_{\rho}^{\rm ade}$ admits a coarse moduli space $F_{\rho}^{\rm ade}$ isomorphic to $(\bD_{\rho}\setminus \Delta_{\rho})/\Gamma_{\rho}$ where $\Delta_{\rho}$ is the discriminant locus (see \cite[Definition 2.5]{AEH21}). 
		
		Since $(\oS, \epsilon \oR)$ is KSBA-stable for $0<\epsilon\ll 1$ for every pair $(\oS, \oR)\in \sF_{\rho}^{\rm ade}$, we can take the seminormalization of its  KSBA compactification $\sF_{\rho}^{\rm slc}$, which is a proper Deligne-Mumford stack. Note that by \cite{KX20,Bir22,Bir23} it is known that $\sF_{\rho}^{\rm slc}$ is independent of the choice of $0<\epsilon\ll 1$ and admits a projective coarse moduli space $F_{\rho}^{\rm slc}$. Moreover, we have an open immersion of Deligne-Mumford stacks $\sF_{\rho}^{\rm ade} \hookrightarrow \sF_{\rho}^{\rm slc}$ which descends to an open immersion of coarse moduli spaces $F_{\rho}^{\rm ade} \hookrightarrow F_{\rho}^{\rm slc}$. 
		
		Next, we use the language of boundary polarized CY pairs to reinterpret the above setup.
		\begin{defn}
			Fix an isometry $\rho\in \rmO(L_{K3})$ of order $n>1$.
			For each $\rho$-markable smooth K3 surface $(S,\sigma)$ satisfying the assumption $(\exists g\geq 2)$, we associate a boundary polarized CY pair $(X, \frac{n-1}{n} C)$ in the following way: let $(\oS, \oR)$ be the ample model of $(S,R)$ as above, then $X:= \oS/\langle\overline{\sigma} \rangle$ where $\overline{\sigma}\in \Aut(\oS)$ is the descent of $\sigma$, and $C$ is the reduced image of $\oR$ under the quotient map $\oS\to X$ produces a boundary polarized CY pair by \cite[Theorem 4.2]{AEH21}. 
			
			Let $\sF_{\rho}$ be the moduli stack of $(X, \frac{n-1}{n} C)$ obtained from $\rho$-markable K3 surfaces. Let $\sF_{\rho}^{\CY}$ be the seminormalization of the closure of $\sF_{\rho}$ in the relevant moduli stack of boundary polarized CY pairs with index dividing $n$. 
			Following Section \ref{ss:KSBA+K}, let $\sF_{\rho}^{\K}$ (resp.  $\sF_{\rho}^{\KSBA}$) be the open substack of $\sF_{\rho}^{\CY}$ parametrizing boundary polarized CY pairs $(X,D)$ such that $(X, (1-\epsilon)D)$ is K-semistable (resp. $(X, (1+\epsilon)D)$ is KSBA-stable) for $0<\epsilon\ll 1$.
			
		\end{defn}
		
		For simplicity, we assume that all moduli stacks in this subsection are seminormal by replacing them with their seminormalization. 
		
		\begin{thm}[\cite{AEH21}]\label{thm:AEH}
			The morphism of stacks $\sF_{\rho}^{\rm ade}\to \sF_{\rho}$ induced by the quotient map $(\oS, \oR)\mapsto (X, \frac{n-1}{n}C)$ extends to a morphism $\sF_{\rho}^{\rm slc} \to \sF_{\rho}^{\KSBA}$ such that both morphisms are $\bmu_n$-gerbes. In particular, we have isomorphisms between coarse moduli spaces $F_{\rho}^{\rm ade} \cong F_{\rho}$ and $F_{\rho}^{\rm slc}\cong F_{\rho}^{\KSBA}$.
			
			Furthermore, the normalization of $F_{\rho}^{\rm slc}$ is a semitoroidal compactification of $\bD_{\rho}/\Gamma_{\rho}$.
		\end{thm}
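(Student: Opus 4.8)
The plan is to prove the two assertions separately, in each case following \cite{AEH21}; I sketch the mechanism. For the first assertion the key tool is the \emph{cyclic (index-one) cover correspondence}. Given a pair $(X,\tfrac{n-1}{n}C)$ in $\sF_{\rho}(\bk)$, the relation $n(K_X+\tfrac{n-1}{n}C)=nK_X+(n-1)C\sim 0$ shows that its Gorenstein index divides $n$, so one may form the index-one cover $\pi:(\oS,\oR)\to(X,\tfrac{n-1}{n}C)$ of \cite[Definition 2.49]{Kol13}: a cyclic Galois cover with group $\bmu_n$, whose Galois generator over the open locus $\sF_{\rho}$ is precisely the non-symplectic automorphism $\overline\sigma$, and whose total space is the ample model of the corresponding $\rho$-markable K3 surface. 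First I would check that this construction is functorial in families — here one uses that the coefficient $\tfrac{n-1}{n}$ is standard and that Koll\'ar's condition holds, so that $\pi$ commutes with base change. Since the isomorphism $\cO_X(nK_X)\cong\cO_X(-(n-1)C)$ is canonical up to $\bk^\times$-scaling and $\bk$ is algebraically closed, the cover $\oS$ is determined by $(X,C)$ up to isomorphism, while the Galois group $\langle\overline\sigma\rangle\cong\bmu_n$ lies in the kernel of $\Aut(\oS,\oR)\to\Aut(X,\tfrac{n-1}{n}C)$ and this map is surjective. Hence $\sF_{\rho}^{\rm ade}\to\sF_{\rho}$ is a $\bmu_n$-gerbe, and the same argument applies to $\sF_{\rho}^{\rm slc}\to\sF_{\rho}^{\KSBA}$ once the target morphism is constructed.

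To produce the extension $\sF_{\rho}^{\rm slc}\to\sF_{\rho}^{\KSBA}$ I would argue by the valuative criterion, both stacks being proper separated Deligne--Mumford stacks: given a one-parameter degeneration of stable K3 pairs $(\oS,\epsilon\oR)\to\Spec R$ carrying the fibrewise $\bmu_n$-action, pass to the quotient to obtain a family $(X,\tfrac{n-1}{n}C)\to\Spec R$ of slc pairs with $n(K_X+\tfrac{n-1}{n}C)\sim 0$ and $C$ relatively ample whose generic fibre lies in $\sF_{\rho}$, hence whose special fibre lies in $\sF_{\rho}^{\KSBA}$ by properness; conversely the index-one cover of a KSBA-stable boundary polarized CY pair over $R$ is an slc K3 pair, and its $\rho$-markability, detected on the generic fibre, propagates by the monodromy/local-system argument of \cite{AEH21}. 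Since the $\bmu_n$-gerbe structure is preserved by these operations, this yields the morphism of stacks and the induced isomorphisms $F_{\rho}^{\rm ade}\cong F_{\rho}$ and $F_{\rho}^{\rm slc}\cong F_{\rho}^{\KSBA}$ on coarse spaces.

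For the second assertion I would first invoke the period map: on the interior $F_{\rho}^{\rm ade}\cong(\bD_{\rho}\setminus\Delta_{\rho})/\Gamma_{\rho}$, and since $F_{\rho}^{\rm slc}$ is proper, Borel's extension theorem (applied after normalization) produces a morphism $\nu$ from the normalization of $F_{\rho}^{\rm slc}$ to the Baily--Borel compactification $(\bD_{\rho}/\Gamma_{\rho})^{\rm BB}$. The heart of the matter, carried out in \cite{AE23} and refined $\rho$-equivariantly in \cite{AEH21}, is to identify the Stein factorization of $\nu$ with a \emph{semitoroidal} quotient: one classifies the Type II and Type III stable degenerations of the relevant K3 pairs, matches their combinatorial invariants (the integral-affine structures on $S^2$, respectively their tropical limits) with the rational polyhedral fans attached to the cusps of $\bD_{\rho}$, and verifies that the resulting collection of fans is a genuine semifan — coarser than some toroidal fan — so that $\nu$ is finite and birational onto the associated semitoroidal compactification, hence an isomorphism after normalization. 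This last step is the main obstacle: it requires a complete understanding of the KSBA boundary stratification and is exactly the technical core of \cite{AE23,AEH21}, which I would quote rather than reprove.
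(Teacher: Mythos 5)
Your proposal is correct and follows essentially the same route as the paper, whose entire proof of this statement is a citation: the gerbe/quotient correspondence is \cite[Theorem 4.2 and Corollary 4.3]{AEH21} (exactly the index-one-cover argument you sketch), and the semitoroidal statement is \cite[Theorem 3.26]{AEH21}, resting on \cite{AE23}, which you likewise quote rather than reprove. Your sketch of the underlying mechanisms is an accurate account of what those cited results do, so there is nothing to correct.
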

		
		\begin{proof}
			The first paragraph follows from \cite[Theorem 4.2 and Corollary 4.3]{AEH21}. The last statement follows from \cite[Theorem 3.26]{AEH21} whose proof is largely based on \cite{AE23}.
		\end{proof}

		
		\begin{thm}\label{thm:K3-CY=BB}
			Notation as above. Then there exists an asymptotically good moduli space morphism $\sF_\rho^{\CY}\to F_{\rho}^{\CY}$ to a projective scheme $F_{\rho}^{\CY}$. We have the following wall crossing diagram of birational morphisms
			\[
			F_\rho^{ \rm K} \to
			F_{\rho}^{\CY}  \leftarrow
			F_{\rho}^{\KSBA} 
			\]
			Furthermore, the normalization of $F_{\rho}^{\CY}$ is isomorphic to the Baily--Borel compactification $(\bD_{\rho}/\Gamma_{\rho})^{\rm BB}$, and their Hodge line bundles coincide up to a positive scaling.
		\end{thm}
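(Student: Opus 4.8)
The plan is to obtain $F_{\rho}^{\CY}$ together with the wall crossing diagram directly from Theorem \ref{t:mainCY} (equivalently Theorem \ref{t:mainintro}) applied to the family underlying $\sF_{\rho}$, and then to identify the normalization of $F_{\rho}^{\CY}$ with the Baily--Borel compactification by comparing ample models of the Hodge line bundle through the KSBA side, where \cite{AEH21} already pins down the geometry.

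\emph{Step 1 (existence and birationality).} Since the pairs parametrized by $\sF_{\rho}$ have $B = 1\cdot\emptyset$ and polarizing boundary $\tfrac{n-1}{n}C$ with CY index dividing $n$, Theorem \ref{t:mainCY} produces the asymptotically good moduli space morphism $\sF_{\rho}^{\CY}\to F_{\rho}^{\CY}$ to a projective seminormal scheme, the commutative wall crossing diagram, and (Theorem \ref{t:mainCY}.4) the ampleness of the Hodge $\bQ$-line bundle $\lambda^{\CY}$ on $F_{\rho}^{\CY}$. For a $\rho$-markable smooth K3 surface $(S,\sigma)$ with fixed curve $R$, the associated pair $(X,\tfrac{n-1}{n}C)$ is the quotient of $(S,0)$ by $\langle\sigma\rangle$ with branch divisor $C$ and standard coefficient $\tfrac{n-1}{n}$, so that $K_S = \pi^*(K_X+\tfrac{n-1}{n}C)$; since $(S,0)$ is klt, $(X,\tfrac{n-1}{n}C)$ is klt, i.e.\ Type I. Hence the Type I locus is dense in $\sF_{\rho}^{\CY}$, and Theorem \ref{t:mainCY}.3 gives that $F_{\rho}^{\K}\to F_{\rho}^{\CY}\leftarrow F_{\rho}^{\KSBA}$ are birational.

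\emph{Step 2 (reduction to the KSBA side).} Let $g\colon F_{\rho}^{\KSBA}\to F_{\rho}^{\CY}$ be the birational morphism from Step 1 and $\tilde g\colon \widetilde{F_{\rho}^{\KSBA}}\to\widetilde{F_{\rho}^{\CY}}$ the induced proper birational morphism of normalizations. By the functoriality of the Hodge line bundle (the defining property recalled after Theorem \ref{t:amplehodge}) applied to the open immersion $\sF_{\rho}^{\KSBA}\hookrightarrow\sF_{\rho}^{\CY}$ of stacks, together with the compatibility of the descents to the good moduli spaces, we get $g^*\lambda^{\CY}\sim_{\bQ}\lambda^{\KSBA}$ and hence $\tilde g^*\widetilde{\lambda^{\CY}}\sim_{\bQ}\widetilde{\lambda^{\KSBA}}$. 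By Theorem \ref{thm:AEH}, $\widetilde{F_{\rho}^{\KSBA}}\cong\widetilde{F_{\rho}^{\rm slc}}$ is a semitoroidal compactification of $\bD_{\rho}/\Gamma_{\rho}$; moreover, passing through the $\bmu_n$-gerbe $\sF_{\rho}^{\rm slc}\to\sF_{\rho}^{\KSBA}$ and using the eigensheaf decomposition of $\pi_*\omega_{\oS}$ for the cyclic cover $\oS\to X$ (which, via the period map, matches the $\zeta_n$-eigenspace $L_{K3,\bC}^{\zeta_n}$; cf.\ \cite{AEH21,AE23}), the Hodge $\bQ$-line bundle $\widetilde{\lambda^{\KSBA}}$ is a positive rational multiple of the automorphic $\bQ$-line bundle $\lambda_{\rho}$ on the semitoroidal compactification $\widetilde{F_{\rho}^{\KSBA}}$.

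\emph{Step 3 (ample model and conclusion).} On the semitoroidal compactification $\widetilde{F_{\rho}^{\KSBA}}$ the automorphic line bundle $\lambda_{\rho}$ is the pullback of an ample $\bQ$-line bundle $\lambda_{\rho,\mathrm{BB}}$ under the canonical morphism $\widetilde{F_{\rho}^{\KSBA}}\to(\bD_{\rho}/\Gamma_{\rho})^{\rm BB}$, so $\lambda_{\rho}$ is semiample and big and $(\bD_{\rho}/\Gamma_{\rho})^{\rm BB}=\Proj\bigoplus_{k\geq0}H^0(\widetilde{F_{\rho}^{\KSBA}},\lambda_{\rho}^{\otimes k})$ is its ample model (the Baily--Borel construction). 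On the other hand $\tilde g$ is proper birational onto the normal scheme $\widetilde{F_{\rho}^{\CY}}$ with $\tilde g^*\widetilde{\lambda^{\CY}}\sim_{\bQ}\widetilde{\lambda^{\KSBA}}\sim_{\bQ}\lambda_{\rho}$ and $\widetilde{\lambda^{\CY}}$ ample, so $\widetilde{F_{\rho}^{\CY}}$ is also an ample model of $\lambda_{\rho}$ on $\widetilde{F_{\rho}^{\KSBA}}$. By uniqueness of the ample model (both targets normal and proper) we conclude $\widetilde{F_{\rho}^{\CY}}\cong(\bD_{\rho}/\Gamma_{\rho})^{\rm BB}$, and under this isomorphism $\widetilde{\lambda^{\CY}}$ corresponds to $\lambda_{\rho,\mathrm{BB}}$ up to positive scaling. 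The main obstacle is Step 2: identifying the Hodge line bundle of the quotient pair $(X,\tfrac{n-1}{n}C)$ with a fixed positive multiple of the automorphic line bundle in a way compatible with the $\bmu_n$-gerbe and with the descent to coarse moduli spaces; once this proportionality and the pullback identity $\tilde g^*\widetilde{\lambda^{\CY}}\sim_{\bQ}\widetilde{\lambda^{\KSBA}}$ are established, the remainder is a formal manipulation of ample models.
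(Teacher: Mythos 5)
Your Step 1 and Step 3 match the paper: the existence, projectivity, ampleness of the Hodge line bundle, and the wall crossing diagram all come from Theorem \ref{t:mainCY}, and the final identification is an ample-model comparison exactly as in the paper's last paragraph. The problem is Step 2, which you yourself flag as ``the main obstacle'': the assertion that the Hodge $\bQ$-line bundle on $\widetilde{F_{\rho}^{\KSBA}}$ is a positive rational multiple of the automorphic line bundle is not something you can extract from \cite{AEH21} or from an ``eigensheaf decomposition of $\pi_*\omega_{\oS}$'' — it is precisely the statement that carries the analytic/Hodge-theoretic content of the theorem, and the paper spends most of its proof establishing it. Concretely, the paper (i) passes to a smooth projective generically \'etale cover $T\to\sF_{\rho}^{\rm slc}$ so that there is an honest family over a variety rather than over a stack or a coarse space; (ii) shows the total space $\ocS$ of the pulled-back K3 family is klt and crepantly extracts the generic minimal resolution via \cite[Corollary 1.4.3]{BCHM10} to get a family whose general fiber is a \emph{smooth} K3 surface; (iii) identifies the moduli b-divisor of the lc-trivial fibration $(\cX,\tfrac{n-1}{n}\cC)\to T$ with that of $\ocS\to T$ using crepancy of the quotient; and (iv) invokes Fujino's canonical bundle formula for K3 fibrations \cite[Theorems 1.2 and 2.10]{Fuj03} together with Borel's extension theorem to realize $\cO_T(19kM_T)$ as the pullback of $\cO(1)$ under an extended period map $T\to(\bD_{2d}/\Gamma_{2d})^{\rm BB}$. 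None of these steps is routine, and without them the proportionality $\widetilde{\lambda^{\KSBA}}\sim_{\bQ}\lambda_{\rho}$ is unsupported.

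A second, smaller gap: to compare with the Baily--Borel compactification of $\bD_{\rho}/\Gamma_{\rho}$ (rather than of $\bD_{2d}/\Gamma_{2d}$, which is what the period map of the K3 family naturally sees), one needs the map $(\bD_{\rho}/\Gamma_{\rho})^{\rm BB}\to(\bD_{2d}/\Gamma_{2d})^{\rm BB}$ induced by the totally geodesic embedding $\bD_{\rho}\hookrightarrow\bD_{2d}$ to be finite, so that the ample automorphic bundle on the target pulls back to an ample bundle proportional to the Hodge bundle on the source. This is the content of the paper's Lemma \ref{lem:BB-finite} (using \cite{KK72} and quasi-finiteness from injectivity on rational boundary components), and your proposal does not address it. Once both points are supplied, your ample-model argument in Step 3 does close the proof in the same way the paper does.
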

		
		\begin{proof}
			The existence and projectivity of the asymptotically good moduli space $F_{\rho}^{\CY}$ and the wall crossing diagram (except the birationality) follows from Theorem \ref{t:mainintro}. The birationality  follows from the irreducibility of these moduli spaces and Theorem \ref{t:mainCY}.3. We shall now focus on showing that the normalization of $F_{\rho}^{\CY}$ is isomorphic to the Baily--Borel compactification.
			
			By \cite[Theorem 6.6]{LMB00}, there exists a proper, surjective, and generically \'etale morphism $T\to \sF_{\rho}^{\rm slc}$ from a projective variety $T$. Replacing $T$ with a resolution, we may assume that $T$ is smooth and irreducible. Let $f:(\ocS, \epsilon \ocR)\to T$ be the pull-back of the universal family over $\sF_{\rho}^{\rm slc}$. By Theorem \ref{thm:AEH} we know that there exists a fiberwise $\bmu_n$-action on $\ocS$ over $T$ such that the quotient pair $(\cX, \frac{n-1}{n}\cC):=\ocS/\bmu_n$ satisfies that $g:(\cX, \frac{n-1+\epsilon}{n}\cC)\to T$ is the pull-back of the universal family over $\sF_{\rho}^{\KSBA}$ under the composition $T\to \sF_{\rho}^{\rm slc} \to \sF_{\rho}^{\KSBA}$. Denote by $\pi: \ocS \to \cX$ the quotient map. Since $nK_{\ocS/T} \sim \pi^* (nK_{\cX/T} + (n-1)\cC)\sim_{T} 0$, we have that $f_* \omega_{\ocS/T}^{[n]} \cong g_* \omega_{\cX/T}^{[n]}((n-1)\cC) = \lambda_{\Hodge, g, n}$ as line bundles on $T$. 
			
			Since $g: (\cX, \frac{n-1}{n}\cC)\to T$ is a family of boundary polarized CY pairs with index dividing $n$ satisfying that $T$ is smooth and a general fiber is normal, \cite[Proposition 14.7]{BABWILD} implies that $g$ is an lc-trivial fibration with zero discriminant divisor, $K_{\cX} + \frac{n-1}{n}\cC\sim_{\bQ} g^*(K_T+M_T)$,  $\cO_T(nM_T)\cong \lambda_{\Hodge, g,n}$, and the moduli b-$\bQ$-divisor $\bfM$ is the pull-back of $M_T$ (see \cite[Section 14.1]{BABWILD} for the relevant definitions). Since $\pi: (\ocS,0) \to (\cX, \frac{n-1}{n}\cC)$ is crepant, $f: \ocS \to T$ is also an lc-trivial fibration with zero discriminant divisor whose moduli b-$\bQ$-divisor is also $\bfM$. Let $h: T\to F_{\rho}^{\CY, {\rm n}}$ be the normalization of the composition $T \to \sF_{\rho}^{\KSBA} \to F_{\rho}^{\KSBA} \to F_{\rho}^{\CY}$. By Theorems \ref{t:mainintro}(2) and \ref{t:amplehodge}, we know that $L_{\Hodge}$ is ample on $F_{\rho}^{\CY}$. Abusing notation, we denote  the pull-back of $L_{\Hodge}$ to $F_{\rho}^{\CY, {\rm n}}$ by $L_{\Hodge}$. Then we have
			\[
			h^* L_{\Hodge}^{\otimes n} \cong \lambda_{\Hodge, g,n} \cong \cO_T(nM_T).
			\]
			
			Next, we seek to apply \cite[Theorem 1.2]{Fuj03} to relate $M_T$ with the Hodge line bundle on the Baily--Borel compactification. In order to do so, we need to perform a birational modification. First of all, we show that $\ocS$ is klt. Since a general fiber of $f$ is a K3 surface with ADE singularities, we know that no lc center of $\ocS$ dominates $T$. If there exists an lc center $Z$ of $\ocS$ with $W:=f(Z) \not\subset T$, then applying \cite[Lemma 2.7]{BABWILD} we can find a prime divisor $P$ containing $W$ such that $(\ocS, f^* P)$ is lc over a neighborhood of the generic point of $W$. This is a contradiction to $Z$ being an lc center. Thus $\ocS$ is klt. 
			
			Next, consider the minimal resolution $\varphi_{\eta}:\cS_{\eta}\to \ocS_{\eta}$ where $\eta$ is the generic point of $T$. By the uniqueness of minimal resolutions of surfaces, we know that $\cS_{\eta}$ is a smooth K3 surface over $K(T)$, and that each $\varphi_{\eta}$-exceptional divisor has log discrepancy $1$ over $\ocS_{\eta}$. Thus we may apply \cite[Corollary 1.4.3]{BCHM10} to get a projective birational morphism $\varphi: \cS \to \ocS$ such $\varphi$ only extracts $\varphi_{\eta}$-exceptional divisors. Hence $\varphi$ is crepant and its general fiber is a smooth K3 surface. By replacing $T$ with a log resolution, we may assume that there exists a simple normal crossing divisor $\Sigma$ on $T$ such that the restriction of $\tf=f\circ \varphi: \cS \to T$ over $T\setminus \Sigma$ gives a family of smooth $\rho$-markable K3 surfaces. By  Borel's extension theorem,  Lemma \ref{lem:BB-finite}, and \cite[Theorems 1.2 and 2.10]{Fuj03}, there exists an extended period map $\wp: T\to (\bD_{\rho}/\Gamma_{\rho})^{\rm BB}$  as a morphism between projective varieties and a positive integer $k$ such that 
			\[
			\cO_T(19k M_T) \cong (\iota^{\rm BB}\circ \wp)^*\cO_{(\bD_{2d}/\Gamma_{2d})^{\rm BB}}(1),
			\]
			where $(\bD_{2d}/\Gamma_{2d})^{\rm BB}$ is embedded in a projective space by automorphic forms of weight $k$. 
			By Lemma \ref{lem:BB-finite} we know that $\lambda_{\rho,{\rm BB}}:=(\iota^{\rm BB})^*\cO_{(\bD_{2d}/\Gamma_{2d})^{\rm BB}}(1)$ is an ample line bundle on $(\bD_{\rho}/\Gamma_{\rho})^{\rm BB}$ that is proportional to the Hodge line bundle by a positive scalar, such that $\cO_T(19k M_T) \cong \wp^* \lambda_{\rho, {\rm BB}}$.
			
			From the above discussion, we see that there are generically finite morphisms $h:T\to F_{\rho}^{\CY, {\rm n}}$ and $\wp: T\to (\bD_{\rho}/\Gamma_{\rho})^{\rm BB}$ whose targets are birational to each other such that $h^*L_{\Hodge}$ and $\wp^*\lambda_{\rho,{\rm BB}}$ are proportional by a positive scalar. Thus we conclude that $F_{\rho}^{\CY, {\rm n}}$ and $(\bD_{\rho}/\Gamma_{\rho})^{\rm BB}$ are isomorphic under which $L_{\Hodge}$ and $\lambda_{\rho,{\rm BB}}$ are proportional by a positive scalar. The proof is finished.
		\end{proof}
		
		\begin{lem}\label{lem:BB-finite}
			For a smooth $\rho$-markable K3 surface $(S,\sigma)$ with $R$ a $\sigma$-fixed curve of genus at least $2$, we can associate a quasi-polarized K3 surface $(S, L)$ where $L$ is a primitive nef and big line bundle on $S$ such that $R\sim L^{\otimes m}$ for some $m\in \bZ_{>0}$. Suppose $(L^2) = 2d$. Let $\bD_{2d}:=\bP([L]^{\perp}\otimes \bC)\cap \bD$ be the period domain of quasi-polarized K3 surfaces of degree $2d$, and $\Gamma_{2d}:=\{\gamma\in \rmO(L_{K3}) \mid \gamma([L])=[L]\}$.
			Then this induces a finite morphism $\iota^{\rm BB} :(\bD_{\rho}/\Gamma_{\rho})^{\rm BB} \to (\bD_{2d}/\Gamma_{2d})^{\rm BB}$.
		\end{lem}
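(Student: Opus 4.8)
The plan is to exhibit $\iota^{\rm BB}$ as a finite morphism of normal projective varieties by realizing it as a map along which the Hodge (automorphic) line bundle pulls back to an \emph{ample} line bundle. First I would record the lattice-theoretic input and the inclusion of period domains. Since $R$ is $\sigma$-invariant and $R\sim L^{\otimes m}$, we have $[R]=m[L]$ in the torsion-free group $H^2(S,\bZ)$, and as $\sigma^*$ is an isometry sending the primitive vector $[L]$ to another primitive vector this forces $\sigma^*[L]=[L]$. Hence, after fixing a $\rho$-marking $\phi$, the vector $v:=\phi(c_1(L))\in L_{K3}$ is primitive, satisfies $(v,v)=2d$, and is fixed by $\rho$. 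For eigenvectors $x,v$ of the ($\bC$-linearly extended) isometry $\rho$ with eigenvalues $\zeta_n$ and $1$ one computes $(x,v)=(\rho x,\rho v)=\zeta_n(x,v)$, so $(x,v)=0$ because $\zeta_n\neq 1$; thus $L_{K3,\bC}^{\zeta_n}\subseteq v^{\perp}\otimes\bC$ and therefore $\bD_\rho=\bP(L_{K3,\bC}^{\zeta_n})\cap\bD\subseteq\bP(v^{\perp}\otimes\bC)\cap\bD=\bD_{2d}$. Any $\gamma\in\Gamma_\rho$ commutes with $\rho$ and hence preserves the $\rho$-fixed sublattice and the $\zeta_n$-eigenspace; combined with the moduli-theoretic forgetful association $(S,\sigma)\mapsto(S,L)$ on families and the period identifications of $F_\rho^{\rm ade}$ and of the moduli space of degree-$2d$ quasi-polarized K3 surfaces, this yields a morphism $(\bD_\rho\setminus\Delta_\rho)/\Gamma_\rho\to\bD_{2d}/\Gamma_{2d}$ of quasi-projective varieties lying over the embedding $\bD_\rho\hookrightarrow\bD_{2d}$.

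Next I would extend this to a morphism of Baily--Borel compactifications. This is the functoriality of the Baily--Borel construction for a holomorphic embedding of Hermitian symmetric domains that is equivariant for a compatible pair of arithmetic groups: restriction of $\Gamma_{2d}$-automorphic forms along $\bD_\rho$ produces $\Gamma_\rho$-automorphic forms, inducing a morphism $\iota^{\rm BB}\colon(\bD_\rho/\Gamma_\rho)^{\rm BB}\to(\bD_{2d}/\Gamma_{2d})^{\rm BB}$ of normal projective varieties extending the map on the open strata. I expect this to be the point requiring the most care, namely matching $\Gamma_\rho$ with the corresponding subgroup of $\Gamma_{2d}$ up to commensurability (which is harmless for finiteness); one can invoke the general theory or argue directly using the explicit descriptions already employed in \cite{AEH21, AE23}.

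Finally, for finiteness: the period point of $(S,\sigma)$, i.e.\ the line $H^{2,0}(S)\subseteq L_{K3,\bC}$, is the same point of $\bP(L_{K3,\bC})$ whether regarded inside $\bD_\rho$ or inside $\bD_{2d}$, so the Hodge $\bQ$-line bundle on $(\bD_\rho/\Gamma_\rho)^{\rm BB}$ is a positive rational multiple of $\iota^{{\rm BB},*}$ of the Hodge line bundle on $(\bD_{2d}/\Gamma_{2d})^{\rm BB}$ — equivalently of $\iota^{{\rm BB},*}\cO(1)$ for the Baily--Borel projective embedding. Since the Baily--Borel theorem makes the Hodge line bundle ample on $(\bD_{2d}/\Gamma_{2d})^{\rm BB}$, its pullback is ample on $(\bD_\rho/\Gamma_\rho)^{\rm BB}$; in particular $\iota^{\rm BB}$ contracts no curve (a contracted curve would have intersection number zero with an ample class), so $\iota^{\rm BB}$ is quasi-finite, and being a morphism of projective varieties it is proper, hence finite. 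This argument simultaneously supplies the proportionality $(\iota^{\rm BB})^*\cO(1)\sim\lambda_{\rho,{\rm BB}}$ between an ample line bundle and the Hodge line bundle that is used in the proof of Theorem \ref{thm:K3-CY=BB}.
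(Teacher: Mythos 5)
Your construction of $\iota^{\rm BB}$ follows essentially the same route as the paper: the forgetful association $(S,\sigma)\mapsto (S,L)$ on moduli, the linear embedding $\bD_\rho\subseteq\bD_{2d}$ coming from the fact that $[L]$ is $\rho$-fixed (your eigenvalue computation $(x,v)=\zeta_n(x,v)$ is exactly the point the paper makes), and Satake--Baily--Borel functoriality for the totally geodesic embedding to extend over the boundary (the paper cites Kiernan--Kobayashi \cite{KK72} for this, and \cite{AEH21} for the extension over the discriminant and for the descent issue you flag, namely that $\Gamma_\rho$ is only matched with $\Gamma_{2d}$ via the moduli-theoretic description). Where you genuinely diverge is the finiteness: the paper deduces quasi-finiteness from the injectivity of $\bD_\rho^{+}\hookrightarrow\bD_{2d}^{+}$ on rational boundary components together with discreteness of the arithmetic groups, and then uses properness; you instead want to deduce it from ampleness of $(\iota^{\rm BB})^*\cO(1)$. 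Your route is appealing because the proportionality of $(\iota^{\rm BB})^*\cO(1)$ with the Hodge line bundle is needed anyway in the proof of Theorem \ref{thm:K3-CY=BB}; the paper's route has the advantage that finiteness is established without any line-bundle comparison on the compactification.

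As written, though, your finiteness step contains a non sequitur. From ``$\lambda_{\rho,{\rm BB}}$ is a positive multiple of $(\iota^{\rm BB})^*\lambda_{2d,{\rm BB}}$'' and ``$\lambda_{2d,{\rm BB}}$ is ample \emph{on the target}'' one cannot conclude that the pullback is ample: the pullback of an ample bundle under a proper morphism is ample precisely when the morphism is finite, which is what you are trying to prove. The correct invocation is the Baily--Borel theorem on the \emph{source}: $\lambda_{\rho,{\rm BB}}$ is ample on $(\bD_\rho/\Gamma_\rho)^{\rm BB}$, hence so is its positive multiple $(\iota^{\rm BB})^*\lambda_{2d,{\rm BB}}$, and then no curve is contracted. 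With that fix the argument works, but you should also justify that the proportionality---which is clear over $\bD_\rho/\Gamma_\rho$ since both bundles descend from the tautological bundle on $\bP(L_{K3,\bC})$---extends across the Baily--Borel boundary: when $\dim\bD_\rho\geq 2$ the boundary has codimension $\geq 2$ in the normal compactification, so the two $\bQ$-line bundles agreeing on the complement must agree; when $\dim\bD_\rho\leq 1$ the bundles could a priori differ by a boundary divisor, but there finiteness is immediate anyway, since a non-constant morphism from a projective curve is finite.
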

		
		\begin{proof}
			First of all, the map $(S,\sigma)\mapsto (S, L)$ induces a morphism between moduli stacks $\sF_{\rho}^{\rm ade} \to \sF_{2d}$, which descends to a morphism between coarse moduli spaces $F_{\rho}^{\rm ade}\to F_{2d}$. Here $\sF_{2d}$ denotes the moduli stack of polarized K3 surfaces of degree $2d$ with ADE singularities; see \cite[Section 2C]{AE23}. Thus we have a morphism $(\bD_{\rho}\setminus \Delta_{\rho})/\Gamma_{\rho}\to \bD_{2d}/\Gamma_{2d}$ via the global Torelli theorem. 
			By \cite[Proof of Theorem 3.24 and Theorem 3.26]{AEH21}, this  extends to a morphism $\iota:\bD_{\rho}/\Gamma_{\rho}\to \bD_{2d}/\Gamma_{2d}$. Since these constructions can be recovered from variation of Hodge structures, we know that $\iota$ is induced by the totally geodesic embedding $\bD_{\rho}\hookrightarrow \bD_{2d}$ as $[L]$ is $\rho$-fixed. Thus by \cite{KK72} we know that $\iota$ extends to a morphism $\iota^{\rm BB}: (\bD_{\rho}/\Gamma_{\rho})^{\rm BB} \to (\bD_{2d}/\Gamma_{2d})^{\rm BB}$. Since both Baily--Borel compactifications are projective, the finiteness of  $\iota^{\rm BB}$ follows from its quasi-finiteness which is a consequence of the injectivity of $\bD_{\rho}^+\hookrightarrow \bD_{2d}^+$ (where ${}^+$ denotes the union with rational boundary components; see e.g. \cite[Definition 5.3]{AE23} or \cite[Definition 3.13]{AEH21}) and the discreteness of arithmetic groups.
		\end{proof}
		
		\begin{proof}[Proof of Theorem \ref{t:nonsymplecticautintro}]
			This follows directly from Theorem \ref{thm:K3-CY=BB}.
		\end{proof}
		
		\begin{rem}
			When $n\geq 3$, the moduli stack $\sF_{\rho}^{\CY}$ is of finite type as it parametrizes pairs of Type I or II by \cite[Theorem 8.15 and Proposition 8.18]{BABWILD}. Therefore, $F_{\rho}^{\CY}$ is indeed a good moduli space of  $\sF_{\rho}^{\CY}$. In other words, the moduli stack  is bounded when the corresponding period domain $\bD_{\rho}$ is a Type I Hermitian symmetric domain, i.e. a complex ball, in which case Theorem \ref{thm:K3-CY=BB} implies that the normalization of $F_{\rho}^{\CY}$ is isomorphic to the Baily--Borel compactification of a ball quotient. 
		\end{rem}
		
		\begin{rem}
			Let $(\cX, \frac{n-1}{n}\cC)\to \sF_{\rho}^{\CY}$ be the universal family.
			Since a general pair $(X, \frac{n-1}{n}C)\in \sF_{\rho}^{\CY}$ is a $\bmu_n$-quotient of an ADE K3 surface, it satisfies that $nK_X + (n-1) C\sim 0$. Thus by \cite[Lemma 2.11]{BABWILD} we know that $nK_{\cX/\sF_{\rho}^{\CY}} + (n-1)\cC \sim_{\sF_{\rho}^{\CY}} 0$.
			By taking fiberwise index $1$ cover of the universal family over $\sF_{\rho}^{\CY}$ (see e.g. \cite[Proof of Proposition 6.12]{ADL21}), we obtain a $\bmu_n$-gerbe $\widetilde{\sF}_{\rho}^{\CY}\to \sF_{\rho}^{\CY}$ together with a universal family $(\widetilde{\cS}, \widetilde{\cR})\to \widetilde{\sF}_{\rho}^{\CY}$ with a fiberwise $\bmu_n$-action such that $\sF_{\rho}^{\rm ade}\hookrightarrow \sF_{\rho}^{\rm slc}\hookrightarrow\widetilde{\sF}_{\rho}^{\CY}$ are  dense open substacks. Then Theorem \ref{thm:K3-CY=BB} together with the fact that  $\bmu_n$-gerbes are cohomologically affine implies that $\widetilde{\sF}_{\rho}^{\CY}$ admits an asymptotically good moduli space $\widetilde{F}_{\rho}^{\CY}$ isomorphic to $F_{\rho}^{\CY}$, whose normalization is isomorphic to $(\bD_{\rho}/\Gamma_{\rho})^{\rm BB}$.  Thus $\widetilde{\sF}_{\rho}^{\CY}\to \widetilde{F}_{\rho}^{\CY}$ (after normalization) provides a modular meaning of the Baily--Borel compactification $(\bD_{\rho}/\Gamma_{\rho})^{\rm BB}$.
		\end{rem}

		\begin{expl}\label{expl:K3}
			Here we collect some examples of moduli  of K3 surfaces with non-symplectic automorphisms that are directly related to smooth del Pezzo pairs. For more in-depth discussions on the topic, we refer to \cite{AN06, AEH21, AE22}.
			\begin{enumerate}
				\item The double cover of a smooth del Pezzo surface $X$ branched along a smooth bi-anti-canonical curve $C\in |-2K_X|$ is naturally a smooth K3 surface with a non-symplectic involution $\sigma$. Thus for each integer $1\leq d\leq 9$, the irreducible components of $\cDP_{d,2}^{\CY}$ coincide with $\sF_{\rho}^{\CY}$ for some $\rho$ depending only on the irreducible component. Note that $\cDP_{d,2}^{\CY}$ is irreducible exactly when $d\neq 8$, while it has two irreducible components when $d=8$. Thus Theorem \ref{thm:K3-CY=BB} implies that the normalization of every irreducible component of $\DP_{d,2}^{\CY}$ is isomorphic to $(\bD_{\rho}/\Gamma_{\rho})^{\rm BB}$.

				When $d=8$ or $9$, the irreducible components of  $\DP_{d,2}^{\CY}$ provide a modular meaning for the Baily--Borel compactification of the moduli space of hyperelliptic K3 surfaces. When $d=5$, $\DP_{5,2}^{\CY}$ provides a modular meaning for the Baily--Borel compactification of an arithmetic quotient of a Type IV Hermitian symmetric domain that is birational to the moduli space of genus $6$ curves \cite{AK11}.
				\item The triple cyclic cover of $\bP^1\times\bP^1$ branched along a smooth bidegree $(3,3)$-curve is naturally a smooth K3 surface with a non-symplectic automorphism $\sigma$ of order $3$. Let $\rho\in \rmO(L_{K3})$ be an isometry induced by $\sigma^*$. Thus Theorem \ref{thm:K3-CY=BB} implies that the normalization of $\DP_{8, \frac{3}{2}}^{\CY}$ is isomorphic to  $(\bD_{\rho}/\Gamma_{\rho})^{\rm BB}$. This provides a modular meaning for the Baily--Borel compactification of Kond\={o}'s ball quotient model for the moduli space of genus $4$ curves via K3 surfaces \cite{Kon02}. 
				
				For a similar example on the moduli of pairs $(\bP^2, \frac{3}{4}C)$, where $C$ is a plane quartic curve, and its relation to the ball quotient model for moduli of genus $3$ curves from \cite{Kon00}, see \cite[Proposition 16.2]{BABWILD}.
			\end{enumerate}
		\end{expl}
		
		\begin{rem}
			We expect that our machinery can be applied to the moduli of Enriques surfaces with a numerical polarization of degree $2$, often called Horikawa's model of Enriques surfaces. To be precise, let $(Z,[L])$ be a smooth Enriques surface $Z$ together with an ample class $[L]\in \Pic Z/\langle [K_Z]\rangle$. Let $M:=L^{\otimes 2}$ be the ample line bundle on $Z$. Then $|M|$ is base point free and defines a double cover $\pi: Z \to (W, \frac{1}{2}R_W)$ where $W$ is a del Pezzo surface of degree $4$ with $4A_1$- or $A_3+2A_1$-singularities. Denote by $R_Z$ the branch divisor in $Z$. Then in \cite{AEGS23} it was shown that the normalization of the KSBA compactification of pairs $(Z, \epsilon R_Z)$ is isomorphic to a specific semi-toroidal compactification of the relevant locally symmetric space $\bD(T_{\rm En})/\Gamma_{{\rm En},2}$ (see \cite[Section 2.3]{AEGS23} for definitions). Following the same strategy as the proof of Theorem \ref{thm:K3-CY=BB}, one should be able to show that the normalization of the boundary polarized CY moduli space of $(W, \frac{1}{2}R_W)$ is isomorphic to the Baily--Borel compactification $(\bD(T_{\rm En})/\Gamma_{{\rm En},2})^{\rm BB}$.
		\end{rem}

			
			


		\bibliographystyle{alpha}
		\bibliography{ref}
		
	\end{document}